\documentclass[10pt]{amsart}

\usepackage{mathtools}
\mathtoolsset{showonlyrefs}

\usepackage{graphicx}
\usepackage{amsmath,amssymb,amsthm}
\usepackage[english]{babel}

\usepackage[Symbol]{upgreek}

\usepackage{amsfonts}
\usepackage{amsmath}
\usepackage{latexsym}
\usepackage{amscd}

\usepackage{fancyvrb}
\usepackage{xcolor}
\usepackage{shadow}
\usepackage{a4wide}

\usepackage{endnotes}
\usepackage{amsopn}
\usepackage{url}
\usepackage{dsfont}

\usepackage{centernot}

\usepackage{enumitem}

\usepackage{tensor}

\usepackage{hyperref}
\hypersetup{colorlinks=true
}

\usepackage[margin=2cm,bottom=2.5cm,top=2.5cm]{geometry}

\numberwithin{figure}{section}
\theoremstyle{plain}
\newtheorem{thm}{Theorem}[section]
\newtheorem{theoreme}{Theorem}[section]
\newtheorem{prop}[thm]{Proposition}

\newtheorem{cor}[thm]{Corollary}
\newtheorem{lemma}[thm]{Lemma}
\numberwithin{equation}{section}

\theoremstyle{remark}

\newtheorem{rmq}[thm]{Remark}
\newtheorem{rem}[thm]{Remark}

\theoremstyle{definition}
\newtheorem{dfn}[thm]{Definition}
\newtheorem{definition}[thm]{Definition}

\newcommand{\eps}{\varepsilon}

\newcommand{\ceps}{\epsilon}

\newcommand{\hatun}[1]{\overset{\lower.9em\hbox{${\scriptscriptstyle 1 \wedge}$}}{#1}}
\newcommand{\hatdeux}[1]{\overset{\lower.9em\hbox{${\scriptscriptstyle \!\!2 \wedge}$}}{#1}}

\newcommand\R{{\mathbb R}} \newcommand\N{{\mathbb N}}
\newcommand\Z{{\mathbb Z}}

 \def\cdotv{\raise 2pt\hbox{,}}

\makeatletter
\def\@tvsp{\mathchoice{{}\mkern-4.5mu}{{}\mkern-4.5mu}{{}\mkern-2.5mu}{}}
\def\ltrivert{\left|\@tvsp\left|\@tvsp\left|}
\def\rtrivert{\right|\@tvsp\right|\@tvsp\right|}
\makeatother

 \def\cdotv{\raise 2pt\hbox{,}}

\newcommand{\Grond}{\mathcal{G}}
\newcommand{\Prond}{\mathcal{P}}

\newcommand{\Nrond}{\mathcal{N}}

\newcommand{\Ab}{\mathbb{A}}

\newcommand{\tprime}{\varrho}

\newcommand{\Lp}{\ell}
\newcommand{\Lo}{\mathfrak{L}}
\newcommand{\tiTheta}{\upsilon}

\newcommand{\canonchi}{\boldsymbol{\chi}}
\newcommand{\cutoffchi}{\chi}

\newcommand{\qtau}{\tau_{q}}

\usepackage{xcolor}

\makeatletter
\def\mathcolor#1#{\@mathcolor{#1}}
\def\@mathcolor#1#2#3{%
  \protect\leavevmode
  \begingroup
    \color#1{#2}#3%
  \endgroup
}
\makeatother

\begin{document}

\title[Dispersion for the wave equation with Neumann boundary condition inside general strictly convex domains]{Dispersion for the wave equation with Neumann boundary condition inside general strictly convex domains}

\author{Oana Ivanovici}

\author{Diego S\'anchez Sanz }
\address{Sorbonne Université, CNRS, LJLL, F-75005 Paris, France} 

  \email{${}^{*}$corresponding author: danela-oana.ivanovici@cnrs.fr}
  \email{danela-oana.ivanovici@cnrs.fr, diego.sanchez-sanz@sorbonne-universite.fr }

 \thanks{{\it Key words}  wave equation in domains with Neumann boundary condition, dispersive and Strichartz estimates.\\
  The authors were supported by ERC grant ANADEL 757 996.}
\date{}

\begin{abstract}
We consider the wave equation on a manifold $(\Omega,g)$ of dimension $d\geq 2$ with smooth
strictly convex boundary $\partial\Omega\neq\emptyset$, with Neumann boundary condition. 
We construct a sharp local in time parametrix for the Neumann wave equation near glancing, extending the classical Melrose-Taylor construction for the Dirichlet problem to the Neumann boundary condition. 
Our construction is based on the microlocal framework and the parametrix developed in \cite{ILLP} for the Dirichlet problem, together with the Melrose-Taylor argument for solving the transport equations under more general, affine-type boundary conditions. Once the Neumann parametrix is constructed, the dispersive and Strichartz estimates follow the analysis developed in \cite{ILLP} for the Dirichlet problem. We therefore only recall the main ingredients of that argument. In particular, the fixed time decay rate for the Green function exhibits the same $t^{1/4}$ loss with respect to the boundaryless case, associated with swallowtail type singularities in the wave front set, and this decay is optimal. Moreover, the corresponding Strichartz estimates are obtained by balancing lossy long time estimates at a given incidence with short time ones with no loss: for $d=3$, this heuristically means that, on average, the decay loss is only $t^{1/6}$.
\end{abstract}

\maketitle

\section{Introduction}\label{intro}

Let us consider the wave equation on a smooth $d$-dimensional Riemannian manifold $(\Omega,g)$, $d\geq 2$, with smooth strictly convex boundary $\partial\Omega\neq\emptyset$, and let $\Delta_g$ denote the Laplace--Beltrami operator. In the present work we consider the Neumann problem
\begin{equation}
\left\{
\begin{aligned}
& (\partial_t^2-\Delta_g)u=0 \text{ in }\Omega,\\
& u|_{t=0}=u_0,\quad \partial_tu|_{t=0}=u_1,\\
& \partial_\nu u|_{\partial\Omega}=0,
\end{aligned}
\right.
\label{WE}
\end{equation}
where $\partial_\nu$ denotes the normal derivative at the boundary. On a smooth Riemannian manifold without boundary, one may construct, to arbitrary order, an approximate solution to the wave equation by standard microlocal methods. Locally, such a parametrix is a Fourier integral operator whose phase solves the eikonal equation. The non-degeneracy properties of this phase then yield pointwise decay estimates for the kernel which are analogous to those in the flat case. More precisely, if $\varkappa\in C^\infty_0((0,+\infty))$, one has, at least for $|t|$ in a sufficiently small fixed interval,
\begin{equation}\label{disprd}
 \big\|\varkappa(-h^2\Delta_g)e^{\pm it\sqrt{-\Delta_g}}
 \big\|_{L^1\to L^\infty}
 \lesssim h^{-d}
 \min\left\{1,\left(\frac{h}{|t|}\right)^{\frac{d-1}{2}}\right\}.
\end{equation}
Such fixed-time estimates are a fundamental tool in the study of
space-time estimates for the wave equation, in particular Strichartz
and spectral projector estimates, and consequently in many nonlinear
problems.

In the presence of a boundary, even the microlocal construction of a
parametrix becomes considerably more involved. Reflections, and in particular glancing and gliding rays, make the geometry of the wave flow much more complicated. As recalled in detail in \cite{ILLP}, the analysis of propagation near the boundary led to major developments in microlocal analysis and to the construction of parametrices capable of describing propagation along the generalized bicharacteristic flow. Such constructions, however, do not in general retain sufficiently precise information on the amplitude of the wave to yield fixed-time dispersive estimates in the presence of gliding rays. For general boundaries, Strichartz estimates were nevertheless obtained, for both Dirichlet and Neumann boundary conditions, by reducing the problem to wave propagation for low regularity metrics and working on sufficiently short time intervals so as to control individual reflections; see in particular \cite{blsmso08} and the references in \cite{ILLP}. This approach does not provide the sharp fixed-time dispersion that requires following the wave through a large number of successive reflections.

For strictly convex domains, such successive reflections can be
described with much greater precision. In the Friedlander model,
explicit parametrices were constructed in \cite{ILP3,ilp12} for the
Dirichlet problem and, more recently, in \cite{DSS26} for Robin
boundary conditions. In a general strictly convex domain, the
Dirichlet problem was carried out in \cite{ILLP}. It relies on the Melrose-Taylor parametrix for the Dirichlet problem near glancing and on Melrose's theorem on glancing hypersurfaces. In particular, in \cite{ILLP} we made the associated canonical transformation sufficiently explicit to recover from the model case the phase functions needed in the general geometry. This allowed us to follow a large number of successive reflections and to obtain sharp fixed-time dispersive estimates, with the optimal $1/4$ loss caused by swallowtail singularities of the reflected wave fronts.\\

The starting point of the construction in \cite{ILLP} is the
Melrose-Taylor parametrix near a glancing point for the Dirichlet boundary condition. In boundary normal
coordinates $(x,y)$, $\Omega=\{x>0,y\in \mathbb{R}^{d-1}\}$ with $\partial\Omega=\{x=0\}$, the corresponding
quasimodes are written in terms of two phase functions $\psi$ and
$\zeta$ and Airy functions in the form

\begin{equation}\label{G}
G(x,y,\eta,\omega)= e^{i\psi(x,y,\eta,\omega)}
 \left(
 q_0(x,y,\eta,\omega) Ai(-\zeta(x,y,\eta,\omega))
 +
 i q_1(x,y,\eta,\omega)
 Ai'(-\zeta(x,y,\eta,\omega))
 \right).
\end{equation}

The phases $\psi$ and $\zeta$ satisfy the coupled eikonal equations
associated with the principal symbol of the Laplace-Beltrami
operator. An essential feature of the Melrose-Taylor construction is
that they may be chosen so that
$\zeta|_{x=0}=\omega$.
In the Dirichlet case, the amplitudes can moreover be constructed so
that
\[
q_0 \text{ is elliptic near $x=0$ and } q_1|_{x=0}=0.
\]
Consequently, when $\omega$ is chosen to be a zero of the Airy
function, the resulting quasimode satisfies the Dirichlet boundary
condition, modulo an error of arbitrarily high order.

A second essential ingredient in \cite{ILLP} is Melrose's
classification theorem for glancing hypersurfaces. It provides,
microlocally near a glancing point, a canonical transformation between
the general strictly convex geometry and the Friedlander model. A
precise form of its generating function was obtained in
\cite[Proposition~2.4 and Appendix]{ILLP}. This makes it possible to
transfer the explicit phase of the model problem to a general
strictly convex domain. The resulting representation is sufficiently
precise not merely to propagate singularities, but also to keep track
of the phase and amplitude through many successive reflections, which
is indispensable for sharp dispersive estimates.\\

The main purpose of the present paper is to construct a parametrix for
the wave equation with Neumann boundary condition microlocally near
glancing in an arbitrary smooth strictly convex domain. For the
Dirichlet problem, such a construction goes back to the fundamental
work of Melrose and Taylor in the 1970s and is the starting point of
the analysis carried out in \cite{ILLP}. To the best of our knowledge,
no corresponding glancing parametrix for the Neumann wave equation in
a general strictly convex domain has been constructed in the
literature. Once this parametrix is available, the dispersive and Strichartz estimates follow from the arguments developed in \cite{ILLP} for the Dirichlet problem.
There is an important distinction between the phase and amplitude
parts of the construction of the solution. The phase functions $\psi$ and $\zeta$ of \eqref{G}
are determined by the eikonal equations and therefore depend only on
the principal symbol of the wave operator and on the geometry of the
glancing hypersurfaces. They are consequently independent of the
choice of boundary condition. In particular, the canonical
transformation obtained from Melrose's equivalence theorem, and the
phase construction derived from it in \cite{ILLP}, remain unchanged
for the Neumann problem.
The new difficulty lies in the amplitudes. Inserting the Airy ansatz
\eqref{G} into the Helmholtz equation leads to the same hierarchy of
transport equations as in the Dirichlet construction, but the
boundary condition selecting their solutions is different. In the
Dirichlet case, Melrose-Taylor construct the amplitudes so that the
coefficient of $Ai'(-\zeta)$ vanishes at the boundary. For the Neumann
condition, one has instead to differentiate the Airy type ansatz
in the normal direction and arrange a cancellation of its $Ai(-\zeta)$
component. This produces a non-trivial boundary relation coupling the
two amplitudes and their normal derivatives.
The main point of the construction is to solve the transport equations
to all orders subject to this boundary relation. For this purpose we
use a more general part of the Melrose-Taylor transport construction:
their analysis of the two involutions associated respectively with the
boundary projection and with the bicharacteristic flow allows one to
solve the transport system while imposing an affine boundary relation
between its two components. This yields the Neumann amplitudes to all
orders, with an elliptic leading coefficient.
We may summarize the main construction as follows.
\begin{thm}[Neumann parametrix near glancing]
\label{thmN}
Let $(\Omega,g)$ be a smooth Riemannian manifold with smooth strictly
convex boundary. Microlocally near a glancing point, one can
construct, to arbitrary order, a parametrix for the wave
equation with Neumann boundary condition.
It is associated with the same
glancing canonical relation and uses the same phase functions
$\psi$ and $\zeta$ as the Melrose-Taylor Dirichlet parametrix,
with amplitudes determined by the Neumann boundary condition.
\end{thm}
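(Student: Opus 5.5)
The plan is to keep the Melrose--Taylor phase construction untouched and to change only the amplitudes. In boundary normal coordinates, we take the phases $\psi,\zeta$ from \cite{ILLP}, obtained from Melrose's equivalence theorem and the generating function of \cite[Proposition~2.4]{ILLP}. They solve the coupled eikonal system
\[
|\nabla\psi|_g^2+\zeta|\nabla\zeta|_g^2=1,\qquad \langle\nabla\psi,\nabla\zeta\rangle_g=0,
\]
with $\zeta|_{x=0}=\omega$. We then insert the ansatz \eqref{G} into $(\Delta_g+\lambda^2)$, where $\lambda$ is the large frequency parameter with $\psi,\zeta$ homogeneous of degrees $1$ and $2/3$. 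We use $Ai''(z)=zAi(z)$ to reduce everything to a combination $e^{i\psi}(A\,Ai(-\zeta)+iB\,Ai'(-\zeta))$. The eikonal system kills the leading terms. We expand $q_0\sim\sum_j q_0^{(j)}$ and $q_1\sim\sum_j q_1^{(j)}$ in decreasing homogeneity. This yields the same triangular hierarchy of coupled transport equations as in the Dirichlet case: at each order, $(q_0^{(j)},q_1^{(j)})$ solve a first-order system of the form $T(q_0^{(j)},q_1^{(j)})=F_j$, where $F_j$ depends only on lower-order terms. The operator $T$ involves the vector fields $\nabla\psi\cdot\nabla$, $\nabla\zeta\cdot\nabla$ and the multiplication by $\zeta$ that couples the two components.

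Next, we compute the Neumann boundary condition. Differentiating \eqref{G} in $x$ and using $Ai''(-\zeta)=-\zeta Ai(-\zeta)$, we obtain at $x=0$, where $\zeta=\omega$, the expression
\[
\partial_x G|_{x=0}=e^{i\psi}\Big[\big(i\partial_x\psi\,q_0+\partial_x q_0-\omega\,\partial_x\zeta\, q_1\big)Ai(-\omega)
+i\big(\partial_x q_1+i\partial_x\psi\, q_1-\partial_x\zeta\, q_0\big)Ai'(-\omega)\Big].
\]
We choose $\omega=\omega_k$ to be a zero of $Ai'$, so that $-\omega_k$ is a zero of $Ai'$. The Neumann condition then reduces, modulo $O(\lambda^{-\infty})$, to the affine boundary relation
\[
\big(i\partial_x\psi+\partial_x\big)q_0-\omega\,\partial_x\zeta\,q_1=0\quad\text{on }x=0,
\]
to be imposed order by order on the asymptotic expansion. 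Note that $\partial_x\zeta|_{x=0}\neq 0$ near glancing; this is the non-degeneracy built into the Melrose--Taylor phase. Since $\omega\partial_x\zeta$ has top homogeneity, the leading term of the relation becomes $q_1^{(j)}|_{x=0}=(\omega\partial_x\zeta)^{-1}(i\partial_x\psi\,q_0^{(j)}+\dots)|_{x=0}$. So the relation expresses the boundary trace of $q_1^{(j)}$ affinely in terms of $q_0^{(j)}$ and lower-order data. This is the exact analogue of the Dirichlet condition $q_1|_{x=0}=0$, but now it is inhomogeneous and coupled.

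The main step, and the expected obstacle, is to solve the transport system to all orders under this affine boundary relation. Here we follow the Melrose--Taylor reduction. Using the glancing normal form, one writes the transport system as an equation for a single function on the region $\zeta\ge 0$ (resp. all $\zeta$). This function is even or odd with respect to the two involutions: the one fixing the boundary projection and the one generated by the bicharacteristic flow. The Dirichlet condition corresponds to prescribing the odd part along $\{x=0\}$. The Melrose--Taylor lemma on the two involutions, in its general form, solves the system with arbitrary smooth prescribed data for one component on the boundary. Therefore an affine relation $q_1|_{x=0}=a\,q_0|_{x=0}+b$ can be handled by a fixed-point or triangular argument. We would first take the leading order: choose $q_0^{(0)}|_{x=0}$ elliptic, set $q_1^{(0)}|_{x=0}$ by the relation, and solve. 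At higher orders, the terms from $\partial_x q_0^{(j)}$, which are of lower homogeneity, are moved into the data of the next order, so the scheme stays triangular. The point to check carefully is that the Borel sum is smooth up to the diagonal $\zeta=0$, in particular across the glancing set where the involutions degenerate; this is exactly where the Melrose--Taylor argument is needed.

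Finally, we Borel-sum the amplitudes. The resulting quasimodes satisfy the equation and the Neumann condition up to $O(\lambda^{-\infty})$, with elliptic $q_0$. Superposing them over $\eta$ and the spectral parameter, exactly as in \cite{ILLP}, and composing with the same FIOs quantizing the Melrose canonical transformation, gives the parametrix. Its canonical relation and phases coincide with the Dirichlet ones, which proves Theorem~\ref{thmN}.
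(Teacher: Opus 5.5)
Your strategy matches the paper's. You keep $\psi,\zeta$ and the Dirichlet transport hierarchy, impose the Neumann condition as the vanishing on $\{x=0\}$ of the $Ai(-\zeta)$-component of $\partial_xG$, solve order by order with the affine Melrose--Taylor result (Proposition~\ref{propTEmeta}), and superpose at the zeros of $Ai'$. However, the one genuinely new computation is wrong. Since $Ai''(-\zeta)=-\zeta Ai(-\zeta)$, the term $iq_1Ai'(-\zeta)$ contributes $+i\zeta\,\partial_x\zeta\,q_1Ai(-\zeta)$. Hence
\[
\partial_xG|_{x=0}=e^{i\psi}\Big[\big(\partial_xq_0+i\partial_x\psi\,q_0+i\omega\,\partial_x\zeta\,q_1\big)Ai(-\omega)+\big(i\partial_xq_1-\partial_x\psi\,q_1-\partial_x\zeta\,q_0\big)Ai'(-\omega)\Big].
\]
Amplitudes built from your relation (with $-\omega\,\partial_x\zeta\,q_1$) leave a residual $(1+i)\,\partial_xq_0^{(0)}Ai(-\omega)$ at the first subprincipal order, so the Neumann condition fails there.

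You also miss that $\partial_x\psi|_{x=0}=0$. Indeed, $\zeta|_{x=0}=\omega$ forces $\nabla_y\zeta|_{x=0}=0$, so on the boundary the second eikonal equation reduces to $\partial_x\psi\,\partial_x\zeta=0$, and $\partial_x\zeta|_{x=0}\neq0$. Hence $c_j=0$ in \eqref{eq:general-MT-boundary}. The top order gives $q_1^{(0)}|_{x=0}=0$, so the leading amplitudes are exactly the Dirichlet ones. After that, $q_1^{(j+1)}|_{x=0}=i\,\partial_xq_0^{(j)}/(\omega\,\partial_x\zeta)|_{x=0}$, which in the paper's $i^j$ bookkeeping is the real recursion \eqref{eq:Neumann-recursion}. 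The scheme is therefore purely triangular and needs no fixed point. Ellipticity comes from the Melrose--Taylor normalization at the glancing point. You cannot freely prescribe the trace of $q_0^{(j)}$ once the boundary relation is imposed, because $J$-invariance constrains it.

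Two further points. First, you divide by $\omega\,\partial_x\zeta$, not by $\partial_x\zeta$, and $\omega$ vanishes at the glancing point itself. Like the paper, you must restrict to $\zeta|_{x=0}=\omega>0$; in the parametrix only $\omega\geq1$ occurs, because of $\chi^\sharp(\omega)$ and $\tilde\omega_1>1$. You must also control the negative powers of $\omega$ produced at each step of the recursion. Second, ``exactly as in \cite{ILLP}'' hides the Neumann-specific changes the paper makes when assembling the parametrix:
\begin{enumerate}
\item the Airy--Poisson formula with $\tilde L=\pi+2\tilde\chi$ and weights $\tilde L'(\tilde\omega_k)$;
\item the check (Lemma~\ref{lemJinvert}) that $J$ remains elliptic even though the $Ai'$-amplitude no longer vanishes on the boundary at lower orders;
\item for $a<h^{2/3-\epsilon}$, initial data expanded on the Neumann model modes \eqref{eig_k}, using the uniform ellipticity of Proposition~\ref{propmatrix}.
\end{enumerate}
These follow \cite{ILLP}, but they are part of the proof and should be stated.
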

\begin{rmq}
Thus, passing from the Dirichlet to the Neumann boundary condition
does not alter the microlocal geometry of the glancing construction;
the difference is entirely carried by the amplitudes.
\end{rmq}

The construction is explicit in the Friedlander model.
For that model, the wave equation with more general Robin boundary
conditions, including both the Dirichlet and Neumann cases, was recently
studied by Sanchez Sanz in \cite{DSS26}, extending 
\cite{ILP3,ilp12}. The explicit structure of the model makes it
possible to describe the corresponding modes and reflected waves
directly. Theorem~\ref{thmN} may
therefore be viewed as the passage from this explicit model to an
arbitrary smooth strictly convex geometry. This passage is not
formal and involves two distincts steps.

The first is the construction of the elementary Neumann quasimode
near glancing (an equivalent for $G$ in \eqref{G} for Neumann). 
Building on the Melrose-Taylor theory, we retain the
Dirichlet phase functions and solve the corresponding transport
equations to all orders with the boundary relation imposed by the
Neumann condition. This produces a Neumann quasimode \(G_N\) whose form is similar to \eqref{G}.
By itself, however, this local quasimode does not yet provide a
solution of the Neumann wave equation. The second step uses the
parametrix construction developed in \cite{ILLP}: starting from
\(G_N\), we construct a sequence of reflected waves and superpose
them so as to obtain a microlocal solution of the wave equation
satisfying the Neumann boundary condition. Thus the Melrose-Taylor
machinery provides the new Neumann building block at glancing,
whereas a successive-reflection construction similar to the one of \cite{ILLP}
turns this building block into the wave parametrix used throughout
the paper.

Finally, as in \cite{ILLP}, applying a Poisson type summation to the
successive-reflection representation leads to a dual representation, 
involving Neumann gallery modes. In the range of very small initial distances from the boundary we have to properly define these gallery modes and prove that their properties are uniform with respect to their discrete parameter, at least in a range useful for our purposes. To our knowledge, these gallery modes
have only been defined in such a uniform way in the general case in \cite{ILLP} for Dirichlet condition; then, one has to carefully construct the initial data by decomposing over the gallery modes, a delicate issue that was notably absent from the model case.
Combining the reflected-wave and gallery-mode
representations, exactly as in \cite{ILLP}, therefore yields a
Neumann parametrix uniformly for the whole range of distances to the
boundary needed in the dispersive analysis.\\

We now turn to the dispersive consequences of the parametrix.
Since its phase is the same as in the Dirichlet problem, the caustics
responsible for the loss of dispersion are the same geometric objects
as in \cite{ILLP}. The Neumann boundary condition changes the
amplitudes carried by the corresponding Lagrangian distributions but
not the underlying canonical geometry. In particular, the swallowtail singularities
responsible for the worst decay are unchanged. As a consequence, we obtain
the same local-in-time decay rate as in the Dirichlet case.

Before stating our main result, let us define strict convexity: our boundary $\partial\Omega\neq \varnothing$ is said to be strictly (geodesically) convex if the induced second fundamental form on $\partial\Omega$ is positive definite. If $\Omega$ is actually a domain in $\R^{d}$ with the identity metric, this definition is equivalent to strict positivity of all principal curvatures at any point of the boundary, and $\Omega$ is a strictly convex domain (it admits a gauge function that is strictly convex.)
\begin{thm}\label{disper}
Let $\varkappa \in C_{0}^{\infty}(]0,+\infty[)$. There exist
$C>0$, $T_{0}>0$ and $a_{0}>0$ such that, uniformly in $a\in ]0,a_{0}]$, $h\in (0,1)$ and $t\in [-T_{0},T_{0}]$, the solution $u_a$ to \eqref{WE} with $(u_0,u_1)=(\delta_a,0)$, $\delta_a$ being any Dirac mass at distance  $a$ from $\partial\Omega$, is such that
\begin{equation}\label{dispco}
\|\varkappa(-h^2 \Delta_{g})u_{a}(t,\cdot)\|_{L^{\infty}}\leq\frac C {h^{d}} \min\left\{1,\left(\frac h{|t|}\right)^{\frac{d-2}{2}+\frac{1}{4}}\right\}\,.
\end{equation}
\end{thm}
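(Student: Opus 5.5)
The plan is to follow the Dirichlet argument of \cite{ILLP} step by step. The Neumann parametrix of Theorem~\ref{thmN} replaces the Melrose--Taylor Dirichlet quasimode, and only the amplitude-dependent steps need to be revisited.

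\emph{Reduction to small distances and the transversal regime.} When $h/|t|\gtrsim 1$ the bound $h^{-d}$ follows from Sobolev embedding and the spectral localization $\varkappa(-h^2\Delta_g)$. When $a\geq a_0$, or $|t|$ is smaller than the time needed to reach the boundary, finite speed of propagation reduces \eqref{dispco} to the boundaryless estimate \eqref{disprd}, which is stronger. Away from glancing directions, reflections are transversal. There the Neumann reflection is a standard Fourier integral operator with reflection coefficient $+1$ instead of $-1$, and the usual stationary phase gives the free decay. We are therefore reduced to data microlocalized near glancing, with $a\leq a_0$, $h\lesssim |t|\leq T_0$, and a tangential frequency $\eta$ together with an incidence parameter $\alpha\in[a,\ldots]$ after a dyadic decomposition in the angle, exactly as in \cite{ILLP}.

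\emph{Reflected-wave representation.} Using $G_N$ from Theorem~\ref{thmN} and the canonical transformation of \cite[Proposition~2.4]{ILLP}, we write $\varkappa(-h^2\Delta_g)u_a(t)$ as a sum over $N$ of reflected waves $\int e^{\frac ih \Phi_N(t,x,y,\eta,\ldots)}\sigma_N\,d\eta\,d\ldots$. The phases $\Phi_N$ are identical to the Dirichlet ones, since they come from $\psi,\zeta$. The symbols $\sigma_N$ differ from the Dirichlet ones only by the Neumann amplitudes $q_0,q_1$. Because $q_0$ is elliptic and both amplitudes are classical symbols with the same bounds, all the oscillatory-integral estimates of \cite{ILLP} apply: they use only symbol-class bounds on the amplitudes, together with the degenerate stationary phase for $\Phi_N$ near the cusp and swallowtail degeneracies. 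For $\alpha\gtrsim h^{2/3-\epsilon}$ this gives, for each $N$, the bound $h^{-d}(h/t)^{(d-2)/2}$ times a factor of the form $(h/t)^{1/2}+h^{1/3}/(N^{1/4}\ldots)$. Here only $O(1)$ values of $N$ contribute at a fixed $(t,x)$. Summing over $N$ and over dyadic $\alpha$ gives the loss $(h/t)^{1/4}$ in place of $(h/t)^{1/2}$, which is \eqref{dispco}.

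\emph{Gallery modes for small $a$.} When $a\lesssim h^{2/3-\epsilon}$, Poisson summation turns the sum over reflections into a sum over Neumann gallery modes. These are built like \eqref{G} with $\omega$ ranging over the zeros $-a'_k$ of $Ai'$ instead of $Ai$, since the Neumann boundary relation forces the $Ai$ component of $\partial_x G_N$ to vanish. The main obstacle lies here, in two parts. First, we must show that the modes $G_N(\cdot,\omega_k)$ are almost orthogonal and have $L^2$ norms uniform in $k$ for $k\lesssim h^{-\epsilon}$. Second, we must decompose $\delta_a$ over them with controlled coefficients. I would redo the ILLP computation using the Wronskian-type identity $\int_0^\infty Ai(x-a'_k)^2dx=a'_kAi(-a'_k)^2$. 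This uniformly comparable normalization replaces the Dirichlet identity with $Ai'(-a_k)^2$. The asymptotics $a'_k\sim a_k\sim (3\pi k/2)^{2/3}$ then show that all bounds from the Dirichlet case persist. Once this is done, the $L^\infty$ bounds summed over $k$ give $h^{-d}(h/t)^{(d-2)/2}(h^{1/3}+(h/t)^{1/2})$-type control in the small-$a$ regime, which is at least as good as \eqref{dispco}. Combining the two regimes proves the theorem.
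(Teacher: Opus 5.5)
Your overall route matches the paper's. You use the Neumann glancing parametrix with the Dirichlet phases $\psi,\zeta$, a sum over reflected waves for $a\ge h^{2/3-\epsilon}$, Neumann gallery modes on the zeros of $Ai'$ for smaller $a$, and standard stationary phase for the transverse part. However, two claims in your reflected-wave step are false, and they sit exactly where the Neumann dispersive analysis has to be checked.

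\textbf{The reflected-wave phases are not the Dirichlet ones.} The sum over $N$ comes from Airy--Poisson summation over the Neumann spectrum $\{\tilde\omega_k\}$. So the $N$-th wave carries $e^{-iN\tilde L(\omega)}$ rather than $e^{-iNL(\omega)}$, where $\tilde L=\frac43\omega^{3/2}-\frac\pi2-B_{\tilde L}(\omega^{3/2})$ is built from $A'_-/A'_+$.
\begin{itemize}
\item Apart from a harmless factor $(-1)^N$, the phase contains $NhB_{\tilde L}(\alpha^{3/2}/h)$, and $B_{\tilde L}$ and $B_L$ have different leading coefficients ($b_1=-7/24$ for Neumann, $b_1>0$ for Dirichlet).
\item In the rescaled phase the difference has size $|N|/\lambda$, with $\lambda=a^{3/2}/h$. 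It can therefore be moved into the amplitude only for $|N|\lesssim\lambda$.
\item Since $|N|\sim t/\sqrt a$ goes up to $a^{-1/2}$, this fails whenever $a\lesssim h^{1/2}$.
\end{itemize}
So ``the estimates of \cite{ILLP} only use symbol bounds on the amplitudes'' does not justify the transfer. You must check that the steps of \cite{ILLP} that see $B_L$ in the phase go through with $B_{\tilde L}$:
\begin{itemize}
\item uniqueness and non-degeneracy of the critical point in $A$, through $1-\frac34B'(\rho\lambda A^{3/2})$;
\item the counting of contributing $N$;
\item stationary phase in the radial variable $\rho$ for $|N|>\lambda^2$.
\end{itemize}
They do go through, because only the symbol bounds of $B_{\tilde L}$ and the size of its derivatives enter, never the sign of $b_1$. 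This is precisely the Neumann-specific verification the paper carries out in the dispersive part.

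\textbf{More than $O(1)$ waves can contribute at a given $(t,x,y)$.} The number of overlapping waves is $\lesssim 1+t/(\sqrt a\,\lambda^2)$. For $t\sim1$ this is large as soon as $a\ll h^{4/7}$, which lies well inside your regime $a\ge h^{2/3-\epsilon}$. In that case $|N|\sim t/\sqrt a\gtrsim\lambda^2$.
\begin{itemize}
\item The relevant single-wave estimate is then not the swallowtail bound $a^{1/8}h^{1/4}|N|^{-1/4}$. That bound is valid for $|N|<\lambda^{1/3}$, where it gives $a^{1/4}(h/t)^{1/4}$ through $|N|\sim t/\sqrt a$; your factor $h^{1/3}|N|^{-1/4}$ is not the right one.
\item Instead one uses the $\lambda^{-2/3}$ bound available for $|N|\ge\lambda^{1/3}$, improved by the factor $(|N|/\lambda)^{-1/2}$ from stationary phase in $\rho$.
\item The non-degeneracy $h^{-1}|\partial_\rho^2\phi_{N,a}|\sim|N|/\lambda$ comes entirely from the term $NhB_{\tilde L}$, which your identification of the phases discards.
\item Summing with the counting bound gives the $h^{1/3}\lambda^{-4/3}$ term, and $h^{1/3}\lambda_\gamma^{-3/2}$ for the dyadic pieces $\gamma\ge 4a$.
\end{itemize}

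With these two points repaired, your outline coincides with the paper's proof.
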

Here $\Delta_{g}$ denotes the Neumann realization of the
Laplace-Beltrami operator. As in the Dirichlet case, $T_0$
is independent of the distance of the source to the boundary and of
the semiclassical parameter $h$.

The estimate \eqref{dispco} exhibits the same $1/4$ loss in the
$h/t$ exponent as in the Dirichlet problem. This loss is geometric:
it is produced by the swallowtail singularities of the common
glancing phase and is therefore unaffected by the change of boundary
condition. We can moreover track these caustics and show that the
estimate is optimal.

\begin{thm}\label{disperoptimal}
Let  $u_a$ be the solution to \eqref{WE} with data
$(u_0,u_1)=(\delta_a,0)$. Let $h\in (0,1)$ and $a\geq h^{1/3}$. There exist a constant $C>0$, such that for all $\vartheta\in \mathbb{S}^{d-2}$, there exist a finite sequence
$(t_{n},x_{n},y_{n})_n$, $1\leq n\leq a^{-1/2}$ with $d(x_{n},\partial\Omega)\sim a$, $y_{n}/|y_{n}|\sim \vartheta$, such that
\begin{equation}\label{dispcooptimal}
h^{-d}(h/t_{n})^{\frac{d-2}{2}}n^{-1/4} a^\frac 1 8 h^{1/4}\sim a^\frac
1 4 h^{-d}(h/t_n)^{\frac{d-2}{2}+\frac 1 4}\leq C
 |\varkappa(-h^2 \Delta_{g})u_{a}(t_n,x_{n},y_{n})|\,.
\end{equation}
\end{thm}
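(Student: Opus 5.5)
We would prove Theorem~\ref{disperoptimal} by running the optimality argument of \cite{ILLP} for the Dirichlet problem on the Neumann parametrix of Theorem~\ref{thmN}. Only the amplitudes change, and on the relevant region their leading part is elliptic.

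\emph{Step 1: the regime.} Since $a\geq h^{1/3}$, we use the reflected-wave representation of the parametrix rather than the gallery-mode one. After the Melrose canonical transformation of \cite[Proposition~2.4]{ILLP}, $\varkappa(-h^2\Delta_g)u_a(t,x,y)$ becomes a sum over reflections $N$ of oscillatory integrals
\[
\frac{1}{h^{d}}\int e^{\frac{i}{h}\Phi_N(t,x,y,\eta,\alpha,s,\sigma)}\,\sigma_N\,d\eta\,d\alpha\,ds\,d\sigma .
\]
The phases $\Phi_N$ are exactly those of \cite{ILLP}, because $\psi$ and $\zeta$ are unchanged. The symbols $\sigma_N$ are the Neumann amplitudes built from $G_N$. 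The Neumann condition produces a reflection coefficient whose leading term is a unimodular factor (of the form $-A_-/A_+$ type) rather than the Dirichlet sign. Each factor therefore has modulus one at leading order, and its product over $N$ reflections is bounded and nonvanishing.

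\emph{Step 2: locating the caustics.} We choose $t_n$ and $(x_n,y_n)$ as in \cite{ILLP}. They are the space–time points where, for the $n$-th reflected wave, the phase $\Phi_n$ has a degenerate critical point of swallowtail type ($A_4$ singularity), with $d(x_n,\partial\Omega)\sim a$ and $y_n$ in direction $\vartheta$. These exist for $1\le n\lesssim a^{-1/2}$ and satisfy $t_n\sim n\sqrt a$. Because the phases coincide with the Dirichlet ones, these points and the $\vartheta$-uniformity carry over verbatim.

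At such a point we apply stationary phase in the $d-2$ nondegenerate tangential variables, which gives the factor $(h/t_n)^{(d-2)/2}$. The remaining degenerate integral is then rescaled to the normal form of the swallowtail. Its value at the cusp point is a nonzero constant times the leading amplitude, times the scaling factor $n^{-1/4}a^{1/8}h^{1/4}$. Using $t_n\sim n\sqrt a$, this factor equals $a^{1/4}(h/t_n)^{1/4}$, which gives both sides of \eqref{dispcooptimal}.

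\emph{Step 3: the main obstacle.} We must show that this leading contribution is not cancelled. Two sources of cancellation have to be ruled out:
\begin{enumerate}
\item[(i)] the contributions of other reflections $N\neq n$, which are non-stationary or only mildly degenerate at $(t_n,x_n,y_n)$;
\item[(ii)] a possible vanishing of the Neumann leading amplitude at the caustic.
\end{enumerate}
For (i), we reuse the separation estimates of \cite{ILLP}: the other terms are $O(h^\infty)$ or of strictly lower order, which depends only on the phases. For (ii), we use the ellipticity of the leading Neumann coefficient provided by Theorem~\ref{thmN}, together with the fact that the accumulated reflection factors have modulus one at leading order. The bottom-of-swallowtail value is then a nonzero universal constant times an elliptic factor.

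The delicate point is the uniformity of (ii) in $n$ up to $a^{-1/2}$. The products of Neumann reflection phases must not drift into a regime where lower-order amplitude corrections, of relative size about $n h/a^{3/2}$, compete with the leading term. We would check that $a\geq h^{1/3}$ keeps these corrections at $O(1)\cdot o(1)$, and if necessary restrict $n$ to a slightly smaller range, which still suffices for a sequence of length comparable to $a^{-1/2}$.
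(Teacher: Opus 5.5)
Your outline follows the paper's route, which gives no separate argument and instead inherits optimality from the tangent-regime analysis of \cite{ILLP}: the phases are unchanged, so the swallowtail points and the counting of reflected waves carry over, and since the Neumann symbol is elliptic the bound \eqref{eq:tangent-smallN}, built from \eqref{eq:169} and \eqref{eq:77}, is attained there with exactly your factor $n^{-1/4}a^{1/8}h^{1/4}$. A few precisions: the per-reflection factor is $e^{-i\tilde L(\omega)}$, built from $A'_\pm$ rather than $A_\pm$; the relevant degenerate critical point of the phase is quartic ($A_3$, whose front is the swallowtail), not $A_4$; and the hypothesis $a\geq h^{1/3}$ is exactly what gives $n\leq a^{-1/2}\leq\lambda^{1/3}$, i.e. $\Lambda=\lambda n^{-3}\geq 1$, which places every $n$ in the regime of \eqref{eq:77} with $e^{iNB_{\tilde L}}=1+O(n/\lambda)$ and $n/\lambda\leq h/a^{2}\leq h^{1/3}$, so no restriction of the range of $n$ is needed for that correction.
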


As a consequence of the more precise estimates underlying
\eqref{dispco}, we also obtain Strichartz estimates for
the Neumann wave equation in the same spirit as those derived in
\cite{ILLP}. In particular, these estimates improve on what follows
from a direct use of the worst fixed-time decay by taking advantage
of the fact that the maximal loss occurs only for a restricted range
of incidence angles and time scales.

\begin{thm}\label{thStri}
 Let $d\geq 3$ and $u$ be a solution of \eqref{WE} on a manifold $\Omega$ with strictly convex boundary. Then there exist $T$ such that for all $\varepsilon>0$, there exists $C_{T,\varepsilon}$ such that
 \begin{equation}
   \label{eq:SEF}
\|u\|_{L^q(0,T) L^r(\Omega)}\leq
C_{T,\varepsilon} \bigl(\,||u_0||_{\dot{H}^{\beta}(\Omega)} +
||u_1||_{\dot{H}^{\beta-1}(\Omega)} \bigr)\,,
\end{equation}
where $\beta=d/2-1/q-d/r$ (scaling condition) and $(d,q,r)$ such that $q\geq 2$ ($q\neq 2$ for $d=3$),
\[
\frac{1}{q}\leq\Bigl(\frac{d-1}{2}-\upgamma(d)\Bigr)\Bigl(\frac{1}{2}-\frac{1}{r}\Bigr)\,, \text{ with } \upgamma(d)=\frac 1 4-\frac 1 {4d}+\varepsilon=\frac 1 6+\frac 1 4\Bigl(\frac 1 3-\frac 1 d\Bigr)+\varepsilon\,.
\]
\end{thm}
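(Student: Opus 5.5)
The plan follows the Dirichlet scheme of \cite{ILLP}. The Neumann parametrix of Theorem~\ref{thmN} has the same phases as the Dirichlet one, so the only new input is the size and symbol structure of the amplitudes, and everything else transfers once those are controlled.

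\textbf{Step 1 (reductions).} By the standard $TT^*$ argument and Keel--Tao, it suffices to prove frequency-localized estimates for $\varkappa(-h^2\Delta_g)e^{\pm it\sqrt{-\Delta_g}}$ on time intervals of length $T$, and then sum over dyadic frequencies. The Littlewood--Paley square function is available for the Neumann Laplacian, for instance via Gaussian heat kernel bounds. Away from the boundary, and microlocally away from glancing (transversal reflections), the usual lossless estimate \eqref{disprd} holds. So we reduce to data localized at distance $a\lesssim a_0$ from $\partial\Omega$ with frequency near glancing.

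\textbf{Step 2 (splitting by incidence).} Following \cite{ILLP}, localize the data in incidence angle $\sqrt a$, i.e. in distance $a$ from the boundary for the tangential directions, with $h^{2/3-\epsilon}\lesssim a\lesssim a_0$. Also treat separately the whispering-gallery regime $a\lesssim h^{2/3-\epsilon}$.

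\begin{itemize}
\item In the small-$a$ regime, the gallery-mode representation gives Strichartz estimates with the loss coming from the $t^{1/4}$ decay. The Neumann gallery modes obey the same uniform bounds as in the Dirichlet case, with zeros of $Ai'$ in place of zeros of $Ai$.
\item For $a\gtrsim h^{2/3-\epsilon}$, the refined form of the dispersion bound behind Theorem~\ref{disper} applies. The wave localized at incidence $a$ satisfies the full decay $h^{-d}(h/t)^{(d-1)/2}$ for $|t|\lesssim \sqrt a$, i.e. before the first swallowtail forms. For larger $t$ it satisfies a bound of the form
\[
h^{-d}\Bigl(\frac h t\Bigr)^{\frac{d-2}{2}}\Bigl(\bigl(\tfrac h t\bigr)^{\frac12}+a^{\frac18}h^{\frac14}\bigl(\tfrac{\sqrt a}{t}\bigr)^{\frac14}\Bigr).
\]
This bound comes from the reflected-wave sum, where each of the roughly $t/\sqrt a$ reflections contributes with controlled overlap, as in \cite{ILLP}.
\end{itemize}

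\textbf{Step 3 (balancing).} Cut the time interval into pieces of length $\sqrt a$ (the time between reflections) and apply the lossless Strichartz estimate on each piece. Then recombine, as in \cite{ILLP}, either by counting pieces, with loss $a^{-1/(2q)}$, or by using the lossy long-time estimate on the full interval. Optimizing over $a$ and interpolating with the endpoint exponents yields $\upgamma(d)=\frac14-\frac1{4d}+\varepsilon$. The $\varepsilon$ absorbs the logarithmic summation over dyadic $a$, and the exclusion $q\neq 2$ for $d=3$ is the Keel--Tao endpoint restriction.

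\textbf{Main obstacle.} The only genuinely new point is checking that the Neumann amplitudes obey the same symbol bounds and uniformity as the Dirichlet ones. Two facts are needed: the leading coefficient $q_0$ in $G_N$ is elliptic, and the boundary relation only introduces lower-order terms of the same class. Granted that, the stationary-phase and caustic analysis of \cite{ILLP} is insensitive to the amplitudes beyond their size. The remaining delicate point is the decomposition of the initial data over Neumann gallery modes uniformly in the mode index. I would handle it by mimicking the Dirichlet almost-orthogonality argument, replacing $Ai(-\omega_k)$ normalizations by their $Ai'$ analogues.
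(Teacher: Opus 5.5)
Your outline has the same architecture as the argument the paper defers to in \cite[Section~5]{ILLP}: frequency localization with $TT^*$/Keel--Tao, a decomposition by incidence $\gamma$, and lossless Strichartz on time intervals of length $\sqrt\gamma$, which costs $\gamma^{-1/(2q)}$ when the pieces are counted. This is interpolated against the long-time estimate, whose gain $\gamma^{\frac14(\frac12-\frac1r)}$ cancels that loss, and the cancellation does give $\frac{d-1}{4d}=\frac14-\frac1{4d}$. The gap is in the dispersive input you feed into this balancing. The long-time bounds actually proved, \eqref{eq:2}, \eqref{eq:37} and \eqref{eq:37bis}, contain the cusp term $h^{-d}(h/t)^{\frac{d-2}{2}}h^{1/3}$, which carries no factor $\gamma^{1/4}$, and you dropped it. For $t\lesssim 1$ this term is dominated by $\gamma^{1/4}(h/t)^{1/4}$ only when $\gamma\gtrsim h^{1/3}$. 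So your displayed bound is false for $h^{2/3-\varepsilon}\lesssim a\ll h^{1/3}$, and the balancing cannot be run at small incidence. Separately, the $TT^*$ kernel of the $\gamma$-localized propagator has to be bounded uniformly over all sources with $a\lesssim\gamma$, meaning both the tangent and the transverse pieces. Identifying incidence $\sqrt a$ with distance $a$ is only a heuristic.

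What is missing is a separate treatment of $\gamma\lesssim h^{1/3}$, a range that contains the whole gallery regime. There you should use \eqref{eq:3}, i.e. decay $(h/t)^{\frac{d-2}{2}+\frac13}$, and Keel--Tao then gives the Strichartz range with loss $\frac16$ directly. The argument therefore yields $\upgamma(d)=\max\{\frac16,\frac14-\frac1{4d}\}+\varepsilon$. This equals the stated value exactly when $d\geq 3$, which is the point of writing $\upgamma(d)=\frac16+\frac14(\frac13-\frac1d)$, and it is where the hypothesis $d\geq3$ is used. Your sketch never uses $d\geq 3$. As written it would give $\upgamma(2)=\frac18$, whereas the paper only claims to recover $\frac16$ in $d=2$. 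Your description of the gallery regime as carrying the $t^{1/4}$ loss also does not suffice: a $\frac14$ loss without the $a^{1/4}$ factor only gives $\upgamma=\frac14$. The Neumann-specific points in your last paragraph are fine, but they belong to the parametrix construction of Section~2, not to this theorem.
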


The proof of Theorem~1.5 is omitted, as it follows from the
dispersive estimates established below by exactly the same argument
as in the Dirichlet case; we refer to \cite[Section~5]{ILLP} for
details.
In dimension $d=2$ the known range of admissible indices for which
sharp Strichartz are already known to hold is in fact slightly larger, see
\cite{blsmso08} where $\upgamma(2)=1/6$ (which we may recover with our argument).  Especially noteworthy is $d=3$, for which we get $\upgamma(3)= 1/6+\varepsilon$: such a loss corresponds heuristically to a fixed time dispersion \eqref{dispco} where the $1/4$ loss would be replaced by a $1/6$ loss. In dimensions $d\geq 3$, Theorem
\ref{thStri} improves the known range of indices for which Strichartz estimates hold, and it does so in a uniform way with respect to dimension, in
contrast to \cite{blsmso08}, where $\upgamma(3)=2/3$ and $\upgamma(d)=(d-3)/2$ for $d\geq 4$. The results in \cite{blsmso08} however apply to any domain or manifold with non-empty boundary. 
\\

The paper is organized as follows. In Section \ref{parconstruction} we construct the
Neumann analogue of the Melrose-Taylor glancing quasimode \eqref{G}. We first
recall the canonical transformation associated with the equivalence
of glancing hypersurfaces and the Dirichlet construction of \eqref{G} from
\cite{ILLP}. We then solve the transport equations with the Neumann
boundary constraint and obtain quasimodes adapted to the Neumann
Laplacian. This is the main new microlocal ingredient of the paper.
We subsequently use these quasimodes to construct the corresponding
gallery modes and a parametrix for the wave propagator. 
Sections \ref{sec:dispersion-estimates} and \ref{secdispapetit} are devoted to the analysis of reflected waves and to fixed-time dispersive estimates. Throughout this paper, we use the notation and conventions of \cite{ILLP} and rely on several of its results without reproving them. 
Since the phase functions are unchanged in the Neumann setting, all the arguments in \cite{ILLP} involving only these phases apply verbatim and will merely be recalled when needed. 
The genuinely new part of the parametrix construction, namely the construction of the symbols satisfying the Neumann boundary condition, will instead be carried out in detail. Once this is achieved, these symbols enjoy properties analogous to those of their Dirichlet counterparts - in particular, the leading symbol remains elliptic - and the dispersive analysis follows along essentially the same lines as in \cite{ILLP}. We shall therefore give the main arguments and the necessary verifications, referring to \cite{ILLP} whenever the proofs are unchanged.\\

In the remaining of the paper, $A\lesssim B$ means that there exists a constant $C$ such that $A\leq CB$ and this constant may change from line to line but is independent of all parameters. It will be explicit when (very occasionally) needed. Similarly, $A\sim B$ means both $A\lesssim B$ and $B\lesssim A$. 

\section{A parametrix construction}\label{parconstruction}

We first recall some notations from \cite[Sect.2]{ILLP}.
By finite speed of propagation, we may work locally near the boundary and chose boundary normal coordinates $(x,y)$ on $\Omega$, with $x>0$ on $\Omega$, $y\in\mathbb{R}^{d-1}$ such that $\partial\Omega=\{(0,y): y\in \mathbb{R}^{d-1}\}$ (these coordinates may be interpreted as Fermi coordinates relative to the hypersurface that is the boundary); local coordinates on $\Omega\times\mathbb{R}_t$ are then $(x,y,t)$. Local coordinates on the base induce local coordinates on the cotangent bundle, namely $(x,y,t,\xi,\eta,\mathbf{\tau})$ on $T^*(\Omega\times\mathbb{R}_t)$. The corresponding local coordinates on the boundary are $(y,t,\eta,\mathbf{\tau})$. In this coordinates (and up to conjugation by a non vanishing smooth
factor $e_{g}(x,y)$), the Laplacian $\Delta_g$ can be written as (\cite[III, Appendix C]{Hormander})
\begin{equation}
  \label{eq:laptilde}
 \Delta=e_{g}^{-1} \Delta_g e_{g}=\partial^2_x+R(x,y,\partial_y)\,.
\end{equation}
We assume that the boundary is everywhere strictly (geodesically) convex: for every point $(0,y_0)\in\partial\Omega$ and every $(0,y_{0},0,\eta_0)\in T^*\Omega$ with $\eta_{0}\neq 0$,
\begin{gather}\label{glancingcond}
  \{\xi^2+R(x,y,\eta),x\}(0,y_0,0,\eta_{0})=0\,,\\
 \{\{\xi^2+R(x,y,\eta),x\},\xi^2+R(x,y,\eta)\}(0,y_0,0,\eta_{0})=2\partial_xR(0,y_0,\eta_0)>0,
\end{gather}
where $\{.,.\}$ denotes the Poisson bracket (see \cite[III, 24.3]{Hormander}). We assume (without loss of generality) that $y_0=0$, hence $\kappa_0=(0,0,0,\eta_0)$. On the boundary
and for $(0,y)$ near $(0,0)$, the metric reads $\xi^2+\sum_{j,k=1}^{d-1}R^{j,k}(0,y)\eta_j\eta_k$; using again \cite[III, Appendix C]{Hormander}, we assume moreover that $(R_{j,k}(0,0))_{j,k}$ is the identity matrix, and define
\begin{equation}
  \label{eq:R01}
  R_0(y,\partial_y)  :=R(0,y,\partial_y)=\sum_{j}\partial_{y_j}^2+O(|y|)\,,\quad 
  R_1(y,\partial_y)  :=\partial_x R(0,y,\partial_y)=\sum_{j,k}R_1^{j,k}(y)\partial_{y_j}\partial_{y_k}\,.
\end{equation}
Recall that strict convexity for $\partial\Omega$ is equivalent to $R_1$ being elliptic (the associated quadratic form is  positive definite). Define our model Laplacian $\Delta_{M}$ and (Fourier) multipliers $q, {\qtau}$
\begin{equation}
  \label{eq:LapM}
\Delta_M=\partial^2_x+\sum_{j}\partial_{y_j}^2+ x\sum_{j,k}R_1^{j,k}(0)\partial_{y_j}\partial_{y_k}\,,\, q(\eta)=\sum_{j,k}R_1^{j,k}(0)\eta_j \eta_k\,,\,  {\qtau}(\omega,\eta)=\sqrt{ |\eta|^2+\omega q(\eta)^\frac 2 3}\,.
\end{equation}
Later we will use various functions of variables $(x,y,\eta,\omega,\sigma)$ (where some variables may be omitted depending on context and both new variables $\omega, \sigma \in \R$)
that will be defined in a conic neighborhood of the set
\begin{equation}
  \label{eq:N0}
 N_0=\{x=0\,,\,\,y=0\,,\,\,\omega=0\,,\,\,\sigma=0\,,\,\,\eta\in \R^{d-1}\setminus\{0\}\}\,.
\end{equation}
Such a function $f$ is said to be homogeneous of degree $k$ if
\[
f(x,y, \lambda \eta, \lambda^{2/3}\omega, \lambda^{1/3}\sigma)=\lambda^{k}
f(x,y,\eta,\omega,\sigma)\,.
\]
\begin{definition}
A symbol $a(x,y,\eta,\omega,\sigma)$ is of order $m$ and type $((1, 2/3,1/3),0)$ if 
\[
\forall \beta=(\beta_0,\beta_1,\beta_2,\beta_3) \quad \exists C_{\beta} \quad |\partial^{\beta_0}_{(x,y)}\partial^{\beta_{1}}_{\eta} \partial^{\beta_{2}}_{\omega}\partial^{\beta_{3}}_{\sigma}a(x,y,\eta,\omega,\sigma)|\leq C_{\beta}(1+|\eta|)^{m-|\beta_1|-\frac 23|\beta_2|-\frac 13|\beta_3|}.
\]

\end{definition}
We now recall the Airy function, defined for $z\in \R$ as the oscillatory integral
\begin{equation}
  \label{eq:15}
  Ai(-z)=\int e^{i(\frac{\sigma^{3}} 3-\sigma z)}\,d\sigma\,.
\end{equation}
The choice of $\sigma$ as an integration variable is consistent with our later use of oscillatory integrals with related phases and with symbols within the class we just defined.

In the following we recall the Melrose-Taylor construction of a parametrix for the Dirichlet problem near a glancing point, and then we use its properties to obtain a similar form for the solution to the Neumann problem.

\subsection{The Dirichlet quasimode and its phase functions}
\subsubsection{A quasimode for the Dirichlet boundary problem }

Constructing a parametrix near glancing or gliding rays for the
Dirichlet boundary problem has a long history, starting with
Andersson-Melrose \cite{AndMel77} and Eskin \cite{esk77}. We also
refer to Melrose-Taylor \cite{meta} and the references therein, and
to Zworski \cite{Zwo} for the exterior problem. We recall below the
result that will be used throughout this paper. It provides the
Dirichlet quasimode near glancing and, in particular, the phase
functions that will also enter the Neumann construction.

\begin{theoreme}\label{thmMelrose}[\cite{meta}]
Let ${\qtau}(\omega,\eta)$ be defined in \eqref{eq:LapM}.  There exist a neighborhood $U$ of $(x,y,\eta,\omega)=(0,0,1,0)$, phase functions $\psi(x,y,\eta,\omega)$ and
  $\zeta(x,y,\eta,\omega)$, symbols $p_0(x,y,\eta,\omega)$ and
  $p_1(x,y,\eta,\omega)$ and a function $e_0(x,y,\eta,\omega)$  such that
  \begin{itemize}[leftmargin=5.5mm]
  \item the function $\psi$ is homogeneous of degree $1$, $(\nabla_y(\partial_{\eta_j}\psi))_{j=1,\cdots,d-1}$ are linearly independent;
  \item the function $\zeta$ is homogeneous of degree $2/3$, and
    \begin{equation}
      \label{eq:16}
      \zeta(x,y,\eta,\omega)=\omega-xq(\eta)^{1/3} e_0(x,y,\eta/|\eta|,\omega/q(\eta)^{1/3})\,,
    \end{equation}
i.e. $e_{0}$ is homogeneous of degree $0$;
\item the symbols    $p_0$, $p_1$ (which do not depend on $\sigma$) 
are of order $0$ and type $((1,2/3,1/3),0)$;
    \item the phase functions $\psi$ and $\zeta$ are solutions to the following eikonal equations 
    \begin{equation} \label{systeikeq} 
<\nabla_{(x,y)}\psi,\nabla_{(x,y)}\psi>+\zeta<\nabla_{(x,y)}\zeta,\nabla_{(x,y)}\zeta>={\qtau}^2(\omega,\eta),\quad    <\nabla_{(x,y)}\psi,\nabla_{(x,y)}\zeta>=0.
 \end{equation}
 Here $<.,.>$ is the symmetric bilinear form obtained by polarization of the principal symbol $\xi^2+R(x,y,\eta)$ of the operator $\Delta$ (which is a second order homogeneous polynomial).
\item Define the function $G_D(x,y,\eta,\omega)$ to be
  \begin{equation}
    \label{eq:defG}
    G_D(x,y,\eta,\omega)=e^{i\psi(\cdot)}\left(p_0(\cdot) Ai(-\zeta(\cdot))+ i p_1(\cdot ) q^{-1/6}(\eta) Ai'(-\zeta(\cdot))\right)\Big|_{(\cdot)=(x,y,\eta,\omega)}\,.
  \end{equation}
  Then, for $\Delta$ given in \eqref{eq:laptilde}, the following equation holds in $U$, 
  \begin{equation}
  \label{eq:eqG}
  -\Delta G_D={\qtau}^2 G_D+O_{C^\infty}({\qtau}^{-\infty})\,,
\end{equation}
with $p_0$, $e_0$ elliptic symbols, $e_0>0$ near any $(0,0,\eta,0)$ with $\eta\in \R^{d-1}\setminus\{0\}$ and $p_{1}=0$ on $\{x=0\}$. 
  \end{itemize}
\end{theoreme}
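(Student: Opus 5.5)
This is the Melrose--Taylor construction, and I would reproduce it in three layers: phases, transport equations, and the Dirichlet boundary condition. The first step is the \emph{phase construction}. By Melrose's equivalence theorem for glancing hypersurfaces, there is a homogeneous canonical transformation $\chi$, defined microlocally near $\kappa_0$, that maps the pair $(\{\xi^2+R=|\eta|^2\ldots\},\{x=0\})$ for $\Delta$ to the corresponding pair for the Friedlander model $\Delta_M$. Equivalently, it maps to the normal form $\{\xi^2 - x\ldots\}$ with variables $(\omega,\eta)$ dual to the Airy parametrization. In the model, the solutions of the eikonal system \eqref{systeikeq} are explicit: $\psi=y\cdot\eta$ and $\zeta=\omega - x q(\eta)^{1/3}$ up to the normalization by ${\qtau}$. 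For the general operator, I would take a generating function of $\chi$ (the form obtained in \cite[Prop.~2.4]{ILLP}) and define $\psi,\zeta$ by Melrose--Taylor's ``$\pm$'' trick. In the region $\zeta<0$, the phases $\psi\pm\frac23(-\zeta)^{3/2}$ are real solutions of the ordinary eikonal equation $\langle\nabla\phi,\nabla\phi\rangle={\qtau}^2$, constructed by Hamilton--Jacobi from the two Lagrangians (incoming and outgoing) associated with $\chi$. Taking the sum and the difference recovers $\psi$ and $\zeta$. These extend smoothly across $\zeta=0$ because the Lagrangian is a fold over the base, by Melrose's result. The normalization $\zeta|_{x=0}=\omega$ is imposed by choosing the boundary data of the generating function so that $\chi$ preserves $\{x=0\}$, and this gives \eqref{eq:16} with $e_0>0$ from strict convexity ($R_1$ elliptic). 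Homogeneity follows from the homogeneity of $\chi$.

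The second step is the \emph{transport equations}. I would insert the ansatz \eqref{eq:defG} into $-\Delta-{\qtau}^2$ and use $Ai''(-\zeta)=-\zeta Ai(-\zeta)$ to reduce everything to combinations of $Ai$ and $Ai'$. The eikonal system kills the top-order terms, and what remains is a coupled system for $(p_0,p_1)$ with transport operators $2\langle\nabla\psi,\nabla\cdot\rangle+\Delta\psi$ together with $\zeta$-weighted terms $\langle\nabla\zeta,\nabla\cdot\rangle$. I would expand $p_j\sim\sum_k p_j^{(k)}$ with decreasing homogeneity, which gives an inductive system with right-hand sides built from earlier terms. Again via the $\pm$ decomposition, $p_0\pm\sqrt{-\zeta}\,p_1$ (suitably normalized) satisfy ordinary transport equations along the two families of rays in $\zeta<0$. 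Melrose--Taylor's analysis of the two involutions (boundary projection and flow reflection) shows that data which are even/odd under the involution give smooth $p_0,p_1$ across the glancing set. Any smooth datum can be prescribed on a transversal, which also gives the freedom to impose conditions at $x=0$.

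The third step is the \emph{Dirichlet condition}, which is also where I expect the main obstacle. The requirement is $p_1|_{x=0}=0$ to all orders, with $p_0$ elliptic. Using the freedom above, I would prescribe each $p_1^{(k)}$ to vanish on $x=0$ and solve for the remaining component. Finally, Borel summation gives symbols in the class $((1,2/3,1/3),0)$ with residual error $O({\qtau}^{-\infty})$, which is \eqref{eq:eqG}. The hard part is smoothness of the amplitudes at $\zeta=0$ uniformly in the symbol class; I would handle it by citing Melrose--Taylor's Malgrange-preparation argument for the involutions rather than reproving it.
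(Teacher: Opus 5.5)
Your treatment of the phases follows the route the paper recalls from \cite[Sect.~2]{ILLP}; the paper does not reprove the theorem itself and refers to \cite{meta}. That route uses Melrose's equivalence theorem, a generating function $x\Xi+y\Theta+\Gamma$, and the normal form of Lemma~\ref{lemphaseG}, in which $\frac23\zeta^{3/2}$ and $\Upsilon$ are the half-difference and half-sum of the two critical values; this is your ``sum and difference'' step. As you say, $\zeta|_{x=0}=\omega$ comes from the canonical transformation mapping $\{X_M=0\}$ onto $\{x=0\}$. Then $\Gamma(0,y,\Xi,\Theta)$ is independent of $\Xi$, so at $x=0$ the phase is already $\sigma^3/3-\sigma\omega$ up to $\sigma$-independent terms. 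One convention slip: with $Ai(-\zeta)$ and \eqref{systeikeq} as written, the real phases are $\psi\pm\frac23\zeta^{3/2}$ on $\{\zeta>0\}$, the side containing the boundary, where $\zeta=\omega>0$. Your $\psi\pm\frac23(-\zeta)^{3/2}$ on $\{\zeta<0\}$ is Melrose--Taylor's $Ai(\zeta)$ convention.

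The genuine gap is the Dirichlet condition. Your plan to impose $p_1^{(k)}|_{x=0}=0$ and ``solve for the remaining component'' rests on the claim that data can be prescribed on a transversal, ``which also gives the freedom to impose conditions at $x=0$''. But $\{x=0\}$ is not a transversal.
\begin{itemize}
\item $H_px=2\xi$ vanishes on the glancing set, so the flow is tangent to $\{x=0\}$ there.
\item On the hyperbolic side each bicharacteristic meets $\{x=0\}$ twice, at $\rho$ and at $J(\rho)$. After diagonalization the transport equations reduce to $H_pb=0$, which forces $J^*b=b$ on $\Sigma\cap Q$.
\item The boundary condition instead relates $b(\rho)$ to $b(I(\rho))$. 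Here $I(\rho)$ is the other lift of the same boundary covector, and it lies on a \emph{different} ray.
\end{itemize}
So $p_1|_{x=0}=0$ is not a Cauchy datum. It is the compatibility system $J^*b=b$, $b_O=f_1b_E+f_2$ recalled in the paper's sketch of Proposition~\ref{propTEmeta}. Taken together, the two conditions amount to a functional equation along the orbits of the billiard map $J\circ I$, which does not hold for arbitrary data.

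Its solvability to all orders, with the leading amplitude normalized to be nonzero at the glancing point, is exactly what Melrose--Taylor obtain from their simultaneous normal form for the pair $(I,J)$ \cite[Prop.~2.8.2, Prop.~4.4.11]{meta}. This is also where the paper points for $p_1|_{x=0}=0$, namely \cite[(4.4.6) and \S 7.1]{meta}. The even/odd splitting you invoke only explains why a smooth amplitude on the fold Lagrangian gives smooth $p_0,p_1$ across $\zeta=0$; it does not produce the boundary condition. The hard step is therefore this $I$--$J$ compatibility, not smoothness at $\zeta=0$. Citing Melrose--Taylor closes the gap only if you cite them for that statement, since the mechanism you describe would not give it.
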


\begin{rmq}\label{rmqzeta}
Constructing an asymptotic solution to equation \eqref{eq:eqG} of the form \eqref{eq:defG} is a classical result in geometrical optics. However, that such a solution can be constructed with $\zeta\vert_{x=0}=\omega$ independent of $(y,\eta)$ is delicate and is a key point of the result (for both the Dirichlet or Neumann cases). Moreover, that the construction can be done such that the symbol $p_{1}$ in front of $Ai'$ in \eqref{eq:defG} vanishes on the boundary $\{x=0\}$ is not obvious and proved in \cite[Paragraph. 4.4, formula 4.4.6 and paragraph 7.1]{meta}. This last condition allows to obtain a parametrix for Dirichlet. Indeed, since $\zeta|_{x=0}=\omega$ and $p_1|_{x=0}=0$, one has
\[
G_D|_{x=0}=e^{i\psi}p_0Ai(-\omega).
\]
Thus, choosing $\omega$ among the zeros of $Ai(-\cdot)$ yields the
Dirichlet boundary condition.
\end{rmq}

Theorem \ref{thmMelrose} can be found, for the exterior problem, in
\cite{Zwo}, and for the interior problem in \cite{meta}.
Another proof of Theorem~\ref{thmMelrose} was given in
\cite[Sect.~2]{ILLP}, based on Melrose's equivalence theorem for
glancing hypersurfaces \cite{mel76}. We briefly recall this
construction below, as both the resulting canonical transformation
and the associated phase functions will also be used in the Neumann
case.

\subsubsection{The glancing canonical transformation and an oscillatory representation }

Theorem~\ref{thmMelrose} gives the quasimode in Airy normal form.
For the dispersive analysis, however, we need a representation in
which the phase is expressed explicitly in terms of the geometry of
the domain and can be compared with the corresponding phase in the
Friedlander model. This is provided by Melrose's equivalence theorem
for glancing hypersurfaces, together with the explicit description
of the associated canonical transformation obtained in \cite{ILLP}.

Observe that Melrose's classification Theorem for glancing
hypersurfaces (see \cite{mel76}) applies, in the non-homogeneous setting, locally near any point in the set $\Sigma_0$, defined as
\begin{equation}
  \label{eq:18}
\Sigma_{0}  =\{(X_{M},Y_{M},\Xi,\Theta)\,:\,\, X_{M}=0\,,\,\,Y_{M}=0\,,\,\,
\Xi=0\,,\,\,|\Theta|=1\}\,.
\end{equation}
Therefore, there exists a canonical transform $\canonchi_M$ such that, near $\Sigma_0$
\begin{equation}
  \label{eq:melrose}
  \canonchi_M(\{X_{M}=0\})=\{x=0\},\;\;\;\canonchi_{M}(\{\Xi^2+|\Theta|^2+X_{M}q(\Theta)=1\})=\{\xi^2+R(x,y,\eta)=1\}\,.
\end{equation}
The crucial fact that such a canonical transformation $\canonchi_{M}$ may actually be defined in a neighborhood of $\Sigma_{0}$ then follows from the transversality of the Hamiltonian flow with respect to $\Sigma_{0}$. The following proposition has been proved in \cite[Prop.2.4]{ILLP}.
\begin{prop}\label{lemgamma}
The generating function for $\canonchi_M$ may be written as $\varphi_{\Gamma}(x,y,\Xi,\Theta)=x\Xi+y\Theta+\Gamma(x,y,\Xi,\Theta)$, where $\Gamma(0,y,\Xi,\Theta)$ is independent of $\Xi$ (as $\canonchi_M(\{X_{M}=0\})=\{x=0\}$) and
\begin{equation}
  \label{eq:GAB}
\Gamma(x,y,\Xi,\Theta)=B_{\Gamma}(y,\Theta)+xA_{\Gamma}(x,y,\Xi,\Theta).
\end{equation}
The transformation $\canonchi_M$ is such that $\canonchi_M(\partial_{\Xi}\varphi_{\Gamma},\nabla_{\Theta}\varphi_{\Gamma},\Xi,\Theta)=(x,y,\partial_x\varphi_{\Gamma},\nabla_y\varphi_{\Gamma})$, and therefore generated by the following relations:
\begin{equation} \label{genchi} 
\left\{ \begin{array}{l}
X_{M}=x+x\frac{\partial A_{\Gamma}}{\partial \Xi}(x,y,\Xi,\Theta)\,,\,\,Y_{M}=y+\frac{\partial B_{\Gamma}}{\partial \Theta}(y,\Theta)+x\frac{\partial A_{\Gamma}}{\partial \Theta}(x,y,\Xi,\Theta)\\ 
\xi=\Xi+A_{\Gamma}(x,y,\Xi,\Theta)+x\frac{\partial A_{\Gamma}}{\partial x}(x,y,\Xi,\Theta)\,,\,\,
\eta=\Theta +\frac{\partial B_{\Gamma}}{\partial y}(y,\Theta)+x\frac{\partial A_{\Gamma}}{\partial y}(x,y,\Xi,\Theta)\,.
\end{array} \right.
 \end{equation}
There exists an elliptic symbol $p(x,y,\eta,\omega,\sigma)$ of order $0$ and type $((1,2/3,1/3),0)$
with support near $N_0$ (from \eqref{eq:N0}) 
and
\begin{equation}\label{eq:Gosc}
G_D(x,y,\eta,\omega):=\frac{1}{2\pi} e^{-i{\qtau} B_{\Gamma}(0,\eta/{\qtau})}\int e^{i(y\cdot \eta+\frac {s^3} 3 +s(xq^{\frac 13}(\eta)-\omega)+{\qtau}\Gamma(x,y,s q^{\frac 13}(\eta)/{\qtau},\eta/{\qtau}))}p(x,y,\eta,\omega,s)ds
\end{equation}
such that Theorem \ref{thmMelrose} holds with this $G_D$.
\end{prop}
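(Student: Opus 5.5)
The plan is to split the proof into a purely symplectic part, which gives the structure \eqref{eq:GAB}--\eqref{genchi} of the generating function, and an oscillatory-integral part, which gives the representation \eqref{eq:Gosc}. Only the second part is delicate.

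\textbf{Generating function.} I would start from the canonical transformation $\canonchi_M$ given by \eqref{eq:melrose}. Near $\Sigma_0$ the projection of its graph onto the variables $(x,y,\Xi,\Theta)$ is a local diffeomorphism, since $\canonchi_M$ is a perturbation of the identity in suitable coordinates. Hence $\canonchi_M$ admits a generating function of the form $\varphi_\Gamma=x\Xi+y\Theta+\Gamma$. The transformation is then given by
\[
X_M=\partial_\Xi\varphi_\Gamma,\quad Y_M=\nabla_\Theta\varphi_\Gamma,\quad \xi=\partial_x\varphi_\Gamma,\quad \eta=\nabla_y\varphi_\Gamma .
\]
The condition $\canonchi_M(\{X_M=0\})=\{x=0\}$ means that $x=0$ forces $X_M=\partial_\Xi\varphi_\Gamma=0$. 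This gives $\partial_\Xi\Gamma(0,y,\Xi,\Theta)=0$, so $\Gamma(0,\cdot)=:B_\Gamma(y,\Theta)$ does not depend on $\Xi$. Hadamard's lemma then gives $\Gamma=B_\Gamma+xA_\Gamma$. Substituting this into the four relations above yields \eqref{genchi} directly.

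\textbf{Candidate phase.} Let $M$ be a Fourier integral operator with phase $\varphi_\Gamma$ quantizing $\canonchi_M$. By Egorov's theorem and \eqref{eq:melrose}, $M$ conjugates $\Delta_M$ to $\Delta$ modulo an elliptic factor and lower order terms. These lower order terms are removed order by order in the usual way. I would apply $M$ to the model quasimode
\[
e^{iY_M\Theta}\int e^{i(s^3/3+s(X_Mq^{1/3}-\omega))}\,ds .
\]
Stationary phase in $(X_M,Y_M)$, followed by the rescaling $\Xi=sq^{1/3}(\eta)/\qtau$ and $\Theta=\eta/\qtau$, produces exactly the phase in \eqref{eq:Gosc}. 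The prefactor $e^{-i\qtau B_\Gamma(0,\eta/\qtau)}$ is only a normalization, so that the phase vanishes at $y=0$.

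\textbf{Identification with the Airy form of Theorem \ref{thmMelrose}.} This is the step I expect to be the main obstacle. To avoid re-deriving the transport equations, I would argue in the reverse direction, starting from Theorem \ref{thmMelrose} as given. I would write
\[
e^{i\psi}Ai(-\zeta)=\int e^{i(\psi+s'^3/3-s'\zeta)}\,ds',
\]
and obtain $Ai'(-\zeta)$ as the same integral with amplitude $is'$. Denote by $\Phi(x,y,\eta,\omega,s)$ the phase in \eqref{eq:Gosc}. I would show that $\Phi$ and $\psi+s'^3/3-s'\zeta$ are equivalent under a change of variables $s'=s'(x,y,\eta,\omega,s)$ that is homogeneous of degree $1/3$. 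This follows from Melrose's normal form for phases with a degenerate (cubic) critical point, in the Chester--Friedman--Ursell form.

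The key check is at $x=0$. Since $\Gamma(0,\cdot)=B_\Gamma$ is independent of $\Xi$, the phase $\Phi$ at $x=0$ is already $y\eta+\qtau(B_\Gamma(y,\eta/\qtau)-B_\Gamma(0,\eta/\qtau))+s^3/3-s\omega$. So one may take $s'=s$ and $\zeta=\omega$ on the boundary, consistent with \eqref{eq:16}. The eikonal system \eqref{systeikeq} for the resulting $(\psi,\zeta)$ follows from $\canonchi_M$ mapping the model characteristic set to the true one. I would then set
\[
p=\bigl(p_0+ip_1q^{-1/6}s'\bigr)\,\frac{\partial s'}{\partial s},
\]
so that \eqref{eq:Gosc} equals the $G_D$ of \eqref{eq:defG}.

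It remains to verify three properties of $p$. First, it is of order $0$ and type $((1,2/3,1/3),0)$; this follows from the homogeneity of $s'$. Second, it is elliptic near $N_0$; this holds because $p_0$ is elliptic and $\partial_s s'=1$ at $x=0$. Third, it can be cut off to a support near $N_0$, since the contribution from $|s|\gg 1$ is $O(\qtau^{-\infty})$ by non-stationary phase. The most care is needed in controlling the normal-form change of variables uniformly in the anisotropic symbol class. That is where I would concentrate the detailed estimates.
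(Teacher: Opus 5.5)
Your first step (the form $\Gamma=B_\Gamma+xA_\Gamma$ and \eqref{genchi}) is fine, and so is your derivation of the phase of \eqref{eq:Gosc} by applying a quantization of $\canonchi_M$ to the model Airy function. The gap is in the identification step.

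Theorem~\ref{thmMelrose} is a pure existence statement. It provides \emph{some} pair $(\psi,\zeta)$, together with amplitudes $p_0,p_1$ adapted to that pair, and nothing ties that pair to your $\canonchi_M$. The Chester--Friedman--Ursell reduction of $\Phi$ (this is Lemma~\ref{lemphaseG}) produces a pair $(\tilde\psi,\tilde\zeta)$ determined by $\Phi$ itself, through its critical values. It does not show that $\Phi$ is equivalent to $\psi+(s')^3/3-s'\zeta$ for the Melrose--Taylor $\psi,\zeta$, and your check at $x=0$ only gives $\tilde\zeta|_{x=0}=\omega$.

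The conclusions of Theorem~\ref{thmMelrose} do not pin down $\psi$. Already for $\Delta_M$, take $\psi=y\cdot\eta+f(\eta,\omega)$ with $f$ homogeneous of degree $1$, or $\psi=y\cdot\kappa$, $\zeta=\omega-xq^{1/3}(\kappa)$ with $\kappa(\eta,\omega)$ a homogeneous relabelling preserving $\qtau(\omega,\cdot)$. These satisfy all the conclusions with $p_0=1$, $p_1=0$, whereas your $\Phi$ (with $\Gamma=0$) gives $\tilde\psi=y\cdot\eta$. So your formula $p=(p_0+ip_1q^{-1/6}s')\,\partial_s s'$ combines amplitudes built for one phase pair with a change of variables built for another.

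Suppose instead that you mean $(\psi,\zeta)$ to be the pair generated by $\Phi$ (your phrase ``the resulting $(\psi,\zeta)$''). Then you need $p_0,p_1$ solving the transport system for \emph{that} pair, with $p_0$ elliptic and $p_1|_{x=0}=0$. The statement of Theorem~\ref{thmMelrose} does not supply them. That is exactly the non-trivial content of the proposition, so ``starting from Theorem~\ref{thmMelrose} as given'' assumes what is to be proved.

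There are two ways to close this.

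The minimal repair is to use the Melrose--Taylor transport construction itself, not just the statement of the theorem. Apply it (e.g.\ \cite[Prop.~4.4.11]{meta}, as the paper does in the Neumann case) directly to $(\tilde\psi,\tilde\zeta)$: solve \eqref{eq:transport0-first}--\eqref{eq:transportj} with $q_1|_{x=0}=0$. As you note, this pair satisfies \eqref{systeikeq} and $\tilde\zeta|_{x=0}=\omega$. Your formula for $p$ then becomes legitimate.

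The other repair is in the spirit of \cite{ILLP}, on which the paper relies and where Theorem~\ref{thmMelrose} is \emph{derived} from the glancing normal form rather than assumed. Keep the amplitude produced by your operator $M$ and extract the boundary property from it.
\begin{itemize}
\item This needs $M$ elliptic with amplitude independent of $\Xi$ at $x=0$, which is compatible with the phase since $\Gamma(0,\cdot)=B_\Gamma$.
\item Then $(MG_M)|_{x=0}$ is a boundary operator applied to $e^{iY\cdot\eta}Ai(-\omega)$, hence a multiple of $Ai(-\omega)$.
\item After Lemma~\ref{lemphaseG} and the division $p=p_0+p_1s'+((s')^2-\zeta)r$, with integration by parts on the remainder, this forces $p_1|_{x=0}=0$. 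Ellipticity of $M$ gives ellipticity of $p_0$.
\end{itemize}
Be aware that imposing this $\Xi$-independence while removing lower order terms is not done ``order by order in the usual way''. The model bicharacteristics are tangent to $\{X_M=0\}$ at glancing, so you cannot simply prescribe amplitude data there. You meet the same two-involution problem as in the Melrose--Taylor transport construction.
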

\begin{rmq}
Proposition \ref{lemgamma} is one of the key ingredients in the
parametrix construction of \cite{ILLP}. The precise structure of
the generating function $\Gamma$ will also be needed in the
Neumann analysis. We therefore recall below the part of its
construction from \cite[Sect.~6.2]{ILLP} that will be used later.
\end{rmq}

Set $\Theta=\varrho\vartheta$ with $\varrho =|\Theta|$ near $1$ and $\vartheta=\Theta/|\Theta|$. 
The functions $A_{\Gamma}, B_{\Gamma}$ are to be defined near the glancing set $\mathcal{GL}=\{x=0,\Xi=0,\varrho-1=0\}$ and for $(y,\vartheta)$ near $\{0\}\times \mathbb{S}^{d-1}$. We work with formal Taylor expansions $F$ near $\mathcal{GL}$ such that $F=\sum_{a,b,c} f_{a,b,c}(y,\vartheta){X_{M}}^a(\varrho-1)^b\Xi^c$.
We attribute a degree to each factor $x, \varrho-1, \Xi$: a monomial of the form $x^a(\varrho-1)^b\Xi^c$ is homogeneous of degree $k$ if and only if $c+2(a+b)=k$. 
For such a  formal series $F(x,y,\Xi,\varrho,\vartheta)$, defined near $\mathcal{GL}$, we write $F= \sum_{k\geq 0} F_k$, where $F_k$ is homogeneous of degree $k$; we also write $F\in \mathcal{H}_{\geq j}$ if and only if $F=\sum_{k\geq j} F_k$. Therefore, $F_0=f_0(y,\vartheta)$, $F_1=\Xi f_1(y,\vartheta)$, $
F_2={X_{M}} f_2^0(y,\vartheta)+(\varrho-1)f_2^1(y,\vartheta)+\Xi^2f_2^2(y,\vartheta)$, and so on. Replacing ${X_{M}},\xi,\eta$ by their formulas \eqref{genchi} (as functions of $(x,y,\Xi,\Theta)$) and using that from \eqref{eq:melrose} 
\begin{equation}\label{eqiffchi}
\xi^2+R(x,y,\eta)=1\quad \text{ if and only if }\quad \Xi^2+|\Theta|^2+{X_{M}}q(\Theta)=1,
\end{equation}
(where we notice that there is no $Y_{M}$ in the second equation), we get
\begin{equation}\label{formalseriesBA}
B_{\Gamma}= \sum_{j\geq 0} (\varrho-1)^j B_{2j}(y,\vartheta)\,,\quad
A_{\Gamma}= \sum_{k\geq 1}A_k\,.
\end{equation}
Using the third equation from \eqref{genchi} and $\xi|_{\mathcal{GL}}=0$, we have $A_{0}=0$. We also have $\xi(x,y,\Xi,\Theta)\in \mathcal{H}_{\geq 1}$ and ${X_{M}}(x,y,\Xi,\Theta)\in \mathcal{H}_{\geq 2}$. Moreover, from the proof of Melrose's theorem of equivalence of glancing surfaces \cite{mel76}, if formal series of the form \eqref{formalseriesBA} satisfy \eqref{genchi} and \eqref{eqiffchi}, then there exist $C^{\infty}$ functions $A_{\Gamma}, B_{\Gamma}$ with the same Taylor development near $\mathcal{GL}$ satisfying \eqref{genchi} and \eqref{eqiffchi}.

We consider in \eqref{eqiffchi} homogeneous terms of order $\leq 5$ in the expansion of $\Gamma$:  using ${X_{M}}(x,y,\Xi,\Theta)\in \mathcal{H}_{\geq 2}$, we are to write the explicit form of $A_{\Gamma}$ up to $\mathcal{H}_{j\leq 3}$ and $B_{\Gamma}$ up to $\mathcal{H}_{j\leq 5}$ (that is, the terms of $A_\Gamma$ of degree at most $3$ and those of $B_\Gamma$ of degree at most $5$).  Let
$A_1=\Xi\Lp(y,\vartheta)$, $A_2=\alpha(y,\vartheta)x+\beta(y,\vartheta)(\varrho-1)+\mu(y,\vartheta)\Xi^2$, $A_3=\alpha_1(y,\vartheta)x\Xi+\beta_1(y,\vartheta)(\varrho-1)\Xi+\mu_1(y,\vartheta)\Xi^3$ and
$A_{\Gamma}=A_{1}+A_{2}+A_{3} 
+\mathcal{H}_{j\geq 4}$. We find

\begin{prop}\label{propimpformgamma}(\cite[Prop.6.5]{ILLP})
The phase function $\Gamma(x,\cdot)=B_{\Gamma}+x A_{\Gamma}$ is such that, near the glancing set $\mathcal{GL}$,
\begin{equation}\label{AGam}
\left\{ \begin{array}{l}
A_{\Gamma}(x,y,\Xi,\Theta)= \Xi\Lp(y,\vartheta)+\alpha(y,\vartheta)x+\mu(y,\vartheta)(\Xi^2+|\Theta|^2-1)+\mathcal{H}_{j\geq3}\,,\\
B_{\Gamma}(y,\Theta)=B_0(y,\vartheta)+(\varrho-1)B_2(y,\vartheta)+\mathcal{H}_{j\geq3}\,,
 \end{array} \right.
\end{equation}
where $\vartheta=\Theta/|\Theta|$, $\varrho=|\Theta|$ and $1+\Lp=\Big(\frac{R_1(y,\vartheta+\nabla_yB_0)}{q(\vartheta)}\Big)^{1/3}$, and where $R_{0}$, $R_{1}$ are defined in \eqref{eq:R01}. 
Moreover, 
\[
\beta(y,\vartheta)=2\mu\,, \quad \text{ and } \quad \mu(y,\vartheta)=\frac{\alpha}{q(\vartheta)(1+\Lp)}-\frac{\nabla_{\eta}R_0(y,\vartheta+\nabla_yB_0)}{4q(\vartheta)}\cdot \nabla_y\Big(\frac{1}{1+\Lp}\Big)\,.
\]
We also have $\Lp(0,\vartheta)=0$, $B_0(0,\vartheta)=0$, $\nabla_yB_0(0,\vartheta)=0$, $B_2(0,\vartheta)=0$ and $\nabla_yB_2(0,\vartheta)=0$, $B_0$ (resp. $B_2$) is homogeneous of order $1$ (resp. of order $0$) in its second variable.
\end{prop}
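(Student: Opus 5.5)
The plan is to turn the equivalence \eqref{eqiffchi} into an identity between functions and then solve it degree by degree in the graded formal expansion near $\mathcal{GL}$. Since both $\{\xi^2+R=1\}$ and $\{\Xi^2+|\Theta|^2+X_Mq(\Theta)=1\}$ are smooth hypersurfaces and $\canonchi_M$ maps one onto the other, after substituting \eqref{genchi} there is a smooth nonvanishing factor $c(x,y,\Xi,\Theta)$ with
\[
\xi^2+R(x,y,\eta)-1=c\,\bigl(\Xi^2+|\Theta|^2+X_Mq(\Theta)-1\bigr),
\]
where $\xi,\eta,X_M$ are given by \eqref{genchi}. Writing $|\Theta|^2-1=(\varrho-1)(2+(\varrho-1))$, we expand $c=\sum_k c_k$ and $R(x,y,\eta)=R_0(y,\eta)+xR_1(y,\eta)+O(x^2)$ in homogeneous components, and match the components of degree $0,1,2,3$. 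This is a finite computation: $\xi\in\mathcal{H}_{\geq1}$ and $X_M\in\mathcal{H}_{\geq 2}$, and only $A_1,A_2$ and the $B_0,B_2$ parts of $B_\Gamma$ can enter at these degrees. Once the formal series are determined, Melrose's argument recalled above gives smooth $A_\Gamma,B_\Gamma$ with that Taylor expansion.

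\textbf{Degrees $0$ and $2$.} In degree $0$ only $R_0(y,\vartheta+\nabla_yB_0)$ appears on the left, and the right side has no degree-$0$ term. This gives the eikonal equation $R_0(y,\vartheta+\nabla_yB_0)=1$, which we solve with Cauchy data $B_0(0,\vartheta)=0$, $\nabla_yB_0(0,\vartheta)=0$; this is possible because $R_0(0,\vartheta)=|\vartheta|^2=1$. Homogeneity of degree $1$ in $\Theta$ follows from the homogeneity of the problem. In degree $1$ the left side contributes nothing and the right side contributes $c_0\cdot 0$, so nothing is imposed. In degree $2$ we have:
\begin{itemize}
\item $\xi=\Xi(1+\Lp)+\dots$ and $X_M=x(1+\Lp)+\dots$, since $\partial_\Xi A_1=\Lp$;
\item matching the coefficient of $\Xi^2$ gives $c_0=(1+\Lp)^2$;
\item matching the coefficient of $x$ gives $R_1(y,\vartheta+\nabla_yB_0)=c_0(1+\Lp)q(\vartheta)$, hence $(1+\Lp)^3=R_1(y,\vartheta+\nabla_yB_0)/q(\vartheta)$;
\item matching the coefficient of $(\varrho-1)$ gives a linear transport equation for $B_2$ along $\nabla_\eta R_0(y,\vartheta+\nabla_yB_0)\cdot\nabla_y$, with a source involving $c_0$, whose characteristics are transverse to $\{y=0\}$.
\end{itemize}
We impose $B_2(0,\vartheta)=0$, $\nabla_yB_2(0,\vartheta)=0$, using the freedom in the choice of $\canonchi_M$ (composition with canonical maps of the model preserving its glancing pair). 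The normalization $\Lp(0,\vartheta)=0$ comes from $R_1(0,\vartheta)=q(\vartheta)$ together with $\nabla_yB_0(0)=0$.

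\textbf{Degree $3$.} Only odd powers of $\Xi$ occur, namely the monomials $x\Xi$, $(\varrho-1)\Xi$ and $\Xi^3$. Their coefficients involve $A_2$ through $\partial_\Xi A_2$ and $\partial_xA_2$ in $\xi$ and $X_M$, the cross term $2\Xi(1+\Lp)\cdot(\text{degree-2 part of }\xi)$, the term $\nabla_\eta R_0\cdot x\nabla_yA_1$ (which contains $\Xi\nabla_y\Lp$), and $c_1\Xi^2$. The three resulting linear relations, after eliminating the unknown coefficient $c_1$, should give $\beta=2\mu$ and the stated formula for $\mu$ in terms of $\alpha$, $q(1+\Lp)$ and $\nabla_\eta R_0\cdot\nabla_y(1+\Lp)^{-1}$. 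In particular $A_2$ recombines as $\alpha x+\mu(\Xi^2+|\Theta|^2-1)$ modulo $\mathcal{H}_{\geq3}$, and $\alpha$ stays free at this order; it will be fixed by higher-order matching or by a normalization. I expect this degree-$3$ bookkeeping to be the main obstacle: tracking every contribution to $\xi$ and $X_M$ of degree $2$, including the $x\partial_xA$ and $x\nabla_yA$ terms and the $y$-derivatives of $\Lp$, and correctly eliminating $c_1$. I would first carry it out in the model case $R=R_0+xR_1$ with constant coefficients as a consistency check. The higher-degree terms are absorbed into $\mathcal{H}_{\geq3}$ and need not be computed.
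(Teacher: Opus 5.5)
Your plan is correct and follows the same route the paper relies on (through \cite[Prop.~6.5]{ILLP}): substitute \eqref{genchi} into \eqref{eqiffchi}, match the graded components near $\mathcal{GL}$ up to degree $3$, then invoke Melrose's argument to realize the formal series by smooth $A_\Gamma,B_\Gamma$; your computations in degrees $0$--$2$ are right. The degree-$3$ step you left open does close as you expected: writing $c_1=\tilde c\,\Xi$, the coefficients of $\Xi^3$, $(\varrho-1)\Xi$ and $x\Xi$ give $\tilde c=2(1+\Lp)\mu$, then $\beta=\tilde c/(1+\Lp)=2\mu$, and $4(1+\Lp)\alpha+\nabla_\eta R_0(y,\vartheta+\nabla_yB_0)\cdot\nabla_y\Lp=4\mu(1+\Lp)^2q(\vartheta)$, which is exactly the stated formula for $\mu$ (your point normalizations are also consistent, since at $y=0$ the transport equation for $B_2$ forces $\vartheta\cdot\nabla_yB_2(0,\vartheta)=(1+\Lp(0,\vartheta))^2-1=0$, so Cauchy data $0$ on $\{y\cdot\vartheta=0\}$ gives $\nabla_yB_2(0,\vartheta)=0$).
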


\subsubsection{Equivalence of phase functions for $G_{D}(x,y,\eta,\omega)$}

We finally make explicit the relation between the two representations
\eqref{eq:defG} and \eqref{eq:Gosc} of the same quasimode. The Airy
normal form \eqref{eq:defG} involves the phase
\[
 \psi(x,y,\eta,\omega)+\frac{s^3}{3}
 -s\zeta(x,y,\eta,\omega),
\]
whereas the oscillatory representation \eqref{eq:Gosc}, obtained
from the glancing canonical transformation, involves
\[
 y\cdot\eta+\frac{\sigma^3}{3}
 +\sigma\bigl(xq^{1/3}(\eta)-\omega\bigr)
 +\tau_q\Gamma\left(
 x,y,\frac{\sigma q^{1/3}(\eta)}{\tau_q},
 \frac{\eta}{\tau_q}\right).
\]
These two phase functions parametrize the same Lagrangian. The
following lemma, proved in \cite[Lemma~6.9]{ILLP}, gives an explicit
change of variable between them. This identification will be
particularly useful below, since the Neumann construction has the
same phase functions $\psi$ and $\zeta$.

\begin{lemma}\label{lemphaseG}
Set $\phi(x,y,\theta,\alpha,\sigma)=\sigma^3/3+\sigma(xq^{1/3}(\theta)-\alpha)+x\tau_{q} A_{\Gamma}(x,y,\sigma q^{1/3}(\theta)/\tau_{q},\theta/\tau_{q})$, with $\tau_q=\tau_q(\alpha,\theta)$. There exists a unique smooth non-degenerate change of variable $\sigma\rightarrow s$ and a smooth function $\Upsilon(x,y,\theta,\alpha)\in C^{\infty}$, independent of $s$, such that $\phi(x,y,\theta,\alpha,\sigma)=s^3/3-s\zeta(x,y,\theta,\alpha)+\Upsilon(x,y,\theta,\alpha)$
 and with $\frac{ds}{d\sigma}$ smooth and non-vanishing.
 
 Let $w:=(x,y,\theta,\alpha)$ and denote $\sigma_0(w)$ the unique solution to $\partial^2_{\sigma,\sigma}\phi(w,\sigma)=0$. The two critical points $\sigma_{\pm}(w)$ of $\phi$ correspond under this change of variable to $s_{\pm}(w):=\pm\sqrt{\zeta(w)}$. More precisely, $\sigma_{\pm}(w)=\sigma_0(w)\pm \sqrt{\zeta(w)}k(\pm\sqrt{\zeta(w)},w)$, where $k(u,w)= 1+\sum_{j\geq 1}k_j(w)u^j$, and the coefficients $k_j$ are smooth functions of $w$.
Moreover, 
\begin{equation}\label{formephaseUpsilon}
\frac 23 \zeta^{3/2}(w)=\frac 12\Big(\phi(w,\sigma_-(w))-\phi(w,\sigma_+(w))\Big),\quad \Upsilon(w):=\frac 12\Big(\phi(w,\sigma_+(w))+\phi(w,\sigma_-(w))\Big)\,.
\end{equation}
\end{lemma}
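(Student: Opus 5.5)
The plan is to treat the phase $\phi(w,\sigma)$ as a one-parameter family of cubic-type functions in $\sigma$ and reduce it to Airy normal form with a Chester--Friedman--Ursell / Malgrange-preparation type argument, exactly as one proves that a function with a fold ($A_2$) singularity is equivalent to $s^3/3-s\zeta+\Upsilon$.

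\textbf{Step 1: fold structure of $\phi$ near $N_0$.} Since $A_\Gamma=\Xi\Lp+\mathcal{H}_{\geq 2}$ (Proposition~\ref{propimpformgamma}), we have $x\tau_qA_\Gamma=O(x\sigma)+O(x^2)+\dots$. Hence $\partial_\sigma^3\phi=2+O(x)$ does not vanish near $N_0$ ($x$ small). Moreover $\partial_\sigma^2\phi=2\sigma+O(x)$ has, by the implicit function theorem, a unique smooth zero $\sigma_0(w)$. Taylor expansion about $\sigma_0$ then gives
\[
\phi(w,\sigma)=\phi(w,\sigma_0)+c_1(w)(\sigma-\sigma_0)+(\sigma-\sigma_0)^3 g(w,\sigma),
\]
with $g$ smooth and $g(w,\sigma_0)=\tfrac13\partial_\sigma^3\phi/2\neq0$.

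\textbf{Step 2: construction of $s$, $\zeta$, $\Upsilon$.} I will apply the Chester--Friedman--Ursell theorem: there exist smooth $s(w,\sigma)$ with $\partial_\sigma s\neq 0$, and smooth $\tilde\zeta(w)$, $\Upsilon(w)$, such that $\phi=s^3/3-s\tilde\zeta+\Upsilon$. The constructive route is as follows. Where $c_1<0$ there are two real critical points $\sigma_\pm$. The values $\tilde\zeta$ and $\Upsilon$ are forced by matching critical values, since the critical points of $s^3/3-s\tilde\zeta$ are $\pm\sqrt{\tilde\zeta}$. This yields \eqref{formephaseUpsilon}, i.e. $\tfrac43\tilde\zeta^{3/2}=\phi(\sigma_-)-\phi(\sigma_+)$. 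Smoothness across the fold follows because $\phi(\sigma_-)-\phi(\sigma_+)$ is an odd function of $\sqrt{-c_1}$ times $(-c_1)$, hence equal to $(-c_1)^{3/2}\times$(smooth, non-vanishing), while $\phi(\sigma_+)+\phi(\sigma_-)$ is even and therefore smooth. One then solves for $s(\sigma)$ by the implicit function theorem on the cubic, using the standard division argument; uniqueness comes from the normalization $\pm\sqrt{\tilde\zeta}\leftrightarrow\sigma_\pm$ with orientation $ds/d\sigma>0$. The expansion $\sigma_\pm=\sigma_0\pm\sqrt{\zeta}\,k(\pm\sqrt\zeta,w)$ is obtained by inverting $s\mapsto\sigma$ at $s=\pm\sqrt\zeta$ and Taylor expanding around $s=0$, where $\sigma(s=0)=\sigma_0$ and $d\sigma/ds|_{0}=1$ after normalization.

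\textbf{Step 3: identification $\tilde\zeta=\zeta$.} This is the main obstacle. The eikonal phase $\zeta$ of Theorem~\ref{thmMelrose} must coincide with $\tilde\zeta$, and $\psi$ with $y\cdot\theta-\tau_qB_\Gamma(0,\theta/\tau_q)+\tau_qB_\Gamma(y,\theta/\tau_q)+\Upsilon$. My plan is to invoke Proposition~\ref{lemgamma}: the oscillatory representation \eqref{eq:Gosc} equals $G_D$. After the change of variables, the integral becomes $e^{i(\dots+\Upsilon)}\int e^{i(s^3/3-s\tilde\zeta)}\tilde p\,ds$. Reducing $\tilde p$ modulo $(s^2-\tilde\zeta)$ by Malgrange division yields the form $p_0Ai(-\tilde\zeta)+ip_1Ai'(-\tilde\zeta)$. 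Both pairs then solve the same system \eqref{systeikeq}, since they parametrize the same Lagrangian, namely the image under $\canonchi_M$ of the model one. Moreover both satisfy $\zeta|_{x=0}=\omega$: at $x=0$ the phase $\phi=\sigma^3/3-\sigma\alpha$ exactly, so $\tilde\zeta|_{x=0}=\alpha$. Uniqueness of the solution of the eikonal system with this boundary datum, which is the Melrose--Taylor uniqueness statement, gives $\tilde\zeta=\zeta$. Finally, uniqueness of the change of variables follows from the rigidity of the normal form, since the only automorphisms of $s^3/3-s\zeta$ preserving the orientation of the fold are trivial.
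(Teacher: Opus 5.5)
The paper gives no argument of its own here beyond citing \cite[Lemma~6.9]{ILLP}. Your Steps 1--2 are the standard Chester--Friedman--Ursell (Malgrange preparation) reduction of a fold, with $\tilde\zeta$ and $\Upsilon$ read off from the critical values as in \eqref{formephaseUpsilon}. This is the natural content of that lemma and is fine as a sketch.

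The gap is Step 3: the uniqueness statement you invoke is false. Prescribing only $\zeta|_{x=0}=\omega$ does not determine a solution of \eqref{systeikeq}. On $\{\zeta>0\}$ the system is equivalent to the two eikonal equations for $\psi\pm\frac23\zeta^{3/2}$. Their solutions are fixed only once the trace $\psi|_{x=0}$ is prescribed, and nothing in your argument prescribes it. Already in the Friedlander model, take any $\eta'$ with $\tau_q(\omega,\eta')=\tau_q(\omega,\eta)$. Then the pair $(y\cdot\eta',\,\omega-xq^{1/3}(\eta'))$ solves \eqref{systeikeq} with the same right-hand side and the same trace $\zeta|_{x=0}=\omega$. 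Yet this $\zeta$ differs from $\omega-xq^{1/3}(\eta)$ as soon as $q(\eta')\neq q(\eta)$.

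Suppose you grant that both pairs parametrize the same Lagrangian, which is what you really need from Proposition~\ref{lemgamma}. Comparing the two WKB phases of the same $G_D$, with the additive constant fixed by $\zeta|_{x=0}=\omega$, gives $\tilde\zeta=\zeta$ only on the illuminated side $\{\zeta\ge 0\}$. On $\{\zeta<0\}$ there are no real critical points, and $\zeta,\Upsilon$ are determined only modulo functions flat on $\{\zeta=0\}$. The same caveat applies to your uniqueness claim: your rigidity argument only fixes $s$ once $\zeta$ and $\Upsilon$ are fixed.

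What closes the argument is not a uniqueness theorem but how $\zeta$ is defined. Proposition~\ref{lemgamma} states that Theorem~\ref{thmMelrose} holds with $G_D$ \emph{given} by \eqref{eq:Gosc}; this is the derivation of Theorem~\ref{thmMelrose} from Melrose's equivalence theorem in \cite[Sect.~2]{ILLP}. So $\zeta$ and $\psi$ are the Airy normal-form data of that oscillatory integral. Concretely:
\begin{itemize}
\item $\zeta$ is your $\tilde\zeta$, given by \eqref{formephaseUpsilon} on $\{\zeta\ge0\}$ and extended across the fold by the normal form;
\item $\zeta|_{x=0}=\alpha$ because $\phi|_{x=0}=\sigma^3/3-\sigma\alpha$;
\item $\psi$ is then given by \eqref{formpsigamma}.
\end{itemize}
Read this way, Steps 1--2 are the proof and Step 3 should be dropped.

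A smaller point: the change of variables is rigid, so you cannot ``normalize'' it to get $\sigma(s=0)=\sigma_0$ and $\partial_s\sigma(0)=1$. In general $\sigma(s=0)=\sigma_0+O(\zeta)$, which is harmless since it is absorbed into the $k_1$ term. But at $\zeta=0$ one has $\partial_s\sigma(0)=\bigl(2/\partial_\sigma^3\phi(w,\sigma_0)\bigr)^{1/3}$. This equals $1+O(x)$, but not $1$ when $A_\Gamma$ contains a $\Xi^3$ term. So the leading coefficient of $k$ must be taken as $1+O(x)$.
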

As a consequence, $\psi$ in the Airy representation can be recovered explicitly from the generating function $\Gamma$,
 \begin{equation}\label{formpsigamma}
 \psi(x,y,\theta,\alpha)=y\cdot \theta+\tau_{q}(\alpha,\theta)B_{\Gamma}(y,\theta/\tau_{q})+\Upsilon(x,y,\theta,\alpha), \text{ where } \Upsilon \in H_{j\geq 2}.
 \end{equation}

In particular, the passage from the Airy normal form
\eqref{eq:defG} to the oscillatory representation
\eqref{eq:Gosc} depends only on the phase functions $\psi,\zeta$,
or equivalently on the underlying Lagrangian and the generating
function $\Gamma$, and not on the amplitudes $p_0,p_1$. This
observation will be crucial for the Neumann problem: once a Neumann
quasimode has been constructed with the same phase functions
$\psi,\zeta$, the same change of variable yields an oscillatory
representation with exactly the phase appearing in
\eqref{eq:Gosc}; only the symbol is changed.

\subsection{A quasimode for the Neumann problem}

We now turn to the Neumann boundary condition. The phase functions
constructed in Theorem~\ref{thmMelrose} depend only on the eikonal
equations and on the geometry of the glancing hypersurfaces, and
therefore do not depend on the choice of boundary condition. The
Neumann construction can consequently be carried out with exactly
the same phase functions $\psi$ and $\zeta$ as in the Dirichlet
case. The new point is to construct amplitudes solving the
corresponding transport equations to all orders and satisfying the
boundary relation required by the Neumann condition.

The following theorem is the main new ingredient in the parametrix
construction.
\begin{thm}\label{thm:N}
Let $\tau_q(\omega,\eta)$ be defined in \eqref{eq:LapM}. There exist
a neighborhood $U$ of $(x,y,\eta,\omega)=(0,0,1,0)$ and symbols
$g_0(x,y,\eta,\omega)$ and $g_1(x,y,\eta,\omega)$ such that the
following properties hold.
The phase functions $\psi,\zeta$ and the elliptic function $e_0$
are exactly those of Theorem~\ref{thmMelrose}; in particular, they
satisfy all the conclusions of that theorem concerning homogeneity,
non-degeneracy and the eikonal equations, and
\begin{equation}\label{zeta}
 \zeta(x,y,\eta,\omega)
 =
 \omega-xq(\eta)^{1/3}
 e_0(x,y,\eta/|\eta|,
          \omega/q(\eta)^{1/3}).
\end{equation}
The symbols $g_0,g_1$ have the same orders and types as $p_0,p_1$
in Theorem~\ref{thmMelrose}, with $g_0$ elliptic, and satisfy on
$\{x=0\}$
\begin{equation}\label{eq:Neumann-symbol-boundary}
 \partial_xg_0
 +i q(\eta)^{-1/6}
   \zeta\,\partial_x\zeta\,g_1
 =0.
\end{equation}
Define
\begin{equation}\label{eq:GN}
 G_N(x,y,\eta,\omega)
 =
 e^{i\psi(\cdot)}
 \left(
 g_0(\cdot)Ai(-\zeta(\cdot))
 +
 i g_1(\cdot)q(\eta)^{-1/6}
 Ai'(-\zeta(\cdot))
 \right)\Big|_{(\cdot)=(x,y,\eta,\omega)}.
\end{equation}
Then, for $\Delta$ given in \eqref{eq:laptilde},
\begin{equation}\label{eq:GN-quasimode}
 -\Delta G_N
 =
 \tau_q^2 G_N
 +
 O_{C^\infty}(\tau_q^{-\infty})
 \qquad\text{in }U.
\end{equation}
Moreover, on $\{x=0\}$,
\begin{equation}\label{eq:dGN-boundary}
 \partial_xG_N
 =
 e^{i\psi} b_N Ai'(-\omega),
 \qquad
 b_N=
 -g_0\partial_x\zeta
 +i q(\eta)^{-1/6}\partial_xg_1.
\end{equation}
Consequently, if $\omega=\widetilde\omega_k$, where
\[
 Ai'(-\widetilde\omega_k)=0,
\]
then
\[
 \partial_xG_N|_{x=0}=0 \text{ modulo } O_{C^\infty}(\tau_q^{-\infty}).
 \]
\end{thm}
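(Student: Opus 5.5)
We keep the phase functions $\psi,\zeta$ of Theorem~\ref{thmMelrose} and build new amplitudes. Write $A=Ai(-\zeta)$, $A'=Ai'(-\zeta)$ and use $Ai''(-z)=-z\,Ai(-z)$.

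\textbf{Step 1: reduction to transport equations.} Apply $\Delta$ to $e^{i\psi}(g_0A+ig_1q^{-1/6}A')$. Collect the coefficients of $A$ and $A'$, replacing $A''$ by $-\zeta A$. Then
\[
-\Delta G_N-\tau_q^2G_N=e^{i\psi}(c_0A+ic_1q^{-1/6}A').
\]
The eikonal system \eqref{systeikeq} kills the leading terms. What remains is a coupled system in which $c_0,c_1$ are given by first-order operators along the vector fields $\langle\nabla\psi,\nabla\cdot\rangle$ and $\langle\nabla\zeta,\nabla\cdot\rangle$, with coefficients involving $\zeta$. Expand $g_j\sim\sum_k g_j^{(k)}$, where $g_j^{(k)}$ is homogeneous of degree $-k$ in the sense of the weighted homogeneity above. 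The requirement $c_0,c_1=O(\tau_q^{-\infty})$ then becomes a hierarchy
\[
\mathcal T(g_0^{(k)},g_1^{(k)})=F_k,
\]
where $\mathcal T$ is the same transport operator as in the Dirichlet case and $F_k$ depends only on lower-order terms. This step is exactly Melrose--Taylor's computation and needs no new input.

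\textbf{Step 2: the Neumann boundary relation.} On $\{x=0\}$ we have $\zeta=\omega$, so all tangential derivatives of $\zeta$ vanish. Compute
\[
\partial_x G_N=e^{i\psi}\bigl[(i\partial_x\psi\,g_0+\partial_xg_0+ i q^{-1/6}g_1\zeta\partial_x\zeta)A+(-g_0\partial_x\zeta+iq^{-1/6}\partial_xg_1-q^{-1/6}\partial_x\psi\, g_1)A'\bigr].
\]
In boundary normal coordinates with the Melrose--Taylor normalization, $\partial_x\psi|_{x=0}=0$. This follows from \eqref{formpsigamma}: $\Upsilon\in\mathcal H_{\ge2}$, $B_\Gamma$ is $x$-independent, and the $x$-derivative of $\Upsilon$ at $x=0$ can be seen to vanish from Lemma~\ref{lemphaseG}, since $\phi$ has the factor $x$. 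If this fails, the term $i\partial_x\psi\,g_0$ is simply absorbed into the boundary relation. With $\partial_x\psi=0$, the coefficient of $A$ is exactly \eqref{eq:Neumann-symbol-boundary} and the coefficient of $A'$ is $b_N$, which gives \eqref{eq:dGN-boundary}. Choosing $\omega=\widetilde\omega_k$ then yields the Neumann condition.

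\textbf{Step 3 (main obstacle): solving the hierarchy with the affine constraint.} The Dirichlet construction imposes the constraint $g_1|_{x=0}=0$. Here the constraint
\[
\partial_xg_0+iq^{-1/6}\omega\,\partial_x\zeta\,g_1=0\quad\text{on }x=0
\]
couples the normal derivative of $g_0$ to the boundary value of $g_1$.

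We follow Melrose--Taylor (\S4.4, \S7.1 of \cite{meta}). Reformulate the transport system as an equation for a single function $a(x,y,\eta,\omega,s)$ in the oscillatory representation \eqref{eq:Gosc}. There the two Airy coefficients correspond to the even and odd parts under the involution $s\mapsto -s$ (the swap $\sigma_+\leftrightarrow\sigma_-$ of Lemma~\ref{lemphaseG}). The boundary projection gives a second involution. Melrose--Taylor show that at each order the transport equation can be solved for arbitrary prescribed data on the fixed set of one involution, and that the resulting map to boundary traces is affine and invertible. We prescribe the data so that the affine relation holds at order $k$. Since $\partial_x\zeta|_{x=0}=-q^{1/3}e_0\ne0$, the relation can be solved for $\partial_xg_0^{(k)}|_{x=0}$ in terms of $g_1^{(k)}|_{x=0}$. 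The $\omega$ factor makes the coupling lower order in the weighted homogeneity, so the leading term can be taken as $g_0^{(0)}=p_0$ (elliptic), with the correction entering only at subsequent orders. Borel summation then gives $g_0,g_1$ in the stated symbol class with $g_0$ elliptic.

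The delicate point is checking that the inhomogeneous boundary relation at order $k$ involves only data the transport equation leaves free, namely the normal derivative of the even part. I would verify this on the Friedlander model first, then transport it by $\canonchi_M$.
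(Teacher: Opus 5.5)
Your architecture matches the paper's: keep $\psi,\zeta$, note that the transport hierarchy is the Dirichlet one, and feed the Neumann relation into Melrose--Taylor's solvability under affine boundary relations (Proposition~\ref{propTEmeta}). The gap is in Step~3, the only genuinely new step, which is set up incorrectly.

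\textbf{The order bookkeeping is inverted.} On $x=0$,
\[
q(\eta)^{-1/6}\zeta\,\partial_x\zeta=-q(\eta)^{1/6}\,\omega\,e_0=-|\eta|\,E_0,
\]
with $E_0$ homogeneous of degree $0$. So in the weighted homogeneity you invoke, the coupling term is one order \emph{higher} than $\partial_xg_0$, not lower. With $g_i\sim\sum_j i^jg_{i,j}|\eta|^{-j}$, the relation splits as follows. At top order it gives $g_{1,0}|_{x=0}=0$; this is the Dirichlet condition, and it is the real reason the leading pair is the Dirichlet one. At order $|\eta|^{-j}$ it gives $\partial_xg_{0,j}+E_0\,g_{1,j+1}=0$ on $x=0$. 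It never pairs $\partial_xg_0^{(k)}$ with $g_1^{(k)}$. Imposed at a fixed order $k$ with $g_0^{(k)},g_1^{(k)}$ of the same degree, as you propose, it is not homogeneous.

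\textbf{$\partial_xg_0|_{x=0}$ is not free data.} This is the more serious problem, and it is exactly the ``delicate point'' you flag. At $x=0$ one has $\nabla_y\zeta=0$ and $\partial_x\psi=0$. Hence $\langle\nabla\zeta,\nabla\cdot\rangle=\partial_x\zeta\,\partial_x$ there, while $\langle\nabla\psi,\nabla\cdot\rangle$ is tangential. The second transport equation therefore reads, on $x=0$,
\[
2\,\partial_x\zeta\,\partial_xg_{0,j}=-(\Delta^b\zeta)\,g_{0,j}-2\langle\nabla\psi,\nabla(q^{-1/6}g_{1,j})\rangle-(\Delta^b\psi)\,q^{-1/6}g_{1,j}+(\text{known forcing}).
\]
So the normal derivative of the even part is determined by the traces. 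Prescribing it turns into a first-order tangential differential relation between the traces of $g_{0,j}$ and $g_{1,j}$, with mismatched homogeneities. That is not of the form $q_{1,j}|_{x=0}=c_jq_{0,j}|_{x=0}+d_j$, which is all Proposition~\ref{propTEmeta} lets you impose.

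\textbf{The missing idea is to use the order shift.} Once step $j$ is done, $\partial_xg_{0,j}|_{x=0}$ is known. At step $j+1$ the paper therefore prescribes
\[
g_{1,j+1}|_{x=0}=-\partial_xg_{0,j}|_{x=0}/E_0 .
\]
This is an affine condition with $c_{j+1}=0$ and known $d_{j+1}$, so Proposition~\ref{propTEmeta} applies directly. The normalization $g_{0,0}(0,0,1,0)=1$ then gives ellipticity of $g_0$. This step divides by $E_0\propto\omega/q(\eta)^{1/3}$, so it uses $\omega$ bounded away from $0$, as the paper states. Wanting to divide by $\partial_x\zeta$ instead is reasonable, but prescribing $\partial_xg_0$ is not a way to do it.

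\textbf{A smaller point in Step~2.} $\partial_x\psi|_{x=0}=0$ does not follow from the factor $x$ in $\phi$, since that factor disappears when you differentiate. Also, ``absorbing'' a nonzero $\partial_x\psi$ would change both the stated boundary relation and $b_N$, so it would not prove the theorem as stated. The one-line argument is the second eikonal equation: at $x=0$, $\langle\nabla\psi,\nabla\zeta\rangle=\partial_x\psi\,\partial_x\zeta=0$, and $\partial_x\zeta=-q^{1/3}e_0\neq0$.
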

\begin{rmq}\label{rmq:Neumann-new}
The essential point in Theorem~\ref{thm:N} is that the Neumann
quasimode can be constructed with \emph{exactly the same phase
functions} $\psi$ and $\zeta$ as in the Dirichlet construction.
Thus all the geometric information encoded by the glancing
canonical transformation, and in particular the results recalled
in the previous subsection, remains unchanged. The new content is
entirely in the amplitudes: one has to solve the transport equations
to all orders while imposing
\eqref{eq:Neumann-symbol-boundary} and preserving the ellipticity of
$g_0$.
\end{rmq}
Let us first explain why \eqref{eq:Neumann-symbol-boundary} is the
appropriate boundary relation. Differentiating \eqref{eq:GN} with
respect to $x$ and using $Ai''(z)=zAi(z)$, the coefficient of
$Ai(-\zeta)$ in $\partial_xG_N$ is
\[
 \partial_xg_0+i(\partial_x\psi)g_0
 +i q(\eta)^{-1/6}
   \zeta\,\partial_x\zeta\,g_1.
\]
Since $\zeta|_{x=0}=\omega$, its tangential derivatives vanish on
the boundary. The eikonal equation
\[
 \langle\nabla\psi,\nabla\zeta\rangle=0
\]
then gives
\[
 \partial_x\psi|_{x=0}=0,
\]
because $\partial_x\zeta$ is elliptic there. Hence
\eqref{eq:Neumann-symbol-boundary} cancels the $Ai(-\zeta)$
component of the normal derivative. The remaining term is precisely
\eqref{eq:dGN-boundary}. Thus the zeros of $Ai'$ play for the
Neumann problem the role played by the zeros of $Ai$ in the
Dirichlet problem. In the next subsection we use the quasimode $G_N$ to construct a
solution to \eqref{WE} microlocally near a glancing point.

\begin{rmq}\label{rmq:GN-osc}
Once Theorem~\ref{thm:N} is established, the oscillatory
representation of the Neumann quasimode follows directly from
Lemma~\ref{lemphaseG}. Indeed, the change of variable in that lemma
depends only on the phase functions $\psi,\zeta$, which are the
same for $G_D$ and $G_N$, and not on their amplitudes. Absorbing
the Jacobian of this change of variable and the amplitudes
$g_0,g_1$ into a new symbol, one obtains
\begin{equation}\label{eq:GN-oscillatory}
 G_N(x,y,\eta,\omega)
 =
 \frac{1}{2\pi}
 e^{-i\tau_q B_\Gamma(0,\eta/\tau_q)}
 \int
 e^{\,i\left(
 y\cdot\eta+\frac{\sigma^3}{3}
 +\sigma(xq^{1/3}(\eta)-\omega)
 +\tau_q\Gamma\left(
 x,y,\frac{\sigma q^{1/3}(\eta)}{\tau_q},
 \frac{\eta}{\tau_q}\right)\right)}
 g(x,y,\eta,\omega,\sigma)\,d\sigma ,
\end{equation}
where $g$ has the same symbolic order and type as the symbol
$p$ in \eqref{eq:Gosc}. Thus the Dirichlet and Neumann quasi modes
are associated with the same Lagrangian and have oscillatory
representations with exactly the same phase; the boundary condition
affects only the symbol.
\end{rmq}

\begin{proof}
The construction follows the same geometric optics scheme as in the
Dirichlet case, the only difference being the boundary condition
imposed on the amplitudes. We first explain this point before turning
to the result of Melrose-Taylor which will allow us to solve the
corresponding boundary problem for the transport equations.

As in the proof of Theorem \ref{thmMelrose}, we look for a quasimode as an
oscillatory integral associated with the glancing canonical relation.
Equivalently, after integration in the Airy variable, it takes the
form
\[
 G_N(x,y,\eta,\omega)
 =
 e^{i\psi(x,y,\eta,\omega)}
 \left(
 g_0(x,y,\eta,\omega)Ai(-\zeta(x,y,\eta,\omega))
 +
 i q(\eta)^{-1/6}
 g_1(x,y,\eta,\omega)Ai'(-\zeta(x,y,\eta,\omega))
 \right).
\]
The phase of the underlying oscillatory integral is determined by the
principal symbol of the wave operator and by the glancing canonical
relation. It is therefore completely independent of the boundary
condition. In particular, the phase functions $\psi$ and $\zeta$ are
exactly the same as in the Dirichlet construction of
Theorem \ref{thmMelrose}. They satisfy
\[
 \langle \nabla\psi,\nabla\psi\rangle
 +
 \zeta\langle\nabla\zeta,\nabla\zeta\rangle
 =
 \tau_q^2,
 \qquad
 \langle\nabla\psi,\nabla\zeta\rangle=0,
\]
together with $\zeta|_{x=0}=\omega$.
Consequently, all the results used in \cite{ILLP} concerning the
construction and the properties of $\psi$ and $\zeta$ apply
unchanged. Once the eikonal equations have been solved, inserting the above
ansatz into
\[
 (-\Delta-\tau_q^2)G_N=O_{C^{\infty}}(\tau_q^{-\infty})
\]
and identifying successively the coefficients of the Airy function
and of its derivative gives a hierarchy of transport equations for
the amplitudes. These equations are local equations in the interior
and depend only on the differential operator and on the already
constructed phase functions. Hence they are again exactly the same
transport equations as in the Dirichlet construction.
The distinction between the Dirichlet and Neumann problems appears
only when one has to select, among the solutions of this system of
transport equations, those satisfying the required condition on
$\{x=0\}$. In the Dirichlet case, Melrose and Taylor construct the
symbols $p_0,p_1$ of $G_D$ in such a way that
$p_1|_{x=0}=0$,
with $p_0$ elliptic. Since $\zeta|_{x=0}=\omega$, this implies that
the trace on the boundary of the resulting quasimode is proportional to
$Ai(-\omega)$ (the coefficient of $Ai'(-\zeta)$ cancels), and choosing $\omega$ among the zeros of $Ai$ yields the Dirichlet boundary condition.

For the Neumann problem we must choose a different solution of the
same transport system. Indeed, differentiating $G_N$ with respect to
$x$, and using $Ai''(-\zeta)=-\zeta Ai(-\zeta)$
shows that the coefficient of $Ai(-\zeta)$ in $\partial_xG_N$ is
\begin{equation}\label{Ncond}
 \partial_x g_0
 +i(\partial_x\psi)g_0
 +i q(\eta)^{-1/6}\zeta\,\partial_x\zeta\,g_1.
\end{equation}
On the boundary, the second eikonal equation and the identity
$\zeta|_{x=0}=\omega$ implies
$\partial_x\psi|_{x=0}=0$.
Thus, in order that the boundary trace of $\partial_xG_N$ contain
only an $Ai'(-\omega)$ component, we have to impose
\begin{equation}
\label{eq:Neumann-boundary-amplitudes}
 \left.
 \left(
 \partial_xg_0
 +i q(\eta)^{-1/6}
   \zeta\,\partial_x\zeta\,g_1
 \right)\right|_{x=0}=0.
\end{equation}
Under this condition,
\begin{equation}\label{condN}
 \partial_xG_N|_{x=0}
 =
 e^{i\psi}
 \left(
 -g_0\partial_x\zeta
 +i q(\eta)^{-1/6}\partial_xg_1
 \right)_{x=0}
 Ai'(-\omega),
\end{equation}
and choosing $\omega$ to be a zero of $Ai'(-\cdot)$ gives the Neumann
 boundary condition.
We are therefore reduced to the following problem: solve exactly the
same hierarchy of transport equations as in the Dirichlet case, but
replace the boundary condition
$ p_1|_{x=0}=0$
by \eqref{eq:Neumann-boundary-amplitudes}, while preserving the
symbolic properties of the amplitudes and, in particular, the
ellipticity of the leading coefficient $g_0$.
At this point we use a further result of Melrose-Taylor. The
construction leading to the condition $p_1|_{x=0}=0$ is in fact part
of a more general result: their method allows one to prescribe a
larger class of boundary relations between the two amplitudes (and
their boundary derivatives), while solving the same transport system
to all orders. The Dirichlet choice $p_1|_{x=0}=0$ is only one
particular instance of this general construction. We shall apply
their result with the boundary operator chosen so that the resulting
amplitudes $g_0,g_1$ satisfy
\eqref{eq:Neumann-boundary-amplitudes}.

We now recall the argument of Melrose-Taylor which allows one to
solve the transport equations with more general boundary conditions that will be
needed for this purpose.
In order to avoid any confusion with the symbols $p_0,p_1$ of the
Dirichlet quasimode and $g_0,g_1$ which will be used below for the
Neumann problem, we denote here the two amplitudes by $q_0,q_1$ and
write
\[
 q_0\sim\sum_{j\geq0}i^j q_{0,j}|\eta|^{-j},
 \qquad
 q_1\sim |\eta|^{-1/3}
             \sum_{j\geq0}i^j q_{1,j}|\eta|^{-j},
\]
where all the coefficients $q_{0,j}$ and $q_{1,j}$ are real valued symbols of
order zero.
We consider
\[
 {G}
 =
 e^{i\psi}
 \left(
 q_0 Ai(-\zeta)+i q_1 Ai'(-\zeta)
 \right).
\]
Once the eikonal equations for $\psi$ and $\zeta$ are satisfied,
substitution into the equation for ${G}$ gives a coupled system
of transport equations for $q_0$ and $q_1$. With our convention
$Ai(-\zeta)$, the equations for the leading terms
$q_{0,0},q_{1,0}$ are
\begin{equation}
\label{eq:transport0-first}
\begin{split}
 2\langle \nabla\psi,\nabla q_{0,0}\rangle
 +2\zeta\langle \nabla\zeta,\nabla q_{1,0}\rangle
 -\langle \nabla\zeta,\nabla\zeta\rangle q_{1,0}
 +(\Delta^b\psi)q_{0,0}
 -\zeta(\Delta^b\zeta)q_{1,0}
 &=0,
\\
 2\langle \nabla\zeta,\nabla q_{0,0}\rangle
 +2\langle \nabla \psi,\nabla q_{1,0}\rangle
 +(\Delta^b\zeta)q_{0,0}
 +(\Delta^b\psi)q_{1,0}
 &=0.
\end{split}
\end{equation}
Here, as above, $\langle\cdot,\cdot\rangle$ denotes the symmetric
bilinear form obtained by polarization of the principal symbol and
$\Delta^b$ denotes the operator obtained by removing its zeroth order
term.
At the successive orders $j\geq1$, the same system has forcing terms
which only involve the coefficients already constructed:
\begin{equation}
\label{eq:transportj}
\begin{split}
 2\langle \nabla \psi,\nabla q_{0,j}\rangle
 +2\zeta\langle \nabla\zeta,\nabla q_{1,j}\rangle
 -\langle \nabla \zeta,\nabla\zeta\rangle q_{1,j}
 +(\Delta^b\psi)q_{0,j}
 -\zeta(\Delta^b\zeta)q_{1,j}
 &=
\Delta q_{0,j-1},
\\
 2\langle \nabla \zeta,\nabla q_{0,j}\rangle
 +2\langle \nabla \psi,\nabla q_{1,j}\rangle
 +(\Delta^b\zeta)q_{0,j}
 +(\Delta^b\psi)q_{1,j}
 &=
 \Delta q_{1,j-1}.
\end{split}
\end{equation}
Thus, at each step, one has to solve the same first order system,
with a source term completely determined by the preceding step.
For the Dirichlet construction, Melrose-Taylor solve this system with the boundary condition
\[
 q_1=0\qquad\text{on }x=0.
\]
Their construction in fact applies to a more general affine boundary
relation which is required for Neumann. 
We shall use the following consequence of \cite[Prop.~4.4.11]{meta}:
\begin{prop}\label{propTEmeta}(\cite[Prop. 4.4.11]{meta}; for details see \cite[Sections 2,4]{meta})
Near a glancing point, the transport system
\eqref{eq:transport0-first}--\eqref{eq:transportj} admits smooth
solutions satisfying at each step an affine boundary condition of
the form
\begin{equation}
\label{eq:general-MT-boundary}
        q_{1,j}|_{x=0}
        =
        c_j q_{0,j}|_{x=0}+d_j,
\end{equation}
where \(c_j\) and \(d_j\) are real-valued smooth symbols of the
appropriate orders. The solution may moreover be normalized by
prescribing the value of its leading component at the glancing
point.
\end{prop}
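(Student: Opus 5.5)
The strategy is to reduce the transport system, at each order, to a system of ordinary differential equations along the two commuting vector fields attached to the phases, and then to solve that system by prescribing data on a transversal hypersurface.

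\textbf{Structure of the system.} Write
\[
X_\psi=2\langle\nabla\psi,\nabla\cdot\rangle,\qquad X_\zeta=2\langle\nabla\zeta,\nabla\cdot\rangle.
\]
The principal part of \eqref{eq:transport0-first} is
\[
X_\psi q_{0}+\zeta X_\zeta q_{1},\qquad X_\zeta q_{0}+X_\psi q_{1}.
\]
Setting $u_\pm=q_0\pm\sqrt{\zeta}\,q_1$ formally diagonalizes it in the region $\zeta>0$. The operators become $X_\psi\pm\sqrt{\zeta}X_\zeta$, and these are the Hamilton fields on the two sheets of the Lagrangian parametrized by $s_\pm=\pm\sqrt\zeta$ (compare Lemma~\ref{lemphaseG}). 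The two sheets are exchanged by the involution $s\mapsto -s$.

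\textbf{Avoiding the square root.} Following Melrose--Taylor, I would work on the Lagrangian $\Lambda$ parametrized by $(x,y,s)$ with phase $\psi+s^3/3-s\zeta$. The amplitude $q_0+s\,q_1$ (modulo the ideal generated by $s^2-\zeta$) is a smooth function on $\Lambda$. The transport equation is then a single linear first-order equation $\mathcal L_H a=f$ along the Hamilton vector field $H$ restricted to $\Lambda$.

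By the glancing hypothesis and the equation $\langle\nabla\psi,\nabla\zeta\rangle=0$, $H$ is transversal to the hypersurface $\{s=0\}$ in $\Lambda$ near the glancing point. So $\mathcal L_H a=f$ has a unique smooth solution with prescribed data $a|_{s=0}$. Splitting $a$ into its even and odd parts in $s$ via the Malgrange preparation theorem, which is the involution argument of \cite[Sect.~2]{meta}, recovers smooth $q_0,q_1$ solving the coupled system.

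\textbf{Imposing the boundary condition.} The boundary $\{x=0\}$ corresponds to $\zeta=\omega$, and there both $q_{0}$ and $q_1$ are determined by the free datum together with the flow. I would show that the map sending the datum to $(q_0,q_1)|_{x=0}$ lets one choose the datum so that $q_1|_{x=0}-c\,q_0|_{x=0}=d$ holds for given smooth $c,d$. The mechanism is as follows. Restricted to $x=0$, the values $q_0\pm\sqrt\omega\, q_1$ are the values of $a$ at the two points $s=\pm\sqrt\omega$. The affine relation therefore becomes a relation of the form $(1-\gamma)a(+)=(1+\gamma)a(-)+\delta$, where $\gamma=c\sqrt\omega$ is small near glancing. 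The map $a(-)\mapsto a(+)$ is the flow-transport between the two sheets, namely the second involution (billiard/reflection). Composing it with the boundary relation gives a fixed-point or invertibility problem of the form identity plus a small perturbation near $\omega=0$.

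\textbf{Induction and normalization.} At order $j$ the forcing terms $\Delta q_{\cdot,j-1}$ are already known, so the same argument applies verbatim. The leading datum $q_{0,0}$ at the glancing point can be prescribed to be nonzero, giving ellipticity. Borel summation in $|\eta|^{-j}$ then yields the symbols.

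\textbf{Main obstacle.} The hardest step is making the even/odd decomposition uniform across $\zeta=0$ while imposing the affine relation. The point is that $c_j$ multiplies $q_0$ and not $\sqrt\zeta\,q_0$, so one must check the relation is compatible with smoothness in $\omega$. Here I would rely on the Malgrange division for $s^2-\zeta$, as in \cite[Prop.~4.4.11]{meta}, to conclude that the resulting $q_{1,j}$ is smooth.
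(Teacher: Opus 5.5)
Your reformulation is sound: you view $q_0+s\,q_1$ as a function $a$ on the Lagrangian $\Lambda$, transport it along $H$ (transversal to $\{s=0\}$), and recover $q_0,q_1$ by the even/odd splitting. This parallels the first half of the paper's sketch. The gap is in the step where you impose the boundary condition.

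\textbf{The two boundary values are exchanged by $I$, not by the flow.} Over a boundary point $(0,y)$, the two points $s=\pm\sqrt{\omega}$ of $\Lambda$ carry the covectors $(\mp\sqrt{\omega}\,\partial_x\zeta,\nabla_y\psi)$, because $\nabla_y\zeta|_{x=0}=0$ and $\partial_x\psi|_{x=0}=0$. They are exchanged by the reflection $I:\xi\mapsto-\xi$. The flow, i.e.\ the involution $J$, takes $(y,+\sqrt\omega)$ to a different point $(y',-\sqrt\omega)$. Here $y'-y\neq0$ has size $(\omega/|\eta|^{2/3})^{1/2}$ in the direction of propagation. So $a(y,+\sqrt\omega)$ and $a(y,-\sqrt\omega)$ are not linked by any transport map.

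\textbf{What the boundary relation actually becomes.} Combine the transport with your relation $(1-\gamma)a(y,+\sqrt\omega)=(1+\gamma)a(y,-\sqrt\omega)+\delta$. For $\beta(y)=a(y,+\sqrt\omega)$ (equivalently, for your datum on $\{s=0\}$) you get a difference equation $\beta\circ\kappa-\mu\,\beta=g$. Here $\kappa$ is the billiard ball map (the composition of $I$ and $J$) on $\Lambda\cap\{x=0\}$, with $\mu=1+O(\sqrt\omega)$ and $g=O(\sqrt\omega)$.

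\textbf{Why this is not identity plus a small perturbation.} Since $\kappa\to\mathrm{id}$ as $\omega\to0$, the operator $\beta\mapsto\beta\circ\kappa-\mu\beta$ tends to zero, not to the identity. Divided by $\sqrt\omega$, it degenerates to a first-order differential operator along the flow direction; for $\omega>0$ it is a genuine difference operator. This already happens for Dirichlet ($\gamma=\delta=0$), so there is nothing perturbative to invert. Even under your own identification $a(+)=\mathcal{T}a(-)$ with $\mathcal{T}$ close to the identity, the resulting operator $(1-\gamma)\mathcal{T}-(1+\gamma)$ is $O(\sqrt\omega)$, not uniformly invertible. The fixed-point argument therefore does not close.

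\textbf{The missing ingredient.} This is exactly what the paper takes from Melrose--Taylor. After reducing to $H_pb=0$, one must solve on $\Sigma\cap Q$ the compatibility system $J^*b=b$, $b_O=f_1b_E+f_2$, with parity taken with respect to $I$. This is done via the simultaneous normal form of the two involutions $I$ and $J$, whose common fixed-point set is the glancing set (Melrose--Taylor, Prop.~2.8.2). In those coordinates the difference equation along the billiard map can be solved smoothly up to the glancing set, with the prescribed normalization at the glancing point. Without this, or an equivalent argument solving the difference equation uniformly as $\omega\to0$, your construction does not produce amplitudes satisfying the affine boundary relation.

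The step you flag as the main obstacle, smoothness of the even/odd splitting via Malgrange division, is routine by comparison. The real difficulty is the interaction between $I$ and $J$.
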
 

This result, whose idea of proof is briefly recalled in the next section, is exactly what is needed for the Neumann
construction. Indeed, write the symbols $g_0,g_1$ of the
Neumann quasimode under the following form
\begin{equation}\label{symb}
 g_0\sim_{|\eta|^{-1}}\sum_{j\geq0}i^j g_{0,j}|\eta|^{-j},
 \qquad
  g_1\sim_{|\eta|^{-1}}
             \sum_{j\geq0}i^j g_{1,j}|\eta|^{-j}.
\end{equation}
The transport equations are precisely
\eqref{eq:transport0-first}-\eqref{eq:transportj}, with
$q_{0,j},q_{1,j}$ replaced by $g_{0,j},q(\eta)^{-1/6} g_{1,j}$. Notice that, as $\psi$, $\zeta$ are homogeneous of order $1$ and $2/3$, respectively, $d\psi$ yields a factor $|\eta|$ while $\zeta$ and $d\zeta$ yield, each one, a factor $q^{1/3}(\eta)$, which motivates the form of these asymptotic expansions. The boundary condition that we require is \eqref{Ncond}, where $\zeta$ is of the form \eqref{zeta}, homogeneous of order $2/3$. It follows that

\begin{align}\label{coeffq}
q^{-1/6}(\eta)(\zeta\partial_x\zeta)|_{x=0} &=-q^{-1/6}(\eta)\times q^{1/3+1/3}(\eta)\Big(\omega/ q^{1/3}(\eta)\Big)e_0(0,y,\eta/|\eta|, \omega/q(\eta)^{1/3})\\
&=-|\eta|\times q^{1/2}(\eta/|\eta|)\times \Big(\omega/ q^{1/3}(\eta)\Big)e_0(0,y,\eta/|\eta|, \omega/q^{1/3}(\eta))\\
&=- |\eta| E_0(y, \eta/|\eta|, \omega/q^{1/3}(\eta)),
\end{align}
where we set $E_0:= q^{1/2}(\eta/|\eta|)\times \Big(\omega/ q^{1/3}(\eta)\Big)e_0(0,y,\eta/|\eta|, \omega/q^{1/3}(\eta))$.

We use \cite[Prop. 4.4.11]{meta} recursively to construct $g_{0,j}(y,\eta/|\eta|, \omega/q^{1/3}(\eta))$ and $g_{1,j}(y,\eta/|\eta|, \omega/q^{1/3}(\eta))$. First, we can construct $g_{0,0}$ and $g_{1,0}$ exactly as in the Dirichlet case as they solve \eqref{eq:transport0-first} and can be chose such that $g_{0,0}(0,0,1,0)=1$ and $g_{1,0}|_{x=0}=0$. Then,
at each step, once
$g_{0,j}$ has been constructed, the contribution
$\partial_xg_{0,j}$ has to be cancelled in the coefficient of
$Ai(-\zeta)$ in the boundary trace of $\partial_xG_N$. Indeed, using \eqref{eq:Neumann-boundary-amplitudes} and \eqref{coeffq} we must have
\[
\left(\partial_x \Big( g_{0,0}+ \sum_{j\geq 1} (i/|\eta|)^j g_{0,j}\Big)-i|\eta|E_0\Big(g_{1,0}+ \sum_{j\geq 1} (i/|\eta|)^j g_{1,j}\Big)\right)\Big|_{x=0}=0,
\]
where we already have $g_{1,0}|_{x=0}=0$.
We therefore
prescribe the boundary value of the next coefficient by
\begin{equation}
\label{eq:Neumann-recursion}
 g_{1,j+1}|_{x=0}
 =
 -\frac{\partial_xg_{0,j}|_{x=0}}
 {E_0(y, \eta/|\eta|, \omega/q^{1/3}(\eta))}.
\end{equation}
This is exactly a boundary condition of the form
\eqref{eq:general-MT-boundary}, with
\[
 c_{j+1}=0,
 \qquad
 d_{j+1}=
 -\frac{\partial_xg_{0,j}|_{x=0}}
 {E_0(y, \eta/|\eta|, \omega/q^{1/3}(\eta))}.
\]
Proposition \cite[Prop. 4.4. 11]{meta} therefore provides the solution at the next step
of the transport construction.
Notice that the quotient in \eqref{eq:Neumann-recursion} is
well-defined in the region under consideration. Indeed, on the
boundary, $\zeta|_{x=0}=\omega>0$, 
while, from the explicit form \eqref{zeta} of
$ \zeta$ we deduce that 
$\partial_x\zeta|_{x=0}$ is elliptic. The vanishing of $\zeta$
occurs away from the boundary, at $x\sim a>0$, and hence plays no
role in prescribing the boundary data.
Iterating this procedure gives the full asymptotic expansions of
$g_0$ and $g_1$, satisfying the transport equations to all orders
and the required Neumann boundary relation. Moreover, the
normalization in \cite[Prop. 4.4.11]{meta} gives a non-vanishing leading
coefficient $g_{0,0}$ at the base point (which equals  the corresponding $p_{0,0}$ from the decomposition of $p_0\sim_{|\eta|^{-1}}\sum_{j\geq 0} i^j p_{0,j}|\eta|^{-j}$ from the Dirichlet case). After possibly shrinking
the microlocal neighborhood, $g_0$ is therefore elliptic.
\end{proof}

\subsubsection{The boundary condition - Sketch of the proof of Proposition \ref{propTEmeta}
(see \cite[Sections 2,4]{meta})}

We briefly recall the geometric argument of Melrose-Taylor which
allows one to solve the transport equations while imposing a boundary
condition. This will also clarify why the boundary condition may be
replaced by the more general affine condition used below.
Introducing, away from the glancing set, the two combinations
\begin{equation}\label{transp}
        a^\pm=q_0\pm(-\zeta)^{1/2}q_1,
\end{equation}
the coupled transport system is diagonalized into the transport
equations associated with the two branches of the characteristic
variety. 
Let \(\Sigma=\{(x,y,\xi,\eta), \xi^2+R(x,y,\eta)=1\}\) denote the characteristic hypersurface of the principal
symbol, and let \(Q=\{(x,y,\xi,\eta), x=0,\eta\neq 0\}\) denote the bounding hypersurface. Thus \(\Sigma\cap Q\) is the characteristic set over the
boundary, and the glancing set $\mathcal{GL}$ is the subset of \(\Sigma\cap Q\) where the
Hamilton vector field \(H_p\) is tangent to \(Q\), that is $\mathcal{GL}=\{(x,y,\xi,\eta), \xi^2+R(x,y,\eta)=1, x=0, H_p x=2\xi=0\}$. Equivalently, if \(q:=x=0\) is a defining function for \(Q\), the
glancing condition is $H_px=0$.
The strict convexity assumption  \eqref{glancingcond} implies the corresponding simple
glancing conditions of Melrose-Taylor, namely that the tangency of
the Hamilton flow of \(\Sigma\) to \(Q\) is of second order and that the
dual tangency condition for the Hamilton flow of \(q\) to \(\Sigma\)
is also non-degenerate.

After lifting the
transport equation to \(\Sigma\), \eqref{transp} has the form
$H_p a+F_1a=F_2$.
By solving successively along the Hamilton flow, one first removes the
inhomogeneous term and then the zeroth order term. The equation is
therefore reduced to
$H_p b=0$.                                       
Hence \(b\) is constant on the bicharacteristics of \(H_p\).
The point is now to choose such a solution so that its restriction to
\(\Sigma\cap Q\) satisfies the prescribed boundary condition. 

Near the
glancing set, two natural two-sheeted projections are defined on
\(\Sigma\cap Q\).
The first one is simply the projection onto the cotangent bundle of the boundary,
\[
        \pi_Q:\Sigma\cap Q\longrightarrow T^*\partial\Omega, \qquad
        (0,y,\xi,\eta)\longmapsto(y,\eta).
\]
Its two sheets can be seen explicitly in the present coordinates.
Indeed, on \(\Sigma\cap Q\) the characteristic equation reads
$\xi^2=1-R(0,y,\eta)$. Thus, on the hyperbolic side of the glancing set, a fixed boundary
covector \((y,\eta)\) has two lifts to \(\Sigma\cap Q\),
\[
        \rho_\pm(y,\eta)
        =
        \left(0,y,\pm\sqrt{1-R(0,y,\eta)},\eta\right).
\]
The corresponding involution \(I\) exchanges these two lifts; in the
present coordinates it is simply given by
\begin{equation}\label{Iinv}
        I(0,y,\xi,\eta)=(0,y,-\xi,\eta),
\end{equation}
and satisfies $\pi_Q\circ I=\pi_Q$.
At glancing, \(H_px=2\xi=0\), hence \(\xi=0\), and the two lifts
\(\rho_+\) and \(\rho_-\) coalesce. Consequently, the glancing set
is precisely the fixed point set of \(I\). In these coordinates, \(\xi\) is \(I\)-odd, and
the fixed point set of $I$ is characterized by $\xi=0$, which is precisely the glancing set.\\

The second involution is associated with the Hamilton flow. Let
\(\Phi_s=\exp(sH_p)\) denote the bicharacteristic flow on \(\Sigma\).
For \(\rho\in\Sigma\cap Q\) sufficiently close to the glancing set,
let \(s(\rho)\neq0\) denote the second small solution of $x\bigl(\Phi_s(\rho)\bigr)=0$.
The second involution is then given by
\[
        J(\rho)=\Phi_{s(\rho)}(\rho).
\]
Thus \(J\) exchanges the two points of \(\Sigma\cap Q\) which lie on
the same bicharacteristic, or equivalently the two points having the
same image under the local projection of \(\Sigma\cap Q\) onto the
space of bicharacteristics.
To see the local geometry, set $x(s)=x\bigl(\Phi_s(\rho)\bigr)$.
Since \(\rho\in Q\), \(x(0)=0\), while
$ x'(0)=H_px(\rho)=2\xi$.
At glancing \(\xi=0\), and the simple glancing condition gives
\(H_p^2x\neq0\). Hence the two intersections of the bicharacteristic
with \(Q\) coalesce at the glancing set, which is the fixed point set
of \(J\).
Since \(H_pb=0\), \(b\) is constant along the Hamilton flow. Therefore
\[
        b(J(\rho))=b(\rho),\text { or equivalently }
        J^*b=b.
\]
Thus \(J\)-invariance is precisely the condition on the boundary
values imposed by the transport equation.\\

The boundary condition is naturally expressed instead in terms of
the first involution \(I\), since \(I\) exchanges the two lifts of the
same boundary covector. Let \(\tau\) be an \(I\)-odd local coordinate
transverse to the glancing set, so that
\[
        \mathcal{GL}=\{\tau=0\},
        \qquad I^*\tau=-\tau.
\]
In the coordinates used above, one may take \(\tau=\xi\), since \eqref{Iinv} holds.
We keep the notation \(\tau\) in order to emphasize that the argument
is intrinsic and does not depend on this particular choice of
coordinates.
Every smooth function \(b\) on \(\Sigma\cap Q\) can then be decomposed
uniquely as
\[
        b=b_E+\tau b_O,
\]
where \(b_E\) and \(b_O\) are both \(I\)-even. Indeed,
\[
        b_E=\frac12(b+I^*b),
        \qquad
        \tau b_O=\frac12(b-I^*b).
\]
The numerator in the second expression is \(I\)-odd and therefore
vanishes on the fixed point set \(\{\tau=0\}\). It is consequently
divisible by \(\tau\), and the quotient \(b_O\) is smooth and
\(I\)-even. Thus \(b_E\) is the even part of \(b\), whereas
\(\tau b_O\) is its odd part. The original boundary condition couples the two branches of the
transport system over the same boundary covector. After the preceding
reductions, these two branches correspond precisely to the two points
interchanged by \(I\). Hence the boundary condition can be expressed
in terms of the \(I\)-even and \(I\)-odd parts of \(b\). The affine
boundary relation considered by Melrose-Taylor takes, after these
reductions, the form
\[
        b_O=f_1 b_E+f_2,
\]
where \(f_1\) and \(f_2\) are smooth \(I\)-even functions. 
We have therefore reduced the transport problem with boundary
condition to the following compatibility problem on
\(\Sigma\cap Q\):
\begin{equation}\label{TE}
        J^*b=b,
        \qquad
        b_O=f_1b_E+f_2.
\end{equation}
The two equations have different origins. The first identifies the
values at the two boundary points belonging to the same
bicharacteristic and is imposed by the transport equation
\(H_pb=0\). The second relates the values on the two characteristic
sheets lying over the same boundary covector and is imposed by the
boundary condition.

Melrose-Taylor construct a simultaneous local normal form for the
two involutions \(I\) and \(J\). 
In these normal coordinates,
\cite[Prop.~2.8.2]{meta} shows that, for arbitrary smooth
\(I\)-even functions \(f_1,f_2\), one can solve simultaneously \eqref{TE}
with the prescribed normalization at the glancing point.
Extending this boundary value
along the bicharacteristic flow then gives the required solution of
the transport equation.
This is the mechanism underlying \cite[Prop.~4.4.11]{meta} of
Melrose-Taylor. In particular, replacing the homogeneous boundary
condition used in the Dirichlet construction by an affine relation
between the two components only changes the even functions \(f_1\) and
\(f_2\) in $b_O=f_1\,b_E+f_2$; the normal form argument itself is unchanged.

\subsection{A parametrix for the wave equation with Neumann boundary condition}

Let $a_0>0$ be small and $a\in (0,a_0]$; denote by $\mathcal{G}(t,x,y,a)$ the
Green function for the wave equation with Neumann boundary condition, and $\delta_{(a,0)}:=\delta_{x=a,y=0}$ the source point,
\begin{equation}
\label{waveeq}
(  \partial_t^2 -\Delta) \Grond=0 \,,\text{ for }\, x>0\,,
 \, \partial_x\Grond_{{\textstyle |}x=0}=0, \Grond|_{t=0}= \delta_{(a,0)} \text{ and } \partial_t \Grond |_{t=0}=0.
\end{equation}
We will frequently need smooth cut-off functions $\varkappa\geq 0$ in $ C_{0}^{\infty}(\R^{m})$ with $m=1$ or with $m=d-1$. For $m=1$, $\varkappa$ will be such that $\varkappa=1$ near $1$, $\varkappa=0$ outside a small neighborhood of $1$, and for $m=d-1$, $\varkappa$ will be radial and such that ${\varkappa}=1$ near $\mathbb{S}^{m-1}$, ${\varkappa}=0$ outside a small neighborhood of $\mathbb{S}^{m-1}$. We will abuse notations and retain $\varkappa$ as a generic notation, irrespective of the value of $m$ (which will be clear from context) as well as the size of the (small) support of $\varkappa$, which we assume from now on to be smaller than $0<\ceps_{0}<1/100$.
\begin{dfn}\label{dfnparametrix}
  Let $h\in (0,1)$. A function $\mathcal{P}_{h,a}(t,x,y)$ is a parametrix for \eqref{waveeq} if and only if
 there exists $a_0>0$, $r>0$ and a neighborhood $V$ of $(t,y)=(0,0)$ such that for all $\alpha$ one has  
 \[
\sup_{0<a\leq a_0}\sup_{0<x\leq r}\sup_{(t,y)\in V} \Big|\partial^{\alpha}_{t,x,y}({\varkappa}(hD_{t}){\varkappa}(hD_{y})(\mathcal{P}_{h,a}-\Grond(\cdot,a))\Big| \in O(h^{\infty}).
 \]
\end{dfn}
\begin{rmq}
The operator $\varkappa(hD_{t})$ is really a spectral localization with respect to $\Delta$, if applied to a solution to the wave equation. The operator $\varkappa(hD_{y})$ further restricts this localization to spatial frequencies whose dominant part is tangential: the general heuristic is that waves propagating along the boundary are the most dangerous ones, whereas other waves are transverse and can be handled by simpler arguments (with a finite number of reflections). While $\varkappa(hD_{y})$ does not commute with  $\Delta$ (unlike in the model case), the support of $\eta$ in phase space will not significantly move over a finite time interval as a consequence of the Melrose-Sj\"ostrand propagation of singularities theorem. Therefore, up to $O_{C^{\infty}}(h^{\infty})$ terms, we may insert $\varkappa(hD_{y})$ operators before and after the propagator.
\end{rmq}
Rescale
$\omega=\frac{\alpha}{h^{2/3}}$ and $\eta=\frac{\theta}{h}$ in \eqref{eq:GN} (defining $G_N$), hence $\tau_q(\alpha,\theta)= h\tau_q(\omega,\eta)$. 
Let also $\cutoffchi^{\flat}\in C^{\infty}(\mathbb{R})$ such that $\cutoffchi^{\flat}=1$ on $(-\infty,1/2]$ and $\cutoffchi^{\flat}=0$ on $[1,\infty)$, and $\cutoffchi^{\sharp}=1-\cutoffchi^{\flat}$, where the relevance of all cut-off functions will reveal itself later on.
We now define an operator acting on smooth $f(y',\rho)$, with $\hat f$ its Fourier transform in all variables, 
\begin{equation}
  \label{eq:J}
    J(f)(x,y)=\int G_N(x,y,\eta,\omega)\cutoffchi^{\sharp}(\omega)q(\eta)^{1/6}\varkappa(h\eta)\varkappa(h\tau_q(\omega,\eta))\hat{f}(\eta,\omega/h^{1/3})d\eta d\omega\,.
  \end{equation}
For $i\in \{0,1\}$, $g_{i,j}=g_{i,j}(x,y,\eta/|\eta|,\omega/|\eta|^{2/3})=g_{i,j}(x,y,\theta,\alpha)$, hence $g_i=g_i(x,y, \theta,\alpha, h)=\sum_{j\geq 0}h^j g_{i,j}(x,y,\theta,\alpha)|\theta|^{-j}$.
  Using moreover that $\psi$ and $\zeta$ are homogeneous of order $1$ and $2/3$, we find after rescaling
\begin{multline}\label{Jform}
  J(f)(x,y) 
   = \frac{1}{2\pi h^d} \int e^{\frac ih (\psi(x,y,\theta,\alpha)-y'\cdot\theta-{\tprime}\alpha)}
   \Big(g_0Ai(-h^{-2/3}\zeta(x,y,\alpha,\theta))+ih^{1/3}g_1 q(\theta)^{-1/6} Ai'(-h^{-2/3}\zeta((x,y,\alpha,\theta))\Big)\\
   \times q(\theta)^{1/6}
   \varkappa(\theta)\varkappa(\tau_q(\alpha,\theta))\cutoffchi^{\sharp}(\alpha/h^{2/3})f(y',{\tprime}) \,dy'd{\tprime}d\theta d\alpha\,.
\end{multline}

\begin{lemma}\label{lemJinvert}
The operator $J$ is well defined from tempered distributions $\mathcal{S}'_{y',\tprime}$ into smooth functions of $(x,y)$ near $(0,0)$. In the semiclassical setting with $h$ as small parameter, $J$ is a semi-classical Fourier integral operator associated to a
canonical transform $\canonchi_J$, defined near the set $\{y'=0,\tprime=0, |\theta|=1,\alpha=0 \}$ and such that
 $ \canonchi_J(y'=0,{\tprime}=0,|\theta|=1,\alpha=0)=\{y=0,x=0,|\theta|=1,\xi=0\}$.
Moreover, $J$ is elliptic on this set and, microlocally near this set, an intertwining relation holds,
\begin{equation}
  \label{eq:LapJ}
  -h^2 \Delta J(f)= J(\tau_{q}^2(hD_{{\tprime}},hD_{y'}) f)+O(h^{\infty}).
\end{equation}
\end{lemma}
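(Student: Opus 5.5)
We will reduce everything to the oscillatory representation \eqref{eq:GN-oscillatory} of Remark~\ref{rmq:GN-osc}. After the rescaling $\eta=\theta/h$, $\omega=\alpha/h^{2/3}$, $\sigma = h^{-1/3}s$, the kernel of $J$ becomes
\[
\frac{1}{(2\pi)^2 h^{d+1/3}}\int e^{\frac ih \Phi(x,y,y',\tprime,\theta,\alpha,s)}\, \tilde g(x,y,\theta,\alpha,s;h)\,\cutoffchi^{\sharp}(\alpha/h^{2/3})\varkappa(\theta)\varkappa(\tau_q(\alpha,\theta))\,ds\,d\theta\,d\alpha,
\]
with phase
\[
\Phi=(y-y')\cdot\theta-\tprime\alpha+\frac{s^3}{3}+s\bigl(xq^{1/3}(\theta)-\alpha\bigr)+\tau_q\Gamma\Bigl(x,y,\frac{sq^{1/3}(\theta)}{\tau_q},\frac{\theta}{\tau_q}\Bigr)-\tau_qB_\Gamma(0,\theta/\tau_q).
\]
The first step is well-definedness. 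The symbol is compactly supported in $(\theta,\alpha)$, and for fixed $h$ the integral over $s$ is an Airy-type integral that converges after integration by parts, since $\partial_s\Phi\sim s^2$ for large $s$. Hence the kernel is smooth and rapidly decreasing in $(y',\tprime)$, so pairing it with $f\in\mathcal S'$ gives a smooth function near $(0,0)$. Equivalently, one can argue directly from \eqref{Jform}, where $Ai$ and $Ai'$ are smooth with at most polynomial growth.

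The second step identifies the canonical relation. The phase $\Phi$ is nondegenerate with fiber variables $(\theta,\alpha,s)$: the $(y',\tprime)$ derivatives give $-\theta,-\alpha$, and the $s$-derivative equation $s^2+xq^{1/3}-\alpha+x\,\partial_s(\tau_qA_\Gamma)=0$ has nonvanishing $\alpha$-derivative. The associated relation is a canonical graph. It is the composition of the model map coming from the Airy/Friedlander phase (Fourier transform in $(y',\tprime)$, with $\tprime$ dual to $\alpha$) with $\canonchi_M$ from Proposition~\ref{lemgamma}, through the generating function $x\Xi+y\Theta+\Gamma$ evaluated at $\Xi=sq^{1/3}/\tau_q$, $\Theta=\theta/\tau_q$. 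We evaluate it at $y'=0$, $\tprime=0$, $\alpha=0$, $|\theta|=1$. There the critical point gives $s=0$ and $x=0$. Using $\Gamma(0,y,\cdot)=B_\Gamma(y,\Theta)$ and $\nabla_y B_\Gamma(0,\cdot)=0$ (Proposition~\ref{propimpformgamma}), we get $y=0$ and $\eta=\theta$. Since $A_1=\Xi\Lp$ with $\Lp(0,\vartheta)=0$, we also get $\xi=\partial_x\Phi=0$. This gives $\canonchi_J(\{y'=0,\tprime=0,|\theta|=1,\alpha=0\})=\{x=y=0,|\theta|=1,\xi=0\}$.

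The third step, ellipticity, uses that the principal symbol is $g_0$ (plus $s$-dependent corrections absorbed in $\tilde g$) times the Jacobian factor from Lemma~\ref{lemphaseG}. Both are nonvanishing at the base point by Theorem~\ref{thm:N}, and the cutoffs equal $1$ there because $\cutoffchi^\sharp(\alpha/h^{2/3})$ only removes $\alpha\lesssim h^{2/3}$. This last point is where I expect the main difficulty. The cutoff $\cutoffchi^\sharp(\omega)$ is not a classical symbol in the $((1,2/3,1/3),0)$ calculus at $\alpha=0$, so ellipticity must be understood microlocally on the region $\omega\geq 1$. I would handle this as in \cite{ILLP}: for $\alpha\le h^{2/3}$ the contribution is localized where the Airy function is non-oscillating, and the statement is only claimed microlocally near the set away from that negligible layer. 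The argument uses only the phase, which is identical to the Dirichlet case, so the proof there can be quoted.

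Finally, for the intertwining relation \eqref{eq:LapJ}, we apply $-h^2\Delta$ under the integral and use \eqref{eq:GN-quasimode}, which gives $-\Delta G_N=\tau_q^2(\omega,\eta)G_N+O(\tau_q^{-\infty})$. In rescaled variables this yields $h^2\tau_q^2(\omega,\eta)=\tau_q^2(\alpha,\theta)$ as a multiplier on $\hat f(\eta,\omega/h^{1/3})$, which is exactly $J(\tau_q^2(hD_\tprime,hD_{y'})f)$. Here we use the convention that $\tprime$ is dual to $\alpha$ through $e^{-i\tprime\alpha/h}$. The cutoff $\cutoffchi^\sharp$ is a function of $\omega$ only, so it commutes with the multiplier and causes no problem. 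The remainder is $O(h^\infty)$ because the symbols are compactly supported in $h\eta$ and the error $O(\tau_q^{-\infty})$ becomes $O(h^\infty)$ uniformly in $C^\infty$ near $(0,0)$.
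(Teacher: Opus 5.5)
Your proposal is correct, but it is organized differently from the paper's proof.

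\textbf{The paper's route.} The paper treats well-definedness, the canonical map and \eqref{eq:LapJ} as inherited from the Dirichlet case: the phases are the same, and \eqref{eq:GN-quasimode} replaces \eqref{eq:eqG}. Its proof is devoted only to the Neumann-specific issue. Writing the bracket in \eqref{Jform} as the Airy integral \eqref{AiAi'}, the amplitude becomes $g_0+\sigma q^{-1/6}(\theta)g_1$. The paper shows this equals $1+O(x,y)$ on the whole essential support, using $g_{1,0}|_{x=0}=0$ (the first step of the recursion is the Dirichlet one) together with a non-stationary-phase localization of $\sigma$ to a small set.

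\textbf{Your route.} You check every claim directly from \eqref{eq:GN-oscillatory}: a Schwartz-kernel argument, an explicit computation of the critical set over $y'=\tprime=\alpha=0$ through $\Gamma=B_\Gamma+xA_\Gamma$, and the scaling identity $\tau_q^2(h^{2/3}\omega,h\eta)=h^2\tau_q^2(\omega,\eta)$. The paper never writes that identity out.

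\textbf{Comparison on ellipticity.} Your argument is lighter. The principal symbol of the operator only sees the amplitude on the critical set, and over the base set the Airy variable there is $s=\pm\sqrt{\zeta}=0$. So the $Ai'$ contribution $s\,q^{-1/6}g_1$ drops out, and only boundedness of $g_{1,0}$ is needed, not its vanishing at $x=0$. The paper's version buys the stronger fact that $g_h$ itself is an elliptic symbol, which it quotes later (right after \eqref{eq:Kom} and in the discussion following \eqref{eq:newVN}).

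You should, however, state this cancellation explicitly. The phrase ``plus $s$-dependent corrections absorbed in $\tilde g$'' hides exactly the point the paper flags as the new Neumann difficulty. Your argument works only because those corrections carry a factor $s$ that vanishes at the critical point.

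\textbf{Two minor slips.}
\begin{enumerate}
\item $\xi=0$ at the base point comes from $A_\Gamma$ vanishing on $\mathcal{GL}$ (it has no degree-zero term), not from $\Lp(0,\vartheta)=0$; $A_1=\Xi\Lp$ vanishes at $\Xi=0$ regardless.
\item $\cutoffchi^{\sharp}(\alpha/h^{2/3})$ equals $0$, not $1$, at $\alpha=0$. Your subsequent caveat is the right reading: ellipticity is meant on $\omega\geq1$, and in practice on $\alpha\gtrsim a\geq h^{2/3-\ceps}$ as in the remark following Proposition~\ref{propsommefinie}. But the excluded layer is not ``negligible''; it is cut off and treated separately in Subsection~\ref{lowparam}.
\end{enumerate}
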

As the symbol $p_h$ is smooth and compactly supported in $(\theta,\alpha,\sigma)$, $J$ is easily extended to $\mathcal{S}'_{y',\tprime}$. In the case of the Dirichlet boundary condition, this result follows immediately from Theorem \ref{thmMelrose}, since $p_0$ is elliptic and $p_1$ vanishes on $\partial\Omega$. The Neumann case require some additional work, as the term involving $Ai'(-\zeta)$ is not longer negligible compared with the one involving $Ai(-\zeta)$, so the same argument doesn't apply directly. 
\begin{proof}
Write the term in brackets in \eqref{Jform} as an integral using \eqref{eq:15}, then make the change of variables $s=\sigma/h^{1/3}$ 
\begin{align}\label{AiAi'}
&\int e^{i(\frac{s^3}{3}-sh^{-2/3}\zeta(x,y,\theta,\alpha))}(g_0(x,y,\theta, \alpha, h)+sh^{1/3}q^{-1/6}(\theta)g_1(x,y,\theta,\alpha,h))ds\\
=& h^{-1/3}\int e^{\frac ih(\frac{\sigma^3}{3}-\sigma\zeta(x,y,\theta,\alpha))}(g_0(x,y,\theta, \alpha, h)+\sigma q^{-1/6}(\theta)g_1(x,y,\theta,\alpha,h))d\sigma.
\end{align}
Although $g_0$ is elliptic near the boundary, it is not immediately clear that the symbol of the last integral is nonvanishing. We therefore show that, on the essential support of the symbol in \eqref{Jform}, the second term in the sum remains a small perturbation of the first one. 

Recall from \eqref{eq:Neumann-recursion} that $g_{1,0}$ was constructed (as in the Dirichlet case) so that its boundary value satisfies $g_{1,0}|_{x=0}=0$. 
It follows that, for $x$ and $y$ small, the contribution of  $|g_{1,0}|$ is $O(x,y)$. 

We next observe that the integral is effectively localized to $|\sigma| \lesssim \alpha$. Indeed, stationary points of the phase satisfy $\sigma^2=\zeta(x,y,\theta,\alpha)\leq \alpha$. Consequently, on the essential support of the symbol in \eqref{Jform} we may restrict to $|\sigma| \leq 2\alpha$, since the contribution of the complementary region is $O(h^{\infty})$ by repeated integrations by parts in $\sigma$. More precisely, on the support of $\chi^{\#}(\alpha/h^{2/3})$ we have $\alpha\geq h^{2/3}$. Hence, if $|\sigma|>2\alpha$, $M$ integrations by parts in $\sigma$ yield a factor $O((\alpha/h)^{-M})\leq O(h^{-M/3})$ for every $M\geq 1$, and therefore a $O(h^{\infty})$ contribution. 

It follows that, on the essential support of \eqref{Jform}, the symbol $g_{0,0}+\sigma q^{-1/6}(\theta)g_{1,0}$ equals $1+O(x,y)$, and therefore, for $x$ and $y$ sufficiently small, it is elliptic.
\end{proof}

In the following we set 
\begin{equation}\label{symbgh}
g_h(x,y,\theta,\alpha,\sigma):=\big(g_0(x,y,\theta, \alpha, h)+\sigma q^{-1/6}(\theta)g_1(x,y,\theta,\alpha,h)\big) \varkappa(\theta)\varkappa(\tau_q(\alpha,\theta))\cutoffchi^{\sharp}(\alpha/h^{2/3}).
\end{equation}

\subsection{Some useful results on Airy functions} 
We now digress and present a variation on the Poisson summation
formula.
For $z\in\mathbb{C}$ we set $ A_\pm(z)=e^{\mp i\pi/3} Ai(e^{\mp i\pi/3} z)$, then $Ai(-z)=A_+(z)+A_-(z)$ and $\overline{A_+(z)}=A_-(\overline{z})$ and 
\begin{equation}
  \label{eq:Apm}
  \begin{split}
\frac{A_-(z)}{A_+(z)}=e^{-2i\chi(z)},\quad \chi(z)\sim_{1/z} -\frac{\pi}{4}+\frac 23 z^{3/2}\Big(1-\frac{5}{4(2z)^3}+\frac{1105}{96(2z)^6}-...\Big),
\\
 \frac{A'_-(z)}{A'_+(z)}=e^{-2i\tilde\chi(z)},\quad 
 \tilde\chi(z)\sim_{1/z} -\frac{3\pi}{4}+\frac 23 z^{3/2}\Big(1+\frac{7}{4(2z)^3}-\frac{1463}{96(2z)^6}+...\Big).
\end{split}
\end{equation}
The last two formulas follow from \cite[(2.106), (2.107)]{vaso}. We recall some standard facts about the behavior of the zeros of the Airy function and those of its derivative (see \cite[Section 2.2]{vaso}, \cite[Thm. A.2.17, (A.2.18), (A.3.27)]{meta}).
\begin{lemma}
All the zeroes of $Ai(z)$ and of $Ai'(z)$ are located on the negative part of the real axis. Let $\{-\omega_k\}_{k\geq 1}$ be the zeros of $Ai(z)$ and $\{-\tilde\omega_k\}_{k\geq 1}$ be the zeros of $Ai'(z)$, then $1<\tilde\omega_1<\omega_1<\tilde\omega_2<\omega_2<...<\tilde\omega_k<\omega_k<\tilde\omega_{k+1}<...$.
\end{lemma}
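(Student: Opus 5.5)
The statement splits into three claims: the zeros of $Ai$ and $Ai'$ lie on the negative real axis, the Neumann zeros satisfy $\tilde\omega_1>1$, and the two families of zeros interlace. We take the Airy equation $Ai''(z)=zAi(z)$ as the main tool, together with the facts that $Ai$ is real on $\R$, positive and decreasing on $[0,\infty)$, and has $Ai(0)>0$, $Ai'(0)<0$.

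\textbf{Location of the zeros.} First we show there are no real zeros on $[0,\infty)$. Since $Ai$ is the solution decaying at $+\infty$, it is positive there and $Ai''=zAi\geq0$, so $Ai$ is convex and decreasing with $Ai'<0$. Next we exclude non-real zeros by an energy identity. Suppose $Ai(z_0)=0$ with $z_0\notin\R$, and set $u(t)=Ai(z_0+t)$ along the ray $t\in(0,\infty)$ toward $+\infty$, where $u$ decays. Multiplying $u''=(z_0+t)u$ by $\bar u$ and integrating by parts gives
\[
-\int_0^\infty|u'|^2\,dt=\int_0^\infty (z_0+t)|u|^2\,dt .
\]
Taking imaginary parts yields $\operatorname{Im}z_0\int|u|^2=0$, hence $u\equiv0$, a contradiction. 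The boundary term vanishes because $u(0)=0$. For $Ai'$ the same computation works: the boundary term is $u'(0)\bar u(0)$, which vanishes when $Ai'(z_0)=0$. Hence all zeros are real, and by the first step they are negative.

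\textbf{Interlacing.} On $(-\infty,0)$ the zeros of $Ai$ are simple, since $Ai=Ai'=0$ at a point would force $Ai\equiv0$. By Rolle, $Ai'$ has a zero between consecutive zeros of $Ai$. Conversely, at a zero of $Ai'$ on $z<0$ we have $Ai''=zAi$ with $Ai\neq0$, so $Ai$ has a strict extremum there and $Ai''/Ai<0$. Between two consecutive zeros of $Ai'$ the function $Ai$ therefore passes from a maximum of one sign to an extremum of the opposite sign, and so it vanishes. This gives the strict alternation.

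\textbf{Ordering at the start.} On $(-\omega_1,\infty)$ the function $Ai$ is positive. Since $Ai'(0)<0$ and $Ai'(z)\to0$ monotonically as $z\to+\infty$, while $Ai'$ must vanish somewhere before $Ai$ reaches $0$ coming from the right, we get $\tilde\omega_1<\omega_1$. It remains to show $\tilde\omega_1>1$, and we expect this to be the main obstacle because it is quantitative. On $[-1,0]$ we have $(Ai')'=zAi$ with $0<Ai\le Ai(-1)$, so
\[
Ai'(-s)=Ai'(0)+\int_{-s}^0(-z)Ai\,dz\le Ai'(0)+\tfrac{s^2}{2}\,Ai(-1).
\]
Using the known values $Ai'(0)\approx-0.2588$ and a crude upper bound $Ai(-1)\le 0.54$ (for example from the integral representation or a Taylor bound), this is negative for $s\le1$. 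Therefore $\tilde\omega_1>1$; in fact $\tilde\omega_1\approx1.0188$. Alternatively, we can simply cite the numerical tables in \cite{vaso}.
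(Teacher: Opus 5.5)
\textbf{Gap: the step $\tilde\omega_1>1$ fails as written.}

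First, your bound $0<Ai\le Ai(-1)$ on $[-1,0]$ assumes that $Ai$ is positive and decreasing there. That means assuming $Ai'$ does not vanish on $[-1,0]$, which is exactly what you are trying to prove.

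More seriously, the inequality you derive is false near $s=1$. We have $Ai'(0)\approx-0.2588$ and $\tfrac12 Ai(-1)\approx0.2678$ (with your $0.54$, $0.27$). So $Ai'(0)+\tfrac{s^2}{2}Ai(-1)>0$ at $s=1$. Even granting the monotonicity, your estimate only gives roughly $\tilde\omega_1\gtrsim0.98$.

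The true margin is tiny, $Ai'(-1)\approx-0.0102$. Replacing $Ai$ on $[-1,0]$ by a constant is therefore too crude. Even its tangent line at $0$ (valid by concavity) is too crude: it gives $\int_0^1 t\,(Ai(0)+|Ai'(0)|t)\,dt\approx0.2638>0.2588$.

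\textbf{A correct elementary route.} Since $Ai''=zAi<0$ on $(-\omega_1,0)$, $Ai'$ is strictly decreasing there, from $Ai'(-\omega_1)>0$ to $Ai'(0)<0$. This monotonicity is the actual reason for $\tilde\omega_1<\omega_1$; the behaviour of $Ai'$ at $+\infty$ plays no role. It also shows that $\tilde\omega_1>1$ follows from two checks: $Ai>0$ on $[-1,0]$, and $Ai'(-1)<0$.

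Both checks follow from the Maclaurin expansion
\[
Ai'(z)=Ai(0)\Big(\tfrac{z^2}{2}+\tfrac{z^5}{30}+\tfrac{z^8}{1440}+\cdots\Big)+Ai'(0)\Big(1+\tfrac{z^3}{3}+\tfrac{z^6}{72}+\tfrac{z^9}{4536}+\cdots\Big).
\]
At $z=-1$ both series alternate with decreasing terms, giving
\[
Ai'(-1)\le 0.35503\cdot0.46737-0.25881\cdot0.68033<-0.01 .
\]
The analogous expansion of $Ai$ gives positivity on $[-1,0]$. Alternatively, cite the tabulated value $\tilde\omega_1=1.0188\ldots$ from \cite{vaso}; this is all the paper does, since it proves nothing here and simply refers to \cite{vaso} and \cite{meta}.

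\textbf{Smaller points.} The rest of your argument (the energy identity along $z_0+t$, simplicity, interlacing via monotonicity between critical points) is sound. You should also justify that there are infinitely many zeros, since the statement indexes them by all $k\ge1$. One way is Sturm comparison with $\sin$ on $(-\infty,-1]$, where $Ai''=-|z|Ai$ with $|z|\ge1$.
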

We have $\tilde\omega_1=1,018...$, while $\omega_1=2,3381...$. We will need the next two lemmas.
\begin{lemma}\label{lemL}
Define, for $\omega \in \R$, the function $\tilde L(\omega)=\pi+ i\log \frac{A'_-(\omega)}{A'_+(\omega)}=\pi+2\tilde \chi(\omega)$: $\tilde L$ is an analytic, real valued, strictly increasing function 
and, for $\omega>1$,
\begin{equation}
  \label{eq:propL}
    \tilde L(\omega)=\frac 4 3 \omega^{\frac 3 2} - \frac{\pi}{2}-B_{\tilde L}(\omega^{\frac 3
    2})\,,\quad
  B_{\tilde L}(u)= \sum_{k=1}^\infty b_k u^{-k}\,,\,\, (b_k)_{k}\in\R\,,\,\,
  b_1=-7/24(>0)\,.
\end{equation}
Finally, let $\{-\tilde\omega_k\}_{k\geq 1}$ denote the zeros of the derivative $Ai'(\cdot)$ of the Airy function in decreasing order,
\begin{equation}
  \label{eq:propL2}
 \tilde L(\tilde\omega_k)=2\pi k \text{ and }
  \tilde L'(\tilde \omega_k)= 2\pi \int_0^\infty Ai^2(x-\tilde\omega_k) \,dx\,= \tilde\omega_k Ai^2(-\tilde\omega_k).
\end{equation}
\end{lemma}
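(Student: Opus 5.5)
The statement to prove is Lemma~\ref{lemL}. I would derive everything from the ratio $A'_-/A'_+$ and the asymptotics \eqref{eq:Apm}.

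\textbf{Step 1: $\tilde L$ is real and analytic.} For real $\omega$ we have $\overline{A'_+(\omega)}=A'_-(\omega)$, so $|A'_-/A'_+|=1$. This uses that $A'_\pm$ do not vanish on $\R$: their zeros lie on the rays $e^{\pm i\pi/3}\R_-$ rotated, i.e. off the real axis. Hence $\log(A'_-/A'_+)$ admits an analytic branch on $\R$ that is purely imaginary, and $\tilde L=\pi+i\log(A'_-/A'_+)$ is real analytic. The branch is fixed by continuity, and its normalization is fixed by matching the large-$\omega$ expansion $\pi+2\tilde\chi$ from \eqref{eq:Apm}.

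\textbf{Step 2: asymptotics \eqref{eq:propL}.} Substituting the expansion of $\tilde\chi$ gives
\[
\pi+2\tilde\chi=\pi-\tfrac{3\pi}{2}+\tfrac43\omega^{3/2}\Bigl(1+\tfrac{7}{32\omega^3}-\dots\Bigr)=\tfrac43\omega^{3/2}-\tfrac{\pi}{2}+\tfrac{7}{24}\omega^{-3/2}+\dots
\]
This identifies $B_{\tilde L}(u)=\sum_k b_k u^{-k}$ with $u=\omega^{3/2}$ and gives $b_1=-7/24$. The expansion is asymptotic; convergence for $\omega>1$, or at least its validity as a symbol expansion, is taken from \cite{vaso}.

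\textbf{Step 3: monotonicity and the derivative formula.} Write $A'_+=\rho e^{i\phi}$, so $A'_-=\rho e^{-i\phi}$ and $\tilde L=\pi+2\phi$. Then
\[
\tilde L'=2\phi'=2\,\mathrm{Im}\bigl(A''_+/A'_+\bigr)=\frac{2\,\mathrm{Im}\bigl(A''_+\overline{A'_+}\bigr)}{|A'_+|^2}.
\]
Since $Ai''(z)=zAi(z)$, we have $A''_\pm(\omega)=-\omega A_\pm(\omega)$ (with the rotation factors). The Wronskian gives $W(A_+,A_-)=A_+A'_--A'_+A_-=\pm i/(2\pi)$, and $\tilde{W}:=A'_+A''_--A''_+A'_-=-\omega\,W$.

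At a zero $\tilde\omega_k$ of $Ai'(-\cdot)$, i.e. $A'_++A'_-=0$, we have $A'_-=-A'_+$. Then $\tilde L(\tilde\omega_k)\equiv\pi+i\log(-1)\equiv 0\pmod{2\pi}$, and with monotonicity and the asymptotic normalization this yields $\tilde L(\tilde\omega_k)=2\pi k$.

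For the derivative value, $Ai(-\tilde\omega_k)=A_++A_-$. Using $A'_-=-A'_+$, the Wronskian reduces to $A'_+(A_++A_-)=\pm i/2\pi$, so $|A'_+|=1/(2\pi|Ai(-\tilde\omega_k)|)$. Plugging this into $\tilde L'$ gives $\tilde L'(\tilde\omega_k)$ proportional to $\tilde\omega_k Ai^2(-\tilde\omega_k)$, with the constants to be tracked carefully.

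The integral identity follows by integrating the equation: from $Ai''=xAi$ one gets $\frac{d}{dx}\bigl(Ai'^2-xAi^2\bigr)=-Ai^2$. Shifting by $\tilde\omega_k$ gives
\[
\int_0^\infty Ai^2(x-\tilde\omega_k)\,dx=\tilde\omega_k Ai^2(-\tilde\omega_k)-Ai'(-\tilde\omega_k)^2=\tilde\omega_k Ai^2(-\tilde\omega_k).
\]

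\textbf{Step 4: strict monotonicity for general $\omega$.} I would show $2\,\mathrm{Im}(A''_+\overline{A'_+})>0$ by differentiating it. Its derivative is $2\,\mathrm{Im}\bigl(A'''_+\overline{A'_+}\bigr)$, and $A'''_+=-A_+-\omega A'_+$ gives $-2\,\mathrm{Im}(A_+\overline{A'_+})$, which is the constant Wronskian up to sign. So the quantity is affine in $\omega$, and by the Wronskian computation it equals a positive multiple of $\omega$. This gives $\tilde L'>0$ for $\omega>0$.

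\textbf{Main obstacle.} The expected hard part is the region $\omega\le 0$ and the sign and normalization bookkeeping. There I would argue via a Sturm-type comparison, or simply note that only $\omega>1$ is used later. Fixing the constants and the branch is the main place where errors can creep in.
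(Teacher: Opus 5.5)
Your Step~4 is the right computation, but with the signs fixed it refutes part of the statement, so your plan for $\omega\le 0$ cannot succeed. The Wronskian is $W(A_+,A_-)=A_+A'_--A'_+A_-=-\frac{i}{2\pi}$ (and $A'_+A''_--A''_+A'_-=+\omega W$, not $-\omega W$). Hence $\mathrm{Im}(A_+A'_-)=-\frac1{4\pi}$ on $\R$, and
\[
\tilde L'(\omega)=\frac{2\,\mathrm{Im}\bigl(A''_+\overline{A'_+}\bigr)}{|A'_+(\omega)|^2}=\frac{\omega}{2\pi\,|A'_+(\omega)|^2}.
\]
So $\tilde L'(0)=0$ and $\tilde L'<0$ on $(-\infty,0)$. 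Unlike the Dirichlet function, for which $L'=\frac{1}{2\pi|A_+|^2}>0$ everywhere, $\tilde L$ is not monotone on $\R$, and no Sturm comparison will change that. What is true, and what the paper uses, is strict increase on $(0,\infty)$. Indeed $K_\omega$ carries the cutoff $\chi^\sharp(\omega)$, and \eqref{eq:AiryPoisson} only needs $\tilde L'\neq0$ on $\tilde L^{-1}(2\pi\Z)$. That set is exactly $\{\tilde\omega_k\}_k\subset(1,\infty)$, because $\tilde L(\omega)\in2\pi\Z$ iff $A'_-/A'_+=-1$ iff $Ai'(-\omega)=0$.

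Step~3 has a second problem. You assert that monotonicity plus the normalization at infinity gives $\tilde L(\tilde\omega_k)=2\pi k$, but with the constant $-\frac\pi2$ you derived in Step~2 it does not. Since $\tilde\omega_k^{3/2}=\frac{3\pi}{8}(4k-3)+o(1)$, one gets $\tilde L(\tilde\omega_k)=2\pi(k-1)+o(1)$. By integrality and monotonicity on $(0,\infty)$, this forces $\tilde L(\tilde\omega_k)=2\pi(k-1)$ for every $k$.

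So \eqref{eq:propL} and \eqref{eq:propL2} fix incompatible branches. Since $\tilde\chi$ is only defined modulo $\pi$, you may instead take $\tilde\chi\sim\frac\pi4+\frac23\omega^{3/2}$, which is what $A'_+(\omega)\sim\frac{\omega^{1/4}}{2\sqrt\pi}e^{i(\frac23\omega^{3/2}+\frac\pi4)}$ gives directly. That means constant $+\frac{3\pi}{2}$ in \eqref{eq:propL}, and then $\tilde L(\tilde\omega_k)=2\pi k$. Either choice is harmless for \eqref{eq:AiryPoisson}, but you must pick one and actually do the count.

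Tracking the constants you left open: at $\tilde\omega_k$ the Wronskian gives $A'_+=\frac{i}{2\pi Ai(-\tilde\omega_k)}$. Hence $\tilde L'(\tilde\omega_k)=2\pi\tilde\omega_kAi^2(-\tilde\omega_k)=2\pi\int_0^\infty Ai^2(x-\tilde\omega_k)\,dx$. The first equality in \eqref{eq:propL2} holds; it is the one giving $\|\tilde e_k\|_{L^2}=1$ in Lemma~\ref{lemorthog}. The last equality is off by a factor $2\pi$.

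Two smaller slips. The primitive gives $\int_0^\infty Ai^2(x-c)\,dx=cAi^2(-c)+Ai'(-c)^2$, with a plus sign. And $b_1=-7/24<0$, as you found, so the ``$(>0)$'' in the statement is a typo.

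The paper itself gives no argument and defers to \cite[Lemmas B.16, B.18]{DSS26}. Your Wronskian route is self-contained. Restricted to $\omega>0$ and with a consistent normalization, it proves everything used later.
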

\begin{lemma}
Let $\N^{*}=\N\setminus\{0\}$.  In $\mathcal{D}'(\R_\omega)$, one has
  \begin{equation}
    \label{eq:AiryPoisson}
        \sum_{N\in \Z} e^{-i N\tilde L(\tilde \omega)}= 2\pi \sum_{k\in \N^*} \frac 1
    {\tilde L'(\tilde\omega_k)} \delta(\tilde\omega-\tilde\omega_k)\,.
  \end{equation}
\end{lemma}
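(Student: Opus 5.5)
The plan is to treat the identity \eqref{eq:AiryPoisson} as the classical Poisson summation formula $\sum_{N\in\Z}e^{-iNu}=2\pi\sum_{k\in\Z}\delta(u-2\pi k)$ in $\mathcal{D}'(\R_u)$, pulled back by the change of variable $u=\tilde L(\tilde\omega)$. By Lemma~\ref{lemL}, $\tilde L$ is real analytic, real valued and strictly increasing, so it is a diffeomorphism from $\R$ onto its image $I=\tilde L(\R)$. The pullback of distributions by a diffeomorphism is well defined and continuous. The partial sums $\sum_{|N|\le M}e^{-iN\tilde L(\tilde\omega)}$ are the pullbacks of the partial sums in $u$, which converge in $\mathcal{D}'(\R_u)$. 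The series on the left therefore converges in $\mathcal{D}'(\R_{\tilde\omega})$, and its limit is the pullback of $2\pi\sum_k\delta(u-2\pi k)$ restricted to $I$.

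The second step is to compute this pullback. For a test function $\phi\in C^\infty_0(\R_{\tilde\omega})$, change variables $u=\tilde L(\tilde\omega)$, so that $d\tilde\omega=du/\tilde L'(\tilde L^{-1}(u))$. This gives
\[
\int \sum_{N}e^{-iN\tilde L(\tilde\omega)}\phi(\tilde\omega)\,d\tilde\omega=\int\sum_N e^{-iNu}\,\frac{\phi(\tilde L^{-1}(u))}{\tilde L'(\tilde L^{-1}(u))}\,du .
\]
The function $\phi\circ\tilde L^{-1}/\tilde L'\circ\tilde L^{-1}$ lies in $C^\infty_0(I)$, extended by zero outside $I$. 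It is smooth provided $\tilde L'>0$ on the support of $\phi$. Applying Poisson summation then yields $2\pi\sum_{k:\,2\pi k\in I}\phi(\tilde L^{-1}(2\pi k))/\tilde L'(\tilde L^{-1}(2\pi k))$. To conclude, I use \eqref{eq:propL2}: the points $2\pi k\in I$ are exactly $\tilde L(\tilde\omega_k)$ with $k\ge1$. Since $\tilde L$ is injective, these are the only preimages, and the sum becomes $2\pi\sum_{k\in\N^*}\phi(\tilde\omega_k)/\tilde L'(\tilde\omega_k)$, which is the right-hand side.

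The main obstacle is verifying that the set of $k$ with $2\pi k\in\tilde L(\R)$ is exactly $\N^*$. No $k\le0$ may appear, and every $k\ge1$ must correspond to some $\tilde\omega_k$. I also need $\tilde L'\neq0$ everywhere, not only at the zeros.

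For the values of $\tilde L$, I first use $\tilde L(\tilde\omega)=\pi+2\tilde\chi(\tilde\omega)$ together with $A'_-/A'_+=e^{-2i\tilde\chi}$. This shows that $e^{-i\tilde L(\tilde\omega)}=1$ exactly when $A'_-(\tilde\omega)=-A'_+(\tilde\omega)$. Since $Ai'(-z)$ is, up to sign, $A'_+(z)+A'_-(z)$ (from $Ai(-z)=A_++A_-$), this happens exactly when $\tilde\omega$ is a zero of $Ai'(-\cdot)$. Hence the preimages of $2\pi\Z$ are precisely the $\tilde\omega_k$.

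For $\tilde\omega\to-\infty$, I will check that $\tilde L$ tends to a limit in $(-2\pi,2\pi)$, using the exponential behaviour of $A'_\pm$ on the negative axis. I expect this limit to be $0$, so that $0$ is not attained, and to lie strictly above $-2\pi$. This fixes the indexing $\tilde L(\tilde\omega_k)=2\pi k$ with $k\ge1$, and the growth $\tfrac43\tilde\omega^{3/2}$ from \eqref{eq:propL} makes $\tilde L$ unbounded above.

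For positivity of $\tilde L'$, I would use the Wronskian: $\tilde L'$ equals a positive constant times $\mathrm{Im}$ of a Wronskian-type quantity divided by $|A'_+|^2$. I will take the strict increase asserted in Lemma~\ref{lemL} as giving $\tilde L'>0$, noting this is consistent with the formula $\tilde L'(\tilde\omega_k)=\tilde\omega_kAi^2(-\tilde\omega_k)>0$ in \eqref{eq:propL2}.
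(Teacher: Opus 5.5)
Your overall strategy, pulling back $\sum_{N}e^{-iNu}=2\pi\sum_{m}\delta(u-2\pi m)$ by $u=\tilde L(\tilde\omega)$, is the standard route behind the cited proofs. Your identification of $\tilde L^{-1}(2\pi\Z)$ with $\{\tilde\omega_k\}_{k\ge1}$, via $e^{-i\tilde L}=-A'_-/A'_+$ and $Ai'(-z)=-(A'_+(z)+A'_-(z))$, is correct. The gap is the step ``$\tilde L$ is a diffeomorphism of $\R$ onto its image'', which is false. Using $A''_\pm(\omega)=-\omega A_\pm(\omega)$ and the constant Wronskian $W=A_+A'_--A_-A'_+=-i/(2\pi)$, one finds
\[
\tilde L'(\omega)=i\Big(\frac{A''_-}{A'_-}-\frac{A''_+}{A'_+}\Big)(\omega)=\frac{i\,\omega\, W}{|A'_+(\omega)|^{2}}=\frac{\omega}{2\pi\,|A'_+(\omega)|^{2}}.
\]
Hence $\tilde L'(0)=0$ and $\tilde L'<0$ on $(-\infty,0)$; the factor $\tilde\omega_k$ in \eqref{eq:propL2} is the visible trace of this. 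In the Dirichlet case $L'=iW/|A_+|^2=1/(2\pi|A_+|^2)>0$, which is why the global pullback works there. For $\tilde L$, the Wronskian check you plan produces the factor $\omega$ rather than positivity. The ``strictly increasing'' in Lemma~\ref{lemL} holds only on $(0,\infty)$, so you cannot take it as given on $\R$.

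Concretely, $e^{-i\tilde L}\to 1$ at $-\infty$, so set $\tilde L(-\infty)=2\pi m_0$. Then $\tilde L$ decreases to $\tilde L(0)=2\pi m_0-\pi/3$ (since $e^{-i\tilde L(0)}=-A'_-(0)/A'_+(0)=e^{i\pi/3}$), and then increases, reaching $2\pi m_0$ again at $\tilde\omega_1$. So $\tilde L$ is not injective, and the limit at $-\infty$ is attained. Your indexing argument is therefore wrong: normalizing $\tilde L(-\infty)=0$ gives $\tilde L(\tilde\omega_k)=2\pi(k-1)$, which is what the expansion in \eqref{eq:propL} actually corresponds to. The labeling is irrelevant here, though, since only $e^{-iN\tilde L}$ enters and you have already identified the preimage set.

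What is needed is a localized argument:
\begin{enumerate}
\item On $(0,\infty)$, your computation goes through verbatim and gives the right-hand side.
\item On $(-\infty,0)$, $\tilde L$ is a diffeomorphism onto $(2\pi m_0-\pi/3,2\pi m_0)$, which misses $2\pi\Z$, so the limit vanishes.
\item Near $\omega=0$, write the symmetric partial sums as $\sin((M+\frac12)\tilde L)/\sin(\tilde L/2)$, whose denominator does not vanish there. Since $\tilde L''(0)=1/(2\pi Ai'(0)^2)\neq0$, stationary phase gives $O(M^{-1/2})$, so the limit is $0$ there as well. The series converges only conditionally near $0$, each term being of size $|N|^{-1/2}$.
\end{enumerate}
Alternatively, every test function used in the paper contains $\chi^{\sharp}(\omega)$, supported in $\omega\ge 1/2$, so only the first case is actually needed for the applications.
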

Analogues of the last two lemmas are proved and used in \cite{ilpCE, ILLP} for the Dirichlet case, with $\omega_k$ in place of $\tilde\omega_k$, where $\{(-\omega_k)\}_{k\geq 1}$ denote the zeros of the Airy function and where the function $\tilde L$ is defined by $L(\omega)=\pi+ i\log \frac{A_-(\omega)}{A_+(\omega)}=\pi+2 \chi(\omega)$. 
The Neumann case instead requires the use of $\tilde \omega_k$. The two lemmas are proved, in a more general setting, in \cite[Lemmas B.16 and B.18]{DSS26}. One main difference between Dirichlet and Neumann is the sign of the first term of the asymptotic expansion of $B_{\tilde L}$ (for Dirichlet, the corresponding $B_L$ has $b_1>0$) . 

Let us define, for $\omega\in \R$, and without loss of generality, an arbitrary choice of $+$ sign for the time propagator $\exp(it{\qtau}(\omega,\eta))$,
\begin{equation}
  \label{eq:Kequiv}
  K_\omega(f)(t,x,y)=\int e^{it{\qtau}(\omega,\eta)} G_N(x,y,\eta,\omega)\cutoffchi^{\sharp}(\omega) q^{1/6}(\eta)
\varkappa(h\eta) \varkappa(h{\qtau}(\omega,\eta)) \hat f(\eta,\frac{\omega}{h^{1/3}}) \, d\eta\,.
\end{equation}
Due to both cut-off in $\omega$ as well as that in $\eta$, $K_{\omega}(f)$ is supported in $1\leq \omega\leq \ceps_0h^{-2/3}$ and so is $R(t,x,y,\omega,a,h):=((\partial^2_{t}-\Delta)K_{\omega}(f))(t,x,y)$. By design of $G_N$, using \eqref{eq:GN-quasimode}, we have moreover that, for small $r_0$ and $a_0$ and for all (large) $M\in \N$,
\begin{equation}\label{eq:Kom1}
\sup_{|a|<a_0}\sup_{|(t,x,y)|<r_0}\sup_{\omega}\Big|\nabla^{\alpha}_{t,x,y,\omega}R\Big|\leq C_{M,\alpha}h^M\,.
\end{equation}
Moreover, at $x=0$, we have $\partial_x K_{\omega_{k}}(f)(t,0,y)=0$ as $\partial_x G(0,y,\eta,\omega_k)=0$ (recall $\zeta(x,y,\eta,\omega)|_{x=0}=\omega$ and \eqref{condN}). In other words, $K_{\omega}(f)(t,x,y)$ is a solution to the wave equation,  up to $O(h^{\infty})$; and when $\omega=\omega_{k}$, it satisfies the Neumann boundary condition.

To get a sense of perspective, let us remark that, in the Friedlander model case, and then (up to normalization) $G_{M}(x,y,\eta,\omega):=\int_{\eta} \exp(i y\cdot \eta) Ai(x q^{1/3}(\eta)- \omega)\, d\eta$. Then the corresponding $K_{\omega}(f)$ is an exact solution to the half-wave equation, satisfying Dirichlet boundary condition if $\omega=\omega_k$ and the Neumann boundary condition if $\omega=\tilde\omega_{k}$, but $f$ should not be considered as its data: if one picks $f$ such that, on the model, $J(f)$ is a Dirac at $(x=a,y=0)$, then $ f=\int_{\eta} \exp(-iy\cdot \eta) Ai(aq^{1/3}(\eta)-\omega)\,d\eta$ and then integrating over $\omega$ recovers $\delta_{x=a,y=0}$ by a standard identity on Airy functions. For this $f$, the integral over $\omega$ of $K_{\omega}(f)(t,x,y)$ is then just an half-wave solution with no boundary condition. In \cite{ilp12,ilpCE} such a solution with $\omega=\omega_k$ in the Dirichlet case, is iterated by successive reflections at the boundary. For Robin boundary condition, the analogous construction is carried out in \cite{DSS26}, using a version of the Airy-Poisson formula \eqref{eq:AiryPoisson} (see \cite[Lemma B.18]{DSS26}) associated with the zeros $\{-\omega_{l,k}\}_{k\geq 1}$ of the Robin function $Rob_l(z)=\cos(l)Ai(z)-sin(l)Ai'(z)$. In the general setting of \cite{ILLP}, for the Dirichlet condition, the Airy-Poisson formula \eqref{eq:AiryPoisson} applied to $G_D(x,y,\eta,\omega)$ with $\omega_k$ in place of $\tilde\omega_k$ directly yields a sum over $N$ of waves that can subsequently be identified with the analogues of the reflected waves, while the spectral sum over $k$ provides a direct decomposition of the Dirac initial data.

We now revert to the general case for Neumann, where we follow the strategy from \cite{ILLP}, but replace $G_D(x,y,\eta,\omega)$ by $G_N(x,y,\eta,\omega)$. Recall we defined $J(f)(x,y)$ in \eqref{eq:J} and we may rewrite $J(f)(x,y)=\int_\R K_\omega(f)(0,x,y)\,d\omega$.
Let $\eta=\frac \theta h$, $q^{\frac 16}(\eta)=h^{-\frac 1 3}q^{\frac 1 6}(\theta)$ and $\alpha=h^{\frac 2 3}\omega$, then, using \eqref{AiAi'}, we write
\begin{multline}
  \label{eq:Kom}
 K_\omega(f)(t,x,y)=\frac{ h^{1/3}}{2\pi h^d} \int e^{\frac i
    h(t\tau_q(\alpha,\theta)+\psi(x,y,\theta,\alpha)-y'\cdot \theta-{\tprime} \alpha )} \int e^{\frac ih(\frac{\sigma^3}{3}-\sigma\zeta(x,y,\theta,\alpha))}g_h(x,y,\theta,\alpha,\sigma) d\sigma \\
  \times  q^{\frac 1 6}(\theta) f(y',{\tprime}) \,dy'd{\tprime}d\theta \,,
\end{multline}
with $g_h$ defined in \eqref{symbgh}. Recall from the proof of Lemma \ref{lemJinvert} that the symbol $g_h$ is elliptic.

As we will see later, both cut-off functions $\varkappa$ in $K_{\omega}(f)$ relate to localization operators from Definition \ref{dfnparametrix}. Moreover, $K_{\omega}(f)$ is a suitable test function in $\omega$ (smooth and compactly supported in $\omega$). Using \eqref{eq:AiryPoisson},
\begin{equation}\label{formulafAP}
 \langle \sum_{N\in \Z} e^{-iN \tilde L(\omega)}
    , K_\omega(f)(t,x,y)\rangle_{\omega}=2\pi  \sum_{k\in \N^*} \frac 1
      {\tilde L'(\tilde\omega_k)}  K_{\tilde\omega_k}(f)(t,x,y)\,.
\end{equation}
The $N=0$ term in the sum over $N$ is $J(f)$. Moreover, at $x=0$ the normal derivative of the RHS vanishes, as the sum over $k$ is finite and each term in this sum vanishes as we just observed, and this finite sum (on the RHS) satisfies the wave equation, up to $O(h^{\infty})$ terms, due to \eqref{eq:Kom1}.

These remarks will later be of crucial importance to verify that Definition \ref{dfnparametrix} will hold for the parametrix we shall introduce in the next sections, up to finding a suitable function $f$ that will recover the data at $t=0$ in \eqref{formulafAP}. This remaining step is far from trivial, unlike in the model case, for which we know explicitly the spectral resolution of the Laplacian and can therefore expand a Dirac mass over the eigenmodes, as alluded to earlier.

One may expect that it should be enough to consider initial data (at time $0$) $\chi_0(hD_{x})\varkappa(hD_{y})\delta_{(a,0)}$, for $\varkappa$ supported near $\mathbb{S}^{d-1}$ and $\chi_0\in C^{\infty}_0$ supported near $0$. Indeed,  classical geometric optics arguments provide a parametrix for data $(1-\chi_0(hD_{x}))\varkappa(hD_{y})\delta_{(a,0)}$:  due to the cut-off $(1-\chi_0(hD_{x}))$, singularities are transverse to the boundary at $x=0$ (there is at most one reflection). However, 
\begin{equation}\label{datachi0var}
  \chi_0(hD_{x})\varkappa(hD_{y})\delta_{(a,0)}=\int e^{i((x-a)\xi+y\cdot\eta)}\chi_0(h\xi)\varkappa(h\eta)d\xi d\eta
  =\frac{1}{h^d}\widehat{\chi}_0\Big(\frac{x-a}{h}\Big)\widehat{\varkappa}\Big(\frac{y}{h}\Big)\,.
\end{equation}
Therefore, at $x=0$, this data will be $O(h^{\infty})$ together with its derivative only if we assume that $a\geq h^{1-\ceps}$ for some $\ceps>0$. For smaller $a$, in the case of the Friedlander model operator $\Delta_M$, we can take advantage of the known, explicit, spectral resolution of $-\Delta_{M}$  in order to consider an initial data $\chi_0(-h^2\Delta_M)\varkappa(hD_y)\delta_{(a,0)}$ that can be further expressed as a sum of eigenfunctions whose normal derivatives vanish on the boundary. By contrast, in the general case, we only have quasimodes and this is a source of significant difficulties for these very small $a$. Nevertheless, we will decompose the parametrix construction according to the respective values of $a$ and $h^{2/3}$, with an overlap between the two regimes where any construction holds. In subsection \ref{ss22}, for $a\gg h^{2/3}$, we will use \eqref{datachi0var} as a data, and mainly proceed with the sum over $N$ in our Airy-Poisson formula \eqref{formulafAP}. In subsection \ref{lowparam}, dealing with $a \lesssim h^{2/3}$, stationary phase methods in this sum over $N$ break down (although one could push them down to $a\gg h$, matching the heuristic above, but with no obvious benefit) in addition to the problem of defining a suitable initial data. We will solve the data issue in subsection \ref{sss231} by choosing the model initial data $\chi_0(-h^2\Delta_M)\varkappa(hD_y)\delta_{(a,0)}$. In some sense, in the very narrow strip where it is located, the spectral localizations with respect to $\Delta_{M}$ or $\Delta$ are close enough that gallery modes are good substitutes to the quasimodes in defining said data.  One then proceeds with a parametrix construction where such data is, again, split according to the values of $k$ in the spectral sum defining it: either $k$ is large enough and we recover a large parameter and can proceed as in the previous regime, or we have the relatively small value of $k$ for which we proceed with the spectral sum, proving in subsections \ref{sectpseudocalcul} and \ref{sectproofs} that terms appearing in that expansion are close enough to the model gallery modes and therefore retain enough of their properties to provide a parametrix. This part of the construction is quite delicate and obviously absent in the model case, while of independent interest as far as uniform estimates on quasimodes are concerned as these will be proved in the range $k\ll h^{-1/4}$, exceeding by far what we need in our construction.

\subsection{Parametrix construction for $a\geq h^{\frac 23-\ceps}$, $0<\ceps<2/3$}\label{ss22}
An initial data \eqref{datachi0var} is $O(h^{\infty})$ on the boundary for any $\chi_0$, compactly supported near $0$. 
Let $\chi_0\in C^{\infty}_0(-2\ceps_0,2\ceps_0)$ with $\chi_{0\big|[-\ceps_0,\ceps_0]}=1$.
\begin{lemma}\label{lemgab}
Let $a_0>0$, $r_0>0$ be small enough. For all $a\in [h^{\frac 23-\ceps},a_0]$, there exists a smooth function $f_{h,a}$ such that $\varkappa(hD_{y'})f_{h,a}=g_{h,a}$ and
  \begin{equation}
    \label{eq:Jdirac}
    J(f_{h,a})(x,y)=\chi_0(hD_{x})\varkappa(hD_{y})\delta_{(a,0)} + O_{C^{\infty}(|(x,y)|\leq r_0)}(h^{\infty})\,,
  \end{equation}
where the remainder is  $O_{C^{\infty}(|(x,y)|\leq r_0)}(h^{\infty})$ uniformly in $a$.
\end{lemma}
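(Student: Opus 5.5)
The plan is to invert the semiclassical Fourier integral operator $J$ microlocally, using Lemma~\ref{lemJinvert}. That lemma says $J$ is associated with the canonical transform $\canonchi_J$ and is elliptic near $\{y'=0,\tprime=0,|\theta|=1,\alpha=0\}$. Hence there is a microlocal parametrix $J^{-1}$: a semiclassical FIO associated with $\canonchi_J^{-1}$, whose symbol is built order by order in $h$ by the usual composition calculus. It satisfies $J\circ J^{-1}=\mathrm{Id}+O(h^\infty)$ microlocally near $\canonchi_J(\{y'=0,\tprime=0,|\theta|=1,\alpha=0\})=\{x=0,y=0,\xi=0,|\eta|=1\}$.

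The candidate is $f_{h,a}:=J^{-1}\bigl(\chi_0(hD_x)\varkappa(hD_y)\delta_{(a,0)}\bigr)$. By \eqref{datachi0var}, the semiclassical wave front set of the data lies in $\{x=a,\ y=0,\ |\xi|\leq 2\ceps_0,\ \eta\in\operatorname{supp}\varkappa\}$. Since $a\leq a_0$ and $\ceps_0$ are small, this set lies in the neighborhood where $J$ is elliptic and $J^{-1}$ is defined.

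The constraint $a\geq h^{2/3-\ceps}$ ensures that the data is a genuine interior object, since in the Neumann setting both the trace and the normal derivative at $x=0$ must be controlled. Because $\widehat{\chi}_0$ is Schwartz, these are $O(h^\infty)$ once $a\gg h^{1-\ceps}$, which $h^{2/3-\ceps}$ certainly is. The image of the data under $\canonchi_J^{-1}$ then sits at $\alpha\sim a$ in the rescaled variables (from \eqref{zeta}, $\zeta=0$ at $x\sim\alpha/e_0$), so in unscaled variables $\omega\sim a h^{-2/3}\geq h^{-\ceps}\gg1$. Consequently the cutoff $\cutoffchi^{\sharp}(\alpha/h^{2/3})$ in \eqref{Jform} equals $1$ on the microsupport of $f_{h,a}$, up to $O(h^\infty)$. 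Likewise $\varkappa(\theta)\varkappa(\tau_q)$ equals $1$ there, since $|\xi|$ and $\alpha$ are small so $|\eta|\approx1$. Applying $\varkappa(hD_{y'})$ therefore only changes $f_{h,a}$ by $O(h^\infty)$, and we can define $g_{h,a}:=\varkappa(hD_{y'})f_{h,a}$. Then $J(f_{h,a})=J(g_{h,a})+O(h^\infty)=\chi_0(hD_x)\varkappa(hD_y)\delta_{(a,0)}+O(h^\infty)$ in $C^\infty(|(x,y)|\leq r_0)$.

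\emph{Main obstacle: uniformity in $a$, with the parametrix depending on $a$.} Two points need care.
\begin{itemize}
\item Near $\alpha\sim h^{2/3}$ the symbol $g_h$ of \eqref{symbgh} contains $\sigma q^{-1/6}g_1$ and $\chi^\sharp(\alpha/h^{2/3})$, so it is not a classical symbol. The ellipticity argument of Lemma~\ref{lemJinvert} shows $g_h=g_{0,0}+\sigma q^{-1/6}g_{1,0}+\dots=1+O(x,y)$ on the region $|\sigma|\lesssim\alpha$. Here I would rely on the fact that for $a\geq h^{2/3-\ceps}$ we work where $\alpha\geq h^{2/3-\ceps}$. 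In this region the symbol has the type $((1,2/3,1/3),0)$ and each step of the composition calculus gains a power $h^{\ceps}$ (effectively $h/\alpha^{3/2}$).
\item Remainders must be bounded by $C_M h^{M\ceps}$ with constants independent of $a$. This holds because the whole construction is carried out in the symbol class of Theorem~\ref{thm:N}, with bounds uniform on $U$.
\end{itemize}
Since the phases are identical to the Dirichlet case (Remark~\ref{rmq:GN-osc}) and only the elliptic symbol changes, the remaining verifications follow the Dirichlet proof in \cite{ILLP} verbatim.
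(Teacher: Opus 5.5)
Your proposal is correct and follows the paper's route. In both, $f_{h,a}$ is obtained by applying a microlocal inverse of the elliptic Fourier integral operator $J$ to $\chi_0(hD_x)\varkappa(hD_y)\delta_{(a,0)}$; the paper makes this explicit in Lemma~\ref{lemp}, writing $J^{-1}$ with phase $-\Phi+y'\cdot\theta'+\tprime\alpha'$ and computing $\widehat{f_{h,a}}$ by stationary phase.

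Two small corrections. First, the data is mapped to $\alpha\gtrsim a$ (ranging up to $a+O(\ceps_0^2)$), not to $\alpha\sim a$. Second, the irrelevance of $\cutoffchi^{\sharp}(\alpha/h^{2/3})$ is best justified from the explicit formula by non-stationary phase in $s$, since a plain semiclassical wavefront argument cannot separate $\alpha\sim h^{2/3}$ from $\alpha\sim h^{2/3-\ceps}$. In the region $\alpha\lesssim h^{2/3}$ one has $-\zeta(a,0,\theta,\alpha)\gtrsim a$, so each integration by parts gains $h/|\zeta|^{3/2}\lesssim h^{3\ceps/2}$; this is what your $h/\alpha^{3/2}$ remark amounts to.
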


The lemma follows from the fact, recalled above, that $J$ is an elliptic Fourier integral operator. As in \cite[Lemma 2.12]{ILLP}, $f_{h,a}$ can be computed explicitly, and we therefore recall its expression from there. The only difference lies in the symbol; this does not affect the argument, since all that is needed is its ellipticity near the boundary. 

\begin{lemma}
\label{lemp}
There exist a symbol $\tilde q_h$ of order $0$, with support near $|\theta'|=1$ and there exists $\chi\in C^{\infty}_0$ supported in a small neighborhood of $0$ (depending only on $a_0$ and $\ceps_0$), such that $f_{h,a}$ whose Fourier transform is given by
 \begin{equation}\label{hatgha}
\widehat{f_{h,a}}(\theta'/h, \alpha/h)=h^{-1+1/3}\int e^{-\frac ih\Phi(a,0,\theta',\alpha,s)}\chi(s) \tilde q_h(\theta',\alpha,s)q^{-1/6}(\theta') ds\,
\end{equation}
solves \eqref{eq:Jdirac}.
\end{lemma}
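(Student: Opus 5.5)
We propose to obtain $f_{h,a}$ by inverting $J$ microlocally, by the same route as \cite[Lemma~2.12]{ILLP}; the Neumann setting changes only the symbol. Start from the oscillatory form \eqref{Jform} together with \eqref{AiAi'}. After the rescaling $\eta=\theta/h$, $\omega=\alpha/h^{2/3}$, $J$ becomes a semiclassical Fourier integral operator. Its kernel is
\[
h^{-d-1/3}\int e^{\frac ih(\psi-y'\cdot\theta-\tprime\alpha+\frac{\sigma^3}{3}-\sigma\zeta)}\,g_h\,q^{1/6}(\theta)\,d\sigma\,d\theta\,d\alpha ,
\]
where $g_h$ is the symbol \eqref{symbgh}. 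By Lemma~\ref{lemJinvert} this symbol is elliptic on the relevant set. We then apply Lemma~\ref{lemphaseG} in reverse: the change of variables $s\mapsto\sigma$ carries the phase $\psi+\sigma^3/3-\sigma\zeta$ into the generating-function form of Remark~\ref{rmq:GN-osc}. The phase becomes
\[
y\cdot\theta+\tfrac{\sigma^3}{3}+\sigma(xq^{1/3}(\theta)-\alpha)+\tau_q\Gamma\bigl(x,y,\sigma q^{1/3}/\tau_q,\theta/\tau_q\bigr)-\tau_qB_\Gamma(0,\theta/\tau_q).
\]
Write $\Phi(x,y,\theta,\alpha,\sigma)$ for this phase, minus the $y'\cdot\theta+\tprime\alpha$ part.

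The target is the function $\chi_0(hD_x)\varkappa(hD_y)\delta_{(a,0)}$. On the support of the cutoffs it is microlocally concentrated near $\{x=a,y=0,\xi\text{ small},|\eta|\sim h^{-1}\}$, which is the image under $\canonchi_J$ of a neighbourhood of $\{y'=0,\tprime=0,|\theta|=1,\alpha=0\}$. Because $J$ is an elliptic FIO there, it has a microlocal parametrix $J^{-1}$. This inverse is an FIO associated with $\canonchi_J^{-1}$, and its phase is the transpose of $\Phi$. We therefore take the ansatz
\[
\widehat{f_{h,a}}(\theta'/h,\alpha/h)=h^{-1+1/3}\int e^{-\frac ih\Phi(a,0,\theta',\alpha,s)}\chi(s)\,\tilde q_h(\theta',\alpha,s)\,q^{-1/6}(\theta')\,ds .
\]
This is the kernel of $J^{-1}$ evaluated at the source point $(a,0)$ and Fourier transformed in $(y',\tprime)$. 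The powers of $h$ are chosen to compensate the prefactors in \eqref{Jform}.

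Next we compute $J(f_{h,a})$. Substituting the ansatz gives an oscillatory integral in the variables $(\theta,\alpha,\sigma,s)$. Its phase is
\[
\Phi(x,y,\theta,\alpha,\sigma)-\Phi(a,0,\theta,\alpha,s),
\]
and it arises after integrating out $(y',\tprime)$, which sets $\theta'=\theta$. We apply stationary phase in the pair $(\alpha,s)$, or equivalently in $(\sigma,s)$. The Melrose generating-function structure \eqref{genchi} with $\Gamma(0,y,\cdot)$ independent of $\Xi$ shows that this phase is nondegenerate. It reduces to $(x-a)\xi+y\cdot\eta$, where $\xi$ is the new fibre variable obtained by setting $\Xi=\sigma q^{1/3}/\tau_q$. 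The resulting expression has the form
\[
h^{-d}\int e^{\frac ih((x-a)\xi+y\cdot\eta)}\,c_h\,d\xi\,d\eta,
\]
with $c_h$ equal to the product of $g_h$, $\tilde q_h$ and Jacobian factors. We then choose $\tilde q_h$ order by order in $h$ so that $c_h=\chi_0(\xi)\varkappa(\eta)$ modulo $O(h^\infty)$. Each order requires dividing by the elliptic leading coefficient, which is legitimate because $g_{0,0}+\sigma q^{-1/6}g_{1,0}=1+O(x,y)$ by Lemma~\ref{lemJinvert}; the higher orders are determined by the usual symbolic calculus of FIOs. The cutoff $\chi(s)$ restricts to small $s$, which corresponds to $\xi$ in the support of $\chi_0$. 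Since the target is $O(h^\infty)$ away from that region, this localisation costs only $O(h^\infty)$.

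We expect the uniformity in $a\in[h^{2/3-\ceps},a_0]$ to be the main obstacle. Here the cutoff $\cutoffchi^\sharp(\alpha/h^{2/3})$ matters, and so does the non-negligibility of the $\sigma g_1$ term in the Neumann case. The key point is that the stationary points satisfy $\alpha\gtrsim a$. Indeed, the relation $\zeta=0$ at $x\sim a$ together with \eqref{zeta} gives $\alpha\sim a\,e_0$. Consequently $\alpha/h^{2/3}\gtrsim h^{-\ceps}$, so $\cutoffchi^\sharp=1$ there. Where $\cutoffchi^\sharp\neq 1$, non-stationary phase gives gains of order $(\alpha/h)^{-M}$, which we control exactly as in the proof of Lemma~\ref{lemJinvert}. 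Moreover, on the support the ellipticity bound $|\sigma g_1|\lesssim\alpha\,O(x,y)$ is small uniformly in $a$. We would check these two points carefully and take every other estimate verbatim from \cite[Lemma~2.12]{ILLP}.
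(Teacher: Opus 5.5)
Your argument is sound in outline and rests on the same mechanism as the paper: $J$ is an elliptic Fourier integral operator attached to the canonical graph $\canonchi_J$, it is inverted microlocally, and $f_{h,a}$ is the kernel of that inverse at $(a,0)$. The difference is that you verify on the opposite side.

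The paper writes $J^{-1}$ explicitly, with phase $-\Phi(x,y,\theta',\alpha',s)+y'\cdot\theta'+\tprime\alpha'$ and $\Phi=\psi+s^3/3-s\zeta$ as in \eqref{phaPhi}. It fixes the symbol so that $J^{-1}\circ J=I$, by stationary phase in $(\sigma,s,x,y,\theta',\alpha')$. It then computes $J^{-1}\bigl(\chi_0(hD_x)\varkappa(hD_y)\delta_{(a,0)}\bigr)$ by inserting \eqref{datachi0var}. The phase $(x-a)\sigma+y\cdot\theta-\Phi(x,y,\theta',\alpha,s)$ has a critical point in $(x,y,\sigma,\theta)$ at $x=a$, $y=0$. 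Its Hessian has identity off-diagonal blocks, so it is non-degenerate for trivial reasons, uniformly in $a$. Formula \eqref{hatgha} then drops out with $\tilde q_h$ produced automatically, and all the Airy degeneracy stays inside the untouched $s$-integral.

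You instead take \eqref{hatgha} as an ansatz, compute $J(f_{h,a})$ directly, and solve for $\tilde q_h$ order by order. This buys two things:
\begin{itemize}
\item You prove \eqref{eq:Jdirac} itself rather than a left-inverse identity; the paper tacitly uses that a microlocal left parametrix of $J$ is also a right one.
\item You are forced to handle the cutoff $\cutoffchi^{\sharp}(\alpha/h^{2/3})$ inside $g_h$, which the paper does not discuss when it normalizes $\tilde q_h=1$.
\end{itemize}
The price is a stationary phase in $(\alpha,s)$ straight through the glancing region. After it you still need a Kuranishi change of fibre variables, a reduction of $(x,y)$-dependent amplitudes, and an order-by-order division, none of which the paper's route requires.

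Several points in your write-up need correcting.
\begin{itemize}
\item The pair $(\alpha,s)$ works because $\partial_\alpha\partial_s$ of $-\Phi(a,0,\theta,\alpha,s)$ equals $\partial_\alpha\zeta(a,0,\theta,\alpha)=1+O(a)$. The Hessian determinant is therefore $-1+O(a+|s|)$, even at $s=0$.
\item The pair $(\sigma,s)$ is \emph{not} equivalent. In the Airy form its Hessian is $\mathrm{diag}(2\sigma,-2s)$, which degenerates where $\sigma s=0$, and that set meets the region $\xi\in\mathrm{supp}\,\chi_0$ you must cover.
\item Non-degeneracy of the phase left in $(\theta,\sigma)$ comes from transversality: you are composing the canonical graph $\canonchi_J$ with the Lagrangian $\canonchi_J^{-1}(N^*\{(a,0)\})$. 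It does not come from $\Gamma(0,y,\cdot)$ being independent of $\Xi$; that property concerns the boundary and plays no role at the interior point $(a,0)$.
\item At critical points, $\partial_s$-stationarity gives $s^2=\zeta(a,0,\theta,\alpha)$. By \eqref{zeta} this means $\alpha=s^2+aq^{1/3}(\theta)e_0\geq c\,a$, not $\alpha\sim a$.
\item On $\{\alpha\leq h^{2/3}\}$ the gain comes from integrating by parts in $s$. There $|\zeta(a,0,\theta,\alpha)-s^2|\gtrsim a+s^2$, so each step gains $O(ha^{-3/2})=O(\lambda^{-1})$, with $\lambda=a^{3/2}/h\geq h^{-3\ceps/2}$. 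This, not a factor $(\alpha/h)^{-M}$ borrowed from Lemma~\ref{lemJinvert}, is what makes your argument uniform for $a\geq h^{2/3-\ceps}$.
\item Your $\Phi$ is the generating-function phase, not \eqref{phaPhi}. To land on \eqref{hatgha} as stated, undo the change of variables of Lemma~\ref{lemphaseG} in the $s$-integral and absorb its Jacobian into $\tilde q_h$.
\end{itemize}
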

 \begin{proof}
We may invert microlocally the operator $J$ from \eqref{eq:J} by setting 
\[
J^{-1}(F)(y',\tprime)=\frac{h^{1/3}}{h^{d+1}}\int e^{\frac ih(-\psi(x,y,\theta',\alpha',s)-\frac{s^3}{3}+s\zeta(x,y,\theta',\alpha'))+y'\cdot\theta'+\tprime\alpha')}q_h(x,y,\theta',\alpha',s)q^{-1/6}(\theta')F(x,y)dxdyd\theta' d\alpha' ds,
\]
where $q_h(x,y,\theta',\alpha',s)$ is a symbol of order $0$, $q_h=\sum_{k\geq 0} h^kq_{h,k}$, with support near $\{x=0,y=0,\alpha'=0,s=0\}$ and elliptic on this set. Here the phase function is 
\begin{equation}\label{phaPhi}
\Phi(x,y,\theta,\alpha,s)=\psi(x,y,\theta,\alpha)+\frac{s^3}{3}-s\zeta(x,y,\theta,\alpha).
\end{equation}
Notice that in \cite{ILLP} we have worked with $\frac{\sigma^3}{3}-\sigma(xq^{1/3}(\theta)-\alpha)+\tau_q\Gamma(x,y,\sigma q^{1/3}(\theta)/\tau_q,\theta/\tau_q)|_{\tau_q=\tau_q(\alpha,\theta)}$ instead of $\Phi$, where $\Gamma$ was defined in \eqref{eq:GAB}. After a suitable change of variable, this phase transforms into \eqref{phaPhi}, which is the canonical form of a phase function with degenerate critical points of order two. We aim at proving that $J^{-1}\circ J(f)=f\text{ modulo } O(h^{\infty})$: the factors $h^{1/3}$ and $h^{-1/3}$ disappear and we are left with 
\begin{multline}
J^{-1}\circ J(f)=\frac{1}{h^{d+1}}\int e^{\frac ih(-\Phi(x,y,\theta',\alpha',s)+y'\cdot\theta'+\tprime\alpha')}q_h(x,y,\theta',\alpha',s)q^{-1/6}(\theta')\\
\times \frac {1}{2\pi h^d} e^{\frac ih \Phi(x,y,\theta,\alpha,\sigma)}g_h(x,y,\theta,\alpha,\sigma)q^{1/6}(\theta) \hat{f}(\theta/h,\alpha/h)d\theta d\alpha d\sigma dxdy d\theta' d\alpha' ds.
\end{multline}
We now apply stationary phase in variables $(\sigma, s, x,y,\theta',\alpha')$: one checks that critical points are non-degenerate, such that $\theta'_c=\theta$, $\alpha'_c=\alpha$, and stationary phase provides a factor $h^{d-1+1+1}=h^{d+1}$ (one factor $h^{d-1}$ from $dyd\theta'$, one factor $h$ from $dxds$ and one factor $h$ from $d\sigma d\alpha'$). The critical value of the phase is $y'\theta+\tprime \alpha$ and we obtain (modulo $O(h^{\infty})$)
\[
J^{-1}\circ J(f)(y',\tprime)=\frac{1}{h^{d+1}}\times \frac{h^{d+1}}{2\pi h^d}\int e^{\frac ih(y'\cdot\theta+\tprime \alpha)}\tilde q_h(\theta,\alpha)\hat{f}(\theta/h,\alpha/h)d\theta d\alpha,
\]
where $\tilde q_h$ is obtained from the product $q_h(x,y,\theta',\alpha',s)q^{-1/6}(\theta') g_h(x,y,\theta,\alpha,\sigma)q^{1/6}(\theta)$ after stationary phase; asking $\tilde q_h=1$ for $\theta$ such that $|\theta|\sim 1$ and $\tprime$ near $0$ allows to chose $q_h$; with $g_h$ defined in \eqref{symbgh}, we obtain $q_h$ as announced. Define 
\begin{equation}\label{gl-2}
\tilde f_{h,a}:=J^{-1}(\chi_0(hD_x)\varkappa(hD_y)\delta_{(a,0)})\,.
\end{equation}
Then, using the second line in \eqref{datachi0var}, 
\begin{multline}\label{ghaF}
\tilde f_{h,a}(y',\tprime)=\frac{1}{h^{d}}\int e^{\frac ih y'\cdot\theta'}F_{h,a}(\tprime,\theta')d\theta'\,,\,
  F_{h,a}(\tprime,\theta')=\frac{h^{1/3}}{h^{d+1}}\int e^{\frac ih (-\Phi(x,y,\theta',\alpha,s)+\tprime \alpha+(x-a)\sigma+y\cdot \theta)}\\
{}  \times q_h(x,y,\theta',\alpha,s)q^{-\frac 1 6}(\theta')\chi_0(\sigma)\varkappa(\theta)d\sigma d\theta dx dy d\alpha ds\,.
\end{multline}
This implies that 
\[
\widehat{\tilde f_{h,a}}(\theta'/h, \alpha/h)=\frac{h^{1/3}}{h^{d+1}}\int e^{\frac ih (-\Phi(x,y,\theta',\alpha,s)+(x-a)\sigma+y\cdot \theta)}\\
{}  \times q_h(x,y,\theta',\alpha,s)q^{-\frac 1 6}(\theta')\chi_0(\sigma)\varkappa(\theta)d\sigma d\theta dx dy d\alpha ds.
\]
We apply stationary phase with respect to variables $(x,\sigma,y,\theta)$: (non-degenerate) critical points are $x=a$, $y=0$, $\sigma_c=\partial_x\Phi(a,0,\theta',\alpha,s)$ and $\theta_c=\partial_y\Phi(a,0,\theta',\alpha,s)$. The resulting symbol $\tilde q_h(\theta',\alpha,s)$ is of order $0$, with support near $\{|\theta'|=1,\alpha=0,s=0\}$ and elliptic on this set, and
\begin{equation}\label{Fha}
\widehat{\tilde f_{h,a}}(\theta'/h, \alpha/h)=h^{-1+1/3}\int e^{-\frac ih\Phi(a,0,\theta',\alpha,s)} \tilde q_h(\theta',\alpha,s)q^{-1/6}(\theta') ds\,.
\end{equation}
Here $\alpha$ is bounded, as $\tau_q(\alpha,\theta')\in \mathrm{supp}\, \varkappa$; indeed, $\alpha=h^{2/3}\omega$ and  we assumed $|\omega|\leq \ceps_0h^{-2/3}$; therefore $\alpha\leq \ceps_0$ on the support of the symbol $g_h$, as well as on the support of $q_h$ and also $\tilde q_h$. The phase of $F_{h,a}$ is stationary in $\alpha$ for $s+\tprime+O(a)=0$ and in $s$ for $s^2+O(a)\sim \alpha\leq \ceps_0$ (as we will see below using the explicit form of $\Phi$) and $a\leq a_0<1$ is small enough, therefore $s^2\lesssim \ceps_0+a_0$ (otherwise non stationary phase in $s$ provides an $O(h^{\infty})$ contribution.) Hence there exists a cut-off $\chi(\tprime)\in C^{\infty}_0((-2r,2r))$, equal to $1$ on $[-r,r]$ for $r\sim \sqrt{\ceps_0}+ a_0$, such that $(1-\chi(\tprime))F_{h,a}(\tprime,\theta')=O(h^{\infty})$ in $\mathcal{S}_{\tprime}$,
 uniformly in $\theta'$ near $|\theta'|=1$. As in the end of the proof of \cite[Lemma 2.12]{ILLP}, we can show that one, modulo $O(h^{\infty})$ terms, one may take $f_{h,a}(y',\tprime)$ to satisfy \eqref{hatgha}.
This completes the proof of Lemma \ref{lemp}.
\end{proof}
\begin{dfn}\label{defparamPoisson}
    Let $f_{h,a}$ be defined in \eqref{hatgha}: using \eqref{eq:AiryPoisson}, we define $\Prond_{h,a}$ equivalently as
    \begin{align}
      \label{eq:Prond}
      \Prond_{h,a}(t,x,y) & = \langle \sum_{N\in \Z} e^{-iN \tilde L(\omega)}
    , K_\omega(f_{h,a})(t,x,y)\rangle_{\omega}\\
      \label{eq:Prond2}
      \Prond_{h,a}(t,x,y) & =2\pi  \sum_{k\in \N^*} \frac 1
      {\tilde L'(\tilde\omega_k)}  K_{\tilde\omega_k}(f_{h,a})(t,x,y)\,.
    \end{align}
    \end{dfn}
We are abusing notation here: one should consider $\Prond^{\pm}$ depending on the sign on $t$ and then obtain $2 \Prond$ from Definition \ref{dfnparametrix} as $\Prond^{+}+\Prond^{-}$. Considering $\Prond^{+}$ is enough by time symetry and we therefore drop the $+$.

We now recall (see the discussion after \eqref{eq:Kequiv}) that, using both localizations in $\eta$ and $\tau_q(\omega,\eta)$, \eqref{eq:Prond2} may be reduced to a finite sum over $k\lesssim h^{-1}$:  support considerations on $K_{\omega}$ (as a function of $\omega$) provide $|\omega|\leq \ceps_0 h^{-2/3}$; after Airy-Poisson summation, this translates into $|\tilde \omega_{k}|\leq \ceps_0 h^{-2/3}$. The zeroes $\{-\tilde\omega_k\}_{k\geq 1}$ of the derivative $Ai'(-\omega)$ of the Airy function have asymptotic $\tilde\omega_k\sim (3\pi k/2)^{2/3}$ (see \cite[(2.53)]{vaso}). We therefore introduce a cut-off in the sum over $k$, $\cutoffchi^{\flat}_{\epsilon_{0}}(h^{2/3}\tilde\omega_{k}):={\cutoffchi^{\flat}}(h^{2/3}\tilde\omega_k/(2\ceps_{0}))$. Then, \eqref{eq:Prond2} may be rewritten as a finite sum,
 \begin{equation}
      \label{eq:Prond2cut}
      \Prond_{h,a}(t,x,y):=2\pi  \sum_{k\in \N^*} \frac{{\cutoffchi^{\flat}_{\epsilon_{0}}}(h^{2/3}\tilde\omega_k
        )}
      {\tilde L'(\tilde\omega_k)}  K_{\tilde\omega_k}(f_{h,a})(t,x,y)\,.
    \end{equation}
    From $\tilde\omega_1\geq 1.01$, we remark that the cut-off function ${\cutoffchi^{\sharp}(\omega)}$, that was introduced in the definition \eqref{eq:Kequiv} of $K_{\omega}$ and which is equal to $1$ on $[1,\infty)$, is no longer needed when restricting $\omega$ to the set $\{\tilde \omega_{k}\}_{k\in \N^{*}}$.  
    But it will help on the other sum \eqref{eq:Prond}, in estimating how many $N$'s contribute significantly.  Again with \eqref{eq:AiryPoisson}, we also have
 \begin{equation}
      \label{eq:Prondcut}
      \Prond_{h,a}(t,x,y)= \langle \sum_{N\in \Z} e^{-iN \tilde L(\omega)}
    , {\cutoffchi^{\flat}}(h^{2/3}\omega/(2\ceps_{0}))K_\omega(f_{h,a})(t,x,y)\rangle_{\omega}\,.
    \end{equation}
The sum $\sum_{N\in\mathbb{Z}}$ converges in $\mathcal{D}'_{\omega}$ and ${\cutoffchi^{\flat}}(h^{2/3}\omega/(2\ceps_0))K_\omega(f_{h,a})(t,x,y)$ is smooth in $(t,x,y)$ in a neighborhood $W$ of $(0,0,0)$ and smooth and compactly supported in $\omega$. Substituting $f_{h,a}$ from \eqref{hatgha} in \eqref{eq:Kom} yields
\begin{multline}\label{Kghauseful}
K_{\omega}(f_{h,a})(t,x,y)=\frac{h^{2/3}}{2\pi h^{d+1}}\int e^{\frac ih (t\tau_q(h^{2/3}\omega,\theta)+\Phi(x,y,\theta,h^{2/3}\omega,\sigma)-\Phi(a,0,\theta,h^{2/3}\omega,s))}\\
\times g_h(x,y,\theta,h^{2/3}\omega,\sigma)\chi(s)\tilde q_h(\theta,h^{2/3}\omega,s)dsd\theta d\sigma\,.
\end{multline}
We set $\Prond_{h,a}(t,x,y)=\sum_{N\in \Z} V_N(t,x,y)$, where $V_{N}$ is defined as
\begin{align}
  \label{eq:newVN}
  \quad V_N(t,x,y) := & \int e^{-iN\tilde L(\omega)}{\cutoffchi^{\flat}}(h^{2/3}\omega/(2\ceps_0) )K_{\omega}(f_{h,a})(t,x,y)d\omega\\
 = & \begin{multlined}[t] \frac 1 {2\pi h^{d+1}} \int e^{\frac i h (
    t\tau_q(\alpha,\theta)+\Phi(x,y,\theta,\alpha,\sigma)-\Phi(a,0,\theta,\alpha,s)-Nh \tilde L(h^{-2/3}\alpha)
    )} \\
\quad\quad\quad{}\times {\cutoffchi^{\flat}}(\alpha/(2\ceps_0))\chi(s)g_h(x,y,\theta,\alpha,\sigma)\tilde q_h (\theta,\alpha,s) ds \,d\theta  d{\sigma}d\alpha\,.\end{multlined}
\end{align}
The symbol ${\cutoffchi^{\flat}}(\alpha/(2\ceps_0)) \chi(s)g_h \tilde q_h$ of $V_N$ is the same for every $N$,  is of order $0$ and is given by an asymptotic expansion with small parameter $h$ and main term equal to $1$ (indeed, since $\tilde q_h$ has been obtained by inverting $J$, whose symbol is $g_h$). Note that we do not have a finite sum over $N$: convergence should be understood in the distributional sense. The cut-off in $\alpha$ is redundant but we will leave it there to emphasize compact support in $\alpha$. As in \cite[Lemma 2.16]{ILLP}, Lemma \ref{sommeNfinie} below shows that for a generic function $f_{h}$ replacing $f_{h,a}$ the sum over $N$ converges and is $O(h^{\infty})$ for $N>h^{-1/3}$, provided $f_{h}$ is of moderate growth with respect to $h$. Practically, $f_{h}$ is an oscillatory integral with an Airy type phase and with a smooth rapidly decaying or compactly supported symbol and as such, is of moderate growth. The proof works in the same way for the Neumann case, as the only difference with respect to the Dirichlet case is the symbol.

As such, we may indeed replace $f_{h,a}$ modulo $O(h^{\infty})$ as it will only concern a (large but) finite number of terms. The main result of this section is the following proposition, which yields Theorem \ref{thmN} for initial data supported at distance greater than $h^{2/3-\epsilon}$ from the boundary.
\begin{prop}\label{propsommefinie}
Let $a\in (h^{\frac{2}{3}-\ceps},a_0]$ with small $a_0,\ceps>0$.
\begin{itemize}[leftmargin=5.5mm]
\item
For $|t|\lesssim 1$, $\Prond_{h,a}$ is essentially a finite sum in $N$ at any given time,
\begin{equation}
  \label{eq:newProndcut}
  \Prond_{h,a}(t,x,y)=\sum_{|N|\lesssim |t| a^{-1/2}} V_N(t,x,y)+O_{C^{\infty}}(h^{\infty})\,. 
  \end{equation}
  Moreover we can introduce a cut-off ${\cutoffchi}^{\sharp}(4\alpha/a)$ in the definition of $V_N$ without changing its main contribution modulo $O(h^{\infty})$ terms. 
\item At $t=0$, we have $\mathcal{P}_{h,a}(0,x,y)=\chi_0(hD_{x}){\varkappa}(hD_y)\delta_{(a,0)}+O_{C^\infty}(h^\infty)$.
\item On $\{x=0\}$, the Neumann boundary condition is satisfied.
  \item $\Prond_{h,a}$ is a parametrix in the sense of the Definition \ref{dfnparametrix}.
\end{itemize}
\end{prop}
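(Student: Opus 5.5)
The four items are proved in the same order as in \cite[Prop.~2.17]{ILLP}. Only the symbol changes from $p$ to $g_h\tilde q_h$, which is still elliptic with leading term $1$. The phase of $V_N$ in \eqref{eq:newVN} is
\[
\Psi_N:=t\tau_q(\alpha,\theta)+\Phi(x,y,\theta,\alpha,\sigma)-\Phi(a,0,\theta,\alpha,s)-Nh\tilde L(h^{-2/3}\alpha),
\]
and it coincides with the Dirichlet phase except that $L$ is replaced by $\tilde L$. By Lemma~\ref{lemL}, $h\tilde L(h^{-2/3}\alpha)=\frac43\alpha^{3/2}-\frac{\pi}{2}h-hB_{\tilde L}(\alpha^{3/2}/h)$. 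So, up to a harmless constant phase $e^{iN\pi/2}$ and a correction $NhB_{\tilde L}$ that is a symbol of order $0$ in the regime $\alpha^{3/2}\gg h$ (we have $\alpha\gtrsim a\gg h^{2/3}$ there), the phase is exactly the one treated in \cite{ILLP}. The sign change of $b_1$ enters only through this lower-order symbolic factor and does not affect any critical point analysis.

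\textbf{First item.} We argue by non-stationary phase in $\alpha$ and $s$, following \cite[Lemma~2.16]{ILLP}, which is stated before for a generic moderate-growth $f_h$.
\begin{itemize}
\item Critical points in $(\sigma,s)$ give $\sigma^2=\zeta(x,y,\theta,\alpha)$ and $s^2=\zeta(a,0,\theta,\alpha)$. Since $\zeta(a,0,\cdot)\approx\alpha-aq^{1/3}e_0$ must be nonnegative, we get $\alpha\gtrsim a$ on the essential support.
\item For $\alpha\le a/4$, repeated integrations by parts in $s$ show that inserting $\chi^\sharp(4\alpha/a)$ costs only $O(h^\infty)$. This uses $a\ge h^{2/3-\ceps}$, so that $a^{3/2}/h\ge h^{-3\ceps/2}$ gains a power of $h$ at each step.
\item Stationarity in $\alpha$ reads $t\,\partial_\alpha\tau_q+\partial_\alpha\Phi-\partial_\alpha\Phi(a,\dots)-2N\sqrt\alpha(1+O(h/\alpha^{3/2}))=0$. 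With $|\partial_\alpha(\cdot)|\lesssim |t|+\sqrt\alpha$ and $\sqrt\alpha\gtrsim\sqrt a$, this forces $|N|\lesssim |t|/\sqrt a$. The geometric meaning is that each reflection takes time about $\sqrt a$.
\item For larger $|N|$, integrating by parts in $\alpha$ gives a bound $O((N\sqrt a\,h^{-1}\cdots)^{-M})$, which is summable in $N$ and $O(h^\infty)$.
\end{itemize}

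\textbf{Second item.} At $t=0$, only $N=0$ survives by the previous item. The $N=0$ term is $\int K_\omega(f_{h,a})(0)\,d\omega=J(f_{h,a})$. Lemma~\ref{lemgab} then gives the data $\chi_0(hD_x)\varkappa(hD_y)\delta_{(a,0)}$ modulo $O(h^\infty)$.

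\textbf{Third item.} Use the spectral form \eqref{eq:Prond2cut}. This is a finite sum over $k$, and each $K_{\tilde\omega_k}$ has vanishing normal derivative at $x=0$ modulo $O(h^\infty)$ by Theorem~\ref{thm:N}, since $Ai'(-\tilde\omega_k)=0$.

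\textbf{Fourth item.} Each $K_\omega$ solves the wave equation to $O(h^\infty)$ uniformly in $\omega$, by \eqref{eq:Kom1}. Hence $\Prond_{h,a}$ solves \eqref{waveeq} with Neumann condition and correct data, all modulo $O(h^\infty)$. The remaining point is to compare with the true Green function after applying $\varkappa(hD_t)\varkappa(hD_y)$.
\begin{itemize}
\item By the Duhamel formula and energy estimates for the Neumann problem, the $O(h^\infty)$ errors in equation, boundary condition and data propagate to $O(h^\infty)$ errors in the solution.
\item We add the transverse part $(1-\chi_0(hD_x))\varkappa(hD_y)\delta_{(a,0)}$, which is handled by geometric optics with at most one reflection.
\item We commute the cut-offs using Melrose--Sjöstrand propagation, as in the Remark after Definition~\ref{dfnparametrix}.
\end{itemize}
Boundary-condition errors are converted into interior errors by subtracting a smooth lift whose normal derivative equals the boundary error.

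\textbf{Main obstacle.} The delicate step is the uniformity in the first item: controlling the $\alpha$-integration by parts uniformly in $N$ and in $a$ down to $h^{2/3-\ceps}$, with the Neumann $\tilde L$. I would first check that $\partial_\alpha$ of $NhB_{\tilde L}(\alpha^{3/2}/h)$ is bounded by $N h^{1/3}(h^{2/3}/\alpha)^{5/2}$. This is negligible against the main term $N\sqrt\alpha$ precisely when $\alpha\gg h^{2/3}$, which is why the cut-off $\chi^\sharp(4\alpha/a)$ has to be inserted first.
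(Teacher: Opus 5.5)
Your treatment of the Neumann condition (the finite sum over $k$ in \eqref{eq:Prond2cut} with $Ai'(-\tilde\omega_k)=0$), of the equation modulo $O(h^\infty)$, and of the large-$|N|$ tail matches the paper. Your integration by parts in $s$ on $\{\alpha\le a/4\}$ is a correct, and more precise, version of Remark~\ref{rmqalpha>a}.

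\textbf{The gap is in the second item.} The claim that at $t=0$ ``only $N=0$ survives by the previous item'' does not follow from what you proved. Your bound $|\partial_\alpha(\cdots)|\lesssim |t|+\sqrt\alpha$ only yields $|N|\lesssim 1+|t|/\sqrt a$, and the additive constant is not an artefact. Take the model case and normalize $q^{1/3}e_0=1$. On the critical set $\sigma^2=\alpha-x$, $s^2=\alpha-a$, the $\alpha$-equation for $N=1$ reads $t\,\partial_\alpha\tau_q=2\sqrt\alpha+\sigma-s$. It is solvable as soon as $t\,\partial_\alpha\tau_q\ge 2\sqrt\alpha-\sqrt{\alpha-x}-\sqrt{\alpha-a}\approx (x+a)/(2\sqrt\alpha)$. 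When $\alpha\gg a$ this time is much smaller than $\sqrt a$; it is the first reflection of a ray leaving $x=a$ towards the boundary. So $V_{\pm1}$ is not negligible for small $t>0$. At $t=0$ the two contributions $s-\sigma$ and $\mp2\sqrt\alpha$ to $\partial_\alpha\Psi_{\pm1}$ are of the same size. A counting argument with unspecified constants therefore cannot decide whether $V_{\pm1}(0,\cdot)$, or even $V_{\pm2}(0,\cdot)$, is $O(h^\infty)$.

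\textbf{What is missing} is a separate non-stationary phase argument at $t=0$; the paper takes it from \cite[Prop.~2.14]{ILLP}.
\begin{itemize}
\item Near the $(\sigma,s)$-critical set one has $|s-\sigma|\le\sqrt{\alpha-x}+\sqrt{\alpha-a}$, up to lower-order corrections.
\item Hence $|\partial_\alpha\Psi_N|\ge |N|\sqrt\alpha$ for $|N|\ge 2$.
\item For $N=\pm1$ one only has $|\partial_\alpha\Psi_{\pm1}|\gtrsim (x+a)/\sqrt\alpha\ge a/\sqrt\alpha$.
\item Localize dyadically $\alpha\sim\gamma\gtrsim a$ and rescale $\alpha=\gamma A$, $(\sigma,s)=\sqrt\gamma(\Sigma,S)$, so the reduced phase varies on the natural scale. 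Each integration by parts then gains $h/((x+a)\sqrt\gamma)\le h/a^{3/2}\le h^{3\ceps/2}$.
\item You must also check that the non-model corrections (from $\partial_\alpha\psi$, $e_0$ and $B_{\tilde L}$) stay below this gap.
\end{itemize}
Only then do you get $\Prond_{h,a}(0,\cdot)=V_0(0,\cdot)+O(h^\infty)=J(f_{h,a})+O(h^\infty)$, after which Lemma~\ref{lemgab} gives the data. Your fourth item, with Duhamel/energy estimates and the transverse piece, adds detail the paper does not write out; that part is fine.
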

\begin{rem}\label{rmqalpha>a}
The cut off ${\cutoffchi^{\sharp}}(h^{-2/3}\alpha)$ from $K_{\omega}$ restricts to $1\leq h^{-2/3}\alpha$. The last statement in the first part of Proposition \ref{propsommefinie} translates into the contribution of the integrals defining $V_N$ being irrelevant for small values $\alpha\leq a/2$: for $a>h^{2/3 -\ceps}$ we can further restrict to $\alpha>a/2$. This follows right away from the expression of $G_N(x,y,\theta/h,\omega)$ appearing in the definition of $K_{\omega}(f_{h,a})$ (recall \eqref{eq:Kequiv}): using \eqref{eq:defG}, $G_N$ reads as a sum of Airy functions computed at $-\zeta=x|\eta|^{2/3}e_0(x,y,\eta,\omega)-\omega$ with an elliptic $e_0$, close to $1$. These Airy functions are exponentially decreasing for $-\zeta>0$; hence, if $a>h^{2/3-\ceps}$, $\eta=\theta/h$ with $|\theta|\sim 1$ and $\omega=h^{-2/3}\alpha$, we must have $a\lesssim \alpha$ since otherwise the contribution from $G_N$ is $O(h^{\infty})$. Note that $h^{2/3-\ceps}\lesssim \alpha$ is required to perform stationary phase arguments; for $\alpha\leq h^{2/3-\ceps}$ (to be dealt with if $a\leq h^{2/3-\ceps}$ !), rescaling no longer provides a large parameter.
\end{rem}
\begin{proof}
From \eqref{condN}, it follows that $\partial_xG(0,y,\eta,\tilde\omega_k)=e^{i\psi(0,y,\eta,\tilde\omega_k)}
  (-g_0\partial_x\zeta+iq(\eta)^{-1/6}\partial_xg_1)Ai'(-\tilde\omega_k)$, which immediately yields $\partial_xG(0,y,\eta,\tilde \omega_k)=0$. Therefore, from
\eqref{eq:Prond2cut} being a finite sum, we get
  $\partial_x \Prond_{h,a}(t,x,y)_{{\textstyle |}\partial\Omega} =0$,
 hence the Neumann boundary condition holds for $\Prond_{h,a}$. From \eqref{eq:AiryPoisson}, we get that the distribution $\sum_{N\in\mathbb{Z}}e^{-iNL(\omega)}\in \mathcal{S}'(\R)$. Moreover, from the upcoming Lemma, for $|N|>h^{-1/3}$, the sum is $O(h^{\infty})$ irrespective of $f_{h,a}$. 
 \begin{lemma}
  \label{sommeNfinie}
  Let $f_{h}$ be a smooth function of $(y',\tprime)$, with compact support in $\tprime$ and of moderate growth in $h$, and $K_{\omega}(f_{h})$ be defined by \eqref{eq:Kom}. Then
\begin{equation}\label{eq:suminN}
  \langle \sum_{|N|\gtrsim h^{-1/3}}e^{-iN \tilde L(\omega)}
    , {\cutoffchi^{\flat}}(h^{2/3}\omega/\ceps_{0})K_\omega(f_{h})(t,x,y)\rangle_{\omega} = O(h^{\infty})\,.
\end{equation}
\end{lemma}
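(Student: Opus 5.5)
We prove \eqref{eq:suminN} by non-stationary phase in $\omega$, term by term in $N$, with bounds that can be summed over $|N|\gtrsim h^{-1/3}$. Write the pairing for a fixed $N$ as
\[
\int e^{-iN\tilde L(\omega)}\,\cutoffchi^{\flat}(h^{2/3}\omega/\ceps_0)\,K_\omega(f_h)(t,x,y)\,d\omega .
\]
By \eqref{eq:Kom}, $K_\omega(f_h)$ is itself an oscillatory integral in $(y',\tprime,\theta,\sigma)$. After substituting $\alpha=h^{2/3}\omega$, its $\omega$-dependence is carried by the phase
\[
\frac1h\Big(t\tau_q(\alpha,\theta)+\psi(x,y,\theta,\alpha)-\tprime\alpha+\frac{\sigma^3}{3}-\sigma\zeta(x,y,\theta,\alpha)\Big)
\]
and by the symbol $g_h$, which contains $\cutoffchi^{\sharp}(\omega)$. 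The total phase in $\omega$ is therefore $-N\tilde L(\omega)+h^{-1/3}\Psi(t,x,y,\theta,h^{2/3}\omega,\sigma,\tprime)$, where $\Psi$ denotes the $\alpha$-derivative part of the bracket and is bounded with all derivatives in $\alpha$. Here we use that $|t|\lesssim 1$, that $x,y$ are small, that $\tprime$ ranges over the compact support of $f_h$, and that $\sigma$ is effectively bounded (the contribution of large $|\sigma|$ is $O(h^\infty)$ by integration by parts in $\sigma$, as in the proof of Lemma~\ref{lemJinvert}).

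Next we compare the two competing derivatives. By Lemma~\ref{lemL}, $\tilde L'(\omega)=2\omega^{1/2}+O(\omega^{-5/2})$ for $\omega\geq1$, and $\tilde L$ is strictly increasing. On the support of $\cutoffchi^{\sharp}(\omega)$ this gives $\tilde L'(\omega)\gtrsim \omega^{1/2}$. The derivative of the second term is $\partial_\omega\big(h^{-1}\Psi(h^{2/3}\omega)\big)=h^{-1/3}\partial_\alpha\Psi$, which is $O(h^{-1/3})$. Note that $\partial_\alpha$ of $\sigma\zeta$ is bounded because $\partial_\alpha\zeta=1+O(x)$ by \eqref{zeta}. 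Hence for $|N|\geq C h^{-1/3}$ with $C$ large,
\[
|\partial_\omega \text{phase}|\gtrsim |N|\,\omega^{1/2}\gtrsim |N| .
\]

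We then integrate by parts $M$ times with the operator $(i\,\partial_\omega \text{phase})^{-1}\partial_\omega$. Each $\omega$-derivative landing on the amplitude costs at most $O(1)$ (from $\cutoffchi^{\sharp}$) or $O(h^{2/3})$ (from $\alpha$-dependent symbols). Derivatives landing on the inverse of the phase derivative are controlled using $\partial^k_\omega\tilde L=O(\omega^{3/2-k})$, which follows from \eqref{eq:propL}, together with $\partial_\omega^k$ of $h^{-1/3}\Psi(h^{2/3}\omega)$ being $O(h^{-1/3+2(k-1)/3})$. Each integration by parts therefore gains a factor $|N|^{-1}$. The $\omega$-support has length $\lesssim h^{-2/3}$, and $f_h$ has moderate growth, so it contributes at most $h^{-m_0}$. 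The remaining integrals in $(y',\tprime,\theta,\sigma)$ are over bounded sets, which gives a polynomial loss $h^{-m_1}$.

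Each term is thus $O(h^{-m}|N|^{-M})$. Summing over $|N|\geq Ch^{-1/3}$ gives $O(h^{-m+(M-1)/3})$, which is $O(h^\infty)$ once $M$ is large. The finitely many $N$ with $h^{-1/3}\lesssim|N|<Ch^{-1/3}$ are absorbed by adjusting the implicit constant in the statement. The same estimates apply to $\partial^\alpha_{t,x,y}$, since derivatives only bring down polynomial powers of $h^{-1}$.

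The main obstacle is making the phase-derivative lower bound uniform near $\omega\sim1$, where the asymptotic expansion \eqref{eq:propL} is not valid. We would handle this region using only that $\tilde L$ is analytic and strictly increasing there, so $\tilde L'\geq c>0$ on $[1/2,\infty)$, which is the support of $\cutoffchi^{\sharp}$. A second point to check is that the distributional sum is legitimately exchanged with the $\omega$-integration. This follows from the rapid decay in $N$ just established, which makes the series absolutely convergent.
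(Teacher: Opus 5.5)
Your proof is correct and is essentially the argument the paper relies on; the paper simply invokes \cite[Lemma 2.16]{ILLP}. For $|N|\geq Ch^{-1/3}$, the term $N\tilde L'(\omega)$ dominates the $O(h^{-1/3})$ $\omega$-derivative of the phase of $K_\omega(f_h)$, so repeated integration by parts in $\omega$ gains $|N|^{-1}$ per step and the bounds are summable in $N$. The one step to repair is the lower bound on $\tilde L'$ near $\omega\sim 1$: strict monotonicity alone does not rule out zeros of $\tilde L'$. Instead, differentiate $\tilde L=\pi+2\arg A'_+$ and use $A''_+=-\omega A_+$ to get the Wronskian identity $\tilde L'(\omega)=c_0\,\omega/|A'_+(\omega)|^2$ with $c_0>0$, which gives $\tilde L'\geq c>0$ on $[1/2,\infty)$. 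Note that $\tilde L'(0)=0$ here, unlike for the Dirichlet $L'$, so the cutoff $\cutoffchi^{\sharp}(\omega)$ is genuinely needed.
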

The Lemma follows exactly like in \cite[Lemma 2.16]{ILLP}.  As such, we are reduced to a finite number of $N$'s, and  from \eqref{eq:Prond} and \eqref{eq:Kom1}, it follows that, taking $W$ smaller if needed, and uniformly in $a<a_0$, one has
$(\partial^2_{t}-\Delta)\mathcal{P}_{h,a}\in O_{C^{\infty}(W)}(h^{\infty})$, not only for $x>0$ but in the full neighborhood $W$ of $(0,0,0,1)$.  Both statements on $\mathcal{P}_{h,a}$ are independent on the particular choice of the function $f_{h,a}$ such that \eqref{eq:Jdirac} holds. It remains to check that, with our choice of $f_{h,a}$ given in \eqref{eq:Jdirac}, $\mathcal{P}_{h,a}(0,x,y)$ is the right initial value: this has already been proved in \cite[Prop 2.14]{ILLP}, by first showing that the sum over $N$ is (large but) finite and that at $t=0$, in the sum over $N$ in \eqref{eq:newProndcut}, all the oscillatory integrals $V_N(0,x,y)$ for $|N|\geq 1$ provide a $O(h^{\infty})$ contribution, while $V_0(0,x,y)=J(f_{h,a})(x,y)$ which, by design, is our initial data. 
The fact that the number of $N$ is finite allows to deduce that $\mathcal{P}_{h,a}(0,x,y)=V_0(0,x,y)=J(f_{h,a})(x,y)=\chi_0(hD_x)\varkappa(hD_y)\delta_{(a,0)}$ and conclude. We do not reproduce the proof from \cite{ILLP} here as all the arguments are exactly the same.
\end{proof}

\subsection{Parametrix construction for $a< h^{2/3-\ceps}$, $0<\ceps<1/12$}\label{lowparam}
We follow the approach of \cite{ILLP}. 

\subsubsection{Spectral theory for $-\Delta_M$: the initial data in terms of model gallery modes}\label{sss231}
Let $-\Delta_M$ be the Friedlander model operator introduced in \eqref{eq:LapM},
and recall $q$ is a positive definite quadratic form.
Taking the Fourier transform in the $y$ variable, the operator $-\Delta_{M}$ becomes
$-\partial^2_x+|\eta|^2+xq(\eta)$. For  $\eta\neq 0$, this operator is a positive self-adjoint operator 
on $L^2(\mathbb{R}_+)$, with compact resolvent. The next Lemma is proved in \cite{DSS26} for the Dirichlet case and in \cite[Prop.2.8]{DSS26} for Robin boundary condition (with $q(\eta)=|\eta|^2$, but only using $q(\eta)\neq 0$):
\begin{lemma}\label{lemorthog}
There exists an Hilbert basis of $L^{2}(\mathbb{R}_{+})$ where $\{\tilde e_k(x,\eta)\}_{k\geq 0}$ are eigenfunctions of $-\partial^2_x+|\eta|^2+xq(\eta)$, with eigenvalues $\lambda_k(\eta)=|\eta|^2+\tilde\omega_k q(\eta)^{2/3}=\tau_{q}^2(\tilde\omega_k,\eta)$. These eigenfunctions are translated and rescaled Airy functions:
\begin{equation}\label{eig_k}
\tilde e_k(x,\eta)=\frac{\sqrt{2\pi} q(\eta)^{1/6}}{\sqrt{\tilde L'(\tilde\omega_k)}}
Ai\Big(xq(\eta)^{1/3}-\tilde\omega_k\Big),
\end{equation}
where $\tilde L'(\tilde\omega_k)$ (from \eqref{eq:propL2}) normalizes $\|\tilde e_k(.,\eta)\|_{L^2(\mathbb{R}_+)}=1$. Moreover, $\partial_x\tilde e_k(x,\eta)|_{x=0}=0$ for all $k\geq 1$.
\end{lemma}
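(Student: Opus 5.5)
The plan is to work on each fiber $\eta\neq 0$ separately, with the operator $P_\eta=-\partial_x^2+|\eta|^2+xq(\eta)$ on $L^2(\mathbb{R}_+)$ taken with the Neumann condition $\partial_xu(0)=0$. First we set up the operator properly. It is the Friedrichs extension of the quadratic form
\[
\int_0^\infty \bigl(|u'|^2+(|\eta|^2+xq(\eta))|u|^2\bigr)\,dx
\]
on $H^1(\mathbb{R}_+)\cap L^2(x\,dx)$. No boundary condition is imposed on the form domain, so the natural boundary condition of the associated operator is Neumann. The form is bounded below by $|\eta|^2$, hence the operator is positive and self-adjoint. The form domain embeds compactly into $L^2(\mathbb{R}_+)$: $H^1$ gives local compactness and the weight $x$ gives tightness at infinity, by Rellich. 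Hence the resolvent is compact, and the spectrum is discrete with an orthonormal basis of eigenfunctions.

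Next we identify the eigenfunctions explicitly. Put $z=xq(\eta)^{1/3}-\mu$, with eigenvalue $\lambda=|\eta|^2+\mu q(\eta)^{2/3}$. The equation $P_\eta u=\lambda u$ becomes the Airy equation $u''(z)=zu(z)$. The $L^2$ condition at $+\infty$ excludes $Bi$, so $u=c\,Ai(xq^{1/3}-\mu)$. The Neumann condition reads $q^{1/3}Ai'(-\mu)=0$, so $\mu$ must be one of the $\tilde\omega_k$; since all zeros of $Ai'$ are real and negative, these are all the eigenvalues, and they are simple. This gives $\lambda_k(\eta)=\tau_q^2(\tilde\omega_k,\eta)$. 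Orthogonality of eigenfunctions with distinct eigenvalues is automatic from self-adjointness. It can also be checked directly via the Wronskian identity $(\mu-\mu')\int Ai(z-\mu)Ai(z-\mu')\,dz=[\,\cdot\,]_{z=0}$, whose boundary term vanishes because $Ai'(-\tilde\omega_k)=0$.

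Finally we compute the normalization constant. With the change of variable $z=xq^{1/3}$,
\[
\int_0^\infty Ai^2(xq^{1/3}-\tilde\omega_k)\,dx=q^{-1/3}\int_0^\infty Ai^2(z-\tilde\omega_k)\,dz .
\]
By \eqref{eq:propL2} this equals $q^{-1/3}\tilde L'(\tilde\omega_k)/(2\pi)$. The underlying identity is $\int_{-\mu}^\infty Ai^2=Ai'(-\mu)^2+\mu Ai(-\mu)^2$, which reduces to $\tilde\omega_kAi^2(-\tilde\omega_k)$ at the Neumann zeros. This gives the prefactor $\sqrt{2\pi}\,q^{1/6}/\sqrt{\tilde L'(\tilde\omega_k)}$ in \eqref{eig_k}. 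The property $\partial_x\tilde e_k|_{x=0}=0$ holds by construction.

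The only delicate points are completeness and the precise identification of the Neumann realization, that is, checking that the form domain gives exactly the Neumann operator. Both are handled by the form argument together with the compact embedding.
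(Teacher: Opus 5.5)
Your proof is correct, but it does not follow the paper, because the paper gives no proof in the text: it refers to \cite[Prop.~2.8]{DSS26}. That reference treats the whole Robin family of boundary conditions for $q(\eta)=|\eta|^2$, using only $q(\eta)\neq0$, and Neumann is one member of the family. You give a direct, self-contained argument instead. You realize the operator through the closed form on $H^1(\mathbb{R}_+)\cap L^2(x\,dx)$, which imposes no constraint at $x=0$ and so produces the Neumann condition naturally. The weighted compact embedding gives compact resolvent and hence a Hilbert basis of eigenfunctions. You then reduce $P_\eta u=\lambda u$ to Airy's equation, discard $Bi$, and read off the spectrum from $Ai'(-\mu)=0$. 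The citation buys uniformity over all Robin conditions at once. Your route explicitly checks the two points the citation leaves implicit: that this is the Neumann realization, and that the $\tilde e_k$ are complete.

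\textbf{Caution on the normalization.} This is the one step where you rely on the paper.
\begin{itemize}
\item $\tilde L'(\tilde\omega_k)$ does not depend on how $Ai$ is normalized, since only the ratio $A'_-/A'_+$ enters. The integral $\int_0^\infty Ai^2$ does depend on it.
\item As printed, \eqref{eq:propL2} contradicts your (correct) identity $\int_0^\infty Ai^2(z-\tilde\omega_k)\,dz=\tilde\omega_kAi^2(-\tilde\omega_k)$ by a factor $2\pi$.
\item It is safer to derive the relation you need yourself. Use $A''_\pm(\omega)=-\omega A_\pm(\omega)$ and the Wronskian $W=A_+A'_--A'_+A_-=-i/(2\pi)$ for the standard Airy function. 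This gives $\tilde L'(\omega)=\omega/(2\pi|A'_+(\omega)|^2)$ for real $\omega$.
\item At $\tilde\omega_k$ one has $A'_-=-A'_+$, so $W=-A'_+Ai(-\tilde\omega_k)$. Hence $\tilde L'(\tilde\omega_k)=2\pi\tilde\omega_kAi^2(-\tilde\omega_k)=2\pi\int_0^\infty Ai^2(z-\tilde\omega_k)\,dz$, which is exactly what your prefactor requires.
\item This holds for $Ai(z)=\frac1{2\pi}\int e^{i(\sigma^3/3+z\sigma)}\,d\sigma$. With the convention of \eqref{eq:15}, which drops the $\frac1{2\pi}$, the prefactor in \eqref{eig_k} would be off by $2\pi$.
\end{itemize}
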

Notice that, when dealing with the Dirichlet boundary condition, one need to replace $\tilde\omega_k$ by $\omega_k$ and the normalisation coefficient $\tilde L(\tilde\omega_k)$ by $L(\omega_k)=\int_0^{\infty}Ai^2(x-\omega_k)dx$ (see the discussion after Lemma \ref{lemL}).

For $a>0$, the Dirac distribution $\delta_{x=a}$ on $\mathbb{R}_+$ may be decomposed as 
 $\delta_{x=a}=\sum_{k\geq 1} \tilde e_k(x,\eta)\tilde e_k(a,\eta)$. Eigenfunctions of 
$-\partial^2_{x}+xq(\eta)$ are $\tilde e_k(x,\eta)$ with eigenvalue $\lambda_k(\eta)-|\eta|^2=\tau_{q}^2(\tilde\omega_k,\eta)-|\eta|^2=\tilde \omega_k q^{2/3}(\eta)$,  and for cut-offs $\varkappa$, $\chi_0$ to be chosen, the following spectral decomposition holds
\begin{equation}\label{diracmoddeltaM<} 
\chi_0(-h^2\partial^2_{x}+xq(h\partial_y))\varkappa(hD_y)\delta_{(a,0)}=\sum_{k\geq 1}\int
e^{iy\cdot \eta} \\ 
\chi_0\bigl(h^{\frac 23}\tilde\omega_kq^{\frac 23}(h\eta)\bigr) \varkappa(h\eta)\tilde e_k(x,\eta)\tilde e_k(a,\eta)d\eta\,.
\end{equation}
With $\chi_0\in C^{\infty}_0(-2\ceps_0, 2\ceps_0)$, $\chi_{0}=1$ on $[-\ceps_0,\ceps_0]$, \eqref{diracmoddeltaM<} is a finite sum with $O(\ceps_0/h)$ terms. 
Setting $\theta=h\eta$, from support considerations, taking the support of $\chi_0$ smaller if necessary, we can assume that $\chi^{\flat}(\alpha/\ceps_{0})\chi_0(\alpha q^{2/3}(\theta))\varkappa(\theta)=\chi_0(\alpha q^{2/3}(\theta))\varkappa(\theta)$,
where $\alpha=h^{2/3}\omega$ and $\chi^{\flat}(h^{2/3}\omega/\ceps_{0})$ is the cut-off introduced in \eqref{eq:Prond2cut} (which restricts the support of $K_{\omega}$ to values $\omega\leq \ceps_0/h^{2/3}$).

For $a\leq h^{2/3-\ceps}$, the easiest way to define an initial data is to chose the lefthand side term in \eqref{diracmoddeltaM<}, which does vanish on the boundary ($\partial_x \tilde e_k(0,\eta)=0$ for every $k\geq 1$). Using both \eqref{eq:Prondcut} and \eqref{eq:Prond2cut}, we are left to obtain a smooth function $g_{h,a}$ such that for $\Prond_{h,a}$ as in \eqref{eq:Prond2cut} to have
\begin{equation}\label{dataamic}
\Prond_{h,a}(0,x,y)=\chi_0(-h^2\partial^2_{x}+xq(h\partial_y))\varkappa(hD_y)\delta_{(a,0)}+O_{C^{\infty}}(h^{\infty}).
\end{equation}
We proceed as follows : for all $k$, we define $\mathcal{E}_{M}(\cdot,\tilde\omega_k)$  
\begin{multline}\label{defEmathcalM}
\mathcal{E}_M(x,y,a,\tilde\omega_k):=\int e^{iy\cdot \eta} 
\chi_0(h^{2/3}\tilde\omega_kq^{2/3}(h\eta)) \varkappa(h\eta)\tilde e_k(x,\eta)\tilde e_k(a,\eta)d\eta\\
=\frac{2\pi}{\tilde L'(\tilde \omega_k)}\int e^{iy\cdot \eta}\chi_0(h^{2/3}\tilde \omega_kq^{2/3}(h\eta)) \varkappa(h\eta)q^{1/3}(\eta)Ai(xq^{1/3}(\eta)-\tilde\omega_k)Ai(aq^{1/3}(\eta)-\tilde \omega_k)d\eta.
\end{multline}
For a given $K_{\ceps}$ such that $h^{-\ceps}\ll h^{-2\ceps}\lesssim K_{\ceps}\lesssim h^{-1/4+\ceps}\ll h^{-1/4}$ we split the sum over $k$ in \eqref{diracmoddeltaM<} as follows
\begin{equation}
\label{dataEMEM}
  \sum_{k\geq 1}(\cdots)=
\sum_{k\geq1}\cutoffchi^{\flat}\Big(\frac{\tilde \omega_k}{\tilde \omega_{K_{\ceps}}}\Big)\mathcal{E}_M(x,y,a,\tilde\omega_k)+\sum_{k\geq 1}\chi^{\flat}(h^{2/3}\tilde\omega_k/\ceps_{0})\cutoffchi^{\sharp}\Big(\frac{\tilde\omega_k}{\tilde\omega_{K_{\ceps}}}\Big)\mathcal{E}_M(x,y,a,\tilde \omega_k)\,.
\end{equation}

\begin{prop}\label{propdataapetitpetit1}
For all $a\in (0,h^{2/3-\ceps})$ there exists a smooth function $f_{h,a,1}$ such that
\begin{equation}\label{trucmuche}
\sum_{k\geq 1}\frac{2\pi}{\tilde L'(\tilde \omega_k)}\cutoffchi^{\flat}(h^{2/3}\tilde\omega_k/\ceps_{0})K_{\tilde\omega_k}(f_{h,a,1})(0,x,y)=\sum_{k\geq1}\cutoffchi^{\flat}\Big( \frac{\tilde \omega_k}{\tilde \omega_{K_{\ceps}}}\Big)\mathcal{E}_M(x,y,a,\tilde\omega_k)+O(h^{\infty})\,.
\end{equation}
\end{prop}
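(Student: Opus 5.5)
The plan is to reduce the identity to a statement about each fixed $k\lesssim K_{\ceps}$ and then invert, mode by mode, the map $f\mapsto K_{\tilde\omega_k}(f)(0,\cdot)$ on the range of the model gallery modes. Since the number of modes involved is at most $K_{\ceps}\lesssim h^{-1/4+\ceps}$, an $O(h^\infty)$ error per mode, uniform in $k$, suffices.

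We look for $f_{h,a,1}$ in the form
\[
\widehat{f_{h,a,1}}(\eta,\omega/h^{1/3})=\sum_{k}\cutoffchi^{\flat}\Big(\frac{\tilde\omega_k}{\tilde\omega_{K_\ceps}}\Big)\varphi_k(\omega)\,\widehat{F_k}(\eta).
\]
Here $\varphi_k$ is a smooth function localized near $\tilde\omega_k$ with $\varphi_k(\tilde\omega_j)=\delta_{jk}$; such functions exist because the zeros are separated, with $\tilde\omega_{k+1}-\tilde\omega_k\sim k^{-1/3}\gtrsim h^{1/12}$. With this choice, evaluation of $K_\omega$ at $\omega=\tilde\omega_j$ isolates a single $F_j$. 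The problem becomes: for each $k\le 2K_\ceps$, find $F_k$ with
\[
\frac{2\pi}{\tilde L'(\tilde\omega_k)}\int e^{iy\cdot\eta}G_N(x,y,\eta,\tilde\omega_k)q^{1/6}(\eta)\varkappa(h\eta)\varkappa(h\tau_q)\widehat F_k(\eta)\,d\eta=\mathcal E_M(x,y,a,\tilde\omega_k)+O(h^\infty).
\]
The cutoff $\chi^\flat(h^{2/3}\tilde\omega_k/\ceps_0)$ equals $1$ on this range, since $\tilde\omega_k\lesssim K_\ceps^{2/3}\ll h^{-2/3}$.

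The key step is to compare $G_N(\cdot,\tilde\omega_k)$ with the model mode $q^{1/6}Ai(xq^{1/3}(\eta)-\tilde\omega_k)$. For $\tilde\omega_k$ small compared to $|\eta|^{2/3}$, both are concentrated in $x\lesssim \tilde\omega_k h^{2/3}\ll h^{2/3-\ceps'}$. The steps are as follows.
\begin{itemize}
\item By \eqref{zeta} and $e_0(0,0,\cdot,0)=1$, we have $\zeta=\tilde\omega_k-xq^{1/3}(1+O(x,y,h^{2/3}\tilde\omega_k))$. By Proposition \ref{propimpformgamma}, $\psi=y\cdot\eta+O(|y|^2|\eta|)+O(h^{2/3}\tilde\omega_k)$-type corrections.
\item The amplitudes satisfy $g_0=g_{0,0}(0,0)+O(x,y)$, and $g_1$ contributes $O(x)+O(h)$ by the construction in the proof of Theorem \ref{thm:N} (recall $g_{1,0}|_{x=0}=0$).
\end{itemize}
Thus $G_N$ is the model eigenfunction times an elliptic factor, up to errors that are small in the strip. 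A genuine $O(h^\infty)$ identity requires a symbolic (pseudodifferential in $y$) inversion rather than a pointwise comparison. I would therefore seek $F_k$ as $\widehat F_k(\eta)=\tilde e_k(a,\eta)\,c_k(y\text{-symbol})$, where $c_k$ is obtained by an asymptotic expansion in powers of $h^{1/3}$ (or $h^{\ceps}$). At each order we match the $y$-dependence by composing with a $y$-pseudodifferential operator. This is legitimate because $x$ is confined to $O(\tilde\omega_kh^{2/3})$, so the Taylor expansion in $x$ gains a factor $h^{2/3}\tilde\omega_k\lesssim h^{2/3}K_\ceps^{2/3}\ll h^{1/2}$ per order.

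The main obstacle is the residual mismatch in $x$. A function of $x$ alone cannot in general be represented exactly by $G_N(x,y,\eta,\tilde\omega_k)$ with a single $\eta$-weight. The expected resolution is that the deviation of $G_N$ from the model mode, after projection, is a combination of other modes $\tilde e_j$. I would handle this by letting $F_k$ also feed neighbouring $j$ through an almost-diagonal matrix $M_{jk}$, with off-diagonal entries $O(h^{1/3}\tilde\omega_k^{\,c})$, to be inverted by a Neumann series. Uniformity of these Airy-quasimode estimates for $k\le K_\ceps\ll h^{-1/4}$ is exactly what the upcoming subsections \ref{sectpseudocalcul} and \ref{sectproofs} would supply. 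I expect the proof of this uniform closeness of $G_N(\cdot,\tilde\omega_k)$ to the gallery modes to be the delicate point.
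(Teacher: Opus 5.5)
Your reduction matches the paper's. $K_{\tilde\omega_k}(f)$ only involves the Fourier coefficient of $f$ at $\omega=\tilde\omega_k$, so it suffices to match $\frac{2\pi}{\tilde L'(\tilde\omega_k)}K_{\tilde\omega_k}(f)(0,\cdot)$ with $\mathcal E_M(\cdot,a,\tilde\omega_k)$ mode by mode for $k\lesssim K_\ceps$. The gap is in the inversion step, which is the whole content of the proposition.

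Your claim that $G_N(\cdot,\tilde\omega_k)$ is the model mode times an elliptic factor, up to small errors, fails on the support of the construction. By \eqref{formpsigamma}, $\psi-y\cdot\eta=\tau_qB_\Gamma(y,\eta/\tau_q)+\Upsilon$. This contains $|\eta|B_0(y,\eta/|\eta|)=O(|y|^2/h)$. Through $(\varrho-1)B_2$ with $\varrho-1\sim-\tilde\omega_kh^{2/3}$, it also contains a term of size $\tilde\omega_kh^{-1/3}|y|^2$; your ``$O(h^{2/3}\tilde\omega_k)$'' drops the factor $\tau_q\sim h^{-1}$. These phases are small only for $|y|\ll h^{1/2}$, a localization you never invoke. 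On $|y|\le 1/16$, the map $\hat F\mapsto\int G_N(\cdot,\tilde\omega_k)\hat F\,d\eta$ is a Fourier integral operator in $y$ whose canonical relation is not the identity, so no pseudodifferential correction can absorb $e^{i|\eta|B_0}$.

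This is why the paper composes with the operator $\Lo$ of Definition \ref{blabla}. It is also where the threshold $k\ll h^{-1/4}$ really comes from, rather than from the Taylor expansion in $x$. After removing $B_0$, each $(y,\theta)$-derivative of $\breve e$ still costs $\tilde\omega_kh^{-1/3}$ (Lemma \ref{lemestimderivekk}). Hence the symbol of $\tilde F^*_{\tilde\omega_k}\tilde F_{\tilde\omega_k}$ lies in $\mathcal S_{1/2-2\ceps_1/3}$ as long as $h\,(\tilde\omega_kh^{-1/3})^2\lesssim h^{4\ceps_1/3}$, i.e.\ $k\le h^{-1/4+\ceps_1}$ (Proposition \ref{propsymbole}). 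Together with $\|\breve e\|_{L^2(x>0)}\sim1$, this gives the uniform ellipticity of Proposition \ref{propmatrix}. The paper then takes $f_{\tilde\omega_k}=(\tilde F^*_{\tilde\omega_k}\tilde F_{\tilde\omega_k})^{-1}\tilde F^*_{\tilde\omega_k}\mathcal E_M$ and uses $\sqrt{\tilde L'(\tilde\omega_k)/2\pi}\,\Lo(f_{\tilde\omega_k})$ as the $k$-th coefficient of $f_{h,a,1}$, separately for each $k$.

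Your substitute is an almost diagonal coupling $M_{jk}$ between modes, inverted by a Neumann series. Nothing in the sketch supports it:
the off-diagonal bound $O(h^{1/3}\tilde\omega_k^{c})$ is only asserted;
the coupling feeds modes with $\tilde\omega_j$ beyond $\tilde\omega_{K_\ceps}$, where closeness to the model deteriorates (the derivative cost $\tilde\omega_jh^{-1/3}$ reaches $h^{-1/2}$ for $j\sim h^{-1/4}$), so neither convergence of the series nor an $O(h^\infty)$ remainder is established;
the later subsections do not provide this input either, since Proposition \ref{propmatrix} is a single-mode ellipticity statement, not an off-diagonal smallness estimate between different $\tilde\omega_j,\tilde\omega_k$.

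Your observation that one $\eta$-weight cannot reproduce an arbitrary $x$-profile is pertinent. In the paper's route it sits in the passage from the normal equations $\tilde F^*\tilde Ff=\tilde F^*\mathcal E_M$ to $\tilde Ff=\mathcal E_M$, which needs $\mathcal E_M(\cdot,a,\tilde\omega_k)$ to lie in the range of $\tilde F_{\tilde\omega_k}$ modulo $O(h^\infty)$. As written, however, your proposal identifies that difficulty rather than resolving it.
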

The proof of Proposition \ref{propdataapetitpetit1} is postponed to Section \ref{sectproofs}, as it requires arguments from section \ref{sectpseudocalcul}; we will introduce the cutoff $\cutoffchi^{\flat}( {\tilde\omega_k}/{\tilde\omega_{K_{\ceps}}})$ in the (LHS) term of \eqref{trucmuche} as well, due to how $f_{h,a,1}$ is obtained.
\begin{prop}\label{propdataapetitpetit2}
  For all $a\in (0,h^{2/3-\ceps})$, there exists a smooth function $f_{h,a,2}$ such that
  \begin{gather}
\label{eq:g1}\begin{multlined}[t]
\langle \sum_{N\in \Z} e^{-iN \tilde L(\omega)}
    , {\cutoffchi^{\flat}}(h^{2/3}\omega/\ceps_{0}){\cutoffchi^{\sharp}}(h^{2\ceps}\omega)K_\omega(f_{h,a,2})(0,x,y)\rangle_{\omega}\\
\quad\quad\quad\quad    {}=\sum_{k\geq 1}\cutoffchi^{\sharp}\Big(\frac{\tilde\omega_k}{\tilde \omega_{K_{\ceps}}}\Big)\chi^{\flat}(h^{2/3}\tilde\omega_k/\ceps_{0})\mathcal{E}_M(x,y,a,\tilde\omega_k)+O(h^{\infty}),
\end{multlined}\\
\label{eq:g12}
\langle \sum_{N\in \Z} e^{-iN \tilde L(\omega)}
    , {\cutoffchi^{\flat}}(h^{2/3}\omega/\ceps_{0}){\cutoffchi^{\sharp}}(\omega){\cutoffchi^{\flat}}( h^{2\ceps}\omega)K_\omega(f_{h,a,2})(0,x,y)\rangle_{\omega}=O(h^{\infty}).
\end{gather}
\end{prop}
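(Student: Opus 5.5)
We construct $f_{h,a,2}$ by transferring the large-$k$ part of the model data, through the same microlocal inversion of $J$ as in Lemma~\ref{lemp}, and then read \eqref{eq:g1} backwards through the Airy--Poisson formula \eqref{eq:AiryPoisson}.

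The first step is to rewrite the right-hand side of \eqref{eq:g1} as a sum over $N$. Since $\tilde e_k(a,\eta)$ and the normalization $2\pi/\tilde L'(\tilde\omega_k)$ appear in \eqref{defEmathcalM}, apply \eqref{eq:AiryPoisson} in reverse to the test function
\[
\omega\mapsto \cutoffchi^{\sharp}(h^{2\ceps}\omega)\chi^{\flat}(h^{2/3}\omega/\ceps_0)\int e^{iy\cdot\eta}\chi_0(h^{2/3}\omega q^{2/3}(h\eta))\varkappa(h\eta)q^{1/3}(\eta)Ai(xq^{1/3}(\eta)-\omega)Ai(aq^{1/3}(\eta)-\omega)\,d\eta.
\]
Here the smooth cut-off $\cutoffchi^{\sharp}(h^{2\ceps}\omega)$ replaces $\cutoffchi^{\sharp}(\tilde\omega_k/\tilde\omega_{K_\ceps})$. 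This is allowed because $\tilde\omega_{K_\ceps}\sim K_\ceps^{2/3}\gtrsim h^{-4\ceps/3}$, so with $K_\ceps$ tuned appropriately the two cut-offs can be made to agree on the zeros $\tilde\omega_k$. This gives the model side as $\langle\sum_N e^{-iN\tilde L(\omega)},\cdots\rangle_\omega$.

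On the support $\omega\gtrsim h^{-2\ceps}$, the rescaled parameter $\alpha=h^{2/3}\omega\gtrsim h^{2/3-2\ceps}\gg a$ provides a genuine large parameter. There are two consequences:
\begin{itemize}
\item The factor $Ai(aq^{1/3}(\eta)-\omega)$ can be written as an Airy-type oscillatory integral, as in \eqref{AiAi'}, with phase $\frac{s^3}{3}-s(aq^{1/3}-\omega)$.
\item The model and true phases can be compared. Since $a\le h^{2/3-\ceps}$, the difference between $\Phi(a,0,\theta,\alpha,s)$ and its Friedlander counterpart is $O(a)\cdot O(\alpha^{1/2})$ in the relevant region, which is small enough to be absorbed in the symbol after stationary phase.
\end{itemize}

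The second step is to define $f_{h,a,2}:=J^{-1}(F_2)$ with the microlocal inverse $J^{-1}$ of Lemma~\ref{lemp}. Here $F_2$ is the $N=0$ model term, i.e.\ the integral over $\omega$ of the above test function. We then compute $\widehat{f_{h,a,2}}$ by stationary phase in $(x,\sigma,y,\theta)$, exactly as in \eqref{Fha}. The result is an Airy-type integral $h^{-2/3}\int e^{-\frac ih\Phi_M(a,\theta',\alpha,s)}\tilde q_{h}\,ds$ with cut-off $\cutoffchi^{\sharp}(\alpha h^{-2/3+2\ceps})$. Stationary phase is legitimate because $\alpha\ge h^{2/3-2\ceps}$ gives effective parameter $\alpha^{3/2}/h\gtrsim h^{-3\ceps}$.

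The third step is to identify the terms. By construction, $V_0(0,\cdot)=J(f_{h,a,2})$ equals $F_2$ modulo $O(h^\infty)$. For $N\ne0$, we show as in Proposition~\ref{propsommefinie} that at $t=0$ both $V_N$ and the model $N$-th term are $O(h^\infty)$. Heuristically, at $t=0$ the phase $-Nh\tilde L(h^{-2/3}\alpha)$ forces $\partial_\alpha$ of the phase to be of size $N\alpha^{1/2}\gtrsim h^{1/3-\ceps}$, which is non-stationary. Lemma~\ref{sommeNfinie} cuts the sum to $|N|\lesssim h^{-1/3}$, so integrations by parts in $\alpha$ give $O(h^\infty)$ uniformly. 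This yields \eqref{eq:g1}.

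For \eqref{eq:g12}, the test function is supported in $1\le\omega\lesssim h^{-2\ceps}$, while $\widehat{f_{h,a,2}}$ is microlocalized where $\alpha\gtrsim h^{2/3-2\ceps}$. Choosing the cut-off in $f_{h,a,2}$ with slightly larger threshold leaves an $O(h^\infty)$ overlap through the rapid decay of $\cutoffchi^{\sharp}$ transferred by the FIO. To be safe, we instead handle the overlap region where both cut-offs are nonzero by the same non-stationary argument.

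The main obstacle is the uniformity of the stationary-phase and non-stationary estimates near the lower threshold $\alpha\sim h^{2/3-2\ceps}$. There the large parameter is only $h^{-3\ceps}$, which is still a negative power of $h$, so the errors are $O(h^\infty)$. This requires the restriction $\ceps<1/12$ and the choice $K_\ceps\gtrsim h^{-2\ceps}$, together with the replacement of the Neumann symbol $g_h$ by its model analogue, whose leading term is $1+O(x,y)$ (proof of Lemma~\ref{lemJinvert}).
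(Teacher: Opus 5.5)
\textbf{There is a genuine gap in your Step 3.} You claim that for every $N\neq0$ both $V_N(0,\cdot)$ and the model $N$-th term are $O(h^\infty)$. This is false for $N=\pm1$ when $a<h^{2/3-\ceps}$.

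Your heuristic differentiates only $-Nh\tilde L(h^{-2/3}\alpha)$. It ignores the $\alpha$-dependence of $\Phi(x,y,\theta,\alpha,\sigma)-\Phi(a,0,\theta,\alpha,s)$:
\begin{itemize}
\item At the critical points $\sigma^2=\zeta(x,\cdot)$, $s^2=\zeta(a,\cdot)$, its $\alpha$-derivative is, to leading order, $s-\sigma$.
\item The modulus of $s-\sigma$ reaches $\sqrt{\zeta(x,\cdot)}+\sqrt{\zeta(a,\cdot)}=2\sqrt\alpha-O((x+a)/\sqrt\alpha)$.
\item So for $N=\pm1$ and the matching branches, the total $\alpha$-derivative is only $O((x+a)/\sqrt\alpha)$.
\item One integration by parts then gains $h/((x+a)\sqrt\alpha)$. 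This is $\gtrsim1$ near $\alpha\sim h^{2/3-2\ceps}$ as soon as $x+a\lesssim h^{2/3+\ceps}$.
\item In the limit $x=a=0$, the model integrand $Ai(-\omega)^2$ contains the oscillations $e^{\pm\frac43 i\omega^{3/2}}$, which cancel exactly the leading oscillation of $e^{\mp i\tilde L(\omega)}$.
\end{itemize}
These $N=\pm1$ terms are the once-reflected (image-source) waves, which overlap the data when the source lies within $h^{2/3-\ceps}$ of the boundary. This is why the paper discards only $|N|\geq2$ and keeps the factor $1+\sum_\pm e^{\pm i\tilde L(\omega)}$ in \eqref{tosolve}.

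The argument of Proposition~\ref{propsommefinie} that you invoke needs $\lambda=a^{3/2}/h$ to be a large parameter, i.e.\ $a\geq h^{2/3-\ceps}$. That is exactly what fails here.

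Nor do the $N=\pm1$ terms match automatically. Setting $f_{h,a,2}=J^{-1}(F_2)$ only fixes $\int K_\omega(f_{h,a,2})(0,\cdot)\,d\omega$, not $K_\omega(f_{h,a,2})(0,\cdot)$ for each $\omega$. Nothing then forces $\int e^{\mp i\tilde L(\omega)}K_\omega(f_{h,a,2})(0,\cdot)\,d\omega$ to equal the model $N=\pm1$ terms. In the Friedlander model this works only because $K_\omega(f)(0,\cdot)$ can be taken equal to the model integrand identically in $\omega$.

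Separately, your phase-comparison remark is wrong: $a\alpha^{1/2}/h$ can be as large as $h^{-1/3-\ceps}$, so it cannot be absorbed into the symbol. The FIO inversion does not need that comparison anyway.

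\textbf{The missing idea is to build the reflected wave into the operator you invert.} The paper proceeds as follows.
\begin{itemize}
\item It splits $Ai(-\zeta_M(a,\eta,\omega))=\sum_\pm A_\pm(\zeta_M(a,\eta,\omega))$ and groups the model data into $E_{M,\pm}$. Each pairs an $N=0$ branch with its Neumann-reflected partner carrying the factor $-(A'_\pm/A'_\mp)(\omega)$.
\item It inserts the cut-off $\chi_1(x/h^{2/3-\ceps})$, which is legitimate since $E_{M,\pm}=O(h^\infty)$ for $x>2h^{2/3-\ceps}$.
\item It solves $(J_++R_+)f_{h,a,2,-}=E_{M,+}$, and symmetrically for the other sign. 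Here $J_+$ is built on $G_{N,+}$ and $R_+$ on $(A'_+/A'_-)(\omega)G_{N,-}$.
\item It inverts $J_++R_+$ by a Neumann series, since $\|J_+^{-1}R_+\|=O(h^\infty)$ by non-stationary phase in $x$. On $x\lesssim h^{2/3-\ceps}$, $\omega\gtrsim h^{-2\ceps}$, the $\zeta^{3/2}$ terms give an $x$-derivative of size $\sim\sqrt\omega|\eta|^{2/3}$. This dominates both the $\psi$ terms and the cost of differentiating $\chi_1$.
\end{itemize}
Only after this construction are the mismatched cross terms \eqref{g2+0}, \eqref{g2-0} and the $|N|\geq2$ terms \eqref{toshowsecond} non-stationary in $\omega$.
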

We introduced a new cut-off ${\cutoffchi^{\sharp}}(h^{2\ceps}\omega)$ in \eqref{eq:g1} and removed ${\cutoffchi^{\sharp}}(\omega)$ (which is supported for $\omega\geq 1/2$ and identically $1$ on the support of ${\cutoffchi}^{\sharp}(h^{2\ceps}\omega)$). The proof of Proposition \ref{propdataapetitpetit2} will be provided in Section \ref{proofprop2}.
The sum of \eqref{eq:g1} and \eqref{eq:g12} yields the second term in \eqref{dataEMEM}. Finally, we have
\begin{prop}\label{propparama<}
For all $a\in (0,h^{2/3-\ceps})$, let $f_{h,a}:=f_{h,a,1}+f_{h,a,2}$, then $\Prond_{h,a}$, defined in Definition \ref{defparamPoisson} is a parametrix in the sense of Definition \ref{dfnparametrix} and \eqref{dataamic} holds.
\end{prop}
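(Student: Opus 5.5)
The plan is to combine Propositions \ref{propdataapetitpetit1} and \ref{propdataapetitpetit2} by linearity of $f\mapsto K_\omega(f)$. We then check the three requirements of Definition \ref{dfnparametrix} on the two equivalent representations \eqref{eq:Prondcut} and \eqref{eq:Prond2cut} of $\Prond_{h,a}$. The guiding principle, as in the proof of Proposition \ref{propsommefinie}, is that the wave equation and the boundary condition do not depend on the choice of $f$. Only the initial data does.

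\textbf{Step 1: the initial data \eqref{dataamic}.} Set $t=0$ in \eqref{eq:Prond2cut} with $f_{h,a}=f_{h,a,1}+f_{h,a,2}$. The $f_{h,a,1}$ part is given by Proposition \ref{propdataapetitpetit1}: it equals the first sum of \eqref{dataEMEM} modulo $O(h^\infty)$.

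For the $f_{h,a,2}$ part we use the Airy--Poisson identity \eqref{eq:AiryPoisson} to pass to the $N$-representation \eqref{eq:Prondcut}. In \eqref{eq:Prondcut} the cut-off $\chi^\sharp(\omega)$ is already present in $K_\omega$. We insert the partition of unity
\[
1={\cutoffchi^{\sharp}}(h^{2\ceps}\omega)+{\cutoffchi^{\flat}}(h^{2\ceps}\omega)
\]
and note that ${\cutoffchi^{\sharp}}(\omega){\cutoffchi^{\sharp}}(h^{2\ceps}\omega)={\cutoffchi^{\sharp}}(h^{2\ceps}\omega)$. The two pieces are then exactly the left-hand sides of \eqref{eq:g1} and \eqref{eq:g12}. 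Their sum is therefore the second sum of \eqref{dataEMEM}, up to $O(h^\infty)$.

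Adding the two contributions and using \eqref{diracmoddeltaM<}, \eqref{dataEMEM} gives \eqref{dataamic}. There is one bookkeeping issue to settle here. The cross terms $K_{\tilde\omega_k}(f_{h,a,1})$ with $k$ large, and the $f_{h,a,2}$ terms with small $k$, must not pollute the result. By construction, $f_{h,a,1}$ carries the cut-off $\chi^\flat(\tilde\omega_k/\tilde\omega_{K_\ceps})$ built in, and \eqref{eq:g12} already accounts for the low-frequency part of $f_{h,a,2}$. So linearity suffices provided the propositions are read with the full sums. This is where I expect the main care to be needed: one must verify that the supports in $\omega$ of $f_{h,a,1}$ and $f_{h,a,2}$ are compatible with the cut-offs used in each proposition.

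\textbf{Step 2: wave equation and Neumann condition.} As noted after \eqref{formulafAP}, the sum \eqref{eq:Prond2cut} is finite, with $O(h^{-1})$ terms. By \eqref{eq:Kom1}, each $K_{\tilde\omega_k}(f_{h,a})$ solves the wave equation modulo $O(h^M)$ uniformly in $k$, because $f_{h,a,1}$ and $f_{h,a,2}$ are of moderate growth in $h$. Hence $(\partial_t^2-\Delta)\Prond_{h,a}=O_{C^\infty}(h^\infty)$ after absorbing the $h^{-1}$ factor.

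By \eqref{condN} and $Ai'(-\tilde\omega_k)=0$, we have $\partial_xK_{\tilde\omega_k}(f)|_{x=0}=O(h^\infty)$. This gives the Neumann condition for $\Prond_{h,a}$.

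\textbf{Step 3: comparison with $\Grond$.} Energy estimates for the Neumann problem, together with finite speed of propagation, control the difference $\Prond_{h,a}-\Grond$. This difference has $O(h^\infty)$ source, boundary and data errors, the data error being measured after applying $\varkappa(hD_t)\varkappa(hD_y)$. It remains to compare the two data: $\chi_0(-h^2\Delta_M)\varkappa(hD_y)\delta_{(a,0)}$ against $\delta_{(a,0)}$. Microlocally on the support of $\varkappa(hD_t)\varkappa(hD_y)$, with $|\xi|$ small, the spectral localizations by $\Delta_M$ and by $\Delta$ agree modulo $O(h^\infty)$ in the strip $a<h^{2/3-\ceps}$. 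This is the argument of \cite{ILLP}, unchanged because only the symbols differ.

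Transverse frequencies, where $\chi_0$ is not equal to $1$, are handled by standard geometric optics with a single reflection, as in the regime of Proposition \ref{propsommefinie}. This step relies on Sobolev-type bounds uniform in $a\le a_0$. It is the step most dependent on \cite{ILLP}, and I would cite its proof verbatim.
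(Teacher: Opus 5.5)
Your Steps 1 and 2 are the paper's proof. It combines Propositions \ref{propdataapetitpetit1} and \ref{propdataapetitpetit2} through $K_\omega(f_{h,a,1}+f_{h,a,2})=K_\omega(f_{h,a,1})+K_\omega(f_{h,a,2})$ and the finite mode sum \eqref{eq:Prond2}, with the approximate wave equation and the Neumann condition independent of $f$, exactly as in Proposition \ref{propsommefinie}. Your splitting $\cutoffchi^{\sharp}(h^{2\ceps}\omega)+\cutoffchi^{\flat}(h^{2\ceps}\omega)$ is precisely the paper's remark that the left-hand sides of \eqref{eq:g1} and \eqref{eq:g12} add up to the second sum in \eqref{dataEMEM}. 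The ``bookkeeping issue'' needs no further check: $\hatdeux{f}_{h,a,1}(\cdot,\tilde\omega_k/h^{1/3})=0$ for large $k$ by \eqref{eq:14}, and both propositions are stated with the full sums.

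Step 3, however, is not part of the paper's argument, and you should drop it rather than cite it from \cite{ILLP}.

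\emph{What ``parametrix'' means here.} In the paper it means a solution of the Neumann wave equation modulo $O(h^\infty)$ with the glancing-localized data \eqref{dataamic}. The transverse complement $\bigl(1-\chi_0(-h^2\partial_x^2+xq(h\partial_y))\bigr)\varkappa(hD_y)\delta_{(a,0)}$ is a separate object and is not contained in $\Prond_{h,a}$. It is treated in the subsection on the transverse contribution, through the modes with $h^{2/3}\tilde\omega_k\geq\ceps_0$. It is not handled by a one-reflection argument as for $1-\chi_0(hD_x)$ in the regime of Proposition \ref{propsommefinie}.

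\emph{The unproved claim.} You assert that the spectral localizations with respect to $\Delta_M$ and $\Delta$ agree modulo $O(h^\infty)$ in the strip $a<h^{2/3-\ceps}$. This is established nowhere. The paper only says, heuristically, that they are ``close enough'', and uses this as motivation for choosing model data. Near the boundary, $\chi_0(-h^2\partial_x^2+xq(h\partial_y))$ is a functional calculus of a boundary problem, not an interior pseudo-differential operator. That is exactly why the paper builds the data from the model gallery modes $\mathcal{E}_M$ and then matches them with quasimodes via $\tilde F_{\tilde\omega_k}$ and $\tilde J_+$, instead of identifying the two localizations directly.
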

Proposition \ref{propparama<} follows easily from Propositions \ref{propdataapetitpetit1} and \ref{propdataapetitpetit2} using $K_{\omega}(f_{h,a,1}+f_{h,a,2})=K_{\omega}(f_{h,a,1})+K_{\omega}(f_{h,a,2})$ together with \eqref{eq:Prond2}.
For later purposes (dispersion for small $a$, Section \ref{secdispapetit}), taking $K_{\ceps}\sim h^{-2\ceps}$ would be enough. However, in the next subsection, following \cite{ILLP}, we aim at obtaining gallery modes for $k$ as large as possible, which turns out to be up to $K_{\ceps}\sim h^{-1/4+\ceps}$. This allows to achieve the proof of Theorem \ref{thmN} for initial data supported at distance smaller than $h^{2/3-\epsilon}$ from the boundary.

\subsubsection{Pseudo-differential calculus; construction of gallery modes.
}\label{sectpseudocalcul}
\begin{dfn}
Let $G_N$ be defined in \eqref{eq:GN}.
We set
\begin{equation}\label{defeomkgen}
\tilde e(x,y,\eta,\omega)=\frac{\sqrt{2\pi} q(\eta)^{1/6}}{\sqrt{\tilde L'(\omega)}}e^{-iy\cdot \eta}G_N(x,y,\eta,\omega).
\end{equation}
\end{dfn}
\begin{dfn}
Let $\varkappa$ be  like in Definition \eqref{dfnparametrix}. For $g\in L^2(\mathbb{R}^{d-1})$, we define
\begin{equation}\label{defFk}
F_{\tilde\omega_k}(g)(x,y): =\frac{1}{(2\pi )^{d-1}}\int e^{i(y-y')\cdot \eta}\tilde e(x,y,\eta,\tilde\omega_k)(\varkappa(hD_{y'})g)(y')d\eta dy'\,. 
\end{equation}
\end{dfn}
\begin{dfn}\label{blabla}
Let $\tilde \varkappa\in C^{\infty}_0(\mathbb{R}^{d-1})$ be such that $\tilde\varkappa=1$ on the support of $\varkappa$ and vanishing outside a neighborhood of $\mathbb{S}^{d-2}$. Let also $\chi\in C^{\infty}_0$ be a smooth cutoff supported in the ball of center $0$ and radius $1/16$ of $\mathbb{R}^{d-1}$.  
For $f\in L^2(\mathbb{R}^{d-1})$ we define an operator $\Lo$ as
\begin{equation}\label{def:L}
\Lo(f)(y):=\int e^{i(y-y')\cdot\eta-i|\eta|B_0(y',\eta/|\eta|)}\tilde\varkappa(h\eta)\chi(y')f(y')dy'd\eta,
\end{equation}
where $B_0$ is the first term in the development of $B_{\Gamma}$ in \eqref{formalseriesBA} and is homogeneous of degree $0$.
\end{dfn}
To define $f_{h,a,1}$ and $f_{h,a,2}$, we need to "invert" $F_{\tilde\omega_k}$, which requires estimating derivatives of $\tilde e(x,y,\eta,\tilde\omega_k)$ with respect to $(y,\eta)$: in the Friedlander model case the corresponding mode $\tilde e_k(x,\eta)$ from \eqref{eig_k} does not depend on $y$, but here it does and one derivative that falls on the phase $\psi$ can provide a large factor $1/h$. Since Theorem \ref{thmMelrose} does not provide additional information about the phase functions $\psi$ and $\zeta$, we need to use Proposition \ref{propimpformgamma} as in \cite{ILLP} in order to be able to describe in mode detail these phases and, in particular, their dependence on $y$.

Deriving $\tilde e$ (hence $G_N$) with respect to $y$ yields a large factor due to $\mathcolor{red}{\mathbf{\tau}} B_0$ in the phase $\mathcolor{red}{\mathbf{\tau}} \Gamma(x,y,\sigma q^{1/3}(\eta)/\mathcolor{red}{\mathbf{\tau}},\eta/\mathcolor{red}{\mathbf{\tau}})$. 
In order to get rid of the homogeneous term of degree zero $|\eta|B_0(y,\eta/|/\eta|)$, we proceed as in \cite[Section 2.3.2]{ILLP} : set $\tilde F_{\tilde\omega_k}:=F_{\tilde\omega_k}\circ \Lo$, which is a new operator whose phase function does not include the contribution $|\eta|B_0(y,\eta/|\eta|)$; we prove that, at least for $k\ll h^{-1/4}$, these operators can be inverted and this will be our main result in this section:
\begin{prop}\label{propmatrix}
Let $0<\ceps_{1}<1/4$ be small. The operators $\tilde F_{\tilde\omega_k}^*\circ \tilde F_{\tilde\omega_k}:L^2(\mathbb{R}^{d-1})\rightarrow L^2(\mathbb{R}^{d-1})$ are pseudo-differential operators that are uniformly elliptic with respect to $1\leq k\leq h^{-1/4+\ceps_{1}}$.
\end{prop}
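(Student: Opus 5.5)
The plan is to write $\tilde F_{\tilde\omega_k}^*\tilde F_{\tilde\omega_k}$ explicitly as an oscillatory integral and compare it with the Friedlander model. Here $\tilde F_{\tilde\omega_k}=F_{\tilde\omega_k}\circ\Lo$. Since $\Lo$ is a semiclassical Fourier integral operator whose phase $y\cdot\eta-|\eta|B_0(y',\eta/|\eta|)$ removes the degree-one term $|\eta|B_0$, the composed kernel of $\tilde F_{\tilde\omega_k}$ has the form
\[
\int e^{i(y-y')\cdot\eta}\,\frac{\sqrt{2\pi}q(\eta)^{1/6}}{\sqrt{\tilde L'(\tilde\omega_k)}}\,e^{i(\psi-y\cdot\eta-\tau_qB_\Gamma(y,\eta/\tau_q))}\bigl(g_0Ai(-\zeta)+iq^{-1/6}g_1Ai'(-\zeta)\bigr)\,\tilde a\,d\eta .
\]
Here $\tilde a$ is a smooth symbol. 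The first step is to note that, by \eqref{formpsigamma} and Proposition \ref{propimpformgamma}, the remaining phase is $\tau_q(B_\Gamma-B_0)+\Upsilon$. On the support we have $\alpha=h^{2/3}\tilde\omega_k\lesssim h^{2/3}k^{2/3}$ and $x\lesssim\alpha$, because the Airy factors decay exponentially for $x\gtrsim\alpha$. With $\varrho-1\sim\alpha$ and $\Upsilon\in\mathcal H_{\geq2}$, this remainder has size $h^{-1}\cdot O(\alpha^{3/2}+\alpha|y|)$. Its $y$-derivatives therefore cost at most $h^{-1}\alpha\lesssim h^{-1/3}k^{2/3}$. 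This is a large but controlled loss, which places the amplitude $e^{-iy\cdot\eta}\tilde e$ in an exotic symbol class $S_{\delta}$ with $\delta<1/2$ in $(y,\eta)$, uniformly for $k\le h^{-1/4+\ceps_1}$.

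The second step is to form $\tilde F^*\tilde F$. It is a composition of two operators with the same phase $(y-y')\cdot\eta$ and amplitudes depending on $x$. Integrating in $x\in\R_+$ gives a pseudo-differential operator with symbol
\[
\sigma_k(y,\eta)=\int_0^\infty|\tilde e(x,y,\eta,\tilde\omega_k)|^2dx\ +\ \text{lower order}.
\]
The lower-order terms come from the symbolic calculus in $S_\delta$. Each of them gains a factor $h^{1-2\delta}$, that is, $(h^{-1/3}k^{2/3})^2h^{1}=h^{1/3}k^{4/3}$ per term. This factor is small precisely when $k\ll h^{-1/4}$, and I expect this to be the origin of the threshold $h^{-1/4+\ceps_1}$.

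The third step is to show that the principal symbol is uniformly close to $1$. I compare $|\tilde e|^2$ with the model mode $|\tilde e_k(x,\eta)|^2$ from Lemma \ref{lemorthog}, which has unit $L^2(\R_+)$ norm. By \eqref{zeta}, $\zeta=\tilde\omega_k-xq^{1/3}(\eta)e_0$ with $e_0=1+O(x,y,\omega/q^{1/3})$, and $g_0=1+O(x,y)+O(h)$. The Neumann-specific term $\sigma q^{-1/6}g_1$ is a perturbation, exactly as in the proof of Lemma \ref{lemJinvert}: $g_{1,0}|_{x=0}=0$, so it is $O(x,y)$, and $x\lesssim\alpha$ on the effective support. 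After rescaling $x=X q^{-1/3}(\eta)$ and changing variable $X\mapsto Xe_0$, the integral becomes $\int_0^\infty Ai^2(X-\tilde\omega_k)dX\,(1+O(\alpha)+O(|y|))$, normalized by $\tilde L'(\tilde\omega_k)/(2\pi)$ through \eqref{eq:propL2}. The main obstacle is to make these error terms uniform in $k$. The Airy profile spreads over $X\in[0,\tilde\omega_k]$, so the relative error involves $\alpha\sim h^{2/3}k^{2/3}$ and, more delicately, the derivatives of $\tilde e$ in $(y,\eta)$ needed for the $S_\delta$ calculus.

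To get those derivative bounds, I would use the oscillatory representation \eqref{eq:GN-oscillatory} rather than the Airy form. Differentiating under the $\sigma$-integral then produces powers of $\sigma$ and of $x$. These are controlled by $\sqrt{\alpha}$ and $\alpha$ on the essential support, via the same integration by parts in $\sigma$ used in Lemma \ref{lemJinvert}. Combined with the localization $\chi(y')$ to $|y'|<1/16$, taken small enough, this gives $\sigma_k\ge1/2$ uniformly for $1\le k\le h^{-1/4+\ceps_1}$. Ellipticity of $\tilde F_{\tilde\omega_k}^*\tilde F_{\tilde\omega_k}$ on the support of $\varkappa(h\eta)$ then follows.
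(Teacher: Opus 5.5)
Your proposal is correct and follows essentially the same route as the paper. You write $\tilde F_{\tilde\omega_k}^*\circ\tilde F_{\tilde\omega_k}$ as a pseudo-differential operator whose amplitude is the $L^2(x>0)$ pairing of the modes $\breve e$; the per-derivative loss $h^{-1}\alpha=\tilde\omega_k/h^{1/3}$ (Lemma \ref{lemestimderivekk}) places it in $\mathcal{S}_{\frac12-\frac23\ceps_1}$, so each term of \eqref{mtildeh} gains $h^{1/3}\tilde\omega_k^{2}\ll1$ exactly when $k\ll h^{-1/4}$, and ellipticity follows from $\|\breve e(\cdot,y,\eta,\tilde\omega_k)\|_{L^2(x>0)}\sim1$ together with the elliptic Jacobian factors, with your explicit check that the Neumann $Ai'$ term is a small perturbation in this norm supplying a detail the paper leaves to the analogy with the Dirichlet case.
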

The proof of Proposition \ref{propmatrix} follows like the one of \cite[Prop 2.30]{ILLP} ; we recall here the main steps but omit details as the argument is identical to the Dirichlet case (when instead of $\tilde\omega_k$ we work with $\omega_k$).

%
Setting $\tilde F_{\tilde\omega_k}=F_{\tilde\omega_k}\circ \Lo$, we can explicitly compute the adjoint operator $\tilde F_{\tilde\omega_k}^*$ using that for every $f\in L^2(\mathbb{R}^{d-1})$ and $\mathcal{E}\in L^2(\Omega)$ we have $<\tilde F_{\tilde\omega_k}(f),\mathcal{E}>_{L^2(\Omega)}=<f,\tilde F_{\tilde\omega_k}^*(\mathcal{E})>_{L^2(\mathbb{R}^{d-1})}$. This further yields
$\tilde F_{\tilde\omega_k}^*\circ \tilde F_{\tilde \omega_k}(f)(z)$ for $f\in L^2(\mathbb{R}^{d-1})$ and $z\in\mathbb{R}^{d-1}$. Since these computations were already carried out in \cite[Section 2.3.2]{ILLP} with $\omega_k$ in place of $\tilde \omega_k$, we only record the final formulas here. As in \cite[(2.72)]{ILLP}, we find
%
\begin{equation}\label{FistarFj}
\tilde F_{\tilde\omega_k}^*\circ \tilde F_{\tilde \omega_k}(f)(z)
=\int_{z'}M_{k}(z,z')f(z')dz' +O(h^{\infty}),
\end{equation}
where 
we have set $\breve{e}(x,y,\eta,\tilde\omega_k)=e^{-i|\eta|B_0(y,\eta/|\eta|)}\tilde e(x,y,\eta,\tilde\omega_k)$ and where, modulo $O(h^{\infty})$ terms,
\begin{gather}\label{mjk}
M_{k}(z,z')=\int_{\tiTheta}e^{i(z-z')\cdot\tiTheta} m_{k}(z',\tiTheta) d\tiTheta\,,\\
m_{k}(z,z',\tiTheta) = \begin{multlined}[t]
\frac{1}{(2\pi )^{d-1}}\int _{\zeta}\int_{w} \int_0^{\infty} e^{-iw\cdot\zeta}|\nabla_{\tiTheta}\eta|(\tiTheta,z,z'+w)|\nabla_{\tiTheta'}\eta|(\tiTheta-\zeta,z',z'+w)\\ 
\times {\varkappa}(h\eta(\tiTheta,z,z'+w)){\varkappa}(h\eta'(\tiTheta-\zeta,z',z'+w))\tilde\chi(z'+w)\chi(z)\chi(z')\\
\times \overline{\tilde{e}(x,z'+w,\eta(\tiTheta,z,z'+w),\omega_k)} \tilde{e}(x,z'+w,\eta(\tiTheta-\zeta,z',z'+w),\omega_k) dx dwd\zeta.
\end{multlined}
\end{gather}
On the support of the two cut-offs $\varkappa$, $h(\tiTheta+O(z,z'+w))\in [1/2,2]$,  $h(\tiTheta-\zeta+O(z',z'+w))\in[1/2,2]$, and on the support of $\chi(z)\chi(z'+w)\tilde\chi(z'+w)$, $|z,z'|<1/16, |z'+w|<1/8$; then, set $\tiTheta=\frac{\theta}{h}$, $\zeta=\frac{\varrho}{h}$ where $\theta\in[1/4,5/4]$ and $|\varrho|\leq 2$. In \eqref{mjk} we may replace $m_{k}(z,z',\tiTheta)$ by $m_{k}(z',z',\tiTheta)=:\tilde{m}_{k}(z',\tiTheta)$ without changing the integral modulo $O(\tiTheta^{-\infty})=O(h^{\infty})$. In the new variables (and modulo $O(h^{\infty})$ terms), the symbol of \eqref{mjk} becomes
\begin{equation}\label{eq:mktilde}
  \tilde{m}_{k}(z',\theta/h) =\frac{1}{(2\pi h )^{d-1}}\int _{\varrho}\int_{w} 
  e^{-\frac ih w\cdot \varrho}
a_{k}((z',\theta); (w,\varrho);h)
  dw d\varrho\,,
\end{equation}
where
\begin{multline}\label{defajk}
a_{k}((z',\theta); (w,\varrho);h):=|\nabla_{\tiTheta}\eta|(\theta/h,z',z'+w)|\nabla_{\tiTheta'}\eta|((\theta-\varrho)/h,z',z'+w)\\\times {\varkappa}(h\eta(\theta/h,z',z'+w)){\varkappa}(h\eta((\theta-\varrho)/h,z',z'-w))\tilde\chi(z'+w)\chi(z)\chi(z')\\
\times \int_0^{\infty}\overline{\breve{e}(x,z'+w,\eta(\theta/h,z,z'+w),\tilde\omega_k)} \breve{e}(x,z'+w,\eta((\theta-\varrho)/h,z',z'+w),\tilde\omega_k) dx\,.
\end{multline}
 Define
\begin{equation}
  \label{eq:20}
  \mathcal{S}_{\gamma}:=\{a\in C^{\infty} \,\,\text{such that}\,\, |\partial_{w}^{\beta_{1}}\partial_{\varrho}^{\beta_{2}}a(w,\varrho,h)|\leq C_{\beta} h^{-\gamma (|\beta_{1}|+|\beta_{2}|)}\}\,.
\end{equation}
The following result holds as in \cite[Prop. 2.27]{ILLP},
\begin{prop}\label{propsymbole}
Let $0<\ceps_{1}<1/4$ be small. The symbols $a_k((z',\theta);(w,\varrho);h)$ are in the class $\mathcal{S}_{\frac{1}{2}-\frac 2 3\ceps_{1}}$, uniformly with respect to $1\leq k\leq h^{-\frac 1 4+\ceps_{1}}$.
\end{prop}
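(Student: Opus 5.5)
We will show that each derivative $\partial_w^{\beta_1}\partial_\varrho^{\beta_2}$ of $a_k$ costs at most $h^{-(1/2-\frac23\ceps_1)}$ per derivative, uniformly in $1\le k\le h^{-1/4+\ceps_1}$. The approach is the one of \cite[Prop.~2.27]{ILLP}: the only change is that $Ai(-\zeta)$ and the rescaled derivative $h^{1/3}q^{-1/6}Ai'(-\zeta)$ are now weighted by the Neumann symbols $g_0,g_1$ of Theorem~\ref{thm:N} rather than $p_0,p_1$, and $\omega_k$ is replaced by $\tilde\omega_k$.

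The factors in \eqref{defajk} not involving $\breve e$ are harmless. The Jacobians $|\nabla_{\tiTheta}\eta|$, the cut-offs $\varkappa$, $\chi$, $\tilde\chi$ are smooth functions of $(z',w,\theta,\varrho)$ with $h$-independent bounds, because $\eta(\theta/h,\cdot)$ is homogeneous of degree one in $\theta/h$. The whole difficulty lies in the $x$-integral of $\overline{\breve e}\,\breve e$.

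Write $\breve e$ through $G_N$ and remove $|\eta|B_0$ as in the definition of $\Lo$. What remains of the phase $\psi-y\cdot\eta-|\eta|B_0$ is, by \eqref{formpsigamma} and Proposition~\ref{propimpformgamma}, equal to $\tau_q B_\Gamma-|\eta|B_0+\Upsilon$ with $\Upsilon\in\mathcal H_{j\ge2}$. In the semiclassical variables this equals
\[
h^{-1}\bigl((\tau_q-|\theta|)B_0+(\varrho_\tau-1)B_2+\Upsilon\bigr),
\]
which on the relevant region is of size $h^{-1}(\alpha+x\alpha^{1/2}+x^2\cdots)$.

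The key localization is that the Airy factors $Ai(-h^{-2/3}\zeta)$ are exponentially small unless $x\lesssim\alpha$. With $\alpha=h^{2/3}\tilde\omega_k\lesssim h^{2/3}k^{2/3}\le h^{1/2+\frac23\ceps_1}$, one derivative in $(w,\varrho)$ of the residual phase produces at most
\[
h^{-1}\bigl(\alpha+\alpha^{3/2}\bigr)\lesssim h^{-1/2+\frac23\ceps_1},
\]
as long as the derivative hits $(\varrho_\tau-1)B_2$, the $\alpha$-dependent part of $\tau_q B_\Gamma$, or $\Upsilon$. The last of these is small of order $x\alpha^{1/2}+\alpha^{3/2}$ thanks to its $\mathcal H_{\ge2}$ structure.

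Derivatives falling on the Airy arguments $h^{-2/3}\zeta$ are estimated as follows. Since $\partial\zeta$ in $(y,\eta)$ is $O(x)$ by \eqref{zeta}, each such derivative gives $h^{-2/3}x\lesssim h^{-2/3}\alpha\lesssim h^{-1/6}$, which is better. The resulting $Ai'$ factors are then controlled through $Ai''(z)=zAi(z)$ and the $L^2$ normalization of $\tilde e$ by $\tilde L'(\tilde\omega_k)$ from Lemma~\ref{lemL}. That normalization is exactly the Neumann analogue of the Dirichlet normalization, and the integral $\int_0^\infty Ai^2(x-\tilde\omega_k)\,dx$ is comparable to $\tilde\omega_k^{1/2}$ for both families of zeros.

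Derivatives on $g_0$ and $g_1$ are bounded because these are symbols of order $0$ and type $((1,2/3,1/3),0)$. Their $\omega$-derivatives are therefore admissible, and $\tilde\omega_k\ge 1$. Iterating by Leibniz' rule and integrating in $x$ over $[0,C\alpha]$, with the exponential tail beyond, gives the claimed $\mathcal S_{1/2-\frac23\ceps_1}$ bounds.

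The main obstacle is the precise bookkeeping of the phase derivative. One must check that no term in $\partial_{(w,\varrho)}$ of the residual phase is of size $h^{-1}$ times something not small in $\alpha$. This relies on $B_0$ having been removed exactly, and on $\nabla_yB_2(0,\vartheta)=0$ together with $\Upsilon\in\mathcal H_{\ge2}$. Since these are phase properties, identical to the Dirichlet case, I would invoke Proposition~\ref{propimpformgamma} and Lemma~\ref{lemphaseG} verbatim. The only new verification is that the $g_1$ term, now nonzero at $x=0$ for $j\ge1$, carries the extra factor $h^{1/3}$ and an $Ai'$ bounded by $\tilde\omega_k^{1/4}$, so it stays within the same bounds.
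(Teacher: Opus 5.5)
Your proposal is correct and follows the paper's route. The paper reduces the statement, via \eqref{defajk} and $\tilde\omega_k\sim k^{2/3}$, to the bound $\|\partial^{\beta}_{(y,\theta)}\breve e(\cdot,y,\theta/h,\tilde\omega_k)\|_{L^2(x\geq 0)}\lesssim(\tilde\omega_k/h^{1/3})^{|\beta|}$ of Lemma~\ref{lemestimderivekk} (taken from \cite{ILLP}), and your residual-phase analysis after removing $|\eta|B_0$, together with the Airy localization $x\lesssim\alpha$ and the order-zero bounds on $g_0,g_1$, is exactly a proof of that lemma with the same count $h^{-1}\alpha=\tilde\omega_k h^{-1/3}\lesssim h^{-1/2+\frac23\ceps_1}$ per derivative (you do not even need $\nabla_yB_2(0,\vartheta)=0$, since $\tau_q(\alpha,\theta)\bigl(|\theta|/\tau_q(\alpha,\theta)-1\bigr)=|\theta|-\tau_q(\alpha,\theta)$ is already $O(\alpha)$).
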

\begin{proof}
The proof follows exactly like in \cite[Lemma 2.28]{ILLP}. We have to prove that for every $|\beta|\geq 1$, as $k\leq h^{-1/4+\ceps_{1}}$,
\begin{equation}\label{akderb}
  h^{|\beta|}\Big|\partial^{\beta}_w\partial^{\beta}_{\varrho}a_{k}((z',\theta);(w,\varrho);h)|_{w=0,\varrho=0}\Big|
  \lesssim C_{\beta}h^{|\beta|}\big(\tilde\omega_k/h^{1/3}\big)^{2|\beta|}\lesssim C_{\beta}\big(h^{4\ceps_{1}/3}\big)^{|\beta|}\,.
\end{equation}
This follows from \eqref{defajk}, $\tilde\omega_k\sim k^{2/3}$ and the next lemma, whose proof follows in the same way as \cite[Lemma 2.32]{ILLP} :
\begin{lemma}\label{lemestimderivekk}
Uniformly for $h^{2/3}\tilde\omega_k\ll 1$, we have:
\begin{equation}\label{estimderivekk}
\|\partial^{\beta}_{(y,\theta)}\breve{e}(.,y,\theta/h,\tilde\omega_k)\|_{L^2(x\geq 0)}\lesssim \big(\tilde\omega_k/h^{1/3}\big)^{|\beta|}\,.
\end{equation}
\end{lemma}
\end{proof}
Using the classical expansion \cite[Lemma 2.29]{ILLP} of a symbol $a\in\mathcal{S}_{1/2-\epsilon}$ with small $\epsilon>0$, yields
\begin{equation}\label{mtildeh}
\tilde{m}_{k}(z',\theta/h)= \sum_{\beta}\frac{h^{|\beta|}}{i^{|\beta|}|\beta|!}\partial^{\beta}_w\partial^{\beta}_{\varrho}a_{k}((z',\theta);(w,\varrho);h)|_{w=0,\varrho=0}\,.
\end{equation}
We may now return to the proof of our main Proposition.
\begin{proof}(of Proposition \ref{propmatrix})
  From \eqref{FistarFj} and \eqref{mjk},
  \begin{equation}
    \label{eq:21}
        \tilde F_{\tilde \omega_k}^*\circ \tilde F_{\tilde\omega_k}(f)(z)=\int e^{\frac ih (z-z')\cdot\theta}\tilde m_k(z',\theta/h)f(z')dz'\,,
      \end{equation}
 where the symbol $\tilde m_k(z',\theta/h)$ is given by \eqref{mtildeh} and where, for every $k\leq h^{-1/4-\ceps_{1}}$, \eqref{akderb} holds true. Moreover, $\tilde m_k$ is elliptic. Indeed, from \eqref{defajk}, it follows that
\[
a_{k}((z',\theta);(0,0);h)={\sigma}(z',\theta/h;0,0) \|\breve{e}(.,z',\theta/h,\tilde \omega_k) \|^2_{L^2(x>0)},
\]
where for $\tiTheta=\theta/h$ and $\zeta=\varrho/h$ like before, we define
\begin{multline}\label{sigmaandtildes}
\sigma((z',\tiTheta);(w,\zeta))=|\nabla_{\tiTheta}\eta|(\tiTheta,z',z'+w)|\nabla_{\tiTheta'}\eta|(\tiTheta-\zeta,z',z'+w)\\\times {\varkappa}(h\eta(\tiTheta,z',z'+w)){\varkappa}(h\eta(\tiTheta-\zeta,z',z'-w))\tilde\chi(z'+w)\chi(z)\chi(z').
\end{multline} 
As in Lemma \cite[Lemma 2.28]{ILLP}, we have (with $\tilde \omega_k$ in place of $\omega_k$)
$\|\breve e(\cdot,y,\eta,\tilde \omega_k)\|_{L^2(x>0)}\sim 1$. On the other hand the symbol $\sigma$ defined in \eqref{sigmaandtildes} is elliptic, therefore $a_{k}$ is elliptic and $a_{k}((z',\theta);(0,0);h)\sim 1$ for $\theta$ close to $1$; using \eqref{akderb}, $\tilde m_k$ is therefore elliptic; $\tilde F_{\tilde \omega_k}^*\circ \tilde F_{\tilde \omega_k}$ are pseudo-differential operators, uniformly elliptic for $k\leq h^{-1/4+\ceps_{1}}$.
\end{proof}

\subsubsection{Construction of quasi-modes $k\ll h^{-1/4}$ (proof of Proposition \ref{propdataapetitpetit1}).}\label{sectproofs}
It suffices to  construct a smooth function $f_{h,a,1}$ such that, for all $k$ such that $\chi_{\flat}(\tilde\omega_{k}/\tilde\omega_{K_{\ceps}})\neq 0$, 
\begin{equation}\label{eqKE}
\frac{2\pi}{L'(\omega_k)}K_{\tilde\omega_k}(f_{h,a,1})(0,x,y)=\mathcal{E}_M(x,y,a,\tilde\omega_k) +O(h^{\infty}).
\end{equation}
Indeed, for a function $f(y',\tprime)$, let $\hatdeux{f}(y',\alpha/h)$ be its Fourier transform w.r.t. $\tprime$ at $\alpha/h$. 
Using the definition of $K_{\tilde\omega}(f)$, we only need $\hatdeux{f}_{h,a,1}(.,\omega/h^{1/3})$ for $\omega\in\{\tilde\omega_k, 1\leq k\ll h^{-1/4}\}$.
\begin{lemma}\label{lemfkhatgk}
Let $h^{-2\ceps}\lesssim K_{\ceps}\lesssim h^{-1/4+\ceps}$, $\ceps>0$ small. For $1\leq k\leq 4 K_{\ceps}$, define
\begin{equation}
  \label{eq:13}
  f_{\tilde \omega_k}(y):=\Big(\tilde F^*_{\tilde\omega_k}\circ \tilde F_{\tilde\omega_k}\Big)^{-1}\Big(\tilde F^*_{\tilde\omega_k}(\mathcal{E}_M(.,\tilde\omega_k))\Big)(y)\,,
\end{equation}
then define $f_{h,a,1}$ with: $\forall k$ such that $\chi^{\flat}(\tilde \omega_k/\tilde\omega_{K_{\ceps}})\neq 0$,
\begin{equation}
  \label{eq:14}
  \hatdeux{f}_{h,a,1}(.,\tilde\omega_k/h^{1/3}) :=\frac{\sqrt{\tilde L'(\tilde\omega_k)}}{\sqrt{2\pi}}\Lo(f_{\tilde \omega_k})\,,\\
\end{equation}
and $\forall k$ such that $\chi^{\flat}(\tilde\omega_k/\tilde\omega_{K_{\ceps}})=0$,
$\hatdeux{f}_{h,a,1}(.,\omega_k/h^{1/3}) :=0$. Then Proposition \ref{propdataapetitpetit1} 
holds for $f_{h,a,1}$.
\end{lemma}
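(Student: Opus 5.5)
The aim is to verify \eqref{eqKE} for each $k$ with $\chi^{\flat}(\tilde\omega_k/\tilde\omega_{K_{\ceps}})\neq 0$. Summing these identities against the cutoffs then gives \eqref{trucmuche}, since both sides are finite sums of $O(K_{\ceps})$ terms and $O(h^\infty)$ errors add up to $O(h^\infty)$.

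The first step is to rewrite $K_{\tilde\omega_k}(f)(0,x,y)$ in terms of $F_{\tilde\omega_k}$. In \eqref{eq:Kequiv}, evaluated at $t=0$ and $\omega=\tilde\omega_k$, only the partial Fourier transform $\hatdeux f(\cdot,\tilde\omega_k/h^{1/3})$ enters. Inserting \eqref{defeomkgen}, i.e. $G_N=\frac{\sqrt{\tilde L'(\tilde\omega_k)}}{\sqrt{2\pi}}q^{-1/6}(\eta)e^{iy\cdot\eta}\tilde e$, the factor $q^{1/6}$ cancels. The cutoffs $\varkappa(h\eta)$ and $\varkappa(h\tau_q)$ are reproduced by $\varkappa(hD_{y'})$ in \eqref{defFk}, up to $O(h^\infty)$; here we use that $\tau_q(\tilde\omega_k,\eta)\sim|\eta|$ for $h^{2/3}\tilde\omega_k\ll1$, so $\varkappa(h\tau_q)=1$ there, after adjusting supports. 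This yields
\[
\frac{2\pi}{\tilde L'(\tilde\omega_k)}K_{\tilde\omega_k}(f)(0,\cdot)=\frac{\sqrt{2\pi}}{\sqrt{\tilde L'(\tilde\omega_k)}}F_{\tilde\omega_k}\big(\hatdeux f(\cdot,\tilde\omega_k/h^{1/3})\big)+O(h^\infty),
\]
up to harmless $2\pi$ normalizations. With the choice \eqref{eq:14}, the right-hand side becomes $F_{\tilde\omega_k}\circ\Lo(f_{\tilde\omega_k})=\tilde F_{\tilde\omega_k}(f_{\tilde\omega_k})$. Also, $f_{h,a,1}$ can be taken smooth: prescribing $\hatdeux f$ on the discrete set $\{\tilde\omega_k/h^{1/3}\}$ and interpolating smoothly, e.g. with cutoffs $\chi((\omega-\tilde\omega_k)/\delta)$ placed in the gaps between consecutive zeros, which are bounded below, does not change $K_{\tilde\omega_k}$.

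The second step, which is the core of the argument, is to show that $\tilde F_{\tilde\omega_k}(f_{\tilde\omega_k})=\mathcal{E}_M(\cdot,\tilde\omega_k)+O(h^\infty)$. By Proposition \ref{propmatrix}, $\tilde F^*\tilde F$ is uniformly elliptic, so \eqref{eq:13} is well defined microlocally. Moreover, $\tilde F(\tilde F^*\tilde F)^{-1}\tilde F^*$ is the orthogonal projector $\Pi_k$ onto the range of $\tilde F_{\tilde\omega_k}$, modulo $O(h^\infty)$. It therefore suffices to show that $\mathcal{E}_M(\cdot,\tilde\omega_k)$ lies in this range up to $O(h^\infty)$. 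To do so, I would construct directly some $g_k$ with $\tilde F_{\tilde\omega_k}(g_k)=\mathcal{E}_M+O(h^\infty)$. Write $\mathcal{E}_M=\int e^{iy\eta}(\dots)\tilde e_k(x,\eta)\tilde e_k(a,\eta)d\eta$, i.e. the model mode applied to $\hat g(\eta)=\tilde e_k(a,\eta)\chi_0\varkappa$. Then compare $\breve e(x,y,\eta,\tilde\omega_k)$ with the model mode $\tilde e_k(x,\eta)$ in the region $x\lesssim h^{2/3}\tilde\omega_k\lesssim h^{2/3-2\ceps}$ where the modes live. Use Lemma \ref{lemphaseG}, \eqref{formpsigamma}, Proposition \ref{propimpformgamma} ($\Lp,B_0,B_2,\nabla B_0$ vanish at $y=0$), the form \eqref{zeta} of $\zeta$, and $g_{0,0}(0,0,1,0)=1$, $g_{1,0}|_{x=0}=0$.

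In practice I would solve $\tilde F_{\tilde\omega_k}(g_k)=\mathcal{E}_M$ by a Neumann-series argument. Set $g^{(0)}=\hat g$ and iterate with the parametrix $(\tilde F^*\tilde F)^{-1}\tilde F^*$, showing that the residual $(I-\Pi_k)\mathcal{E}_M$ is $O(h^\infty)$. The residual arises only from the mismatch between the quasimode and the model mode, and since $a<h^{2/3-\ceps}$ and $x$ is localized near the boundary, this mismatch is small. To upgrade it to $O(h^\infty)$, I would use that both $\mathcal{E}_M$ and $\tilde F_{\tilde\omega_k}(\cdot)$ are annihilated by $-h^2\Delta_M-h^2\tau_q^2$, respectively by $-h^2\Delta-h^2\tau_q^2$ modulo $O(h^\infty)$ (by \eqref{eq:GN-quasimode}), with the Neumann condition.

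This last step is the main obstacle. The derivative bounds of Lemma \ref{lemestimderivekk} lose $(\tilde\omega_k/h^{1/3})^{|\beta|}$, which is acceptable only for $k\le h^{-1/4+\ceps}$, exactly the range fixed by $K_{\ceps}$. Following \cite[Lemma 2.34]{ILLP}, I would run the Dirichlet argument verbatim: the only inputs are the ellipticity of $g_0$ and the $L^2$ normalization $\|\breve e\|\sim1$. Both hold in the Neumann case, with $\tilde L'(\tilde\omega_k)$ in place of $L'(\omega_k)$.
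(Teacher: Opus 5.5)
Your first step is the last line of the paper's proof. Via \eqref{eq:Kequiv}, \eqref{defeomkgen} and \eqref{defFk}, $\frac{2\pi}{\tilde L'(\tilde\omega_k)}K_{\tilde\omega_k}(f_{h,a,1})(0,\cdot)$ becomes, up to constants, $F_{\tilde\omega_k}(\Lo f_{\tilde\omega_k})=\tilde F_{\tilde\omega_k}(f_{\tilde\omega_k})$. Your remarks on $\varkappa(h\tau_q)$ and on smooth interpolation in the dual variable are harmless additions.

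For the second step the paper argues in one line: from $\tilde F^*_{\tilde\omega_k}\tilde F_{\tilde\omega_k}f_{\tilde\omega_k}=\tilde F^*_{\tilde\omega_k}\mathcal{E}_M$ it concludes $\tilde F_{\tilde\omega_k}f_{\tilde\omega_k}=\mathcal{E}_M$ ``by construction'', ``inverting $\tilde F^*_{\tilde\omega_k}$''. Your diagnosis of this step is correct and sharper than the text. The map $\tilde F^*_{\tilde\omega_k}$ sends functions of $(x,y)$ to functions of $y$, so $\ker\tilde F^*_{\tilde\omega_k}=(\operatorname{Ran}\tilde F_{\tilde\omega_k})^{\perp}$ is very large. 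The normal equations therefore only give $\tilde F_{\tilde\omega_k}f_{\tilde\omega_k}=\Pi_k\mathcal{E}_M$, and \eqref{eqKE} needs $(I-\Pi_k)\mathcal{E}_M=O(h^\infty)$. The paper's ``inversion'' tacitly assumes exactly this.

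The gap is that you do not prove it either, and the tools you propose cannot.

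\textbf{The iteration stalls.} Your iteration stops after one step. $g^{(1)}=(\tilde F^*\tilde F)^{-1}\tilde F^*\mathcal{E}_M$ is already the least-squares solution. Since $\tilde F^*_{\tilde\omega_k}(I-\Pi_k)=0$, every further correction vanishes and the residual stays $(I-\Pi_k)\mathcal{E}_M$. A Neumann series inverts an operator; it cannot move a vector into a range.

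\textbf{Model versus true modes.} Comparing the two yields at best powers of $h$. For instance, $\zeta-\zeta_M=-xq^{1/3}(\eta)(e_0-1)$ with $e_0-1=O(|x|+|y|+|\omega|q^{-1/3}(\eta))$ by \eqref{zeta}. The $B_\Gamma$ and $\Upsilon$ terms of $\psi-y\cdot\eta$ in \eqref{formpsigamma} are likewise nonzero, and $g_0\neq 1$ away from the base point. Nothing shows these discrepancies can be absorbed by re-choosing the density in $\eta$.

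\textbf{The PDE remark.} It does not force range membership either. $\mathcal{E}_M$ satisfies the model eigen-relation for $-\Delta_M$, while elements of $\operatorname{Ran}\tilde F_{\tilde\omega_k}$ satisfy the corresponding relation for $-\Delta$ via \eqref{eq:GN-quasimode}. Since $\Delta-\Delta_M$ (terms $O(|y|)|\eta|^2$, $O(x^2)|\eta|^2$) is not $O(h^\infty)$ on the support, nothing makes $(I-\Pi_k)\mathcal{E}_M$ negligible.

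\textbf{The cited inputs.} Lemma \ref{lemestimderivekk}, the ellipticity of $g_0$ and $\|\breve e\|_{L^2}\sim 1$ are the inputs of Proposition \ref{propmatrix}, i.e.\ invertibility of $\tilde F^*\tilde F$. They say nothing about $\operatorname{Ran}\tilde F_{\tilde\omega_k}$, so deferring to the Dirichlet argument does not supply what is missing.

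Your proof is therefore incomplete at the step you call its core. You need either a genuine proof that $\mathcal{E}_M(\cdot,\tilde\omega_k)$ is, modulo $O(h^\infty)$, a superposition over $\eta$ of the true Neumann quasimodes with the same $\tilde\omega_k$. Or you need a reformulation in which the defect $(I-\Pi_k)\mathcal{E}_M$ is accounted for elsewhere in the construction of the initial data.
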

\begin{proof}
As $k\leq 4 K_{\ceps}\lesssim h^{-1/4+\ceps}\ll h^{-1/4}$, we can use Proposition \ref{propmatrix} and define $f_{\tilde\omega_k}$ by \eqref{eq:13}. For such $f_{\tilde\omega_k}$ and $\chi_{\flat}(\tilde \omega_{k}/\omega_{ K_{\ceps}})\neq 0$, we now define $\hatdeux{f}_{h,a,1}(y,\tilde \omega_k/h^{1/3})$ by \eqref{eq:14} (and zero for larger $k$'s).
  
By construction, on the support of $\chi_{\flat}(\tilde\omega_{k}/\tilde\omega_{K_{\ceps}})$, we have $F_{\tilde\omega_k}(\Lo(f_{\tilde\omega_k}))(x,y)=\tilde{F}_{\tilde\omega_k}(f_{\tilde \omega_k})(x,y)=\mathcal{E}_M(x,y,a,\tilde \omega_k)$, as we chose $f_{\tilde\omega_k}$ such that $\tilde F^*_{\tilde \omega_k}\circ \tilde F_{\tilde \omega_k}(f_{\tilde \omega_k})=\tilde F^*_{\tilde \omega_k}(\mathcal{E}_M(.,\tilde \omega_k))$. In turn, we have
\begin{equation}\label{eqKEfgfg}
F_{\tilde \omega_k}\Big(\frac{2\pi}{\sqrt{\tilde L'(\tilde \omega_k)}}\hatdeux{f}_{h,a,1}(.,\tilde \omega_k/h^{1/3})\Big)(x,y)=\mathcal{E}_{M}(x,y,a,\tilde \omega_k)+O(h^{\infty})\,,
\end{equation}
(inverting $\tilde F^{*}_{\omega_{k}}$); using \eqref{eq:Kequiv} and \eqref{defFk}, $\frac{2\pi}{\tilde L'(\tilde\omega_k)}K_{\tilde\omega_k}(f_{h,a,1})(0,x,y)=\mathcal{E}_{M}(x,y,a,\tilde\omega_k)+O(h^\infty)$. 
\end{proof}
We obtain the explicit form of $\hatdeux{f}_{h,a,1}$ as a corollary of Lemma \ref{lemfkhatgk}, which follows as in \cite[Corollary 2.31]{ILLP}.
\begin{cor}\label{lemgomegak}
We keep the notations from the proof of Lemma \ref{lemfkhatgk}.
Let 
\begin{equation}\label{Io}
I_a(\eta,\tilde \omega_k):=\int_{x,y} e^{-iy\cdot \eta}\overline{\tilde e(x,y,\eta,\tilde\omega_k)}\mathcal{E}_M(x,y,a,\tilde \omega_k)dxdy.
\end{equation}
For $1\leq k\leq K_{\ceps}$, $f_{h,a,1}$ (from Lemma \ref{lemfkhatgk}) may be rewritten
\[
\hatdeux{f}_{h,a,1}(y',\tilde\omega_k/h^{1/3})=\frac{\sqrt{\tilde L'(\tilde \omega_k)}}{\sqrt{2\pi}}\int e^{iy'\cdot\eta}\varkappa(h\eta)r(\eta,\tilde\omega_k)I_a(\eta,\tilde\omega_k)d\eta,
\]
where $r(.,\omega_k)$ is an elliptic symbol of order $0$ and main contribution $\frac{1}{\tilde m_k(y,\eta+|\eta|\partial_yB_0(y,\eta/|\eta|))}$ with $\tilde m_k$ defined in \eqref{mtildeh}. 
\end{cor}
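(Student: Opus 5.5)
We start from the definition \eqref{eq:13}--\eqref{eq:14} of $f_{h,a,1}$ and unfold the two operators. By Proposition \ref{propmatrix}, $\tilde F^*_{\tilde\omega_k}\circ\tilde F_{\tilde\omega_k}$ is, for $k\le K_{\ceps}$, a uniformly elliptic semiclassical pseudo-differential operator. It has symbol $\tilde m_k(z',\theta/h)$ given by \eqref{mtildeh} and belongs to the class $\mathcal{S}_{1/2-2\ceps_1/3}$ (Proposition \ref{propsymbole}). Its parametrix is therefore a pseudo-differential operator in the same class, with symbol $1/\tilde m_k$ modulo lower-order terms. Lower-order here means terms that are $O(h^{4\ceps_1/3})$ relative to the principal part by \eqref{akderb}. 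We then compute $\tilde F^*_{\tilde\omega_k}(\mathcal{E}_M(\cdot,\tilde\omega_k))$ explicitly. From \eqref{defFk} and $\tilde F=F\circ\Lo$, the adjoint is $\Lo^*\circ F^*_{\tilde\omega_k}$, and
\[
F^*_{\tilde\omega_k}(\mathcal{E})(y')=\frac{1}{(2\pi)^{d-1}}\varkappa(hD_{y'})\int e^{iy'\cdot\eta}\Big(\int e^{-iy\cdot\eta}\overline{\tilde e(x,y,\eta,\tilde\omega_k)}\mathcal{E}(x,y)\,dx\,dy\Big)d\eta .
\]
This is exactly $\varkappa(hD_{y'})$ applied to the inverse Fourier transform of $I_a(\eta,\tilde\omega_k)$ from \eqref{Io}.

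The second step is to compose $\Lo\circ(\tilde F^*\tilde F)^{-1}\circ\Lo^*$ acting on this Fourier-side expression. The operator $\Lo$ of \eqref{def:L} is a semiclassical Fourier integral operator with phase $(y-y')\cdot\eta-|\eta|B_0(y',\eta/|\eta|)$. Its canonical transform is $(y',\eta+|\eta|\nabla_{y}B_0)\mapsto(y,\eta)$ near the support, and $\Lo\Lo^*$ is elliptic there since $\nabla_yB_0(0,\cdot)=0$ makes the transform close to the identity. By Egorov's theorem, $\Lo\,(\tilde F^*\tilde F)^{-1}\Lo^*$ is a pseudo-differential operator. Its principal symbol is $1/\tilde m_k$ evaluated at the transported point, i.e. at $(y,\eta+|\eta|\partial_yB_0(y,\eta/|\eta|))$, multiplied by the elliptic symbol of $\Lo\Lo^*$. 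That latter factor we normalize into $r$, or absorb since $\chi=1$ on the relevant $y$-support.

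A pseudo-differential operator acting on $\int e^{iy'\cdot\eta}\varkappa(h\eta)I_a\,d\eta$ produces $\int e^{iy'\cdot\eta}\varkappa(h\eta)r(y',\eta,\tilde\omega_k)I_a\,d\eta$ modulo $O(h^\infty)$. This uses that $I_a$ is essentially localized near $y=0$, since $\mathcal{E}_M$ is concentrated in $|y|\lesssim h$ by the cutoffs. We may therefore freeze $y'$ in the symbol and write $r(\eta,\tilde\omega_k)$ as in the statement. Multiplying by $\sqrt{\tilde L'(\tilde\omega_k)}/\sqrt{2\pi}$ as in \eqref{eq:14} gives the formula.

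The main obstacle is the symbolic calculus in the exotic class $\mathcal{S}_{1/2-\delta}$ uniformly in $k\le K_{\ceps}$. Both the inversion and the Egorov step must produce remainders that are genuinely smaller, and the asymptotic series must be summable. We would handle this as in \cite[Corollary 2.31]{ILLP}. The key is to use \eqref{akderb}: each additional term gains $h^{4\ceps_1/3}$, so finitely many terms suffice up to $O(h^N)$ for any $N$. The remaining check is ellipticity of $\tilde m_k$ uniformly in $k$, which holds because $\|\breve e\|_{L^2}\sim1$ in the Neumann case exactly as in the Dirichlet case. Since only the symbol $g_h$ differs from the Dirichlet setting and it enters only through $\tilde m_k$, the argument is otherwise identical.
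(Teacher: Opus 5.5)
Your argument is correct and follows the route the paper relies on; the paper itself defers to \cite[Corollary~2.31]{ILLP}. You write $\hatdeux{f}_{h,a,1}(\cdot,\tilde\omega_k/h^{1/3})=\frac{\sqrt{\tilde L'(\tilde\omega_k)}}{\sqrt{2\pi}}\,\Lo\,(\tilde F^*_{\tilde\omega_k}\tilde F_{\tilde\omega_k})^{-1}\Lo^*F^*_{\tilde\omega_k}(\mathcal{E}_M)$, recognize $F^*_{\tilde\omega_k}(\mathcal{E}_M)$ as the inverse Fourier transform of $\varkappa(h\eta)I_a(\eta,\tilde\omega_k)$, and conjugate the elliptic parametrix by $\Lo$; that conjugation is exactly what produces the shifted argument $\eta+|\eta|\partial_yB_0(y,\eta/|\eta|)$ in the main term.

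You should drop the final ``freeze $y'$'' step. It is not an $O(h^\infty)$ operation, because $r(y',\eta)-r(0,\eta)$ is not negligible on the spatial support of $F^*_{\tilde\omega_k}(\mathcal{E}_M)$. It would also erase the $y$-dependence of the stated main term. Read $r$ instead as the left symbol $r(y',\eta,\tilde\omega_k)$ of $\Lo(\tilde F^*_{\tilde\omega_k}\tilde F_{\tilde\omega_k})^{-1}\Lo^*$; the notation $r(\eta,\tilde\omega_k)$ in the statement is an abuse.
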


\subsubsection{Proof of Proposition \ref{propdataapetitpetit2}}\label{proofprop2}
Our goal is to obtain $f_{h,a,2}$ such that \eqref{eq:g1} holds. Observe that the sum in the second line of \eqref{eq:g1}, involving $\mathcal{E}_M$ for large $\tilde\omega_k$, is, using \eqref{eq:AiryPoisson},
\begin{equation}\label{EMomegamare}
\sum_{k\geq 1}\cutoffchi^{\sharp}\Big( \frac{\tilde\omega_k}{\tilde\omega_{K_{\ceps}}}\Big)\cutoffchi^{\flat}(\frac{h^{2/3}\tilde\omega_k}{\ceps_{0}})\mathcal{E}_M(x,y,a,\tilde\omega_k)
=\langle \sum_{N\in \Z} e^{-iN \tilde L(\omega)}
    , {\cutoffchi^{\flat}}(h^{2/3}\omega/\ceps_{0})\cutoffchi^{\sharp}\Big(\frac{ \omega}{\tilde \omega_{K_{\ceps}}}\Big)\frac{\tilde L'(\omega)}{2\pi}\mathcal{E}_M(x,y,a,\omega))\rangle_{\omega},
\end{equation}
and non-stationary phase (with respect to $\omega$) easily applies for all $|N|\geq 2$ in the second line of \eqref{EMomegamare}, providing an $O(h^{\infty})$ contribution (this is just the model case). Therefore, we are left to obtain $f_{h,a,2}$ such that
\begin{multline}\label{tosolve}
\int {\cutoffchi^{\flat}}(h^{2/3}\omega)\cutoffchi^{\sharp}(h^{2\ceps} \omega)\Big(1+\sum_{\pm} e^{\pm i \tilde L(\omega)}\Big)K_\omega(f_{h,a,2})(0,x,y)d\omega\\+\sum_{|N|\geq 2}\int e^{- iN \tilde L(\omega)}{\cutoffchi^{\flat}}(h^{2/3}\omega/\ceps_{0})\cutoffchi^{\sharp}( h^{\ceps}\omega)K_\omega(f_{h,a,2})(0,x,y)d\omega \\
=\int  {\cutoffchi^{\flat}}(h^{2/3}\omega/\ceps_{0})\cutoffchi^{\sharp}\Big(\frac{ \omega}{\tilde\omega_{K_{\ceps}}}\Big)\Big(1+\sum_{\pm} e^{\pm i \tilde L(\omega)}\Big)\frac{\tilde L'(\omega)}{2\pi}\mathcal{E}_M(x,y,a,\omega)d\omega+O(h^{\infty}).
\end{multline}
Let us analyze the last line of \eqref{tosolve}, corresponding to the sum over model gallery modes. Here $\mathcal{E}_M$ is a product of two Airy functions $e^{iy\cdot \eta}\mathrm{Ai}(-\zeta_M(x,\eta,\omega))\mathrm{Ai}(-\zeta_M(a,\eta,\omega))$, where the phases $\zeta_M=\omega-xq^{1/3}(\eta)$ and $\psi_M(y,\eta)=y\cdot \eta$ are such that \eqref{systeikeq} holds with $<.,.>$ replaced by the scalar product obtained by polarization of the principal symbol $\xi^2+|\eta|^2+xq(\eta)$ of the model Laplace operator $\Delta_M$. Using the definition of $L$ in Lemma \ref{lemL},
$
1+\sum_{\pm}e^{\pm i\tilde L(\omega)}=1-\Big(\frac{A'_+}{A'_-}\Big)(\omega)-\Big(\frac{A'_-}{A'_+}\Big)(\omega)
$. As $h^{2/3}\tilde\omega_{K_{\ceps}}\sim (hK_{\ceps})^{2/3}\gtrsim (h^{1-2\ceps})^{2/3}\gg h^{2/3-\ceps}$, it follows that $h^{2/3}\omega$ is much larger than $a$ on the support of the symbol of the integral in the last line of \eqref{tosolve}, and we can use \eqref{eq:Apm} to write $Ai(-\zeta_M(a,\eta,\omega))=\sum_{\pm}A_{\pm}(\zeta_M(a,\eta,\omega))$. The phase of $\mathcal{E}_M$ is now
\begin{equation}\label{phaEm}
y\cdot \eta+\xi^3/3+\xi(xq^{1/3}(\eta)-\omega)\pm \frac 23 (\omega-aq^{1/3}(\eta))^{3/2}.
\end{equation}
\begin{lemma}
In the integral defining $\mathcal{E}_M(.,\omega)$, the usual stationary phase in $\xi$ applies. Moreover, for the phase corresponding to $N=0$ in the second line of \eqref{EMomegamare}, we have
\[
\phi_{M,\pm,\mp}(x,y,\eta,\omega):= y\cdot \eta\pm\frac 23 \Big((\omega-xq^{1/3}(\eta))^{3/2}-(\omega-aq^{1/3}(\eta))^{3/2}\Big).
\]
In the same way, the phases corresponding to $N=\pm1 $ in the second line of \eqref{EMomegamare} are
\[
\phi_{M,\pm,\pm}(x,y,\eta,\omega)\mp\frac 43 \omega^{3/2}:= y\cdot \eta\pm \frac 23\Big((\omega-xq^{1/3}(\eta))^{3/2}+(\omega-aq^{1/3}(\eta))^{3/2}-2\omega^{3/2}\Big).
\]
Moreover, for $x>2h^{2/3-\ceps}$, the integral in the second line of \eqref{tosolve} is $O(h^{\infty})$.
\end{lemma}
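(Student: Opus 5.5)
We write $\mathcal{E}_M$ as an oscillatory integral in a variable $\xi$ and analyse the resulting phase; each of the three claims is then a direct computation.

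\textbf{Representation.} Using \eqref{eq:15}, write $Ai(xq^{1/3}(\eta)-\omega)=\int e^{i(\xi^3/3+\xi(xq^{1/3}(\eta)-\omega))}d\xi$. On the support considered, $h^{2/3}\omega\gg a$, so $\omega-aq^{1/3}(\eta)\gg1$ and \eqref{eq:Apm} applies. This gives
\[
Ai(aq^{1/3}(\eta)-\omega)=\sum_\pm c_\pm\,(\omega-aq^{1/3}(\eta))^{-1/4}e^{\pm i\frac23(\omega-aq^{1/3}(\eta))^{3/2}}(1+O((\omega-aq^{1/3}(\eta))^{-3/2})),
\]
with constants $c_\pm$ and a classical symbol expansion. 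The phase is then \eqref{phaEm}.

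\textbf{Stationary phase in $\xi$.} The critical points satisfy $\xi^2=\omega-xq^{1/3}(\eta)$, and $\partial_\xi^2=2\xi$. After rescaling $\xi=\omega^{1/2}\tilde\xi$, the phase becomes $\omega^{3/2}(\tilde\xi^3/3-\tilde\xi(1-xq^{1/3}/\omega))$. The large parameter is $\omega^{3/2}\gtrsim h^{-3\ceps}$, so on the support where $\omega\ge h^{-2\ceps}$ and $\omega-xq^{1/3}(\eta)\gtrsim\omega$ the two critical points $\tilde\xi=\pm(1-xq^{1/3}/\omega)^{1/2}$ are non-degenerate and separated. The usual stationary phase therefore applies. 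It gives critical values $\mp\frac23(\omega-xq^{1/3}(\eta))^{3/2}$; the remaining region near the turning point $\omega\sim xq^{1/3}$ is handled by the last claim.

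\textbf{Identifying the phases.} Combine each critical value with the $\pm\frac23(\omega-aq^{1/3})^{3/2}$ term and with the phase factor of the Poisson term. By Lemma \ref{lemL}, $e^{\pm i\tilde L(\omega)}$ contributes $\pm(\frac43\omega^{3/2}-\frac\pi2)$ up to lower-order symbols $B_{\tilde L}$.
\begin{itemize}
\item For $N=0$, the opposite-sign combination gives $\phi_{M,\pm,\mp}$.
\item For $N=\pm1$, the same-sign combination gives $\phi_{M,\pm,\pm}$, and the Poisson factor contributes the extra $\mp\frac43\omega^{3/2}$. The sign bookkeeping must match the convention $A_-/A_+$ in $\tilde L$; the non-oscillating parts are absorbed into symbols.
\end{itemize}

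\textbf{Vanishing for $x>2h^{2/3-\ceps}$.} The points $x,y$ stay in a fixed small neighborhood, and on the support of the symbol $q^{1/3}(\eta)\sim h^{-1/3}$. Also $\omega\le\ceps_0h^{-2/3}$ and $\omega\gtrsim h^{-2\ceps}$, so $xq^{1/3}(\eta)$ ranges up to values comparable to $\omega$. Here the plan is to use the $\omega$-integration rather than the $\xi$-integration. For $N=0,\pm1$:
\begin{itemize}
\item Compute $\partial_\omega\phi=\pm\big((\omega-xq^{1/3})^{1/2}\mp(\omega-aq^{1/3})^{1/2}\big)$, plus $\mp2\omega^{1/2}$ for $N=\pm1$.
\item Since $a<h^{2/3-\ceps}<x/2$, we have $(\omega-aq^{1/3})^{1/2}-(\omega-xq^{1/3})^{1/2}\gtrsim xq^{1/3}\omega^{-1/2}$.
\item This lower bound is $\gtrsim h^{-\ceps}\omega^{-1/2}\cdot h^{0}$, which makes the phase non-stationary.
\end{itemize}
Integration by parts in $\omega$ against a symbol whose $\omega$-derivatives cost $\omega^{-1}$ then gains $(xq^{1/3}\omega^{-1/2}\cdot\omega^{1/2})^{-1}\lesssim h^{\ceps}$ per step, hence $O(h^\infty)$. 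Where $xq^{1/3}>\omega$, the Airy factor in $x$ decays exponentially, which gives $O(h^\infty)$ directly.

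The main obstacle is to make this integration by parts uniform near the turning region $\omega\approx xq^{1/3}(\eta)$, where the stationary phase in $\xi$ degenerates. There I would keep the Airy function in $x$ unexpanded and integrate by parts in $\omega$ using only the phase $\pm\frac23(\omega-aq^{1/3})^{3/2}$, together with the $N$-dependent term. Its $\omega$-derivative $(\omega-aq^{1/3})^{1/2}\sim\omega^{1/2}\gtrsim h^{-\ceps}$ dominates the derivative of $Ai(xq^{1/3}-\omega)$, which only costs $\omega^{1/2}$ at worst. Because of this competition of comparable sizes, the bound requires careful splitting with cutoffs in $(\omega-xq^{1/3})/\omega$.
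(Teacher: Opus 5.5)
The phase part of your proposal is fine; the last claim, $O(h^\infty)$ for $x>2h^{2/3-\epsilon}$, has a gap in the turning region that your sketch does not close.

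\textbf{Phases.} The paper does not redo this: it imports \cite[Lemma 3.2]{ILLP} after checking that $\frac{A'_+}{A'_-}=-\frac{A_+}{A_-}e^{iO(\omega^{-3/2})}$, so the Neumann Poisson factors differ from the Dirichlet ones only by a harmless symbol. You instead read the $N=\pm1$ phases directly off $\tilde L(\omega)=\frac43\omega^{3/2}-\frac\pi2-B_{\tilde L}(\omega^{3/2})$ from Lemma~\ref{lemL}, which is equivalent. Two small additions are needed:
\begin{itemize}
\item Say why the other sign combinations drop out (equal signs for $N=0$, the wrong pairings for $N=\pm1$): their $\omega$-derivative is $\gtrsim\omega^{1/2}$, so they are non-stationary.
\item Correct the scaling: $q^{1/3}(\eta)\sim h^{-2/3}$, not $h^{-1/3}$. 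Your bound $xq^{1/3}(\eta)\gtrsim h^{-\epsilon}$ for $x>2h^{2/3-\epsilon}$ only holds with the correct scaling.
\end{itemize}

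\textbf{The gap.} The turning region is not empty: for $x\gtrsim h^{2/3-4\epsilon/3}$ the point $\omega=xq^{1/3}(\eta)$ lies inside the support $\tilde\omega_{K_\epsilon}\lesssim\omega\lesssim\epsilon_0h^{-2/3}$. Your repair does not work as stated. If you cut at $|\omega-xq^{1/3}|\le c\,\omega$ and integrate by parts in $\omega$ against the $a$-phase only, each derivative falling on $Ai(xq^{1/3}-\omega)$ costs up to $\langle\omega-xq^{1/3}\rangle^{1/2}\le(c\omega)^{1/2}$. Meanwhile $|\partial_\omega\phi|\sim\omega^{1/2}$, so you gain only a factor $\sqrt c$ per step, never a power of $h$. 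Likewise, the exponential decay for $xq^{1/3}>\omega$ gives $O(h^\infty)$ only once $xq^{1/3}-\omega$ is at least a positive power of $\omega$.

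\textbf{What is missing.} Near the turning point the separation $(x-a)q^{1/3}$ is $\gtrsim\omega$, not merely $\gtrsim h^{-\epsilon}$. With this, either of the following closes the argument.
\begin{itemize}
\item Shrink the window to $|\omega-xq^{1/3}|\le\omega^{1-\delta}$. There each step gains $\omega^{-\delta/2}\lesssim h^{2\epsilon\delta/3}$. On $\omega^{1-\delta}\le\omega-xq^{1/3}\le\omega/2$, the WKB symbol costs at most $\omega^{-1+\delta}$ per derivative, against $|\partial_\omega\phi|\gtrsim\omega^{1/2}$ for both the $N=0$ and $N=\pm1$ phases. On $xq^{1/3}-\omega\ge\omega^{1-\delta}$ the exponential decay is $O(h^\infty)$.
\item Alternatively, on $\{xq^{1/3}\ge\omega/4\}$, do not expand the $x$-Airy factor. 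Keep its $\xi$-integral and integrate by parts jointly in $(\xi,\omega)$ for $\Psi=\xi^3/3+\xi(xq^{1/3}-\omega)\pm\frac23(\omega-aq^{1/3})^{3/2}-N\tilde L(\omega)$. There, either $|\partial_\omega\Psi|\gtrsim\omega^{1/2}$ or $|\partial_\xi\Psi|\gtrsim\omega$. Measuring $\xi$ on the scale $\omega^{1/2}$, each integration by parts gains $\omega^{-3/2}$.
\end{itemize}
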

\begin{proof}
The proof follows as in \cite[Lemma 3.2]{ILLP}. Indeed, for $N=0$ the corresponding phases and symbols are exactly the same, while for $N=\pm 1$, the only difference with respect to \cite[Lemma 3.2]{ILLP} is the fact that we deal with factors $\Big(\frac{A'_+}{A'_-}\Big)^{\pm 1}(\omega)$ instead of $\Big(\frac{A_+}{A_-}\Big)^{\pm 1}(\omega)$. From \eqref{eq:Apm}, it follows that $\chi(\omega)-\tilde\chi(\omega)=\pi/2-\frac{1}{4\omega^{3/2}}+O(1/\omega^{3})$ and therefore
\[
\frac{A'_+}{A'_-}(\omega)=e^{-2i\tilde\chi(\omega)}=e^{-2i\chi(\omega)+2i(\pi/2+O(1/\omega^{3/2}))}=-\frac{A_+}{A_-}(\omega)e^{iO(1/\omega^{3/2})},
\]
where $\omega\geq \tilde\omega_{K_\epsilon}\gg h^{-\epsilon}$ is large. This allows to conclude again exactly like in \cite[Lemma 3.2]{ILLP}. 
\end{proof}
Putting all this together, the integral in the last line in \eqref{tosolve} reads as 
\[
\int  {\cutoffchi^{\flat}}(h^{2/3}\omega/\eps_{0})\cutoffchi^{\sharp}\Big(\frac{ \omega}{\tilde\omega_{K_{\ceps}}}\Big)\Big(1+\sum_{\pm} e^{\pm i \tilde L(\omega)}\Big)\frac{\tilde L'(\omega)}{2\pi}\mathcal{E}_M(x,y,a,\omega)d\omega=E_{M,+}(x,y,a)+E_{M,-}(x,y,a),
\]
where we have set, modulo $O(h^{\infty})$,
\begin{multline}\label{EM1}
E_{M,+}(x,y,a):=\int  {\cutoffchi^{\flat}}(h^{2/3}\omega/\eps_{0})\cutoffchi^{\sharp}({\omega}/{\tilde\omega_{K_{\ceps}}}) \int q(\eta)^{1/6}\varkappa(h\eta)\chi_0(h^{2/3}\omega q^{2/3}(h\eta))\\
\times e^{iy\cdot \eta}\Big(A_+(\zeta_M(x,\eta,\omega))-\Big(\frac{A'_+}{A'_-}\Big)(\omega)A_-(\zeta_M(x,\eta,\omega))\Big)A_-(\zeta_M(a,\eta,\omega))d\eta d\omega,
\end{multline}
\begin{multline}\label{EM2}
E_{M,-}(x,y,a):=\int  {\cutoffchi}^{\flat}(h^{2/3}\omega/\eps_{0})\cutoffchi^{\sharp}({\omega}/{\tilde \omega_{K_{\ceps}}})\int q(\eta)^{1/6}\varkappa(h\eta)\chi_0(h^{2/3}\omega q^{2/3}(h\eta))\\
\times e^{iy\cdot \eta}\Big(A_-(\zeta_M(x,\eta,\omega))-\Big(\frac{A'_-}{A'_+}\Big)(\omega)A_+(\zeta_M(x,\eta,\omega))\Big)A_+(\zeta_M(a,\eta,\omega))d\eta d\omega,
\end{multline}
where the phase functions of the Airy terms in the second line of \eqref{EM1} are $\phi_{M,+,-}$ and $\phi_{M,-,-}+\tilde L(\omega)$, while the phase functions of the Airy terms in the second line of \eqref{EM2} are $\phi_{M,-,+}$ and $\phi_{M,+,+}-\tilde L(\omega)$. Moreover, for $x>2h^{2/3-\ceps}$, $E_{M,\pm}(x,y,a)=O(h^{\infty})$ and $E_{M,\pm}(0,y,a)=0$. This means that we can introduce a smooth cut-off $\chi_1( x/h^{2/3-\ceps})$ with $\chi_1\in C^{\infty}_0$ equal to $1$ on $[-1,1]$ and equal to $0$ for $x\geq 2h^{2/3-\ceps}$ such that $E_{M,\pm}(x,y,a)=\chi_1(x/h^{2/3-\ceps})E_{M,\pm}(x,y,a)+O(h^{\infty})$, and therefore we need to construct $f_{h,a,2}$ such that \eqref{tosolve} holds with the last line replaced by $\chi_1(x/h^{2/3-\ceps})(E_{M,+}(x,y,a)+E_{M,-}(x,y,a))$ (instead of $E_{M,+}(x,y,a)+E_{M,-}(x,y,a)$). 

We now go back to \eqref{tosolve}: the symbol of its left hand side has support in $\omega\gtrsim h^{-2\ceps}$, while the right hand side is essentially supported for $x,a\lesssim h^{2/3-\ceps}$. For such values of $x$ and $\omega$ we have $\zeta(x,y,\eta,\omega)=\omega-x|\eta|^{2/3}e_0(x,y,\eta/|\eta|,\omega_k/|\eta|^{2/3})\geq \omega/2$; using \eqref{eq:GN} we write
\[
G_N(x,y,\eta,\omega)=e^{i\psi}\sum_{\pm}\Big(g_0A_{\pm}(\zeta)+i q(\eta)^{-1/6}g_1A'_{\pm}(\zeta)\Big)=:G_{N,\pm}(x,y,\eta,\omega).
\]
\begin{prop}\label{propapetitg2}
There exists smooth functions $f_{h,a,2,\pm}$ such that, with $f_{h,a,2}:=\sum_{\pm} f_{h,a,2,\pm}$,
\begin{multline}\label{g2-}
\int \cutoffchi^{\flat}(h^{2/3}\omega/\ceps_{0})\cutoffchi^{\sharp}( h^{2\ceps}\omega)\Big(G_{N,+}(x,y,\eta,\omega)-\Big(\frac{A'_+}{A'_-}\Big)(\omega)G_{N,-}(x,y,\eta,\omega)\Big)\\
q(\eta)^{1/6}\varkappa(h\eta)\varkappa(h\tau_{q}(\omega,\eta))\hat{f}_{h,a,2,-}(\eta,\omega/h^{1/3})d\eta d\omega
=E_{M,+}(x,y,a)+O(h^{\infty}),
\end{multline}
\begin{multline}\label{g2+}
\int\cutoffchi^{\flat}(h^{2/3}\omega/\ceps_{0})\cutoffchi^{\sharp}( h^{2\ceps}\omega) \Big(G_-(x,y,\eta,\omega)-\Big(\frac{A'_-}{A'_+}\Big)(\omega)G_{N,+}(x,y,\eta,\omega)\Big)\\
\times q(\eta)^{1/6}\varkappa(h\eta)\varkappa(h\tau_{q}(\omega,\eta))\hat{f}_{h,a,2,+}(\eta,\omega/h^{1/3})d\eta d\omega
=E_{M,-}(x,y,a)+O(h^{\infty}).
\end{multline}
\begin{multline}\label{g2+0}
\int\cutoffchi^{\flat}(h^{2/3}\omega/\ceps_{0})\cutoffchi^{\sharp}( h^{2\ceps}\omega) \Big(G_{N,+}(x,y,\eta,\omega)-\Big(\frac{A_+}{A_-}\Big)(\omega)G_{N,-}(x,y,\eta,\omega)\Big)\\
q(\eta)^{1/6}\varkappa(h\eta)\varkappa(h\tau_{q}(\omega,\eta))\hat{f}_{h,a,2,+}(\eta,\omega/h^{1/3})d\eta d\omega
=O(h^{\infty}),
\end{multline}
\begin{multline}\label{g2-0}
\int\cutoffchi^{\flat}(h^{2/3}\omega/\ceps_{0})\cutoffchi^{\sharp}( h^{2\ceps}\omega) 
\Big(G_{N,-}(x,y,\eta,\omega)-\Big(\frac{A_-}{A_+}\Big)(\omega)G_{N,+}(x,y,\eta,\omega)\Big)\\
\times q(\eta)^{1/6}\varkappa(h\eta)\varkappa(h\tau_{q}(\omega,\eta))\hat{f}_{h,a,2,-}(\eta,\omega/h^{1/3})d\eta d\omega
=O(h^{\infty}),
\end{multline}
\begin{equation}\label{toshowsecond}
  \sum_{|N|\geq 2 }\int e^{- iN \tilde L(\omega)}\int\cutoffchi^{\flat}(h^{2/3}\omega/\ceps_{0})\cutoffchi^{\sharp}( h^{2\ceps}\omega)
  K_\omega(f_{h,a,2})(0,x,y)d\omega =O(h^{\infty}).
\end{equation}
\end{prop}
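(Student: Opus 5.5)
The plan is to follow the Dirichlet argument of \cite[Prop.~3.3 and Section~3]{ILLP}, now with $G_{N,\pm}$ and the ratios $A'_\pm/A'_\mp$ in place of $G_{D,\pm}$ and $A_\pm/A_\mp$. The phases are unchanged, so every stationary-phase computation carries over. Only the symbols and the reflection coefficients differ.

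\emph{Step 1: identify the phases on each side.} On the support $\omega\gtrsim h^{-2\ceps}$ and $x,a\lesssim h^{2/3-\ceps}$ we have $\zeta\geq\omega/2$, so \eqref{eq:Apm} shows that $G_{N,\pm}$ are Lagrangian distributions with phases $\psi\mp\frac23\zeta^{3/2}$ and elliptic symbols; ellipticity holds because $g_0$ is elliptic and $\zeta^{1/2}q^{-1/6}g_1$ is a perturbation of size $O(x)$, as in the proof of Lemma~\ref{lemJinvert}. Using Lemma~\ref{lemphaseG} and \eqref{formpsigamma}, I will compare $\psi\mp\frac23\zeta^{3/2}$ with the model phases $\phi_{M,\pm,\mp}$ of $E_{M,\pm}$. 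In the rescaled variables $\eta=\theta/h$, $\alpha=h^{2/3}\omega$, both are perturbations of $y\cdot\eta\mp\frac23(\omega-xq^{1/3}(\eta))^{3/2}$ by $O(x^2)$, $O(y)$ and $B_\Gamma$ terms. Hence the operator $f\mapsto\int G_{N,+}\,q^{1/6}\varkappa\varkappa\,\hat f\,d\eta\,d\omega$, and likewise for $G_{N,-}$, is an elliptic semiclassical FIO, microlocally near the relevant set.

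\emph{Step 2: define $f_{h,a,2,\pm}$ by inverting these FIOs.} I set $\hat f_{h,a,2,-}$ so that the $G_{N,+}$ part of \eqref{g2-} reproduces $\int A_+(\zeta_M(x))A_-(\zeta_M(a))\cdots$. Concretely, I compose the model data $\chi_1(x/h^{2/3-\ceps})E_{M,+}$ with a parametrix of that FIO, exactly as $J^{-1}$ was built in Lemma~\ref{lemp}. Then $\hat f_{h,a,2,-}$ is an oscillatory integral carrying the phase $-\frac23(\omega-aq^{1/3})^{3/2}$ of $A_-(\zeta_M(a))$ together with an elliptic symbol. The reflected part $-(A'_+/A'_-)(\omega)G_{N,-}$ then automatically matches $-(A'_+/A'_-)(\omega)A_-(\zeta_M(x))A_-(\zeta_M(a))$ modulo $O(h^\infty)$. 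The reason is that both sides satisfy the Neumann condition at $x=0$ exactly for this combination: the $A_\pm$ traces, combined with the factor $A'_+/A'_-$, make $\partial_x$ vanish at $x=0$ by \eqref{eq:dGN-boundary}. Both sides also solve the same eikonal/transport problem, and outgoing data is unique once the incoming part is fixed. I would verify this matching by applying stationary phase in $(\eta,\omega)$ and comparing principal symbols order by order. The construction of $f_{h,a,2,+}$ for \eqref{g2+} is symmetric.

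\emph{Step 3: the cross terms \eqref{g2+0}, \eqref{g2-0} and the tail \eqref{toshowsecond}.} These involve a mismatched phase pairing, either $\pm\frac23(\omega-aq^{1/3})^{3/2}$ from $\hat f$ against the wrong branch, or an extra $N\tilde L(\omega)$ with $|N|\geq2$. In both cases the $\omega$-derivative of the total phase is bounded below by $\sim\omega^{1/2}\gtrsim h^{-\ceps}$ on the support, using $\tilde L'(\omega)\sim 2\omega^{1/2}$ from Lemma~\ref{lemL} and $x,a\ll h^{2/3}\omega$ after rescaling. Repeated integration by parts in $\omega$ then gives $O(h^\infty)$, and Lemma~\ref{sommeNfinie} handles summability in $N$. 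The only Neumann-specific point here is the relation $A'_+/A'_-=-(A_+/A_-)e^{iO(\omega^{-3/2})}$ from the previous lemma, which changes the coefficients but not the phases.

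\emph{Main obstacle.} I expect the hard part to be Step~2: showing that a single $f_{h,a,2,-}$ produces the correct reflected term to all orders. This requires the symbol of the reflected contribution to match $-(A'_+/A'_-)$ times the model symbol, and not merely its leading term. I would first reduce to checking equality of boundary traces of $\partial_x$ at $x=0$, where both sides vanish. I would then invoke uniqueness for the hyperbolic transport system away from glancing ($\zeta\geq\omega/2$), as in Proposition~\ref{propTEmeta}, to propagate this equality inward.
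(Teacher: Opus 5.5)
Your Step 2 has a genuine gap, exactly where you expected it. You fix $f_{h,a,2,-}$ with a parametrix of the $G_{N,+}$-operator so that the incoming parts agree. You then assert that $-(A'_+/A'_-)(\omega)\int G_{N,-}\cdots\hat f_{h,a,2,-}$ coincides with the reflected part of $E_{M,+}$, because both satisfy the Neumann condition and solve ``the same eikonal/transport problem''. Neither justification holds.

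\begin{itemize}
\item The Neumann condition is one relation between traces at $x=0$. It cannot determine the outgoing component on the whole strip $0\le x\lesssim h^{2/3-\ceps}$.
\item The two sides are not governed by the same equations. $E_{M,+}$ is built from $\Delta_M$, with phases $y\cdot\eta$ and $\zeta_M=\omega-xq^{1/3}(\eta)$. The left side of \eqref{g2-} is built from $\Delta$, with phases $\psi,\zeta$. Agreement ``up to $O(x^2)$, $O(y)$ and $B_\Gamma$ terms'' is far from agreement modulo $O(h^\infty)$.
\item Neither side solves a fixed PDE in $(x,y)$. Both are superpositions over $(\eta,\omega)$ with an $\omega$-dependent $\tau_q$, so there is no evolution in $x$ along which equal boundary traces could be propagated inward.
\item Proposition~\ref{propTEmeta} concerns constructing the symbols $g_0,g_1$ of a single quasimode, not uniqueness for such superpositions.
\end{itemize}
So the identification of the reflected parts is the statement to be proved, and your argument does not prove it.

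The paper never matches incoming and reflected parts separately. In Proposition~\ref{proptildeJ} it forms $\tilde J_+=J_++R_+$, with the cutoff $\chi_1(x/h^{2/3-\ceps})$ built in. Its one real estimate is $\|J_+^{-1}R_+\|=O(h^\infty)$, obtained by non-stationary phase in $x$ on the composed phase \eqref{phaBR}:
\begin{itemize}
\item the two $\zeta^{3/2}$ terms contribute with the same sign, giving a gradient $\gtrsim\sqrt\omega|\eta|^{2/3}+\sqrt{\tilde\omega}|\tilde\eta|^{2/3}$;
\item the $\partial_x\psi$ difference is $O(\omega|\eta|^{1/3}+\tilde\omega|\tilde\eta|^{1/3})$, which is smaller since $h^{2/3}\omega\ll1$;
\item relative to the phase gradient, each $x$-derivative of $\chi_1$ costs only $h^{\ceps}/\sqrt\omega$.
\end{itemize}
It then sets $f_{h,a,2,-}=\tilde J_+^{-1}(E_{M,+})$, with $\tilde J_+^{-1}=(I+J_+^{-1}R_+)^{-1}J_+^{-1}$ applied to all of $E_{M,+}$. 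The cross terms and $|N|\ge2$ follow by non-stationary phase in $\omega$, as in your Step 3. You have no counterpart of this composition estimate.

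Be aware, though, that this Neumann-series formula is, as written, only a left inverse of $\tilde J_+$. The reason is that $J_+J_+^{-1}$ is the identity only on functions microlocalized where $J_+$ lands, not on the range of $R_+$. So $\tilde J_+f_{h,a,2,-}=\chi_1E_{M,+}$ still needs $\chi_1E_{M,+}$ to lie in the range of $\tilde J_+$ modulo $O(h^\infty)$, which is essentially the question you isolated. Your concern is legitimate; the resolution you propose, inferring the reflected component from the vanishing Neumann trace, cannot settle it.
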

\begin{proof}
It is enough to prove that the operators in the first lines of \eqref{g2+} and \eqref{g2-} are invertible. Indeed, once $f_{h,a,2,\pm}$ are defined, \eqref{g2+0}, \eqref{g2-0} and \eqref{toshowsecond} follow by non-stationary phase. More precisely, \eqref{g2-} forces the $\omega$-derivative of $f_{h,a,2,-}$ to be $ 2\sqrt{\omega}+O(a)$, since otherwise the phase of \eqref{g2-} would be non-stationary in $\omega$. Substituting such a function 
into \eqref{g2-0} therefore produces a phase whose $\omega$-derivative behaves like $4\sqrt{\omega}$ hence  a $O(h^{\infty})$ contribution by repeated integrations by parts, since $\omega$ is large on the support of the symbol. The same argument yields \eqref{toshowsecond} : for $|N|\geq 2$ the phases are non-stationary in $\omega$, and each integration by parts gains a factor $(N\sqrt{\omega})^{-1}$, allowing summation over $N$. We are thus reduced to constructing $f_{h,a,2,-}$ satisfying \eqref{g2-}; the construction of $f_{h,a,2,+}$ solving \eqref{g2+} is identical.
\begin{prop}\label{proptildeJ}
Let $\tilde J_+:=J_++R_+$, with
\begin{multline}
  J_+(f)(x,y)=\int \cutoffchi^{\flat}(h^{2/3}\omega/\ceps_{0})\cutoffchi^{\sharp}(h^{2\ceps}\omega)
  G_{N,+}(x,y,\eta,\omega)\chi_1(x/h^{2/3-\ceps}) \\
\times q(\eta)^{1/6}\varkappa(h\eta)\varkappa(h\tau_{q}(\omega,\eta))e^{i(y'\cdot\eta+\tprime \omega/h^{1/3})} f(y',\tprime )d\eta d\omega dy'd\tprime.
\end{multline}
\begin{multline}
  R_+(f)(x,y)=\int \cutoffchi^{\flat}(h^{2/3}\omega/\ceps_{0})\cutoffchi^{\sharp}(h^{2\ceps}\omega)
  \Big(\frac{A'_+}{A'_-}\Big)(\omega)G_{N,-}(x,y,\eta,\omega)\chi_1(x/h^{2/3-\ceps})  \\
\times q(\eta)^{1/6}\varkappa(h\eta)\varkappa(h\tau_{q}(\omega,\eta))e^{i(y'\cdot\eta+\tprime \omega/h^{1/3})} f(y',\tprime )d\eta d\omega dy'd\tprime.
\end{multline}
The operator $\tilde J_+$ is well defined from $\mathcal{S}'_{y',\tprime}$ into the space of functions of $(x,y)$ near $(0,0)$, and with $h$ as small parameter, $J_+$ is an elliptic semi-classical Fourier integral operator. Moreover,  $\|J_+^{-1}\circ R_+\|_{\mathcal{L}(L^{2})}=O(h^{\infty})$, hence $\tilde J_+$ is invertible and $\tilde J^{-1}_+=\Big(I+J^{-1}_+\circ R_+\Big)^{-1}\circ J^{-1}_+$.
\end{prop}
If we now chose $f_{h,a,2,-}(y',\tprime):=\tilde J_+^{-1}(E_{M,+})$, this achieves the proof of Proposition \ref{propapetitg2}.
\end{proof}
\begin{proof}(of Proposition \ref{proptildeJ}, which follows as in \cite[Prop. 2.34]{ILLP})
The operator $J_+$ is easily elliptic and invertible with phase function $\psi+\frac 23\zeta^{3/2}$ with $\psi$ and $\zeta$ defined in Theorem \ref{thmMelrose}. The oscillatory phase function of $R_+$ is $\psi-\frac 23\zeta^{3/2}+\frac 43 \omega^{3/2}$. Therefore the phase function of $J^{-1}_+\circ R_+$ is given by
\begin{equation}\label{phaBR}
-\psi(x,y,\eta,\omega)-\frac 23\zeta^{3/2}(x,y,\eta,\omega)+\psi(x,y,\tilde \eta,\tilde\omega)-\frac 23 \zeta^{3/2}(x,y,\tilde\eta,\tilde \omega)+\frac 43 \tilde\omega^{3/2},
\end{equation}
where $x,y$ are now integration variables. The derivative of \eqref{phaBR} with respect to $x$ is
\[
-\partial_x\zeta(x,y,\eta,\omega)\sqrt{\zeta(x,y,\eta,\omega)}-\partial_x\zeta(x,y,\tilde \eta,\tilde\omega)\sqrt{\zeta(x,y,\tilde\eta,\tilde\omega)}-\partial_x\psi(x,y,\eta,\omega)+\partial_x\psi(x,y,\tilde\eta,\tilde\omega),
\]
where $|\eta|,|\tilde \eta|\sim 1/h$, $\omega,\tilde\omega\geq h^{-2\ceps}$ and $xq^{1/3}(\eta)\leq h^{2/3-\ceps-2/3}=h^{-\ceps}$ on the support of the symbol. As $\zeta=\omega-x|\eta|^{2/3}e_0(x,y,\eta/|\eta|,\omega/|\eta|^{2/3})$ with $e_0$ elliptic and close to $1$, the derivatives of the two terms involving $\zeta$ in \eqref{phaBR} are such that 
\[
\Big|\partial_x\zeta(x,y,\eta,\omega)\sqrt{\zeta(x,y,\eta,\omega)}+\partial_x\zeta(x,y,\tilde \eta,\tilde\omega)\sqrt{\zeta(x,y,\tilde\eta,\tilde\omega)}\Big|\gtrsim \frac 12 (\sqrt{\omega}|\eta|^{2/3}+\sqrt{\tilde \omega}|\tilde\eta|^{2/3}).
\]
On the other hand, using \eqref{formpsigamma}, we obtain as in \cite[Prop. 2.35]{ILLP} the following bounds
\[
\Big|\partial_x\psi(x,y,\eta,\omega)-\partial_x\psi(x,y,\tilde \eta,\tilde\omega)\Big|\lesssim  \omega |\eta|^{1/3}+\tilde\omega |\tilde\eta|^{1/3},
\]
where we have used that $q^{2/3}(\eta)/\tau_{q}(\omega,\eta)=|\eta|^{4/3}q^{2/3}(\eta/|\eta|)/(|\eta|\tau_{q}(\omega/|\eta|^{2/3},\eta/|\eta|))\sim |\eta|^{1/3}$. On the support of the symbol $\chi(h^{2/3}\omega)\chi(h^{2/3}\tilde\omega)$ we have $|\omega|,|\tilde\omega|\leq \ceps_0h^{-2/3}$, which means that the main contribution of the derivative of \eqref{phaBR} comes from the terms involving $\zeta$ and behaves like $\sim (\sqrt{\omega}|\eta|^{2/3}+\sqrt{\tilde \omega}|\tilde\eta|^{2/3})$, as for $|\eta|,|\tilde\eta|\simeq 1/h$ we have $\sqrt{\omega}|\eta|^{2/3}\gg \omega|\eta|^{1/3}$  ($\omega h^{2/3}\ll 1$). To perform non stationary phase and obtain an $O(h^{\infty})$ contribution, we check that taking one derivative with respect to $x$ of the symbol provides a factor $O(h^{\ceps})$. Indeed, ${h^{2/3}}\partial_x(\chi(x/h^{2/3-\ceps}))/{\sqrt{\omega}}\sim h^{\ceps}/\sqrt{\omega}\leq h^{\frac 53 \ceps}$, which completes the proof.
\end{proof}
To complete the proof of Proposition \ref{propdataapetitpetit2}, it remains to prove that, for $f_{h,a,2,-}:=\tilde J^{-1}_+(E_{M,+})$, \eqref{eq:g12} holds: but then in \eqref{eq:g12} one obtains a vanishing symbol as $\chi^{\flat}(h^{2\ceps}\omega)\chi^{\sharp}( h^{2\ceps}\omega)=0$.\qed

\section{Dispersion estimates}
\label{sec:dispersion-estimates}

Let $\ceps$ be small, $0<\ceps<1/12$ be consistent with the parametrix construction in subsection \ref{lowparam} (to have an overlap between both regimes where we get dispersion estimates by different arguments). 
For $a\in(0,h^{2/3-\ceps})$, $0<\ceps<1/12$, the proof of the dispersive bound follows exactly as in \cite[Section 4]{ILLP}, using Propositions \ref{propdataapetitpetit1} and \ref{propdataapetitpetit2}, together with Remark \ref{rmqtransversesmall}. Let $a\geq h^{2/3-\ceps}$.
We will use the parametrix as a sum over $N$ to obtain the following dispersion estimates, restricting to positive times for the sake of simplicity.
\begin{thm}\label{dispintermediaire}
There exist $a_{0}$, $c$, $C$, $\ceps$ such that for all $|(t,x,y,h,a)|<a_{0}$, one has
\begin{itemize}
\item for $t\leq c \sqrt a$,
  \begin{equation}
    \label{eq:1}
|  \Prond_{h,a}(t,x,y)|\leq C h^{-d}  \min \left(1, (h/t)^{\frac{d-1} 2}\right) \,;
  \end{equation}
\item For $a\geq h^{2/3-\ceps}$, $t>c\sqrt a$,
  \begin{equation}
    \label{eq:2}
|  \Prond_{h,a}(t,x,y)|\leq C h^{-d}  \left(\frac h t\right)^{\frac{d-2} 2} \left(  (\max(a,x))^{\frac 1 4} \left(\frac h t \right)^{\frac 1 4} +h^{\frac 1 3}\right)\,;
  \end{equation}
\item For $a\leq h^{1/3+\ceps}$,
  \begin{equation}
    \label{eq:3}
|  \Prond_{h,a}(t,x,y)|\leq C h^{-d}  \left(\frac h t\right)^{\frac{d-2} 2+\frac 1 3} \,.
  \end{equation}
\end{itemize}
\end{thm}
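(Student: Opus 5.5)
The plan is to work with the representation $\Prond_{h,a}=\sum_{|N|\lesssim t a^{-1/2}}V_N$ from Proposition \ref{propsommefinie}, together with the cut-off ${\cutoffchi}^{\sharp}(4\alpha/a)$, which restricts the integral to $\alpha\gtrsim a$. I will first use Lemma \ref{lemphaseG} and Remark \ref{rmq:GN-osc} to rewrite the phase of $V_N$ in the form
\[
t\tau_q+y\cdot\theta+\sigma^3/3+\sigma(xq^{1/3}-\alpha)+\Gamma\text{-terms}-(\text{same at }(a,0,s))-Nh\tilde L(h^{-2/3}\alpha).
\]
This is exactly the phase treated in \cite[Sect.~3]{ILLP}, except that $L$ is replaced by $\tilde L$. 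By \eqref{eq:propL}, $h\tilde L(h^{-2/3}\alpha)=\tfrac43\alpha^{3/2}-\tfrac{\pi}{2}h+O(h^2/\alpha^{3/2})$. The constant shift only produces a harmless unimodular factor $e^{iN\pi/2}$. The lower-order correction has size $Nh^2\alpha^{-3/2}\lesssim t\,a^{-1/2}h^2a^{-3/2}$, which is $\ll h$ for $a\ge h^{2/3-\ceps}$, so it can be absorbed into the symbol (its sign $b_1<0$ is irrelevant). The symbol of $V_N$ is elliptic of order $0$ and lies in the same class as in the Dirichlet case; this was checked in the proof of Lemma \ref{lemJinvert} and in \eqref{symbgh}. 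Consequently every stationary-phase and degenerate-phase estimate of \cite[Sect.~3]{ILLP} applies verbatim to each $V_N$.

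For \eqref{eq:1}, when $t\le c\sqrt a$ only $|N|\le1$ contributes. There the $\alpha$-integral has at most one non-degenerate critical point, so a standard $(d-1)$-dimensional stationary phase in $(\theta,\sigma,s,\alpha)$ gives the free bound $h^{-d}\min(1,(h/t)^{(d-1)/2})$.

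For \eqref{eq:2}, I will first perform stationary phase in the $d-2$ angular directions of $\theta$, which gives $h^{-d}(h/t)^{(d-2)/2}$. The remaining variables $(|\theta|,\alpha,\sigma,s)$ produce, for each $N$, a phase with at most swallowtail-type degeneracy. Following \cite[Prop.~3.x]{ILLP}, I will estimate each $V_N$ by $h^{-d}(h/t)^{(d-2)/2}\big(h^{1/3}+(h/a)^{1/4}N^{-1/4}\big)$-type bounds. Only $O(1)$ values of $N$ near the swallowtail times interact at a given $(t,x,y)$; for the others the $\alpha$-phase is separated, and summation gives at most $(\max(a,x))^{1/4}(h/t)^{1/4}$ after using $N\sim t/\sqrt a$. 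This step is the main obstacle: one has to show that the overlap of successive $V_N$ is uniformly bounded and that the degenerate critical point analysis is insensitive to the modified $\tilde L$. I would handle this by comparing the $\alpha$-derivative of the phase, $-2N\sqrt\alpha+t\partial_\alpha\tau_q+\dots$, with its Dirichlet counterpart, and noting that they differ only by $O(Nh^2\alpha^{-5/2})$.

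For \eqref{eq:3}, when $a\le h^{1/3+\ceps}$ the main term $(\max(a,x))^{1/4}(h/t)^{1/4}$ is dominated by the $h^{1/3}$ regime, except possibly when $x$ is large. In that case I will reverse the roles of $x$ and $a$ using symmetry of the Green function, or argue directly as in \cite{ILLP} via the Airy-type bound $|Ai|\lesssim1$ in the $\sigma$ integral after rescaling. This yields $(h/t)^{(d-2)/2+1/3}$. For $a<h^{2/3-\ceps}$, the gallery-mode representation \eqref{eq:Prond2cut} with the bounds of Lemma \ref{lemestimderivekk} gives this estimate directly, exactly as in \cite[Sect.~4]{ILLP}.
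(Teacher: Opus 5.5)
Your overall strategy is the paper's: keep the Dirichlet phases, observe that only the amplitude and $B_L\to B_{\tilde L}$ change, and import the stationary-phase analysis of \cite[Sect.~3]{ILLP}. However, the argument you actually sketch for \eqref{eq:2} rests on two claims that are false in part of the range $a\ge h^{2/3-\ceps}$.

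\textbf{The $B_{\tilde L}$ term cannot be absorbed.} After dividing by $h$, the term $NhB_{\tilde L}(\alpha^{3/2}/h)$ has size $\sim |N|h/\alpha^{3/2}=|N|/\lambda$. With $\alpha\sim a$ and $|N|\sim t/\sqrt a$ this is $\sim th/a^2$, which is $\gg 1$ as soon as $a\ll h^{1/2}$. Your inequality $ta^{-2}h^2\ll h$ therefore fails, for instance, at $a=h^{2/3-\ceps}$, $t\sim 1$. The paper keeps $e^{iNB_{\tilde L}}$ in the phase for $|N|\gtrsim\lambda$. For $|N|>C\lambda^2$ this term is exactly what produces non-degeneracy in $\rho$, see \eqref{eq:54}, and hence the extra factor $(|N|/\lambda)^{-1/2}$.

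\textbf{The overlap is not uniformly bounded.} By \eqref{eq:113}, $\sharp\Nrond_d^1\lesssim 1+t/(\sqrt a\,\lambda^2)$, where $\sqrt a\,\lambda^2=a^{7/2}/h^2$. This count is unbounded for $a\ll h^{4/7}$, which is a nonempty part of your range. So the ``main obstacle'' you set out to overcome is a false statement. The paper instead bounds each of these waves by $h^{-d}(h/t)^{\frac{d-2}{2}}a^{1/2}\lambda^{1/3}/|N|$, using \eqref{eq:169}, \eqref{eq:64} and the $\rho$-gain. Multiplying by the count gives $h^{1/3}\lambda^{-4/3}$, which is one source of the $h^{1/3}$ term in \eqref{eq:2}.

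\textbf{No dyadic decomposition in $\gamma$.} The relation $|N|\sim t/\sqrt a$ holds only in the tangent regime $\gamma\sim a$. For $\gamma\ge 4a$ one has instead:
\begin{itemize}
\item $|N|\sim t/\sqrt\gamma$;
\item two critical points $S_\pm$;
\item only order-two (cusp) degeneracy in $\Sigma$ (Lemmas \ref{lemSAtransv}, \ref{lemtransversephases}).
\end{itemize}
This gives \eqref{eq:37bis}, which must then be summed over $\gamma$.

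\textbf{Wrong power of $a$ in the per-wave bound.} Collecting $a^2/h$, $(\lambda|N|)^{-1/2}$ and $|N|^{1/4}\lambda^{-3/4}$ from \eqref{eq:77} gives $a^{1/8}h^{1/4}|N|^{-1/4}$. Your bound $(h/a)^{1/4}|N|^{-1/4}$ with $|N|\sim t/\sqrt a$ yields $a^{-1/8}(h/t)^{1/4}$, not the claimed $a^{1/4}(h/t)^{1/4}$.

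\textbf{Problems with \eqref{eq:3}.}
\begin{itemize}
\item Symmetry of the Green function cannot dispose of large $x$, since $\max(a,x)$ is itself symmetric in $(a,x)$.
\item For $a<h^{2/3-\ceps}$, the gallery-mode sum is only controlled for $k\lesssim K_{\ceps}\ll h^{-1/4}$, not for the whole of \eqref{eq:Prond2cut}.
\item The modes with $\tilde\omega_k\gtrsim\tilde\omega_{K_\ceps}$ must go through the reflected-wave representation and the transverse estimates, which hold for arbitrary $a$ (Remark \ref{rmqtransversesmall}).
\item Lemma \ref{lemestimderivekk} is an $L^2$ derivative bound used for the ellipticity of $\tilde F^*_{\tilde\omega_k}\tilde F_{\tilde\omega_k}$, not a pointwise decay estimate.
\end{itemize}
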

The first estimate, \eqref{eq:1}, is just the (short time) dispersion for a free wave. On this timescale, the wave has at most one reflection and singularities have not appeared yet. One should point out that (a suitable version of) such dispersion is already proved in \cite{blsmso08}, for a more general boundary.

The second estimate, \eqref{eq:2}, is proved using the parametrix as a sum over reflected waves. The first term is due to swallowtail singularities (and always larger than the corresponding factor in the free dispersion) and the second term is due to the presence of cusps appearing after each swallowtail singularity, between two consecutive reflections; notice that here we use the parametrix construction in the region $a\geq h^{2/3-\ceps}$.

The third estimate, \eqref{eq:3}, will be proved using the parametrix as a sum over quasi-modes, and we postpone its proof to the last section, where we deal with decay of such quasi-modes.

The next  result allows to deal with the parametrix "behind the wave front" and follows like in \cite[Lemma 3.2]{ILLP}.
\begin{lemma}
\label{derriere}
There exist $c_{0}$ and $T_{0}$ such that , with $\mathcal{B}=\{ 0\leq x\leq a, |y|\leq c_{0} t, 0<h\leq t \}$, 
  \begin{equation}
    \label{eq:2bis}
\forall t\in [0, T_{0}]\,,\quad\quad   \sup_{x,y,t\in \mathcal{B}}  |  \Prond_{h,a}(t,x,y)|\leq C h^{-d}  O(( h/ t)^{\infty})\,.
  \end{equation}
\end{lemma}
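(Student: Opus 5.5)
The plan is to prove Lemma~\ref{derriere} by non-stationary phase in each reflected wave $V_N$ of \eqref{eq:newVN}. Since Proposition~\ref{propsommefinie} reduces $\Prond_{h,a}$ to $|N|\lesssim t a^{-1/2}$ terms up to $O(h^\infty)$, the target is
\[
|V_N(t,x,y)|\lesssim h^{-d}(h/t)^M \quad\text{for every } M,
\]
uniformly on $\mathcal{B}$. Summing over $N$ then costs at most a factor $t a^{-1/2}\le h^{-1/3}$, which is absorbed because $M$ is arbitrary. The case $a<h^{2/3-\ceps}$ is handled the same way with the gallery-mode representation \eqref{eq:Prond2cut}; for $k$ small the modes are localized in $x\lesssim h^{2/3}$, and we fall back on the argument of \cite[Lemma 3.2]{ILLP}.

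\textbf{Step 1: the phase.} By Remark~\ref{rmq:GN-osc} and Lemma~\ref{lemphaseG}, the phase of $V_N$ is
\[
\Phi_N = t\tau_q(\alpha,\theta)+\Phi(x,y,\theta,\alpha,\sigma)-\Phi(a,0,\theta,\alpha,s)-Nh\tilde L(h^{-2/3}\alpha).
\]
This is literally the Dirichlet phase of \cite{ILLP}, except that $L$ is replaced by $\tilde L$. By Lemma~\ref{lemL} the two differ by $-\pi/2$ plus a lower-order symbol in $\omega^{-3/2}$, so $\partial_\alpha$ of $Nh\tilde L$ is $2N\sqrt{\alpha}\,(1+O(h/\alpha^{3/2}))$. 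The symbol $g_h\tilde q_h$ is elliptic, of order $0$ and independent of $N$. Hence only the phase geometry matters, and it coincides with the Dirichlet one up to negligible corrections.

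\textbf{Step 2: integrating by parts in $\theta$.} From \eqref{formpsigamma},
\[
\nabla_\theta \Phi_N = t\,\theta/\tau_q + y + O(x+a+\alpha+|y|^2)+\cdots,
\]
while stationarity in $(\sigma,s,\alpha)$ forces $\sigma^2\simeq\alpha-x$, $s^2\simeq\alpha-a$ and $t\simeq 2(N+1)\sqrt{\alpha}$ roughly, together with the constraint $\alpha\gtrsim a$ (Remark~\ref{rmqalpha>a}). In $\mathcal B$ we have $|y|\le c_0t$ with $c_0$ small. Therefore, after localizing with the critical relations in the other variables, $|\nabla_\theta\Phi_N|\gtrsim t$ uniformly in $N$. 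Each integration by parts in $\theta$ gains a factor $h/t$. Derivatives landing on the symbol cost at most $O(1)$ in $\theta$, since the symbols are of type $((1,2/3,1/3),0)$ after rescaling and $\theta$ is nondegenerate. This produces $(h/t)^M$. The condition $h\le t$ in $\mathcal{B}$ ensures that $h/t$ is indeed a gain.

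\textbf{Main obstacle.} The hard part is the degenerate Airy variables $(\sigma,s)$. We cannot first integrate them out by stationary phase, since near caustics this destroys uniformity. Instead, I would decompose into the region where $|\nabla_\theta\Phi_N|\gtrsim t$ holds directly, and the complementary region where, necessarily, $|\partial_\alpha\Phi_N|$ or $|\partial_\sigma\Phi_N|$ is large. In the latter region I would integrate by parts in those variables, using the geometric fact that points with $x\le a$ and $|y|\le c_0 t$ lie strictly inside the wave front; this is the convexity/trapping estimate of \cite[Lemma 3.2]{ILLP}. Checking that the constants in this decomposition are uniform in $N\le t a^{-1/2}$ and in $a$ is where most of the care is needed. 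Because the phases agree with the Dirichlet ones modulo the $\tilde L$ versus $L$ shift, I expect the Dirichlet bookkeeping to transfer verbatim.
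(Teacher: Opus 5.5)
Your overall strategy matches the paper's. The paper gives no argument of its own: it states that the lemma follows as in \cite[Lemma~3.2]{ILLP}, for exactly the reason you give (the phases are the Dirichlet ones with $L$ replaced by $\tilde L$, and the symbol is elliptic of order zero). The transfer is even more direct than you say. The term $\tilde L(h^{-2/3}\alpha)$ does not depend on $\theta$, so $\nabla_\theta$ of the phase of $V_N$ in \eqref{eq:newVN} is literally the Dirichlet one and is independent of $N$. (Minor slip: by \eqref{eq:Apm}, $L-\tilde L=\pi+O(\omega^{-3/2})$, not $-\pi/2$. This only produces a harmless factor $(-1)^N$.)

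The steps you write out explicitly do not hold uniformly down to $t\sim h$, and the failure is confined to the direct wave $V_0$ at short times.

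\textbf{The error term.} Your bound $O(x+a+\alpha+|y|^2)$ does not give $|\nabla_\theta\Phi_N|\gtrsim t$ when $t\lesssim a$. In fact there is no additive $O(\alpha)$ term: the $\alpha$-correction comes from $t\nabla_\theta\tau_q$ and is $O(t\alpha)$. The $(x,a)$-dependent terms, coming from $\nabla_\theta\Upsilon$ and from $\sigma\nabla_\theta\zeta(x,\cdot)-s\nabla_\theta\zeta(a,\cdot)$, are $O(x+a)$.

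\textbf{The prefactor.} Since \eqref{eq:newVN} carries the prefactor $h^{-d-1}$, integrating by parts in $\theta$ and then bounding trivially gives $h^{-d-1}(h/t)^M$. This is not $h^{-d}O((h/t)^\infty)$ for $h\le t\le h^{1-\delta}$. The same objection applies to your absorption of the $h^{-1/3}$ count of reflected waves.

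\textbf{The terms $N\neq0$.} Here none of this matters, and no decomposition in $(\sigma,s,\alpha)$ is needed. By \eqref{eq:newProndcut}, these terms occur only when $t\gtrsim\sqrt a$, so $x\le a\lesssim t^2$. Hence $|\nabla_\theta\Phi_N|\ge t/2$ on the whole support, after a harmless cutoff $|\sigma|\lesssim 1$ and for $c_0,\ceps_0,T_0$ small. Moreover $h/t\lesssim h^{2/3}$ absorbs both the extra $h^{-1}$ and the number of terms.

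\textbf{The term $V_0$ for $h\le t\lesssim a$.} This is the genuinely delicate piece. Your decomposition in the Airy variables would have to recover the missing factor $h$ in addition to the $(h/t)^M$ gain, and you do not say how. The simplest route is to treat $V_0$ as the unreflected wave, as the paper does in \eqref{eq:36}, through its standard FIO representation with prefactor $h^{-d}$ and phase $S(t,x,y,\xi,\eta)-a\xi$. There $\nabla_\eta(S-a\xi)=y+t\nabla_\eta p+O(t^2)$, with $|\nabla_\eta p|\simeq 1$ thanks to the tangential localization $|\xi|\le 2\ceps_0|\eta|$. Integration by parts in $\eta$ then gives $h^{-d}(h/t)^M$ on $|y|\le c_0 t$.
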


\subsection{Number of waves contributing to $\Prond_{h,a}$}\label{sectcardN1}

We use the dyadic localization introduced in \cite[Section 3.1]{ILLP}. Let $\phi\in C^\infty_0(\R)$ be even, with $\phi=1$ on $[-1,1]$ and $\phi=0$ outside $(-3/2,3/2)$, and set $\chi_1=\phi-\phi(2\cdot)$. For dyadic $\gamma\leq \ceps_0$, we denote by $K_{\omega,\gamma}$ the operator obtained from $K_\omega$ in \eqref{eq:Kequiv} by inserting the cutoff
$\chi_1\Big(\frac{\omega}{\gamma|\eta|^{2/3}}\Big)$,
and by $\Prond_{h,a,\gamma}$ the corresponding localization of $\Prond_{h,a}$. Thus
\[
\Prond_{h,a}=\sum_\gamma \Prond_{h,a,\gamma},
\qquad h^{2/3-\ceps}\lesssim\gamma\leq\ceps_0,
\]
where the sum is dyadic; moreover, the terms with $\gamma\ll a$ are $O(h^\infty)$ by non-stationary phase.

Using \eqref{eq:Prondcut} and the Airy-Poisson formula \eqref{eq:AiryPoisson}, we write
\begin{equation}\label{defProndgamma}
\Prond_{h,a,\gamma}(t,x,y)
=\sum_{N\in\Z}V_{N,\gamma}(t,x,y),
\end{equation}
where
\begin{equation}
\label{eq:newVNgam}
V_{N,\gamma}(t,x,y)
=\frac1{2\pi h^{d+1}}
\int e^{\frac ih\Phi_{N,a,\gamma}}
\chi_1\Big(\frac{\alpha}{|\theta|^{2/3}\gamma}\Big)
{\cutoffchi^{\flat}}\Big(\frac{\alpha}{\ceps_0}\Big)
{\cutoffchi^{\sharp}}\Big(\frac{\alpha}{h^{2/3}}\Big) \notag\
\chi(s)g_h(x,y,\theta,\alpha,\sigma)
\tilde q_h(\theta,\alpha,s)
,ds,d\theta,d\sigma,d\alpha ,
\end{equation}
with
\begin{equation}\label{PhiNagamma}
\Phi_{N,a,\gamma}
=t\tau_q(\alpha,\theta)
+\Phi(x,y,\theta,\alpha,\sigma)
-\Phi(a,0,\theta,\alpha,s)
-\frac43N\alpha^{3/2}
+NhB_{\tilde L}(\alpha^{3/2}/h).
\end{equation}
The amplitude in \eqref{eq:newVNgam} is a symbol of order zero, uniformly in $N$.

The localization and counting arguments of \cite[Section 3.1]{ILLP} apply without change to \eqref{eq:newVNgam}. Indeed, the phase functions $\Phi$ are the same as in the Dirichlet case, while $B_L$ is replaced by $B_{\tilde L}$. The arguments in \cite[Propositions 3.4-3.5]{ILLP} only use the corresponding symbol estimates and the asymptotic behavior
$|B'_{\tilde L}(\omega^{3/2})|
\sim |B'_L(\omega^{3/2})|
\sim \omega^{-3/2}$;
the different sign of the leading term of $B_{\tilde L}$ plays no role. The change in the amplitude is likewise harmless, since the new amplitude is a uniformly bounded symbol of order zero.

We therefore retain the following consequences. First, at fixed $|t|\lesssim T_0$,
\begin{equation}\label{eq:24}
\sum_{|N|\geq 4|t|/\sqrt\gamma}
V_{N,\gamma}(t,x,y)=O(h^\infty).
\end{equation}
In particular, only $O(|t|/\sqrt\gamma)$ reflected waves may contribute.
For fixed $(t,x,y)$, let
\begin{equation}\label{Ncal}
\mathcal{N}(t,x,y)
=\{N\in\Z:\ \Phi_{N,a,\gamma}
\text{ has a critical point in } (\sigma,s,\alpha,\theta)\}.
\end{equation}
We use the enlargement $\mathcal N_d^1(t,x,y)$ introduced in
\cite[Section 3.1]{ILLP}; equivalently, it is obtained by taking the union of $\mathcal N(t',x',y')$ over the $\gamma$-neighborhood $\mathcal C_\gamma(t,x,y)$ defined there. Then we have
\begin{prop}\label{propnofw}
The following holds 
\begin{equation}\label{eq:113}
|\mathcal N_d^1(t,x,y)|
\lesssim 1+\gamma^{-1/2}|t|
\big(\gamma^{3/2}/h\big)^{-2},
\end{equation}
and
\begin{equation}\label{eq:outsideN1}
\sum_{\substack{N\notin\mathcal N_d^1(t,x,y)\
|N|\lesssim |t|/\sqrt\gamma}}
V_{N,\gamma}(t,x,y)
=O(h^\infty).
\end{equation}
\end{prop}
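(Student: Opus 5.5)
The plan is to reproduce the counting argument of \cite[Section~3.1]{ILLP}. Only two ingredients change in the Neumann setting: the function $B_L$ becomes $B_{\tilde L}$, and the amplitude becomes the Neumann symbol built from $g_h$. First I would write out the critical point equations of $\Phi_{N,a,\gamma}$ in \eqref{PhiNagamma}. Stationarity in $\sigma$ and in $s$ gives $\sigma^2=\zeta(x,y,\theta,\alpha)$ and $s^2=\zeta(a,0,\theta,\alpha)$. Stationarity in $\theta$ fixes $y$ as a function of $(t,\alpha,\theta)$. Stationarity in $\alpha$ gives
\[
t\,\partial_\alpha\tau_q+\partial_\alpha\Phi(x,y,\theta,\alpha,\sigma)-\partial_\alpha\Phi(a,0,\theta,\alpha,s)=2N\alpha^{1/2}-\tfrac32 N\alpha^{1/2}B'_{\tilde L}(\alpha^{3/2}/h).
\]
On the support of $\chi_1(\alpha/(\gamma|\theta|^{2/3}))$ we have $\alpha\sim\gamma$. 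Using the structure of $\zeta$ and $\psi$ from Lemma~\ref{lemphaseG} and Proposition~\ref{propimpformgamma}, the left side equals $t\,(1+O(\gamma))$ plus bounded terms of size $O(\sqrt\gamma)$. This gives the relation $N\approx t/(2\sqrt\gamma)$, so $N$ is determined by the critical point.

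Next I would estimate how much $N$ can vary when $(t',x',y')$ ranges over the neighborhood $\mathcal C_\gamma(t,x,y)$ and $(\alpha,\theta)$ ranges over the support of the symbol. The heart of the count is the dependence of $N$ on $\alpha$ through $\partial_\alpha$ of the $\alpha$-equation. The dominant variation comes from $N\alpha^{-1/2}$ and from the geometric corrections, but it is partly cancelled at the swallowtail-type points. The residual width of the admissible set of $N$ is then controlled by the second-order term. In the Dirichlet computation this is where the factor $\gamma^{-1/2}|t|(\gamma^{3/2}/h)^{-2}$ arises: it comes from $N h\,B'_{\tilde L}(\alpha^{3/2}/h)\,\partial_\alpha(\alpha^{3/2}/h)$, using $|B'_{\tilde L}(u)|\sim u^{-2}$ and $|N|\lesssim|t|/\sqrt\gamma$. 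I would simply verify that only $|B'_{\tilde L}(u)|\sim|B'_L(u)|\sim u^{-2}$ and bounds on higher derivatives enter. These follow from \eqref{eq:propL}, because $B_{\tilde L}$ is a convergent series in $u^{-1}$ with $b_1\neq0$; the sign of $b_1$ plays no role. This yields \eqref{eq:113}, with the constant $1$ accounting for the width coming from the $\mathcal C_\gamma$ enlargement.

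For \eqref{eq:outsideN1}, take $N\notin\mathcal N^1_d(t,x,y)$ with $|N|\lesssim|t|/\sqrt\gamma$. Then the gradient of $\Phi_{N,a,\gamma}$ in $(\sigma,s,\alpha,\theta)$ is bounded below, uniformly on the symbol's support, by the distance to the critical set. This distance is at least of the scale fixed by $\mathcal C_\gamma$, and after rescaling $\alpha=\gamma A$ it gives a large parameter $\gamma^{3/2}/h\gtrsim h^{-3\epsilon/2}$. I would integrate by parts with the usual operator $L=\frac{h}{i|\nabla\Phi|^2}\nabla\Phi\cdot\nabla$. Each step gains a power of $h/\gamma^{3/2}$ times the distance of $N$ to $\mathcal N$, so the sum over the finitely many $N$ converges and is $O(h^\infty)$.

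For this, the amplitude $\chi_1\,\chi^\flat\chi^\sharp\chi(s)g_h\tilde q_h$ must be a symbol in the rescaled variables. This holds because $g_h$ is of order $0$ and type $((1,2/3,1/3),0)$, exactly like the Dirichlet $p$, by Theorem~\ref{thm:N} and \eqref{symbgh}. The factor $\sigma$ in $g_h$ is harmless, since $|\sigma|\lesssim\sqrt\gamma$ on the essential support.

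The main obstacle I expect is the uniformity of the lower bound on $|\nabla\Phi_{N,a,\gamma}|$ near degenerate (swallowtail) critical points, where the derivative in $\alpha$ can vanish to higher order. There I would follow \cite[Prop.~3.5]{ILLP} and use the full gradient in $(\alpha,\theta)$ together with the enlargement $\mathcal C_\gamma$, which is designed precisely to absorb these degenerate regions.
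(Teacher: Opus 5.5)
Your plan follows the paper's route. The paper also proves this proposition by invoking \cite[Props.~3.4--3.5]{ILLP} verbatim, after checking that the Neumann amplitude is still an order-zero symbol and that only symbol-type bounds on $B_{\tilde L}$ with $|B'_{\tilde L}(u)|\sim u^{-2}$ enter; the sign of $b_1$ is irrelevant, and \eqref{eq:propL} is an asymptotic rather than a convergent expansion, which is all that is needed.

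One correction to your heuristic for \eqref{eq:113}: the factor $|N|\lambda_\gamma^{-2}$ does not come from a cancellation at swallowtail points. Without $NhB_{\tilde L}$ the phase is homogeneous, so the critical equations fix the scale-invariant variables and leave $\rho=|\theta|$ free. The term $NhB_{\tilde L}(\alpha^{3/2}/h)$ breaks this homogeneity, so the real $N$ solving the critical system varies by a relative amount $\sim\lambda_\gamma^{-2}$ as $\rho$ ranges over the support of $\varkappa$. With $|N|\sim|t|/\sqrt\gamma$ this gives the bound $1+|N|\lambda_\gamma^{-2}$.
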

These are precisely \cite[Propositions 3.4-3.5]{ILLP}, whose proofs apply verbatim with $B_L$ replaced by $B_{\tilde L}$.

We now complete the proof of Theorem \ref{dispintermediaire}. We decompose
\[
\Prond_{h,a}=\sum_{a\lesssim\gamma\ll1}\Prond_{h,a,\gamma}
\]
and distinguish the tangent regime $\gamma\sim a$ from the transverse regime $\gamma\geq4a$. We write $\Prond_{h,a,a}$ and $V_{N,a}$ for the corresponding terms when $\gamma\sim a$.
The term $N=0$ satisfies the usual free-space dispersive estimate
\begin{equation}\label{eq:36}
|V_0(t,x,y)|
\leq Ch^{-d}\left(\frac ht\right)^{\frac{d-1}{2}},
\qquad t\gtrsim h.
\end{equation}
The remaining terms satisfy the following estimates.

\begin{prop}\label{propN=1}
Assume $\ceps>0$, $a\in[h^{2/3-\ceps},a_0]$ and $h\in(0,1)$. Then, for $t\gtrsim h$,
\begin{equation}\label{eq:37}
|V_{\pm1}(t,x,y)|
\leq Ch^{-d}\left(\frac ht\right)^{\frac{d-2}{2}}
\left[
\left(\frac ht\right)^{1/2}
+a^{1/4}\left(\frac ht\right)^{1/4}
+h^{1/3}
\right].
\end{equation}
\end{prop}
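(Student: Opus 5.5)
The plan is to follow the proof of the corresponding estimate for $V_{\pm1}$ in \cite[Section 3]{ILLP}. The only new inputs are the amplitude $g_h$ in place of the Dirichlet symbol and $B_{\tilde L}$ in place of $B_L$ in the phase \eqref{PhiNagamma}.

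First, I would decompose $V_{\pm1}=\sum_\gamma V_{\pm1,\gamma}$ dyadically, with $a\lesssim\gamma\leq\ceps_0$, and use \eqref{eq:24}: $V_{\pm1,\gamma}$ is $O(h^\infty)$ unless $t\gtrsim\sqrt\gamma$. This restricts to $\gamma\lesssim t^2$.

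Next, in \eqref{eq:newVNgam} I would apply stationary phase in the $d-2$ angular directions of $\theta$ (writing $\theta=\rho\vartheta$) and in the radial variable $\rho$ jointly with the time variable. The phase is nondegenerate there because $\tau_q$ is homogeneous and the Hessian is inherited from the free wave. This gives the factor $h^{-d}(h/t)^{(d-2)/2}$, leaving an integral in $(\alpha,\sigma,s)$ with rescaled large parameter $\lambda=\gamma^{3/2}/h$. After the rescalings $\alpha=\gamma A$, $\sigma=\sqrt\gamma S$, $s=\sqrt\gamma S'$, the phase near the critical set is a perturbation of the model
\[
\lambda\Big(T A + \tfrac{S^3}3 - S(X-A) - \tfrac{S'^3}3 + S'(1-A) \mp \tfrac43 A^{3/2}\Big),
\]
with $T=t/\sqrt\gamma$ and $X=x/\gamma$. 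This is justified by Lemma \ref{lemphaseG} and Proposition \ref{propimpformgamma}, which give $\psi,\zeta$ as perturbations of the Friedlander phases with errors in $\mathcal H_{\geq 2}$. The term $NhB_{\tilde L}$ contributes $O(h/\gamma^{3/2})=O(\lambda^{-1})$ to the phase and is harmless because $|B'_{\tilde L}(u)|\sim u^{-2}$; its sign is irrelevant.

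Then I would treat the degenerate critical points.
\begin{itemize}
\item For $\gamma\gg a$ (transverse regime), stationary phase in $(S,S')$ is nondegenerate. The remaining $A$-integral has at worst a fold or cusp, so the loss is at most $\lambda^{-1/3}$ relative to nondegenerate, giving a contribution $\lesssim h^{-d}(h/t)^{(d-2)/2}\bigl((h/t)^{1/2}+h^{1/3}\bigr)$. I plan to use the cusp bound $\lambda^{-1/2}\cdot\lambda^{1/6}$ times the Jacobian factors and then sum over the $O(\log)$ dyadic $\gamma$, which costs at most an $\varepsilon$ and is absorbed as in \cite{ILLP}.
\item For $\gamma\sim a$ (tangent regime), the critical points of the phase in $(S',A)$ coalesce at a swallowtail. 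Here I would invoke the swallowtail estimate $\lambda^{-1/4}$ from \cite[Prop. 3.8]{ILLP}, taking the degenerate directions $S',A$ with the model swallowtail normal form. Together with the Jacobian factors $\sqrt a$ and $(h/t)^{1/4}$ coming from $t\sim\sqrt a$, this gives the term $a^{1/4}(h/t)^{1/4}$.
\end{itemize}

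The key check, and the expected obstacle, is that the amplitude $g_h\,\tilde q_h$ does not vanish identically to an order that would change these bounds, and that it is a symbol uniformly in the rescaled variables. The upper bound needs only boundedness of the symbol and its derivatives in the rescaled variables. For $g_h=g_0+\sigma q^{-1/6}g_1$, the $\sigma$-factor scales like $\sqrt\gamma\lesssim1$, and derivatives of $g_1$ are symbols of type $((1,2/3,1/3),0)$ by Theorem \ref{thm:N}, so rescaled derivatives remain bounded. Once this uniformity is verified, the degenerate stationary phase lemmas of \cite{ILLP} apply verbatim, and summing the three regimes yields \eqref{eq:37}.
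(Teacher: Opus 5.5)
Your reduction matches the paper's: the phases are the Dirichlet ones, $B_L$ is replaced by $B_{\tilde L}$ with the same symbol bounds, and $g_h\tilde q_h$ stays an order-zero symbol after rescaling. The gap is in the stationary-phase mechanism you then describe. It is not the one that gives \eqref{eq:37}, and as written it would not give it.

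\textbf{Transverse blocks $\gamma\geq 4a$.} The observation-side Airy variable (your $S$, the paper's $\Sigma$) is not non-degenerate. Its two critical points coalesce where $\zeta(x,y,\theta,\alpha)=0$, i.e.\ when $x$ sits at the apex of the reflected ray (Lemma~\ref{lemphaseG}). In the paper (Lemma~\ref{lemSAtransv}) the non-degenerate variables are the source variable $S$ (factor $\lambda_\gamma^{-1/2}$, using $a/\gamma\leq1/4$) and $A$ ($\partial_A^2\sim|N|$, factor $(\lambda_\gamma|N|)^{-1/2}$). The decisive input, taken from \cite[Lemmas 3.35--3.36]{ILLP}, is that for $N=\pm1$ the remaining phase in $\Sigma$ has at most one critical point \emph{of order two}.

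You also need to keep the Jacobian $\gamma^2/h$ of the rescaling, which your sketch never records. There is no stationary phase in $\rho$ (its second derivative comes only from the $B_{\tilde L}$ term, cf.\ \eqref{eq:54}) and none "in time". With this Jacobian one gets $\frac{\gamma^2}{h}\lambda_\gamma^{-4/3}=h^{1/3}$ near the degenerate point. Away from it one gets $\frac{\gamma^2}{h}\lambda_\gamma^{-3/2}=h^{1/2}\gamma^{-1/4}\lesssim (h/t)^{1/2}$, since for $N=\pm1$ critical points have $t\lesssim\sqrt\gamma$.

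Your hedge ``fold or cusp'' is exactly what must be excluded. A cusp would give $\frac{\gamma^2}{h}\lambda_\gamma^{-5/4}=\gamma^{1/8}h^{1/4}$, which for $\gamma\gg a$ exceeds every term on the right of \eqref{eq:37}. Note also that the quantity $\lambda^{-1/2}\lambda^{1/6}=\lambda^{-1/3}$ you call the cusp bound is in fact the order-two bound. The $1/4$ loss may only come from $\gamma\sim a$.

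\textbf{Tangent block $\gamma\sim a$.} The roles are again not the ones you state. $A$ is the non-degenerate variable (Lemma~\ref{lemAc}, factor $\lambda^{-1/2}$). The swallowtail enters through the two-dimensional $(\Sigma,S)$ integral, bounded by $\lambda^{-3/4}$ (Lemma~\ref{lemestimVN} with $N=1$, $\Lambda=\lambda$). This gives $\frac{a^2}{h}\lambda^{-5/4}=a^{1/8}h^{1/4}\lesssim a^{1/4}(h/t)^{1/4}$ because $t\lesssim\sqrt a$ there. Your ``Jacobian factors $\sqrt a$ and $(h/t)^{1/4}$'' do not produce this.

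\textbf{Summation in $\gamma$.} \eqref{eq:37} carries no $\varepsilon$ or logarithmic loss. Summing a $\gamma$-uniform block bound such as $h^{1/3}$ over $O(\log(1/h))$ dyadic blocks and ``absorbing an $\varepsilon$'' therefore does not prove the statement. You need a loss-free summation in $\gamma$. One way is to show that at a fixed $(t,x,y)$ only boundedly many dyadic $\gamma$ carry a critical point of the $N=\pm1$ phase, the others being $O(h^\infty)$ by non-stationary phase in $A$.
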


\begin{prop}\label{proptransv}
Assume $\ceps>0$, $a\in[h^{2/3-\ceps},a_0]$, $\gamma\geq4a$, and set
$\lambda_\gamma=\gamma^{3/2}/h$. Then, for $t\gtrsim\sqrt\gamma$,
\begin{equation}\label{eq:37bis}
\left|\sum_{|N|\geq2}V_{N,\gamma}(t,x,y)\right|
\leq Ch^{-d}\left(\frac ht\right)^{\frac{d-2}{2}}
h^{1/3}
\left(
\frac{\gamma^{1/4}}{t^{1/2}}
+\lambda_\gamma^{-3/2}
\right).
\end{equation}
\end{prop}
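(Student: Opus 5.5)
We prove Proposition~\ref{proptransv} by following the transverse-regime analysis of \cite[Section 3]{ILLP}. Only two inputs change there: the amplitude, now a uniformly bounded order-zero symbol built from $g_h$, and $B_L$, which becomes $B_{\tilde L}$.

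\textbf{Rescaling.} Since $\gamma\geq 4a$, on the support of $\chi_1(\alpha/(|\theta|^{2/3}\gamma))$ we have $\alpha\sim\gamma\gg a$. We rescale $\alpha=\gamma A$, $\sigma=\sqrt\gamma\,S$, $s=\sqrt\gamma\,S'$ and write $\lambda_\gamma=\gamma^{3/2}/h$. The phase $\Phi_{N,a,\gamma}/h$ then becomes $\lambda_\gamma$ times a phase in the rescaled variables.

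\textbf{Stationary phase in the Airy variables.} Because $\alpha\gg a$ and $\alpha\gg x$ on the relevant region, the critical points in $S$ and $S'$ are non-degenerate, separated by $\sim\sqrt{\zeta}$, and $\zeta\sim\gamma$. Stationary phase in $(\sigma,s)$ is therefore justified with large parameter $\lambda_\gamma$ and gains $\lambda_\gamma^{-1}$ in total. The reduced phase is
\[
t\tau_q\pm\tfrac23\zeta^{3/2}(x,\cdot)\pm\tfrac23\zeta^{3/2}(a,\cdot)+\psi(x,\cdot)-\psi(a,\cdot)-\tfrac43N\alpha^{3/2}+NhB_{\tilde L}(\alpha^{3/2}/h).
\]

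\textbf{Stationary phase in $\theta$.} We apply stationary phase in the $d-2$ directions of $\theta$ transverse to the radial direction. This yields the factor $(h/t)^{(d-2)/2}$, exactly as in the Dirichlet case, because these directions involve only the phases $\psi,\zeta$ and these are unchanged.

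\textbf{The sum over $N$ and the integral in $(|\theta|,\alpha)$.} By Proposition~\ref{propnofw}, only $N\in\mathcal N_d^1(t,x,y)$ contribute modulo $O(h^\infty)$, and $|N|\lesssim t/\sqrt\gamma$. For each such $N$, the remaining two-dimensional integral in $(|\theta|,\alpha)$ has at worst a fold-type degeneracy in the transverse regime. We then split into two cases.
\begin{itemize}
\item When the Hessian in $\alpha$ is non-degenerate, it has size $\sim N\lambda_\gamma\gamma^{-?}$; combined with the prefactors this gives $h^{1/3}\gamma^{1/4}t^{-1/2}$ per wave. The wavefronts of different $N$ are separated, so no summation loss occurs.
\item Otherwise, the count \eqref{eq:113} of interacting waves bounds the sum by $\lambda_\gamma^{-3/2}$.
\end{itemize}
Both cases only use $|B'_{\tilde L}(u)|\sim u^{-2}$ together with symbol bounds on $g_h$ and $\tilde q_h$.

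\textbf{Main obstacle.} The hardest step is to check that the Neumann amplitude does not spoil the degenerate stationary phase. The concern is the $\sigma q^{-1/6}g_1$ part of $g_h$, which is not small away from the boundary. I would handle it by noting that after rescaling, $\sigma\sim\sqrt\gamma\ll1$, so the amplitude remains a bounded symbol in the rescaled variables. Its derivatives in $S$ and $A$ are then bounded uniformly in $\lambda_\gamma$, and these are exactly the hypotheses used in \cite{ILLP}.
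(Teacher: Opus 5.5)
Your proposal has a genuine gap at the step that produces the factor $h^{1/3}$ in \eqref{eq:37bis}. You claim that stationary phase in both Airy variables $(\sigma,s)$ is non-degenerate because $\alpha\gg a$ and $\alpha\gg x$. The first part is right: $\gamma\geq 4a$ is exactly what makes the source-side variable non-degenerate (Lemma \ref{lemSAtransv}, $S_\pm=S_0\pm\sqrt{A-O(a/\gamma)}$). Nothing, however, restricts $x$. The wave $V_{N,\gamma}$ lives in $0\le x\lesssim\gamma$, and where $\zeta(x,\cdot)\to 0$ the two $\sigma$-critical points $\sigma_0\pm\sqrt{\zeta}\,k$ of Lemma \ref{lemphaseG} coalesce. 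Even away from that region, the caustics of the reflected fronts do not disappear; they only move into another variable.

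Your own bookkeeping shows that something is missing. The prefactor $\gamma^2/h$, times $\lambda_\gamma^{-1}$ from $(\sigma,s)$ and $(|N|\lambda_\gamma)^{-1/2}$ from a non-degenerate $\alpha$-integral, gives $h^{1/2}\gamma^{-1/4}|N|^{-1/2}\sim (h/t)^{1/2}$ per wave. That is the free decay, not the $h^{1/3}\gamma^{1/4}t^{-1/2}$ your first bullet asserts; the unresolved $\gamma^{-?}$ hides this.

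The paper proceeds differently. It does non-degenerate stationary phase only in $S$, which gives $\lambda_\gamma^{-1/2}$, and in $A$, where $\partial_A^2\Psi\sim N$ gives $(\lambda_\gamma|N|)^{-1/2}$. It then invokes Lemma \ref{lemtransversephases}: the remaining phase has at most one degenerate critical point in $\Sigma$, of order two, so the $\Sigma$-integral is $O(\lambda_\gamma^{-1/3})$. This yields $\frac{\gamma^2}{h}\lambda_\gamma^{-4/3}|N|^{-1/2}=h^{1/3}|N|^{-1/2}$ per wave, and with $|N|\sim t/\sqrt\gamma$ the first term of \eqref{eq:37bis}. If you insist on eliminating $\sigma$ first (possible only where $\zeta(x,\cdot)$ stays away from $0$), you must then carry out a degenerate stationary phase in $A$ and establish its order and size. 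Your proposal never does this; instead your case split attributes the target bound to the non-degenerate case.

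The second term $h^{1/3}\lambda_\gamma^{-3/2}$ does not follow from counting alone. In the overlapping regime $\sharp N\gg 1$, \eqref{eq:113} forces $|N|\sim t/\sqrt\gamma\gg\lambda_\gamma^2$. There, the per-wave bound $h^{1/3}|N|^{-1/2}$ times $\sharp N\lesssim |N|\lambda_\gamma^{-2}$ gives $h^{1/3}|N|^{1/2}\lambda_\gamma^{-2}$, which exceeds both terms of \eqref{eq:37bis}. The missing ingredient is an extra stationary phase in $\rho=|\theta|$. The phase is linear in $\rho$ except for $NhB_{\tilde L}(\rho\lambda_\gamma A^{3/2})$, and for $|N|\geq\lambda_\gamma^2$ this term gives $\frac1h|\partial_\rho^2\phi_{N,a,\gamma,\pm}|\sim |N|/\lambda_\gamma$ by \eqref{eq:rhotransv}. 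That produces the extra factor $(|N|/\lambda_\gamma)^{-1/2}$, hence a per-wave bound $h^{1/3}\lambda_\gamma^{1/2}/|N|$. Summing over $\sharp N\lesssim 1+t/(\sqrt\gamma\,\lambda_\gamma^2)$ then gives $h^{1/3}\lambda_\gamma^{-3/2}$.

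For the same reason, your picture of a two-dimensional $(|\theta|,\alpha)$ integral with a fold is inaccurate. For moderate $N$ there is no stationary phase in $\rho$ at all. The relevant dichotomy is $\sharp N=O(1)$ versus $\sharp N\gg1$, not whether the $\alpha$-Hessian degenerates.

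Your treatment of the Neumann amplitude as a bounded order-zero symbol after rescaling is fine and matches what the paper uses. It is not where the difficulty lies.
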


\begin{prop}\label{proptang}
Assume $\ceps>0$, $a\in[h^{2/3-\ceps},a_0]$, and set
$\lambda=a^{3/2}/h$. Then, for $0\leq x\leq2a$,
\begin{equation}\label{eq:37ter}
\left|\sum_{|N|\geq2}V_{N,a}(t,x,y)\right|
\leq Ch^{-d}\left(\frac ht\right)^{\frac{d-2}{2}}
\left[
a^{1/4}\left(\frac ht\right)^{1/4}
+\frac{h^{1/3}}{\lambda^{4/3}}
\right].
\end{equation}
\end{prop}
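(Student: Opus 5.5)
The plan is to follow the Dirichlet argument of \cite[Section 3]{ILLP} for the tangential regime $\gamma\sim a$ and check at each step that only the phase $\Phi_{N,a,a}$ from \eqref{PhiNagamma} and the order-zero, uniformly bounded character of the amplitude $\chi(s)g_h\tilde q_h$ are used. With $\lambda=a^{3/2}/h$, we rescale $\alpha=a A$, $\sigma=\sqrt a\,S$, $s=\sqrt a\,S'$, $x=aX$, $t=\sqrt a\,T$. By Lemma \ref{lemphaseG} and Proposition \ref{propimpformgamma}, the phase $\Phi_{N,a,a}/h$ becomes $\lambda$ times the model Friedlander phase
\[
T\tau(A,\theta)+\tfrac{S^3}{3}+S(X-A)-\tfrac{S'^3}{3}-S'(1-A)-\tfrac43 N A^{3/2},
\]
up to perturbations of size $O(a)$, $O(N h/a^{3/2})$ (from $NhB_{\tilde L}$, whose derivative behaves like $\omega^{-3/2}$ as noted above) and $O(\sqrt a\,|y|)$ coming from $\Gamma$. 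By \eqref{eq:24} and Proposition \ref{propnofw}, the sum over $|N|\geq 2$ reduces to $|N|\lesssim t/\sqrt a$. Moreover, only $|\mathcal N_d^1|\lesssim 1+ t a^{-1/2}\lambda^{-2}$ values of $N$ contribute. Each $V_{N,a}$ is estimated individually and the results are summed.

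\textbf{Step 1.} For fixed $N$, apply stationary phase in the $d-2$ angular directions of $\theta$ transverse to $y/|y|$, non-degenerate because $\tau_q$ is strictly convex in $\theta$. This gives the factor $h^{-d}(h/t)^{\frac{d-2}{2}}$ after accounting for the $h^{-d-1}$ prefactor, the radial $|\theta|$ integral and the $a$-scalings. We are left with a $4$-dimensional integral in $(S,S',A,|\theta|)$ with large parameter $\lambda$. Integrating in $|\theta|$ and $S'$ (as in \cite{ILLP}) reduces it to a two-variable integral in $(S,A)$ whose phase has a swallowtail-type degeneracy where $T\sim 4N$ and $X\sim 1$.

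\textbf{Step 2.} Near the swallowtail point, use the sharp bound $\lambda^{-1/4}\times(\text{normalization})$, which after rescaling gives $a^{1/4}(h/t)^{1/4}$ per wave with $N\sim t/\sqrt a$ (that is, $N^{-1/4}\lambda^{-1/4}$ times the stationary-phase factor in $N$). Away from the swallowtail, only cusps or folds occur, giving $\lambda^{-1/3}$, i.e.\ the $h^{1/3}\lambda^{-4/3}$-type term after combining with the decay in the non-degenerate directions. Summing over $N\in\mathcal N_d^1$, whose cardinality is $O(1)$ when $t\lambda^{-2}\lesssim\sqrt a$ and otherwise compensated by the gain $\lambda^{-2}$, gives \eqref{eq:37ter}. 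This is exactly the bookkeeping of \cite[Prop.~3.8]{ILLP}. The restriction $0\le x\le 2a$ ensures $X\lesssim 1$, so that the model degeneracy analysis applies.

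\textbf{Main obstacle.} I expect the hardest part to be justifying that the perturbation terms do not destroy the swallowtail normal form uniformly in $N$ up to $t/\sqrt a$. In particular, the term $NhB_{\tilde L}(\alpha^{3/2}/h)$ accumulates with $N$. I would first check that its contribution to the relevant derivatives is $O(N\lambda^{-1})\cdot\lambda^{-1}$ relative to the main terms, hence a small smooth perturbation because $N\lambda^{-2}\lesssim t a^{-1/2}\lambda^{-2}$. The opposite sign of $b_1$ compared with Dirichlet only shifts the critical points and does not affect the size estimates. Finally, the Neumann amplitude has a $\sigma$-dependent part $\sigma q^{-1/6}g_1$. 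It remains a symbol of order zero in the rescaled variables since $|\sigma|\lesssim\sqrt a$ there, so the degenerate stationary phase lemmas (Airy/Pearcey/swallowtail type bounds with symbol seminorms) apply unchanged.
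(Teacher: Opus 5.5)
Your proposal has a genuine gap in the treatment of overlapping waves. The reduction in Step 1 is also not the one that works in the tangent regime.

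\textbf{The reduction in Step 1.} For $\gamma\sim a$ the critical points of the source variable $s$ (your $S'$) satisfy $S'^2\approx A-1$. They coalesce on the support of the symbol, since these are rays leaving $x=a$ tangentially. Integrating out $S'$ therefore leaves an Airy factor in $A$, not a symbol, so degenerate stationary phase in $(S,A)$ does not apply to what remains. By homogeneity the phase is linear in $\rho=|\theta|$ apart from $NhB_{\tilde L}$, so the $\rho$-integral gives no decay either.

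The paper eliminates $A$ instead, which is uniformly non-degenerate. The term $-\frac43N\alpha^{3/2}$ gives $|\partial_A^2\Psi_{N,a,a}|\sim|N|a^{3/2}$, hence the factor $(\lambda|N|)^{-1/2}$ (Lemma \ref{lemAc}). The degenerate integral left is in $(\sigma,s)$. After the rescaling by $|N|$ its large parameter is $\Lambda=\lambda|N|^{-3}$. It is $O(\Lambda^{-3/4})$ for $|N|<\lambda^{1/3}$ and $O(\lambda^{-2/3})$ for $|N|\geq\lambda^{1/3}$ (Lemma \ref{lemestimVN}). The product $\frac{a^2}{h}(\lambda|N|)^{-1/2}|N|^{1/4}\lambda^{-3/4}=a^{1/8}h^{1/4}|N|^{-1/4}$, with $|N|\sim t/\sqrt a$, is what yields $a^{1/4}(h/t)^{1/4}$. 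Your Step 2 states this per-wave value but gives no mechanism producing the factor $|N|^{-1/4}$.

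\textbf{The gap: overlapping waves.} You treat $NhB_{\tilde L}(\alpha^{3/2}/h)$ as a small perturbation because its relative size is $N\lambda^{-2}$. But $\sharp N=|\mathcal N_d^1(t,x,y)|\gg1$ occurs exactly when $t\gtrsim\sqrt a\,\lambda^2$, that is, when $|N|\sim t/\sqrt a\gtrsim\lambda^2$. There this term is not small, and it is precisely what the proof needs.

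In that regime it is the only source of curvature in $\rho$: by \eqref{eq:54}, $\frac1h|\partial_\rho^2\phi_{N,a}|\sim|N|/\lambda$. Stationary phase in $\rho$ then gives an extra factor $(|N|/\lambda)^{-1/2}$ and the per-wave bound $a^{1/2}\lambda^{1/3}/|N|$. Summing over $\sharp N\lesssim t/(\sqrt a\lambda^2)$ waves gives $a^{1/2}\lambda^{-5/3}=h^{1/3}\lambda^{-4/3}$. This is the origin of the second term in \eqref{eq:37ter}, not cusps or folds away from the swallowtail.

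The cardinality gain $\lambda^{-2}$ alone does not suffice:
\begin{itemize}
\item Without the $\rho$-factor, the per-wave bound is $a^{1/2}\lambda^{-1/6}|N|^{-1/2}$, and the sum over $\sharp N$ waves is $a^{1/2}\lambda^{-13/6}|N|^{1/2}$.
\item For $a=h^{3/5}$ and $t$ of order one (so $\lambda=h^{-1/10}$, $|N|\sim h^{-3/10}\gg\lambda^2$), this is $h^{11/30}$.
\item That exceeds both $a^{1/4}(h/t)^{1/4}=h^{2/5}$ and $h^{1/3}\lambda^{-4/3}=h^{7/15}$.
\end{itemize}
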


These are the Neumann analogues of the corresponding estimates in
\cite[Section 3]{ILLP}. Their proofs use the same stationary-phase analysis.
Indeed, by construction the phase functions $\Phi$ are exactly those of the
Dirichlet parametrix, and the Neumann amplitude occurring in
\eqref{eq:newVNgam} is again a symbol of order zero with the required uniform
bounds. The only change in the phase is
$B_L(\alpha^{3/2}/h)\longrightarrow
B_{\tilde L}(\alpha^{3/2}/h)$.
From \eqref{eq:propL}, $B'_{\tilde L}$ has the same symbol behavior as $B'
_L$modulo the sign, which is irrelevant for out purpose.
In particular, the estimates for its derivatives used in
\cite[Section 3]{ILLP} are unchanged. The sign difference in their leading
terms does not enter any of the non-degeneracy or stationary-phase
arguments. We therefore only indicate below the few points needed to
identify the estimates with those of \cite{ILLP}.

\subsection{The tangent part $\gamma\sim a$}

Let $\gamma\sim a$ and $\lambda=a^{3/2}/h$. We proceed as in \cite[Section 3.2]{ILLP}, keeping track of the only change in the phase, namely the replacement of $B_L$ by $B_{\tilde L}$.
We first apply stationary phase to $\Psi_{N,a,a}$ with respect to $A$.

\begin{lemma}\label{lemAc}
The equation $\partial_A\Psi_{N,a,a}=0$ has at most one solution $A_c$ on the support of the symbol. This critical point is non-degenerate and
$|\partial_A^2\Psi_{N,a,a}||_{A=A_c}\sim Na^{3/2}$.
Consequently, stationary phase in $A$ yields the factor
\begin{equation}\label{eq:169}
(\lambda |N|)^{-1/2}.
\end{equation}
Moreover, if $N\sqrt a\lesssim1$, then
\begin{equation}\label{eq:146bis}
A_c=\Ab_0^2-2\Ab_0\Ab_1+
\left(\frac{\Sigma-S}{2N}\right)^2
+O\left(a\left(\frac{\Sigma}{N},\frac{S}{N}\right)^2\right)
+\frac{f_0}{N\lambda^2},
\end{equation}
where
\[
\Ab_0=
\frac{q^{2/3}(\vartheta_c)|_{\Sigma=S=0}}
{4N\sqrt a}\big(t+E_0(y,a)\big),
\qquad
\Ab_1=
\frac{\Sigma}{2N}(1-xE_1)
-\frac{S}{2N}(1-aE_2),
\]
with $E_0=O(|y|^2+a|y|)$, $E_{1,2}=O(1)$, while $f_0$ admits an asymptotic expansion in $\lambda^{-1}$.
\end{lemma}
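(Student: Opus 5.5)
We will follow the route of \cite[Section 3.2]{ILLP} for the corresponding Dirichlet lemma. The only change in the phase is that $-NhB_L(\alpha^{3/2}/h)$ is replaced by $+NhB_{\tilde L}(\alpha^{3/2}/h)$; the amplitude plays no role in locating or counting critical points.

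The first step is to rescale the tangent regime exactly as in \cite{ILLP}. We write $\alpha=aA$ with $A\sim 1$ on the support of $\chi_1(\alpha/(|\theta|^{2/3}a))$, rescale $\sigma=\sqrt a\,\Sigma$ and $s=\sqrt a\, S$, and factor $\lambda=a^{3/2}/h$. This turns $\Phi_{N,a,\gamma}/h$ into $\lambda\Psi_{N,a,a}$. Differentiating in $A$ gives
\[
\partial_A\Psi_{N,a,a}=\frac{t}{a^{1/2}}\partial_\alpha\tau_q+\text{(terms from }\partial_\alpha\Phi(x,y,\cdot)-\partial_\alpha\Phi(a,0,\cdot))-2N\sqrt A+N\lambda^{-1}\cdot\tfrac32 A^{1/2}B'_{\tilde L}(\lambda A^{3/2}).
\]
By \eqref{eq:propL} we have $B'_{\tilde L}(u)=O(u^{-2})$, so the last term is $O(N\lambda^{-2})$.

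The second step computes the second derivative. The dominant contribution to $\partial_A^2\Psi_{N,a,a}$ is $-N A^{-1/2}$, coming from $-\frac43 NA^{3/2}$ after factoring $a^{3/2}$. The other contributions are smaller for $N\geq1$ on the support:
\begin{itemize}
\item the $t\tau_q$ term contributes $O(ta^{3/2}\cdot a^{-3/2}\cdot a^{1/2})$, which is small relative to $N$ because $|N|\gtrsim t/\sqrt a$ on the relevant set by \eqref{eq:24} and Proposition~\ref{propnofw};
\item the $\Phi$ terms contribute $O(1)$ at most, and this is absorbed once $|N|\geq2$, or handled for $N=\pm1$ as in \cite{ILLP};
\item the $B_{\tilde L}$ term contributes $O(N\lambda^{-2})$.
\end{itemize}
Hence $\partial_A\Psi$ is strictly monotone in $A$, which gives at most one critical point $A_c$, non-degenerate with $|\partial_A^2\Psi|\sim N$ in rescaled units, that is $\sim Na^{3/2}$ before rescaling. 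Stationary phase with large parameter $\lambda$ then produces the factor $(\lambda|N|)^{-1/2}$ in \eqref{eq:169}.

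The third step expands $A_c$ when $N\sqrt a\lesssim1$. We solve $\sqrt{A}=\frac{1}{2N}\big(t\,\partial_\alpha\tau_q/\sqrt a+\partial_\alpha\Phi\text{-terms}\big)+O(\lambda^{-2})$. We insert the expansion of $\Phi$ near the glancing set coming from Lemma~\ref{lemphaseG} and Proposition~\ref{propimpformgamma}: this gives $\partial_\alpha\Phi(x,y,\theta,\alpha,\sigma)=-\sigma(1-xE_1)+\dots$, and similarly at $(a,0)$ with $E_2$. The $t$ term gives $\frac{q^{2/3}(\vartheta_c)}{2\sqrt a}(t+E_0(y,a))$, where $E_0=O(|y|^2+a|y|)$ comes from $\nabla_yB_0(0,\cdot)=0$ and $B_2(0,\cdot)=0$. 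Squaring gives $\Ab_0^2-2\Ab_0\Ab_1+\Ab_1^2$ plus the stated error, and then
\[
\Ab_1^2=\Big(\frac{\Sigma-S}{2N}\Big)^2+O\Big(a\big(\tfrac{\Sigma}{N},\tfrac SN\big)^2\Big).
\]
The $B_{\tilde L}$ term contributes $f_0/(N\lambda^2)$, with $f_0$ expandable in $\lambda^{-1}$ by \eqref{eq:propL}; its sign differs from the Dirichlet case, but that does not matter here.

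The expected obstacle is making the dominance of $-NA^{-1/2}$ uniform, in particular for $N=\pm1$ and for $t$ close to $\sqrt a$. There I would rely verbatim on the bounds of \cite[Lemma 3.6]{ILLP}, since they involve only $\Phi$ and $|B'_{\tilde L}|\sim|B'_L|$.
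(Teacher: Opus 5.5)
Your proposal is correct and follows the paper's route: it transfers the Dirichlet argument of \cite[Section 3.2]{ILLP}. The only change is $B_L'\to B_{\tilde L}'$ in the factor $1-\frac34 B'(\rho\lambda A^{3/2})$, which is harmless since $B'_{\tilde L}(u)=O(u^{-2})$ obeys the same symbol bounds.

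Two small slips do not affect the conclusion. First, the chain rule gives $\frac32 N A^{1/2}B'_{\tilde L}(\lambda A^{3/2})$ with no $\lambda^{-1}$ prefactor; it is this correct expression that is $O(N\lambda^{-2})$. Second, the $t\tau_q$ and $\Phi$ contributions to $\partial_A^2\Psi_{N,a,a}$ are $O(t\sqrt a)$ and $O(\sqrt a)$ in rescaled units, so they are negligible against $|N|A^{-1/2}$ for every $N\neq0$, including $N=\pm1$, without invoking \eqref{eq:24} (which in fact bounds $|N|$ from above, not below).
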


\begin{proof}
This is the analogue of the corresponding result in \cite[Section 3.2]{ILLP}. The equation determining $A_c$ has exactly the same structure, except that
$1-\frac34B_L'(\rho\lambda A^{3/2})
\quad\text{is replaced by}\quad
1-\frac34B_{\tilde L}'(\rho\lambda A^{3/2})$.
The asymptotic expansion of $B_{\tilde L}$ from Lemma \ref{lemL} gives the same symbol bounds as for $B_L$, and therefore the proof of the existence, uniqueness and non-degeneracy of $A_c$ is unchanged. The same computation gives \eqref{eq:146bis}; the contribution containing $\rho$ is entirely contained in the remainder $f_0/(N\lambda^2)$.
\end{proof}
Let
\begin{equation}\label{eq:23}
\phi_{N,a}(t,x,y,\Sigma,S,\rho)
:=\Psi_{N,a,a}(t,x,y,\Sigma,S,A_c,\rho)
\end{equation}
be the critical value after stationary phase in $A$, and denote by
$\sigma_{V,h,a}(\Sigma,S,\rho)$ the resulting symbol after stationary phase in $\vartheta$ and $A$. It is a symbol of order zero, uniformly in $N$, with compact support in $(\Sigma,S,\rho)$.

For $|N|\lesssim\lambda$, the factor
$e^{iNB_{\tilde L}(\rho\lambda A^{3/2})}$ may be absorbed into the symbol as it doesn't oscillate. For larger $N$ we retain it in the phase. In particular, when $|N|>C\lambda^2$, the computation of \cite[Section 3.2]{ILLP} gives
\begin{equation}\label{eq:54}
\frac1h\left|\partial_\rho^2\phi_{N,a}\right|
\sim\frac{|N|}{\lambda}.
\end{equation}
Indeed, the only contribution to the second $\rho$ derivative comes from
$NhB_{\tilde L}(\rho\lambda A^{3/2})$; its sign plays no role. Hence stationary phase in $\rho$ yields the additional factor $(|N|/\lambda)^{-1/2}$ as in \cite[Section 3.2]{ILLP}.

We recall the two oscillatory estimates that will be used in the summation.

\begin{lemma}\label{lemestimVN}
There exists $C>0$, independent of $N$, such that:
\begin{enumerate}
\item if $|N|\geq\lambda^{1/3}$,
\begin{equation}\label{eq:64}
\left|
\int e^{\frac ih\phi_{N,a}}
\sigma_{V,h,a}(\Sigma,S,\rho) d\Sigma dS d\rho
\right|
\leq C\lambda^{-2/3};
\end{equation}
\item if $1\leq |N|<\lambda^{1/3}$ and
$\Lambda=\lambda |N|^{-3}\geq1$, then
\begin{equation}\label{eq:77}
\left|
\int e^{\frac ih\phi_{N,a}}
\sigma_{V,h,a}
\left(\frac{\sigma}{|N|},\frac{s}{|N|},\rho\right)
d\sigma ds
\right|
\leq C\Lambda^{-3/4}.
\end{equation}
\end{enumerate}
\end{lemma}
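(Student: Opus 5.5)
We will prove Lemma~\ref{lemestimVN} by following the Dirichlet argument of \cite[Section 3.2]{ILLP}, checking at each step that only symbol bounds and non-degeneracy enter. We never use the sign of the leading coefficient of $B_{\tilde L}$ or the precise Dirichlet amplitude.

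The starting point is the structure of the phase $\phi_{N,a}$ obtained in \eqref{eq:23}, after stationary phase in $\vartheta$ and $A$ (Lemma~\ref{lemAc}). Inserting $A_c$ from \eqref{eq:146bis}, we write
\[
\phi_{N,a}=h\lambda\Bigl(\tfrac{\Sigma^3}{3}-\tfrac{S^3}{3}+\text{(affine terms in }\Sigma,S)\Bigr)+N h\lambda\,\mathcal{Q}(\Sigma-S)+\dots+NhB_{\tilde L}(\rho\lambda A_c^{3/2}).
\]
Here $\mathcal{Q}$ is essentially $-\frac{(\Sigma-S)^2}{4N}$-type quadratic coupling coming from $A_c$. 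The overall large parameter is $\lambda=a^{3/2}/h$, and the last term contributes only through $\rho$.

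For case (1), $|N|\ge\lambda^{1/3}$, the plan is as follows. After rescaling, the dependence on $(\Sigma,S)$ is a pair of cubic phases in the separate variables $\Sigma$ and $S$, weakly coupled through the quadratic term of size $\lambda/N\le\lambda^{2/3}$. We apply van der Corput type bounds (degenerate stationary phase of order $3$) in $\Sigma$ and in $S$ separately. Each cubic integral with large parameter $\lambda$ gives at most $\lambda^{-1/3}$, uniformly in the lower-order coefficients, for the standard Airy reason that $\partial^3$ of the phase is bounded below by $\lambda$. The product gives $\lambda^{-2/3}$. The $\rho$-integral is over a compact set and is bounded trivially, or by \eqref{eq:54} when $|N|>C\lambda^2$, which only helps. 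The uniformity in $N$ requires that derivatives of the symbol $\sigma_{V,h,a}$ be bounded independently of $N$; this holds by Lemma~\ref{lemAc} and since $e^{iNB_{\tilde L}}$ is either absorbed, for $|N|\lesssim\lambda$, or kept in the phase and affecting only the $\rho$ direction.

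For case (2), $1\le|N|<\lambda^{1/3}$, we rescale $\Sigma=\sigma/|N|$ and $S=s/|N|$. The phase becomes $\Lambda$ times a phase of the form $\frac{\sigma^3}{3}-\frac{s^3}{3}-\frac{(\sigma-s)^2}{4}\cdot(\ldots)+\text{linear}$ with $\Lambda=\lambda|N|^{-3}\ge1$. This is exactly the normal form of a swallowtail-type degenerate phase in two variables. The bound $\Lambda^{-3/4}$ is the sharp swallowtail decay: the Hessian together with the cubic terms have a degeneracy of $A_4$ type, and the oscillatory integral index is $3/4$. We would quote the uniform estimate from \cite[Section 3.2]{ILLP} (and the underlying Arnold/Duistermaat uniform bounds for stable singularities). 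The point to check is that the lower-order perturbations $O(a(\Sigma/N,S/N)^2)$ and $f_0/(N\lambda^2)$ remain small smooth perturbations after rescaling, so that the versality and uniformity still apply.

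The main obstacle is the uniformity of the swallowtail bound in case (2) with respect to the parameters $(t,x,y)$ and $N$. We would handle it as in \cite{ILLP}: perform stationary phase in the variable $\sigma+s$, which is non-degenerate, reducing to a one-dimensional integral in $\sigma-s$ with a quartic phase. Then we apply the uniform bound $\Lambda^{-1/4}$ for quartic polynomials with bounded lower coefficients, which gives $\Lambda^{-1/2}\cdot\Lambda^{-1/4}$.
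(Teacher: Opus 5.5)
Your top-level route is the paper's. There the proof is a pure transfer from \cite[Section~3.2]{ILLP}: for $|N|<\lambda^{1/3}$ the factor $e^{iNB_{\tilde L}}$ goes into the symbol (so the phase is linear in $\rho$), and for $|N|\geq\lambda^{1/3}$ only the symbol bounds of $B_{\tilde L}$ and, when $|N|>\lambda^2$, the $\rho$-non-degeneracy \eqref{eq:54} are used; none of these sees the sign of $b_1$.

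The trouble is the mechanism you supply for case (2): it interchanges the degenerate and non-degenerate directions, and would fail as written. Because $\alpha$ enters $\Phi(x,y,\theta,\alpha,\sigma)-\Phi(a,0,\theta,\alpha,s)$ only through $\alpha(s-\sigma)$, the quadratic coupling created by the $A$-stationary phase is, up to $O(a)$, a multiple of $(\Sigma-S)^2/N$ (compare \eqref{eq:146bis}). After $\Sigma=\sigma/|N|$, $S=s/|N|$ the phase is therefore $\Lambda\bigl(\frac{\sigma^3-s^3}{3}+c(\sigma-s)^2+\text{linear}\bigr)$ with $c\sim1$, as you say. Put $u=\sigma-s$ and $v=\sigma+s$; then $\frac{\sigma^3-s^3}{3}=\frac{uv^2}{4}+\frac{u^3}{12}$.
\begin{itemize}
\item $\partial_v^2=\Lambda u/2$ vanishes on the diagonal $\sigma=s$, whereas $\partial_u^2=\Lambda(2c+u/2)$ stays bounded below near the degenerate point.
\item So $\sigma+s$ is exactly the kernel direction of the Hessian there. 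Stationary phase in it is not uniform: its critical value behaves like $-\Lambda\ell^2/u$, with $\ell$ the coefficient of $v$, which is singular at $u=0$.
\item The correct step is stationary phase in $\sigma-s$. This gives $u_c\approx-(v^2/4+\ell')/(2c)$ and a reduced phase $-\frac{(v^2/4+\ell')^2}{4c}+\dots$, quartic in $\sigma+s$. Van der Corput with four derivatives then yields $\Lambda^{-1/2}\Lambda^{-1/4}$.
\end{itemize}
Relatedly, $3/4$ is the oscillation index of an $A_3$ point ($x^4+y^2$), not of an $A_4$ point, which would give $\frac12+\frac15$. The paper's ``swallowtail'' is the shape of the wave front produced by this $A_3$ degeneracy, not the type of the critical point.

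Case (1) has a gap of the same kind: separate van der Corput bounds in $\Sigma$ and in $S$ do not multiply. To apply van der Corput in $S$ to $I(S)=\int e^{\frac ih\phi_{N,a}}\sigma_{V,h,a}\,d\Sigma$, you need $\|I\|_{\infty}+\|\partial_S I\|_{L^1}\lesssim\lambda^{-1/3}$. But $\partial_S$ of the coupling term $\lambda c(\Sigma-S)^2/N$ is as large as $\lambda/|N|$, i.e.\ up to $\lambda^{2/3}$ when $|N|\sim\lambda^{1/3}$, so this fails. The coupling is weak only at the Airy scale $\Sigma,S\sim\lambda^{-1/3}$, where its coefficient is $\lambda^{1/3}/|N|\leq1$. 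Exploiting this needs a genuinely two-variable argument, for example rescaling to that scale and handling separately the region where the phase is non-degenerate in each variable.

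So either quote both oscillatory bounds from \cite{ILLP}, which is all the paper does, or repair these two steps. In either case the only Neumann-specific check is the one you correctly identify: $B_{\tilde L}$ has the same symbol bounds as $B_L$, and the amplitude is still a uniformly bounded symbol of order zero.
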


\begin{proof}
These are the analogues of the corresponding estimates in
\cite[Section 3.2]{ILLP}. For $|N|<\lambda^{1/3}$,
$NB_{\tilde L}=O(1)$ and its exponential can be included in the symbol; the resulting phase is linear in $\rho$, and the proof of \eqref{eq:77} is exactly the same.
For $|N|\geq\lambda^{1/3}$ the proof of \eqref{eq:64} uses only the symbol bounds on $B_L$ and, for $|N|>\lambda^2$, the non-degeneracy in $\rho$ expressed by \eqref{eq:54}. Both properties hold with $B_{\tilde L}$ in place of $B_L$, so the same argument applies.
\end{proof}

\subsubsection{Summation over the reflected waves}

We now prove Proposition \ref{proptang}. By Proposition
\ref{propnofw}, the terms
$N\notin\Nrond_d^1(t,x,y)$ contribute $O_{C^\infty}(h^\infty)$.
At a critical point of the phase defining $V_{N,a}$,
\begin{equation}\label{eq:Ntime}
|N|\sim\frac{t}{\sqrt a}.
\end{equation}
Set $\sharp N=|\Nrond_d^1(t,x,y)|$.
Assume first that $\sharp N=O(1)$. If
$1\leq|N|\leq\lambda^{1/3}$, collecting stationary phase in
$\vartheta$, \eqref{eq:169}, and \eqref{eq:77}, we obtain
\begin{align}
|V_{N,a}(t,x,y)|
&\lesssim
h^{-d}\left(\frac ht\right)^{\frac{d-2}{2}}
\frac{a^2}{h}
\frac1{|N|^{1/2}\lambda^{1/2}}
|N|^{1/4}\lambda^{-3/4} \notag\
&\lesssim
h^{-d}\left(\frac ht\right)^{\frac{d-2}{2}}
\frac{a^{1/8}h^{1/4}}{|N|^{1/4}} \notag\
&\lesssim
h^{-d}\left(\frac ht\right)^{\frac{d-2}{2}}
a^{1/4}\left(\frac ht\right)^{1/4},
\label{eq:tangent-smallN}
\end{align}
where we used \eqref{eq:Ntime} in the last inequality.
If $|N|\geq\lambda^{1/3}$, we instead use \eqref{eq:64} and obtain
\begin{equation}\label{eq:tangent-largeN}
|V_{N,a}(t,x,y)|
\lesssim
h^{-d}\left(\frac ht\right)^{\frac{d-2}{2}}
\frac{a^2}{h}
\frac1{|N|^{1/2}\lambda^{1/2}}
\lambda^{-2/3}.
\end{equation}
Using $|N|\geq\lambda^{1/3}$, this is bounded by the first term in
\eqref{eq:37ter}, or by
$h^{-d}\left(\frac ht\right)^{\frac{d-2}{2}}h^{1/3}$,
as required.

It remains to consider the case $\sharp N\gg1$. By \eqref{eq:113},
$\sharp N\lesssim
1+\frac{t}{\sqrt a\,\lambda^2}$.
Hence $\sharp N\gg1$ implies $t\gtrsim\sqrt a,\lambda^2$; moreover,
by \eqref{eq:Ntime}, $|N|\sim t/\sqrt a$. Using \eqref{eq:64},
\eqref{eq:169} and the above cardinality bound, we obtain
\begin{equation}
\left|
\sum_{\substack{N\in\Nrond_d^1(t,x,y)\
|N|\sim t/\sqrt a}}V_{N,a}
\right|
\lesssim
h^{-d}\left(\frac ht\right)^{\frac{d-2}{2}}
a^{1/2}\lambda^{1/3}\frac{\sharp N}{|N|} 
\lesssim
h^{-d}\left(\frac ht\right)^{\frac{d-2}{2}}
h^{1/3}\lambda^{-4/3}.
\label{eq:tangent-sum}
\end{equation}
Together with the preceding bounds, this yields
\[
\left|\sum_{|N|\geq2}V_{N,a}(t,x,y)\right|
\lesssim
h^{-d}\left(\frac ht\right)^{\frac{d-2}{2}}
\left[
a^{1/4}\left(\frac ht\right)^{1/4}
+\frac{h^{1/3}}{\lambda^{4/3}}
\right],
\]
which is \eqref{eq:37ter} and completes the proof of Proposition
\ref{proptang}.

\subsection{The almost transverse part $4a\leq\gamma\ll1$}

We return to $\Psi_{N,a,\gamma}(t,x,y,\Sigma,S,A,\rho)$, obtained after stationary phase in $\vartheta$. As in \cite[Section 3.3]{ILLP}, when $4a\leq\gamma$ we first apply stationary phase in $S$.

\begin{lemma}\label{lemSAtransv}
The phase $\Psi_{N,a,\gamma}$ has two distinct critical points $S_\pm$ in $S$, satisfying
\begin{equation}\label{critScpm}
S_{\pm}^{2}+ \frac{a}{\gamma}q^{1/3}(\vartheta_{c})
\left(1+\partial_{\Xi}A_{\Gamma}
\left(a,0,\Xi,\frac{\vartheta_{c}}{\tau_{q}(\gamma A,\vartheta_{c})}\right)
\Big|_{\Xi=\frac{\sqrt{\gamma}S{\pm}q^{1/3}(\vartheta_c)}
{\tau_q(\gamma A,\vartheta_c)}}\right)=A.
\end{equation}
Moreover,
$\partial_S^2\Psi_{N,a,\gamma}|_{S=S_\pm}\sim1$,
 $S_\pm=S_0\pm\sqrt{A-O(a/\gamma)}$,
 $S_0=O(a/\sqrt\gamma)$.
For each $S_\pm$, the resulting phase has a non-degenerate critical point $A_\pm$ in $A$, with
$\partial_A^2\Psi_{N,a,\gamma}|_{(S_\pm,A_\pm)}
\sim N$.
Consequently, stationary phase in $S$ and $A$ yields respectively the factors
\begin{equation}\label{eq:SAfactors}
\lambda_\gamma^{-1/2},
\qquad
(\lambda_\gamma |N|)^{-1/2},
\qquad \lambda_\gamma=\gamma^{3/2}/h.
\end{equation}
\end{lemma}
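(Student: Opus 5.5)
The plan is to work directly with the phase $\Psi_{N,a,\gamma}(t,x,y,\Sigma,S,A,\rho)$ obtained after stationary phase in $\vartheta$, following \cite[Section 3.3]{ILLP}. The only Neumann-specific change in this phase is $B_L\to B_{\tilde L}$, which enters through the term $NhB_{\tilde L}$. In the regime $4a\leq\gamma$ this term is lower order: its $S$-derivative vanishes and its $A$-derivative is $O(N\lambda_\gamma^{-1})$ relative to the main terms. The $S$-dependence of $\Psi_{N,a,\gamma}$ therefore comes only from the difference $\Phi(x,y,\cdot,\sigma)-\Phi(a,0,\cdot,s)$, after the rescalings $\alpha=\gamma A$, $s=\sqrt\gamma S$, $\sigma=\sqrt\gamma\Sigma$. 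Using the explicit form of $\Gamma$ from Proposition \ref{propimpformgamma} and Lemma \ref{lemphaseG}, I would write the $S$-part of $h^{-1}\Psi$ as
\[
-\lambda_\gamma\Big(\frac{S^3}{3}+S\big(\tfrac a\gamma q^{1/3}(\vartheta_c)-A\big)+\tfrac{a}{\gamma^{3/2}}\tau_q A_\Gamma\big(a,0,\sqrt\gamma S q^{1/3}/\tau_q,\vartheta_c/\tau_q\big)\Big).
\]

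Differentiating in $S$ then gives exactly \eqref{critScpm}: the derivative of the $A_\Gamma$ term produces the factor $\partial_\Xi A_\Gamma$ multiplied by $\frac a\gamma q^{1/3}$. Since $a/\gamma\leq 1/4$ and $\partial_\Xi A_\Gamma=\Lp+O(\Xi)$ is small near the glancing set, the equation takes the form $S^2-2S_0S+O(a/\gamma)=A$ with $S_0=O(a/\sqrt\gamma)$. Here the linear term comes from the $\Xi^2$-part $\mu\Xi^2$ of $A_\Gamma$, which contributes $\mu\sqrt\gamma S q^{1/3}/\tau_q$ times $a/\gamma$. Solving this quadratic by the implicit function theorem gives $S_\pm=S_0\pm\sqrt{A-O(a/\gamma)}$.

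On the support of $\chi_1$ we have $A\sim 1$, while $a/\gamma\leq 1/4$, so the two roots are separated by a quantity of order $1$. This gives $\partial_S^2\Psi=\pm 2\sqrt{A-O(a/\gamma)}+O(a/\sqrt\gamma)\sim 1$ with the appropriate sign, and hence the factor $\lambda_\gamma^{-1/2}$.

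The $A$-step is where I expect the main obstacle. I would plug $S_\pm(A)$ into $\Psi$ and compute $\partial_A^2$. The dominant contribution comes from $-\frac43 N\gamma^{3/2}A^{3/2}/h$ together with the two $\frac23(\cdot)^{3/2}$ terms that appear after stationary phase in $\Sigma$ and $S$; these produce $N\lambda_\gamma\cdot\frac{d^2}{dA^2}A^{3/2}\sim N\lambda_\gamma$. The contributions of $t\tau_q$, of $\partial_A S_\pm$, and of $NhB_{\tilde L}''$ must be shown to be lower order. The last one is $O(N\lambda_\gamma^{-1})$ uniformly, using $|B'_{\tilde L}(u)|\lesssim u^{-2}$ from \eqref{eq:propL}, and its sign is irrelevant. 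The delicate points are that $t\tau_q$ is only $O(t\gamma/h)$ with $t\sim N\sqrt\gamma$ at critical points, and that the $\Sigma$-critical points behave the same way as the $S$-ones. I would handle these exactly as in \cite{ILLP}, tracking the terms $t\partial_A^2\tau_q$. Uniqueness of $A_\pm$ for each sign then follows from strict monotonicity of $\partial_A\Psi$, and this yields the factor $(\lambda_\gamma|N|)^{-1/2}$ in \eqref{eq:SAfactors}.
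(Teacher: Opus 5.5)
Your reduction is the one the paper uses. The only change with respect to \cite{ILLP} is $B_L\to B_{\tilde L}$. This term does not depend on $S$, and its $A$-derivatives obey the same bounds; for instance $N\partial_A^2 B_{\tilde L}(\lambda_\gamma A^{3/2})=O(N/\lambda_\gamma)$. Your $S$-step is correct and reproduces \eqref{critScpm}: you differentiate the $\Gamma$-form of $\Phi(a,0,\cdot)$, use $\Lp(0,\vartheta)=0$ and the $\mu\Xi^2$ term to get $S_0=O(a/\sqrt\gamma)$, and use $\gamma\geq 4a$, $A\sim1$ to get $|\partial_S^2\Psi|\sim 1$.

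The $A$-step, however, does not work as you describe it. You propose to compute $\partial_A^2$ after stationary phase in both $\Sigma$ and $S$, and you assert that the $\Sigma$-critical points behave like the $S$-ones. They do not.
\begin{itemize}
\item The separation of $S_\pm$ comes from $a\leq\gamma/4$, which is a condition on the source only.
\item The $\Sigma$-critical points are governed by the observation point $x$, which is unrestricted. They merge when $\frac{x}{\gamma}q^{1/3}(1+\dots)$ reaches $A$, i.e.\ at the turning point of the ray.
\end{itemize}
This is why $\Sigma$ is never integrated out at this stage. It is kept in $\phi_{N,a,\gamma,\pm}(\Sigma,\rho)$ and handled last by degenerate stationary phase (Lemma~\ref{lemtransversephases}, and the factor $\lambda_\gamma^{-1/3}$ in \eqref{eq:87}). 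If you eliminated $\Sigma$ first, its critical value would add a term $\pm\frac{\lambda_\gamma}{2}\bigl(A-\frac{x}{\gamma}q^{1/3}(1+\dots)\bigr)^{-1/2}$ to $\partial_A^2$. That term is unbounded near this fold, so $\partial_A^2\sim N\lambda_\gamma$ would fail there.

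The non-degeneracy in $A$ must instead be proved at fixed $\Sigma$:
\begin{itemize}
\item the $\Sigma$-part of the phase is linear in $A$ up to $\tau_q$-corrections, so it contributes only $O(\sqrt\gamma\,\lambda_\gamma)$;
\item the term $t\tau_q$ contributes $O(t\sqrt\gamma\,\lambda_\gamma)$.
\end{itemize}

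You also misattribute the curvature coming from $S$. The $S$-critical value, of the form $\pm\frac23\lambda_\gamma\bigl(A-O(a/\gamma)\bigr)^{3/2}$, does not produce a term proportional to $N$. It contributes $\pm\frac{\lambda_\gamma}{2}\bigl(A-O(a/\gamma)\bigr)^{-1/2}$, which is precisely the $\partial_A S_\pm$ contribution you list as lower order. This term has size $\lambda_\gamma$ and is not negligible against $-N\lambda_\gamma A^{-1/2}$ for small $|N|$. You must check that the two cannot cancel, using $A\sim1$ and the lower bound on $A-O(a/\gamma)$ supplied by $\gamma\geq 4a$.
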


\begin{proof}
The analysis in $S$ is exactly the one in \cite[Section 3.3]{ILLP}, since it only involves the phase functions inherited from the Dirichlet construction. The equation determining $A_\pm$ has the same form as in the Dirichlet case, except that
$1-\frac34B_L'(\rho\lambda_\gamma A^{3/2})
\quad\text{is replaced by}\quad
1-\frac34B_{\tilde L}'(\rho\lambda_\gamma A^{3/2})$.
The symbol estimates for $B_{\tilde L}$ are the same as those used for $B_L$, hence the non-degeneracy in $A$ and the stationary-phase factors are unchanged.
\end{proof}

For $|N|>\lambda_\gamma^2$ we additionally apply stationary phase in $\rho$, exactly as in the tangent regime. If
$\phi_{N,a,\gamma,\pm}(t,x,y,\Sigma,\rho)
:=
\Psi_{N,a,\gamma}
(t,x,y,\Sigma,S_\pm,A_\pm,\rho)$,
then, as in \eqref{eq:54},
\begin{equation}\label{eq:rhotransv}
\frac1h
\left|
\partial_\rho^2\phi_{N,a,\gamma,\pm}
\right|
\sim\frac{|N|}{\lambda_\gamma}.
\end{equation}
Thus stationary phase in $\rho$ yields the additional factor
$(|N|/\lambda_\gamma)^{-1/2}$.
Let $\sigma_{V,h,\gamma,\pm}(\Sigma,\rho)$ denote the symbol obtained after stationary phase in $\vartheta$, $S$ and $A$. It is a symbol of order zero, uniformly in $N$, with compact support in $\Sigma$. We are therefore left with oscillatory integrals of the form
$\int e^{\frac ih\phi_{N,a,\gamma,\pm}}
\sigma_{V,h,\gamma,\pm}(\Sigma,\rho)\,d\Sigma\,d\rho$.

The following phase properties are those used in the Dirichlet case.

\begin{lemma}\label{lemtransversephases}
For $|N|\geq2$ and $t\sim4N\sqrt\gamma$, the phase
$\phi_{N,a,\gamma,\pm}$ has at most one degenerate critical point in $\Sigma$, of order two. If $|N|\geq\lambda_\gamma^2$, the same property holds for the critical value obtained after stationary phase in $\rho$.
\end{lemma}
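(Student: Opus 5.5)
The plan is to show that the statement of Lemma \ref{lemtransversephases} concerns only the phase $\phi_{N,a,\gamma,\pm}$. This phase is built from $\Phi$, from $\tau_q$ and from the term $-\frac43N\alpha^{3/2}+NhB_{\tilde L}(\alpha^{3/2}/h)$. We then reduce to the Dirichlet analysis of \cite[Section 3.3]{ILLP} and check the single place where $B_{\tilde L}$ enters. In the rescaled variables, the first step writes the $\Sigma$-dependence of $\phi_{N,a,\gamma,\pm}$ after the stationary phases in $\vartheta$, $S$ and $A$ (Lemma \ref{lemSAtransv}). The $\Sigma$ variable comes from $\Phi(x,y,\theta,\alpha,\sigma)$, that is, from the Airy-type part $\frac{\sigma^3}{3}-\sigma\zeta$ together with the $\Gamma$-corrections. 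Since $A_\pm$ and $S_\pm$ depend smoothly on $\Sigma$, one gets
\[
\partial_\Sigma\phi_{N,a,\gamma,\pm}=\partial_\Sigma\Psi_{N,a,\gamma}\big|_{S_\pm,A_\pm},
\]
by the envelope identity. This reduces to a cubic-type expression in $\Sigma$: its leading part is $\Sigma^2/(4N^2)\cdot(\text{elliptic})-(A_\pm-\dots)$ plus lower-order terms controlled by \eqref{eq:146bis}-type expansions of $A_\pm$.

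The second step is to compute $\partial_\Sigma^3\phi_{N,a,\gamma,\pm}$ and show that it does not vanish on the support of the symbol when $t\sim4N\sqrt\gamma$ and $|N|\geq2$. As in \cite{ILLP}, this uses the explicit form of $A_\pm$ in terms of $(\Sigma-S_\pm)/(2N)$ and the normal form of Lemma \ref{lemphaseG}: the third derivative is a nonzero multiple of $1/N^2$ (times an elliptic factor from $q^{2/3}(\vartheta_c)$ and $e_0$), up to errors of size $O(a/\gamma,\gamma)$. A nonvanishing third derivative implies that $\partial_\Sigma\phi$ has at most two zeros. These coalesce only where $\partial_\Sigma^2\phi=0$, which gives at most one degenerate critical point, and it is of order two. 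The only input from the boundary condition is through $A_\pm$. By Lemma \ref{lemSAtransv}, $A_\pm$ solves the same equation with $1-\frac34B'_{\tilde L}$ in place of $1-\frac34B'_L$. Its $\Sigma$-derivatives differ from the Dirichlet ones by terms of size $O(|N|\,|B''_{\tilde L}|)$, and these carry the same symbol bounds by \eqref{eq:propL}. The sign of $b_1$ therefore does not enter.

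For $|N|\geq\lambda_\gamma^2$, we repeat the argument for the critical value after stationary phase in $\rho$, using \eqref{eq:rhotransv}. The critical point $\rho_c(\Sigma)$ is smooth. Its $\Sigma$-dependence enters $\partial_\Sigma^3$ only through terms of relative size $\lambda_\gamma/|N|\lesssim\lambda_\gamma^{-1}$, which leaves the nonvanishing of the cubic coefficient intact. The main obstacle is the uniformity in $N$ of the third-derivative lower bound. All $N$-dependent corrections coming from $NhB_{\tilde L}$ must be shown to be subordinate to the $1/N^2$ leading term. This is checked by bounding each correction by $|N|\lambda_\gamma^{-2}\cdot N^{-3}$ via the asymptotics of $B_{\tilde L}$, exactly as in the Dirichlet case.
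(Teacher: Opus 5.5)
Your top-level reduction is the paper's. Its whole proof is to cite \cite[Lemmas 3.34 and 3.37]{ILLP} and observe that those proofs use only the geometric part of the phase and symbol bounds on $B_L$, which $B_{\tilde L}$ shares; the sign of $b_1$ plays no role. The gap is in the mechanism you supply for the central nondegeneracy step: it is computed incorrectly, and as written it would not prove the lemma.

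First, the leading part of $\partial_\Sigma\phi_{N,a,\gamma,\pm}$ is not $\Sigma^2/(4N^2)$. By the envelope identity it is $\Sigma^2-\zeta$ evaluated at $A=A_\pm(\Sigma)$, that is $\Sigma^2+Xq^{1/3}e_0-A_\pm(\Sigma)$ up to small terms. The $\Sigma^2$ coming from the Airy cubic $\Sigma^3/3$ is the dominant piece, and you dropped it.

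Second, the expansion in $(\Sigma-S)/(2N)$ from \eqref{eq:146bis} belongs to the tangent regime. In the transverse regime you first sit at $S_\pm\approx\pm\sqrt{A}$ (Lemma \ref{lemSAtransv}). The $A$-equation then reads $(2N\mp1)\sqrt{A_\pm}\approx T'-\Sigma$, with $T'$ a rescaled time of size $\sim N$. Hence $\partial_\Sigma^2A_\pm\approx 2(2N\mp1)^{-2}$ and
\[
\partial_\Sigma^3\phi_{N,a,\gamma,\pm}\approx 2\bigl(1-(2N\mp1)^{-2}\bigr),
\]
which is at least $16/9$, up to small errors, when $|N|\geq2$.

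So the cubic coefficient is of order one, uniformly in $N$. This is what the $N$-independent factor $\lambda_\gamma^{-1/3}$ in \eqref{eq:87} requires; a coefficient of size $N^{-2}$ would give $(\lambda_\gamma N^{-2})^{-1/3}$ instead. The hypothesis $|N|\geq2$ enters exactly here, since it keeps $|2N\mp1|\geq3$. For $N=\pm1$ one sign gives $2N\mp1=\pm1$ and the cubic coefficient vanishes, which is why those terms are treated separately.

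In your version, $|N|\geq 2$ is never used. Moreover, a leading term of size $N^{-2}$ cannot be shown to dominate errors of size $O(a/\gamma,\gamma)$ once $N^{-2}\lesssim a/\gamma$. Note that $a/\gamma$ may be as large as $1/4$, while $N$ ranges up to $\sim t/\sqrt\gamma$. So the nonvanishing of the third derivative is not established.

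For the same reason, your \emph{main obstacle} is misplaced. The $B_{\tilde L}$-corrections modify $\partial_\Sigma^2A_\pm$ only by a relative $O(\lambda_\gamma^{-2})$, i.e. by $O(N^{-2}\lambda_\gamma^{-2})$ in absolute terms. That is trivially subordinate to an order-one cubic coefficient, which is why the paper can invoke the Dirichlet lemmas without further work.
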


\begin{proof}
This is \cite[Lemmas 3.34 and 3.37]{ILLP}. The proofs only involve the geometric part of the phase and the symbol estimates of $B_L$. They therefore apply with $B_{\tilde L}$ in place of $B_L$.
\end{proof}

For completeness, the exceptional terms $N=0,\pm1$ satisfy the same estimates as in \cite[Lemmas 3.35-3.36]{ILLP}: for $N=0$ and $|t|>\gamma$,
$\partial_\Sigma^2\phi_{0,a,\gamma,\pm}\sim\frac{t}{\sqrt\gamma}$, $|\Sigma|\lesssim1$,
while for $N=\pm1$ the phase has at most one critical point of order two. These estimates give Proposition \ref{propN=1}; below we only consider $|N|\geq2$.

\subsubsection{Summation over $N$ and proof of Proposition \ref{proptransv}}

By \eqref{eq:outsideN1}, terms with
$N\notin\Nrond_d^1(t,x,y)$ contribute $O_{C^\infty}(h^\infty)$.
Set again 
$\sharp N=|\Nrond_d^1(t,x,y)|$.
At a critical point we have
\begin{equation}\label{eq:Ntransv}
|N|\sim\frac{t}{\sqrt\gamma}.
\end{equation}

Assume first that $\sharp N=O(1)$. For
$2\leq|N|\lesssim\lambda_\gamma^2$, collecting stationary phase in
$\vartheta$, the two factors in \eqref{eq:SAfactors}, and degenerate stationary phase in $\Sigma$, we obtain
\begin{equation}
|V_{N,\gamma}(t,x,y)|
\lesssim
\frac1{h^d}\left(\frac ht\right)^{\frac{d-2}{2}}
\frac{\gamma^2}{h}
\frac1{\lambda_\gamma^{1/2}}
\frac1{|N|^{1/2}\lambda_\gamma^{1/2}}
\frac1{\lambda_\gamma^{1/3}}
\lesssim
\frac1{h^d}\left(\frac ht\right)^{\frac{d-2}{2}}
\frac{h^{1/3}}{\sqrt{|N|}}.
\label{eq:87}
\end{equation}
Using \eqref{eq:Ntransv}, this gives
\begin{equation}\label{eq:87bis}
|V_{N,\gamma}(t,x,y)|
\lesssim
\frac1{h^d}\left(\frac ht\right)^{\frac{d-2}{2}}
h^{1/3}\frac{\gamma^{1/4}}{t^{1/2}}.
\end{equation}

For $|N|\geq\lambda_\gamma^2$, we use in addition
\eqref{eq:rhotransv}; hence
\begin{equation}
|V_{N,\gamma}(t,x,y)|
\lesssim
\frac1{h^d}\left(\frac ht\right)^{\frac{d-2}{2}}
\frac{\gamma^2}{h}
\frac1{\lambda_\gamma^{1/2}}
\frac1{|N|^{1/2}\lambda_\gamma^{1/2}}
\left(\frac{|N|}{\lambda_\gamma}\right)^{-1/2}
\frac1{\lambda_\gamma^{1/3}}
\lesssim
\frac1{h^d}\left(\frac ht\right)^{\frac{d-2}{2}}
\frac{h^{1/3}\lambda_\gamma^{1/2}}{|N|}.
\label{eq:89}
\end{equation}

It remains to consider the case where several reflected waves overlap.
By \eqref{eq:113},
$\sharp N
\lesssim
1+\frac{t}{\sqrt\gamma\,\lambda_\gamma^2}$.
Together with \eqref{eq:Ntransv} and \eqref{eq:89}, this yields
\begin{equation}
\left|
\sum_{N\in\Nrond_d^1(t,x,y)}V_{N,\gamma}
\right|
\lesssim
\frac1{h^d}\left(\frac ht\right)^{\frac{d-2}{2}}
\frac{h^{1/3}\lambda_\gamma^{1/2}}{|N|}
\sharp N \lesssim
\frac1{h^d}\left(\frac ht\right)^{\frac{d-2}{2}}
\frac{h^{1/3}}{\lambda_\gamma^{3/2}}.
\label{eq:90}
\end{equation}
Combining \eqref{eq:87bis} and \eqref{eq:90}, we obtain
\[
\left|\sum_{|N|\geq2}V_{N,\gamma}(t,x,y)\right|
\lesssim
\frac1{h^d}\left(\frac ht\right)^{\frac{d-2}{2}}
h^{1/3}
\left(
\frac{\gamma^{1/4}}{t^{1/2}}
+\frac1{\lambda_\gamma^{3/2}}
\right),
\]
which proves Proposition \ref{proptransv}.

\begin{rmq}\label{rmqtransversesmall}
The argument above only requires
$\gamma\gtrsim h^{2/3-\ceps}$, and not
$a\gtrsim h^{2/3-\ceps}$. In particular, both the transverse dispersive estimate and the counting bound \eqref{eq:113} remain valid for arbitrary $a>0$. This will be used in the small-$a$ regime below.
\end{rmq}

\subsection{The transverse contribution}

It remains to consider the part of the initial data which is microlocalized away from the glancing region. More precisely, with the notation of Lemma~\ref{lemgab}, we replace the first term in the right-hand side of \eqref{eq:Jdirac} by
$(1-\chi_0(hD_x))\varkappa(hD_y)\delta_{(a,0)}$.
The corresponding contribution to the wave flow is represented, as above, by a sum over the successive reflections at the boundary. The analogue of \eqref{eq:newVN} is obtained by replacing the glancing cutoff
${\cutoffchi^{\flat}}\bigl(\alpha/(2\ceps_0)\bigr)$
by $1-{\cutoffchi^{\flat}}\bigl(\alpha/(2\ceps_0)\bigr)$.
Thus, up to a harmless modification of \(\ceps_0>0\), the corresponding oscillatory integrals are supported in the transverse region
\[
\alpha\geq\ceps_0.
\]
On this support the associated bicharacteristics meet the boundary transversally, uniformly away from glancing, and the phases have only non-degenerate critical points. Moreover, by finite speed of propagation, on every fixed sufficiently small time interval only finitely many terms in the sum over N can contribute. Standard stationary phase can therefore be applied to each of these terms and yields the same decay as for the free wave propagator,
\[
\bigl|\Prond^{\mathrm{tr}}_{h,a}(t,x,y)\bigr|
\lesssim h^{-d}\min\left\{1,
\left(\frac{h}{|t|}\right)^{\frac{d-1}{2}}\right\}.
\]

For $a<h^{2/3-\ceps}$, we proceed exactly as in the Dirichlet case. Instead of prescribing the initial data in \eqref{dataamic}, we require
\[
\Prond_{h,a}(0,x,y)
=
\bigl(1-\chi_0(-h^2\partial_x^2+xq(h\partial_y))\bigr)
\varkappa(hD_y)\delta_{(a,0)}
+O_{C^\infty}(h^\infty).
\]
Using the spectral decomposition in terms of the Neumann quasimodes $\tilde e_k$, this selects the modes satisfying, up to a harmless modification of the cutoff,
\[
h^{2/3}\tilde\omega_k\geq\ceps_0.
\]
The resulting parametrix is then treated exactly as the contribution corresponding to
$\tilde\omega_k>\tilde\omega_{K_\epsilon}$ in the Dirichlet analysis. Indeed, the subsequent argument only uses that the Airy parameter $\tilde\omega_k$ is sufficiently large, so that the relevant oscillatory integrals can be handled by the usual stationary phase arguments. They therefore satisfy the free-space dispersive bound as well.

Hence the transverse contribution exhibits no additional dispersive loss, and the boundary-induced loss is entirely contained in the glancing contribution.

\section{Index of notations}
Below is a commented list of the main notations, with reference to their very first occurence. 
\subsection{General notations (used consistently throughout the paper)}
\begin{itemize}
\item $(\Omega,g)$ = $d$ dimensional manifold, $d\geq 2$, $\Delta_g$ its Laplace Beltrami operator, section \ref{intro}.
  \item $\upgamma(d)$ encodes the loss in Strichartz estimates w.r.t. the case without boundary, Theorem \ref{thStri}.
\item $(x,y)$, boundary normal coordinates; $t$ the time variable; locally, $\Omega=\{(x,y) : x>0, y\in\mathbb{R}^{d-1}\} $, section \ref{parconstruction}.
\item $(\xi,\eta,\tau)$, dual variables: $(x,y,t,\xi,\eta,\tau)\in T^*(\Omega\times\mathbb{R}_t)$.
For $(x,y)$ near $(0,y)$, the metric is $\xi^2+R(x,y,\eta)$. In a neighborhood of $(0,0)\in \partial\Omega$, 
\[
  R_0(y,\partial_y):=R(0,y,\eta)
  \,, \quad R_1(y,\eta):=\partial_x R(0,y,\partial)
  \,,
\]
section \ref{parconstruction} and \eqref{eq:R01}.
\item $\Delta_M=\partial^2_x+\sum_j\partial^2_{y_j}+x\sum_{j,k=1}^{d-1}R^{j,k}_1(0)\partial_{y_j}\partial_{y_k}$: model Laplace operator, \eqref{eq:LapM};  Multipliers 
 \[
q(\eta)=\sum_{j,k=1}^{d-1}R^{j,k}_1(0)\eta_j\eta_k,\quad \tau_q(\omega,\eta):=\sqrt{|\eta|^2+\tilde\omega q(\eta)^{2/3}}\,.
 \]
\item $\{\tilde e_k(x,\eta)\}_{k\geq 0}$: in the spectral decomposition of $-\Delta_M$ (section \ref{sss231}), an explicit orthonormal base of eigenfunctions satisfying the Neumann boundary condition, associated to eigenvalues $\lambda_k(\eta)$, where
\[
  \lambda_k(\eta)=|\eta|^2+\tilde\omega_k q(\eta)^{2/3}=\tau_q^2(\tilde\omega_k,\eta).
\]
\item $\{-\tilde \omega_k\}_{k\geq 0}$: zeros of the derivative of the Airy function in decreasing order. Everywhere in the paper $\omega >1$ and serves as a substitute to the $\xi$ variable: if $Q_{y}$ is the differential operator with symbol $q$, $\alpha=h^{2/3}\omega$ quantizes the operator $x-Q_{y}^{-1}\partial_{x}^{2}$.
\item $s$, $\sigma$: integration variables in Airy type oscillatory integrals, \eqref{eq:Gosc}, \eqref{eq:15}.
\item  $(a,b)$ coordinates of the source point, mostly set with $b=0$, Theorem \ref{disper}.
\item  $h\in (0,1)$ (Theorem \ref{disper}), $\gamma\in (0,1)$ with $1/\gamma \in 2^{\mathbb{N}}$ (Section \ref{sectcardN1}): small parameters.
    \item $\lambda=a^{3/2}/h$, $\lambda_{\gamma}=\gamma^{3/2}/h$: large parameters, Section \ref{sectcardN1}.
    \item $(X,Y,T)$, rescaled coordinates (using some combination of $a$, $h$ or $\lambda$, $\lambda_{\gamma}$ as rescaling parameters), Section \ref{sectcardN1}; $\Sigma$, $S$, $A$: rescaled variables in Airy-type oscillatory integrals, Section \ref{sectcardN1}.
      \item  $\omega$: \eqref{eq:LapM}, parameter and integration variable, successively rescaled to $\alpha$ and then $A$ (Section \ref{sectcardN1}). Stationary phases in oscillatory integrals are performed with respect to $\alpha$, $\sigma$, $s$ or their rescaled versions $A$, $\Sigma$, $S$, less frequently $\eta$, with a large parameter being $1/h$, $\lambda$ or $\lambda_{\gamma}$.
    \item $\theta$: (Section \ref{parconstruction}) rescaled $\eta$, near $\mathbb{S}^{d-1}$, and $\rho=|\theta|$, $\vartheta=\theta/\rho$.
\end{itemize}
\subsection{Localisations in phase space} 
\begin{itemize}
\item We localize $\tau_q(\omega,\eta)\sim 1/h$ and $|\eta|\sim 1/h$.
For small $x$, this corresponds to large frequencies $-\Delta\sim -\Delta_M \sim 1/h^2$ and "tangent" directions: the number of reflections on the boundary may be quite large.
\item A further localization is to values $\omega/|\eta|^{2/3}\sim \gamma$. Informally, it relates to  the angle of incidence at the boundary for a ray starting tangentially from $(\gamma,0)$.  %
\item Cut-offs : $\varkappa\geq 0$ is a cut-off function in $ C_{0}^{\infty}(\mathbb{R}^{m})$ with $m=1$ or with $m=d-1$, localizing around  a small neighbourhood of $1$ (for $d=1$), or near $\mathbb{S}^{m-1}$  for $m=d-1$; $\varkappa_{1}$ is a 1-d $\varkappa$. We also have $\chi^{\flat}\in C^{\infty}(\mathbb{R})$ such that $\chi^{\flat}=1$ on $(-\infty, 1]$ and $\chi^{\flat}=0$ on $[2,\infty)$ and $\chi^{\sharp}=1-\chi^{\flat}$. Also, $\chi_0\in C^{\infty}_0(\mathbb{R})$ is supported is a small, fixed neighborhood of $0$.
\end{itemize}
\subsection{Operators, kernels and quasimodes}
\begin{itemize}
\item $G_D(x,y,\eta,\omega)$: a quasimode for Dirichlet, \eqref{eq:eqG} or \eqref{eq:Gosc}; satisfies \eqref{eq:eqG}  $-\Delta G=\tau_q^2 G+O_{C^{\infty}}(\tau_q^{-\infty})$; $G_N(x,y,\eta,\omega)$ a quasimode for Neumann, \eqref{eq:GN}; satisfies \eqref{eq:GN-quasimode}.
\item $K_{\omega}(f)(t,x,y)$: operator related to wave flow \eqref{eq:Kequiv}, acting on smooth $f$.
\item $J(f)(x,y)$: Fourier integral operator, \eqref{eq:J}.
\item $\mathcal{P}_{h,a}(t,x,y)$, \eqref{eq:Prond} and \eqref{eq:Prond2}, our parametrix for the wave equation
\item $V_{N}$: a wave in the expansion over $N$ of $\mathcal{P}_{h,a}$, \eqref{eq:newVN}.
\item $V_{N,\gamma}$: further localized with $\chi_{1}(\omega/(\gamma |\eta|^{2/3}))$, \eqref{eq:newVNgam}.
  \item $\mathcal{P}_{h,a,\gamma}$: the corresponding sum over $N$, \eqref{defProndgamma}. 
  \item $\mathcal{E}_M(\cdot,\omega_k)$:  galery modes for the model Laplacian $\Delta_{M}$, \eqref{defEmathcalM}.
  \item $\tilde e_{k}(x,\eta)$: eigenfunctions of $\mathcal{F}_{y}(\Delta_{M})$, \eqref{eig_k}.
    \item  $\tilde e(x,y,\eta,\omega)$: quasimodes for $\Delta_{g}$, \eqref{defeomkgen}.
\item $f_{h,a}$, $f_{h,a,j}$, $j\in\{1,2\}$: functions to serve as arguments to $J$ and $K_{\omega}$ to construct a suitable smoothed out Dirac data, Propositions \ref{propdataapetitpetit1} and \ref{propdataapetitpetit2}.
\item $F_{\omega_k}(g)(x,y)$: operator acting on functions $g\in L^2(\mathbb{R}^{d-1})$, average (with density $\hat g(\eta)$) of quasimodes $\tilde e(x,y,\eta,\omega_{k})$, \eqref{defFk}.
\item $\Lo(f)(y)$: operator actiong on $f\in L^2(\mathbb{R}^{d-1})$ which allows to "get rid" of the term $B_0$ in the phase of $\tilde e(x,y,\eta,\omega)$,  \eqref{def:L}.
\item $\tilde F_{\omega_k}(f)(x,y)=F_{\omega_k}\circ \Lo(f)(x,y)$: its main property is that it can be inverted.
\end{itemize}
\subsection{Phase functions and canonical transformation} 
\begin{itemize}
\item $\zeta(x,y,\eta,\omega)$, $\psi(x,y,\eta,\omega)$ : the phase functions of $G(x,y,\eta,\omega)$ from Theorem \ref{thmMelrose}.
\item $\Sigma_0$: \eqref{eq:18}, a neighborhood of a glancing point in the model case.
  \item $ \canonchi_M$, \eqref{eq:melrose},the canonical transformation defined in a conic neighborhood of $\Sigma_0$ mapping the model case (variables $(X_{M},Y_{M}, \Xi, \Theta)$) to the general case (variables $(x,y,\xi,\eta)$).
  \item $\varphi_{\Gamma}(x,y,\Xi,\Theta)=x\Xi+y\Theta+\Gamma(x,y,\Xi,\Theta)$: Proposition \ref{lemgamma}, the generating function for $\canonchi_M$, with $\Gamma(x,y,\Xi,\Theta)=B_{\Gamma}(y,\Theta)+xA_{\Gamma}(x,y,\Xi,\Theta)$ from \eqref{eq:GAB}.
    \item $A_{\Gamma}$, $B_{\Gamma}$: phase functions that are formal series \eqref{formalseriesBA}, defined near $\mathcal{GL}=\{x=0,\Xi=0,\varrho-1=0\}$ and for $(y,\vartheta)$ near $\{0\}\times \mathbb{S}^{d-1}$, where $\Theta=\varrho\vartheta$. Their explicit form is given in \eqref{AGam}.
\item $\mathcal{H}_{\geq j}=\{ F \text{ such that } F=\sum_{k\geq j} F_k, \text{ with } F_k \text{ homogeneous of degree } k\geq 1\}$, where a monomial of the form $x^a(\varrho-1)^b\Xi^c$ is homogeneous of degree $k$ if $c+2(a+b)=k$.

\item $\Lp(y,\vartheta)$ (which defines $A_1=\Xi \Lp$), $\alpha(y,\vartheta)$, $\beta(y,\vartheta)$, $\mu(y,\vartheta)$ (which define $A_2=\alpha(y,\vartheta)x+\beta(y,\vartheta)(\varrho-1)+\mu(y,\vartheta)\Xi^2$) so that $A_{\Gamma} =A_1+A_2+\mathcal{H}_{\geq 3}$: other functions related to $\Gamma$.
  \item $B_{\Gamma}(y,\Theta)=B_0(y,\vartheta)+(\varrho-1)B_2(y,\vartheta)+\mathcal{H}_{j\geq3}$, whose properties are stated in Proposition \ref{propimpformgamma}.
\item $\mathcal{L}=\{(x,y,\vartheta,\varrho,\Xi), p_{M,2}(X_M,Y_M,\Xi,\Theta)=0\}$.
\item $\Phi_{N,a,\gamma}$: the phase function of $V_{N,\gamma}$ defined in \eqref{PhiNagamma}.
%
%
    \end{itemize}

%

\def\cprime{$'$} \def\cprime{$'$}

\end{document}